\documentclass[onefignum,onetabnum]{siamart251216}

\usepackage{amsopn}

\usepackage{amssymb}

\usepackage{lipsum}
\usepackage{amsfonts}
\usepackage{graphicx}
\usepackage{epstopdf}
\usepackage{algorithmic}
\ifpdf
  \DeclareGraphicsExtensions{.eps,.pdf,.png,.jpg}
\else
  \DeclareGraphicsExtensions{.eps}
\fi

\newsiamremark{remark}{Remark}
\newsiamremark{hypothesis}{Hypothesis}
\newsiamremark{conjecture}{Conjecture}
\crefname{hypothesis}{Hypothesis}{Hypotheses}
\newsiamthm{claim}{Claim}

\headers{Peel Neighborhoods}{S. Huntsman}

\title{Peel Neighborhoods
}

\author{Steve Huntsman\thanks{Cynnovative
  (\email{steve.huntsman@cynnovative.com}).}
}

\makeatletter
\newcommand*{\addFileDependency}[1]{
  \typeout{(#1)}
  \@addtofilelist{#1}
  \IfFileExists{#1}{}{\typeout{No file #1.}}
}
\makeatother

\ifpdf
\hypersetup{
  pdftitle={Peel Neighborhoods},
  pdfauthor={S. Huntsman}
}
\fi

\begin{document}

\maketitle

\begin{abstract}
We introduce the canonical, parameter-free, and efficiently computable notion of \emph{peel neighborhoods} in a finite metric space of strict negative type. Using a soft threshold to upper bound their radius or cardinality allows peel neighborhoods to be computed at scale, enabling useful microscopic descriptions of geometry and topology. As an example of their utility, peel neighborhoods enable efficient and performant estimates of local dimension and detections of singularities in samples from stratified manifolds.
\end{abstract}

\begin{keywords}
  peeling, peel neighborhoods, magnitude
\end{keywords}

\begin{AMS}
  51F99, 54E35, 62R40
\end{AMS}

\section{Introduction}\label{sec:Introduction}

The typical ways of encoding ``hard'' locality (versus ``soft'' locality exemplified by, e.g. kernels or topological persistence) in point clouds are with balls of fixed radius or with equal numbers of neighbors around points \cite{jones2024manifold}. In this paper, we introduce an attractive alternative called \emph{peel neighborhoods}. The key underlying difference between peel neighborhoods and other constructions is that peel neighborhoods pay deference to a robust and dimension-agnostic notion of convexity. Without thresholding of a sort detailed below, a nondegenerate peel neighborhood (i.e., one which does not equal the entire space) is defined by a circumscribed sphere in an embedding space \cite{asao2025geometric,devriendt2025geometry}. Degenerate peel neighborhoods arise because of a convexity deficit that indicates a boundary. 

Importantly, while exact convex hull algorithms exhibit exponential scaling in complexity as a function of dimension \cite{balestriero2022deephull} and/or precision \cite{zhuang2025polynomial}, the calculation of peel neighborhoods is generally much more efficient, and a very simple heuristic gives a provably good approximation \cite{huntsman2025peeling}. By Theorem \ref{thm:peeling} below, the complexity of the heuristic and exact calculations is independent of dimension, except insofar as dimension might inform the calculation of distances; although exact peeling has only weakly polynomial complexity, in practice precision is not an issue either. Since the complexity of pairwise distance calculations typically scales linearly with data dimension and amortized approximate nearest neighbor search scales polylogarithmically with data cardinality \cite{malkov2018efficient}, the amortized computation of each thresholded peel neighborhood typically scales linearly with data dimension and polylogarithmically with data cardinality. We have observed this in practice up to scales of hundreds of thousands of points and thousands of dimensions.

This scalability is unusual for constructions associated with the theory of \emph{magnitude} \cite{leinster2013magnitude,leinster2021entropy}. This theory synthesizes enriched category theory and information theory in a general notion of ``effective size'' that computationally boils down to dense linear algebra in the specific context of metric geometry. This typically presents a severe scaling bottleneck that peel neighborhoods are able to avoid. Meanwhile, the simplicity of peel neighborhoods complements their efficiency. The \emph{peel} of a finite metric space of negative type has an elementary definition, and a simple algorithm approximates it efficiently and very well at the small scales that peel neighborhoods use \cite{huntsman2025peeling}. An augmentation with anytime convergence to the exact result is not much more troublesome to deploy, losing only a theoretical runtime guarantee but terminating quickly in practice. 

The efficiency, simplicity, theoretical depth, and performance (both exact and heuristic) of peel neighborhoods suggest that they can be useful for a multitude of ``microscopic'' applications in geometry and topology. Here, we focus on estimating local dimension and identifying singularities in samples from stratified manifolds.

The main contributions of this paper are the definition of peel neighborhoods \emph{per se} in \S \ref{sec:peelNeighborhoods} along with experiments in \S \ref{sec:experiments} that characterize peel neighborhoods and that demonstrate their utility. Of note is \S \ref{sec:gradients}, in which we introduce a parameter-free and scalable singularity detection score for stratified manifolds with performance that is competitive with or better than the state of the art, while being faster to compute.

The paper is organized as follows. \S \ref{sec:Preliminaries} covers preliminaries on weightings and diversity that draw on the theory of magnitude. \S \ref{sec:peelingTheorem} reviews the peeling theorem and Algorithm \ref{alg:ApproximatePeel} from \cite{huntsman2025peeling}, gives examples of peels, and also gives some useful results about peels. \S \ref{sec:peelNeighborhoods} introduces peel neighborhoods and gives some basic examples as well as detailing a practical thresholding technique that enhances their utility and accelerates their computation (this can be largely dispensed with, but at the cost of intricacy). \S \ref{sec:experiments} discusses a series of experiments involving peel neighborhoods. \S \ref{sec:comparison} compares them to common alternatives. \S \ref{sec:annuli} indicates how peel neighborhoods accurately reflect topology and can therefore serve as an alternative to topological persistence. \S \ref{sec:scaling} discusses how peel neighborhoods can approximate peels themselves. \S \ref{sec:localDimension} illustrates how peel neighborhoods enable good local dimension estimates in sampled stratified manifolds. \S \ref{sec:gradients} shows how estimated gradient norms of these dimension estimates can identify singularities in sampled stratified manifolds. \S \ref{sec:VGT} compares the method of \S \ref{sec:localDimension} to the volume growth transform of \cite{robinson2024structure}. The paper concludes in \S \ref{sec:conclusions}. 

Appendices extend Proposition \ref{prop:inclusionBound2}; detail another experiment along the lines of \S \ref{sec:comparison}; provide additional statistical detail for the experiment of \S \ref{sec:annuli}; detail experiments for $m = 2$ along the lines of the $m = 10$ experiments in \S \ref{sec:gradients}; detail sampling from a hyperbolic surface for \S \ref{sec:VGT}; and finally use MNIST as an example of how peel neighborhoods can be computed at scale.

\section{Weightings and diversity}\label{sec:Preliminaries}

A matrix $d$ of distances for a finite metric space is \emph{negative type} if $x^T d x \le 0$ for $1^T x = 0$ and $x^T x = 1$. If the inequality is strict, then $d$ is \emph{strict negative type}. Examples of strict negative type metrics on finite spaces include finite subsets of Euclidean space endowed with the $L^p$ distance for $1 \le p \le 2$; finite subsets of spheres without antipodal points endowed with the geodesic (cosine) distance, hyperbolic space, and ultrametrics on finite spaces \cite{hjorth1998finite}. The $L^p$ product of finite strict negative type metrics is also strict negative type iff $p > 1$ \cite{huntsman2025peeling}. In practice, for data embedded in some Euclidean space, the vast majority of nondegenerate dissimilarities in applications are likely to be \emph{locally} strict negative type in the sense that the underlying geometry is still locally close enough to Euclidean. This situation is good enough for our purposes, as in the case of the Bolza surface as used in \S \ref{sec:VGT} and described in Appendix \S \ref{sec:bolza}. 

By Schoenberg's theorem \cite{schoenberg1937certain}, if $d$ is strict negative type, then $$Z_{jk} = \exp(-td_{jk})$$ is positive definite for $t \in (0,\infty)$. The matrix $Z$ just above is a special case of a \emph{similarity matrix}: i.e., a square nonnegative matrix with positive diagonal. For a generic similarity matrix, a \emph{weighting} $w$ is a solution to $$Zw = 1,$$ where $1$ indicates a vector of all ones. The sum of the weighting vector is well-defined, even if the weighting is not unique: this sum is called the \emph{magnitude} of $Z$ in the literature. If $d$ is strict negative type, and $t > 0$ is sufficiently small, the corresponding unique weighting is concentrated on (a subset of) the ``boundary'' of the underlying space \cite{willerton2009heuristic,bunch2020practical,huntsman2025peeling}. This behavior is related to the notion of Bessel capacities \cite{meckes2015magnitude}.

Meanwhile, for a probability distribution $p$ and generic similarity matrix $Z$, the \emph{diversity of order $q$} is \begin{equation}
\label{eq:diversity}
D_q^{Z}(p) := \exp \left ( \frac{1}{1-q} \log \sum_{j: p_j > 0} p_j (Zp)_j^{q-1} \right )
\end{equation} 
for $1 < q < \infty$, and via limits for $q = 1,\infty$ \cite{leinster2016maximizing,leinster2021entropy}. Note that if $Z = I$, then the logarithm of diversity is the R\'enyi entropy; if furthermore $q = 1$, then the logarithm of diversity is the Shannon entropy. 

A remarkable result of Leinster and Meckes \cite{leinster2016maximizing} on the $q$-independence of diversity-maximizing distributions has the following special case that connects weightings with a nice tradeoff between concreteness and generality:
\begin{theorem}
\label{thm:maxDiversity}
Let $Z_{jk} = \exp(-t d_{jk})$ for $t > 0$ and $d$ strict negative type. If the weighting $w$ that solves $Zw = 1$ is positive, then $w$ is proportional to the diversity-maximizing distribution for all $q$.
\end{theorem}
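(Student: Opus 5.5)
Set $p := w/(1^T w)$, which is a probability distribution because $w > 0$. The plan is to show three things: $p$ maximizes $D_q^Z$ for one particular value of $q$; it does so for every $q$; and the maximizer is unique, so the statement holds with ``the'' diversity-maximizing distribution. The natural pivot is $q = \infty$, where by taking limits in \eqref{eq:diversity} we have $D_\infty^Z(p) = 1/\max_{j: p_j>0} (Zp)_j$. For our $p$ we get $Zp = 1/(1^T w)$, a constant vector, so $D_q^Z(p) = 1^T w$ for every $q \in [0,\infty]$. Indeed, whenever $Zp$ is constant on the support of $p$, every power mean in \eqref{eq:diversity} collapses to that constant.

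For an upper bound, take an arbitrary distribution $r$. Since $Z$ is symmetric, $w^T Z r = 1^T r = 1$. Writing $w^T Zr = \sum_j w_j (Zr)_j$ and using $w_j > 0$ gives $1 \le (1^T w)\max_j (Zr)_j$. This bound is not yet enough, because $D_\infty$ uses the maximum only over the support of $r$, not over all $j$. I would handle this with the standard Leinster--Meckes reduction. First, $D_q^Z(r)$ is nonincreasing in $q$, by monotonicity of power means, so it suffices to bound the smallest member of the family, $D_\infty^Z(r)$. Second, I would bound $D_\infty^Z(r) \le \sup_r D_\infty^Z(r)$ and identify that supremum with the maximum magnitude over subsets $B \subseteq X$ that carry a positive weighting. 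For a restriction $Z_B$ (still positive definite by Schoenberg's theorem) I would show that $D_\infty$ is maximized by the normalized weighting of $Z_B$, using the same duality inequality applied on $B$, where now $\max_{j \in B}(Z_B r)_j$ is exactly the quantity in $D_\infty$.

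It remains to show that the full space maximizes magnitude among subsets. For strict negative type, $|B| = \sup_{v} (1^T v)^2 / (v^T Z_B v)$ by Cauchy--Schwarz in the $Z$-inner product, and padding $v$ with zeros shows $|B| \le |X| = 1^T w$. Combining the steps,
\[
D_q^Z(r) \le D_\infty^Z(r) \le 1^T w = D_q^Z(p),
\]
so $p$ maximizes $D_q^Z$ simultaneously for all $q$.

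For uniqueness, I would argue that equality in the chain forces equality in Cauchy--Schwarz. That in turn forces the optimal $v$ on the support of $r$ to be proportional to $w$, and since $Z$ is positive definite and $w$ has full support, this gives $r = p$.

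The main obstacle is the middle step: handling the support restriction in $D_\infty$ and the limiting cases $q = 0, 1, \infty$ cleanly. I would simply cite \cite{leinster2016maximizing} for the fact that the maximizing distributions are $q$-independent. Given that, one only needs to compute at $q = \infty$ (or $q = 2$, where $D_2^Z(r) = 1/(r^T Z r)$ and the Cauchy--Schwarz argument is immediate) and observe that $p$ attains the bound.
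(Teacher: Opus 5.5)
\textbf{Gap: the monotonicity step runs the wrong way.} $D_q^Z(r)$ is the reciprocal of the $r$-weighted power mean of order $q-1$ of the values $(Zr)_j$, $j\in\textnormal{supp}(r)$. It is therefore nonincreasing in $q$:
$D_\infty^Z(r)\le D_q^Z(r)\le D_0^Z(r)$,
with strict inequalities whenever $Zr$ is not constant on $\textnormal{supp}(r)$. An upper bound on the smallest member $D_\infty^Z(r)$ gives no upper bound on $D_q^Z(r)$ for $q<\infty$. The first inequality in your chain $D_q^Z(r)\le D_\infty^Z(r)\le 1^Tw$ is false for generic $r$.

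What your ingredients do prove is the range $q\in[2,\infty]$. Cauchy--Schwarz in the $Z$-inner product gives $r^TZr\ge 1/(1^Tw)$, i.e.\ $D_2^Z(r)\le 1^Tw$, and then $D_q^Z(r)\le D_2^Z(r)$ for $q\ge 2$. This also makes your detour through subsets unnecessary, since $\max_{j\in\textnormal{supp}(r)}(Zr)_j\ge r^TZr$ already bounds $D_\infty^Z$. As sketched, that detour only handles supports whose own weighting is nonnegative anyway.

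The range $q<2$, and ultimately an upper bound on $D_0^Z(r)=\sum_{j\in\textnormal{supp}(r)}r_j/(Zr)_j$, is exactly the nontrivial content of the Leinster--Meckes theorem: $\sup_r D_q^Z(r)$ does not depend on $q$. Nothing in your sketch supplies this. Your uniqueness argument is an equality case of the invalid chain, so it fails for the same reason.

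\textbf{Your fallback is sound and matches the paper.} The paper gives no independent proof; it presents the statement as a special case of Leinster--Meckes. Granting that $\sup_r D_q^Z(r)$ is independent of $q$, your computation $D_q^Z(p)=1^Tw=\sup_r D_2^Z(r)$ shows that $p$ maximizes every $D_q^Z$.

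For uniqueness, argue instead as follows. Suppose $r$ maximizes $D_q^Z$ for some $q\in(0,\infty]$ and $Zr$ is not constant on $\textnormal{supp}(r)$. Then for $q'<q$ we would have $D_{q'}^Z(r)>D_q^Z(r)=\sup_s D_{q'}^Z(s)$, which is absurd. Hence $Zr$ is constant on its support, so $D_2^Z(r)=D_q^Z(r)=\sup_s D_2^Z(s)$. Strict convexity of $s\mapsto s^TZs$ on the simplex then forces $r=p$.

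This yields uniqueness only for $q>0$. At $q=0$ the Leinster--Meckes theorem gives no uniqueness for a general similarity matrix: for $Z=I$, every full-support distribution maximizes $D_0^Z$. So if $q=0$ is included, ``the'' maximizer there needs a separate argument.
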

This reduces diversity maximization to a standard linear algebra problem while also rendering the parameter $q$ irrelevant. The sequel addresses the limit $t \downarrow 0$.

\section{The peeling theorem}\label{sec:peelingTheorem}


The diversity of order $1$ of a probability distribution $p$ on $[n]$ endowed with a similarity matrix $Z$ is 
\begin{equation}
\label{eq:diversity1}
D_1^{Z}(p) := \prod_{j:p_j > 0} (Zp)_j^{-p_j}.
\end{equation}
The corresponding generalized entropy is $\log D_1^{Z}(p)$. When $d$ is strict negative type, it is possible to efficiently optimize these quantities for $Z = \exp[-td]$ in the limit $t \downarrow 0$. In this limit, the first-order approximation $Z = \exp[-td] \approx 11^T - td$ yields
\begin{equation}
\label{eq:ssEntropy1Approx}
\log D_1^{Z}(p) \approx t p^T d p.
\end{equation}
The quantity $p^T d p$ appearing on the right side of \eqref{eq:ssEntropy1Approx} is called the \emph{quadratic entropy} of $d$. If $d$ is strict negative type, then the quadratic entropy is convex, and it can be efficiently optimized. We recall a particularly convenient heuristic that is the first phase of a somewhat more involved algorithm for the exact optimum.

\begin{theorem}[peeling theorem; informal \cite{huntsman2025peeling}; see \S G of \cite{huntsman2025peeling} for details\footnote{A previous version of this paper on arXiv carried over an incorrect version of this theorem from \cite{huntsman2025peeling}, which has been corrected and extensively elaborated in that paper's \S G. Along with using the correct statement here, we ran every example using \emph{both} the heuristic approximation of Algorithm \ref{alg:ApproximatePeel} and exact peel neighborhoods, reporting discrepancies throughout. Such discrepancies turn out to be minor and functionally insignificant in the present paper when they occur at all.}]
\label{thm:peeling}
Let $\Delta_{N-1} := \{ p \in [0,1]^N : 1^T p = 1 \}$. For $d$ strict negative type and metric, Algorithm \ref{alg:ApproximatePeel} returns an approximation to
\begin{align}
\label{eq:argMaxQE}
p_*(d) := & \ \arg \max_{p \in \Delta_{N-1}} p^T d p \nonumber \\
\overset{\forall q}{=} & \ \arg \max_{p \in \Delta_{N-1}} \lim_{t \downarrow 0} D_q^Z(p)
\end{align}
with strong computable \emph{a priori} error bounds (and in practice, very good if not exact results) in time $O(N^{\omega+1})$, where $\omega \le 3$ is the matrix multiplication complexity exponent. Augmenting Algorithm \ref{alg:ApproximatePeel} with a subsequent efficient (and in practice, very brief) ascent phase produces the exact result in finitely many steps while also providing an anytime convergence guarantee.
\end{theorem}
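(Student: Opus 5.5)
The plan has two parts. First, I would establish the two equalities in \eqref{eq:argMaxQE}. Second, I would analyze Algorithm \ref{alg:ApproximatePeel} and its ascent augmentation.

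\textbf{The argmax.} Since $d$ is strict negative type, the quadratic form $p \mapsto p^T d p$ is strictly concave on the affine hull of $\Delta_{N-1}$: for $p \neq p'$ in the simplex, $p - p'$ sums to zero, so $(p-p')^T d (p-p') < 0$. Hence the maximizer $p_*(d)$ over the compact simplex exists and is unique. For the $q$-independence I would expand $Z = 11^T - td + O(t^2)$, so that $(Zp)_j = 1 - t(dp)_j + O(t^2)$. Substituting into \eqref{eq:diversity} gives
\begin{equation*}
\log D_q^Z(p) = t\,p^T d p + O(t^2)
\end{equation*}
for every $q$, with the $q=1,\infty$ cases handled by the corresponding limits. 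The normalized quantity $t^{-1}\log D_q^Z(p)$ therefore converges to $p^T d p$ uniformly on the simplex. Uniform convergence, together with uniqueness of the maximizer of the limit, forces the maximizers to converge to $p_*(d)$. This is how I would read the $\lim_{t\downarrow 0}$ inside the argmax. As a cross-check, Theorem \ref{thm:maxDiversity} gives $q$-independence at each fixed $t$ whenever the weighting is positive.

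\textbf{KKT structure and the peeling phase.} I would then characterize $p_*$ by the KKT conditions: $(dp)_j = \lambda$ on the support $S$, and $(dp)_j \le \lambda$ off $S$. On a fixed support, the candidate is the normalized solution of $d_{SS} w = 1$, which is $w = d_{SS}^{-1}1 / (1^T d_{SS}^{-1} 1)$. Strict negative type makes $d_{SS}$ nonsingular with $1^T d_{SS}^{-1} 1 > 0$, so this is well defined. I take the algorithm to iteratively solve this system on the current support and delete ("peel") the coordinates where the weight is negative, repeating until the weight is nonnegative. Each iteration costs $O(N^\omega)$, and there are at most $N$ deletions, which gives the $O(N^{\omega+1})$ bound.

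The \emph{a priori} error bound would come from the duality gap of the concave program. At the returned $p$, the amount by which the KKT conditions fail, namely $\max_j (dp)_j - p^T d p$, upper bounds $p_*^T d p_* - p^T d p$ by concavity.

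\textbf{Ascent phase.} I would implement this as an active-set method: add the index with the largest violation $(dp)_j - \lambda$, re-solve, and drop negative coordinates with a ratio test. Strict concavity makes the objective strictly increase and prevents support sets from repeating. Since there are finitely many supports, the method terminates exactly. Monotone increase together with the duality-gap bound gives the anytime guarantee.

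\textbf{Main obstacle.} The hard step is proving that the greedy peeling heuristic yields a \emph{strong} error bound, beyond the trivially computable duality gap. That requires controlling how deleting negative-weight points interacts with the optimal support, and it is where the corrected details in \S G of \cite{huntsman2025peeling} matter. I would first try to show that indices peeled because of negative weight on a superset of the support lie outside $\operatorname{supp} p_*$, using the interlacing or monotonicity of Schur complements of conditionally negative definite matrices. If that fails, I would settle for the duality-gap certificate.
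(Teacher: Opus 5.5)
The paper does not prove this theorem; it imports it, informally, from \S G of \cite{huntsman2025peeling}, so your sketch has to stand on its own. Several parts are sound:
\begin{itemize}
\item strict concavity on the affine hull of $\Delta_{N-1}$;
\item the KKT characterization (Proposition \ref{prop:dp});
\item well-posedness of $d|_S^{-1}1/(1^Td|_S^{-1}1)$ for $|S|\ge 2$;
\item the bound of at most $N$ solves, each of cost $O(N^\omega)$;
\item a finitely terminating active-set ascent.
\end{itemize}

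The genuine gap is the step you yourself flag, and the route you propose for it cannot work. Take $\mathcal{J}\supseteq S:=\textnormal{supp}(p_*)$, $\lambda:=p_*^Tdp_*>0$, and $s:=\lambda 1-d|_\mathcal{J}\,p_*|_\mathcal{J}$. By Proposition \ref{prop:dp}, $s\ge 0$ and $s$ vanishes on $S$. Then
\[
d|_\mathcal{J}^{-1}1=\lambda^{-1}\bigl(p_*|_\mathcal{J}+d|_\mathcal{J}^{-1}s\bigr).
\]
Nothing forces $(d|_\mathcal{J}^{-1}s)_i\ge -p_{*i}$ for $i\in S$, particularly when $p_{*i}$ is tiny (as for points 2 and 3 in the five-point example of \S\ref{sec:conclusions}).

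In fact your lemma would make Algorithm \ref{alg:ApproximatePeel} exact. Suppose indices with negative weight on supersets of $S$ always lie outside $S$. Then, barring exactly vanishing weights, every support the algorithm visits contains $S$. The terminal $p$ is a nonnegative stationary point of the strictly concave objective on the affine hull of the face $\Delta_\mathcal{J}$, so it maximizes over that face. That face contains $p_*$, hence $p=p_*$. But the theorem only claims an approximation, and the paper reports heuristic-versus-exact discrepancies on sampled data (Figures \ref{fig:MNIST_peel_layers}, \ref{fig:rho_ball_10}, \ref{fig:poisson}). So the lemma is false, and no Schur-complement interlacing argument will prove it. What the claim needs is quantitative control of how much of $p_*$ can sit on peeled indices, not a qualitative exclusion.

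Your fallback is an a posteriori certificate evaluated at the output, not an a priori bound. As written it is also off by a factor of $2$; concavity gives
\[
p_*^Tdp_*-p^Tdp\le 2(p_*-p)^Tdp\le 2\Bigl(\max_j(dp)_j-p^Tdp\Bigr).
\]
The $2$ cannot be dropped for general $p$. Take the vertices and centroid of a unit equilateral triangle, with $p$ the point mass at the centroid: the gap is $2/3$, while $\max_j(dp)_j-p^Tdp=1/\sqrt3$.

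Combined with $\gamma(d)\|p-p_*\|_2^2\le p_*^Tdp_*-p^Tdp$ (from the proof of Theorem \ref{thm:peelContinuity}), this is a sound stopping rule for the ascent phase. It does not, however, explain why the heuristic output is close to $p_*$, which is the substance of the claim.

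A smaller slip concerns $q=\infty$. The $O(t^2)$ remainder in your expansion is not uniform in $q$, and in fact $t^{-1}\log D_\infty^Z(p)\to\min_{j\in\textnormal{supp}(p)}(dp)_j$, not $p^Tdp$. The maximizer is still $p_*$, because this quantity is at most $p^Tdp$, with equality at $p_*$ by Proposition \ref{prop:dp}. Alternatively, at fixed $t$, $D_q^Z$ is nonincreasing in $q$ with $q$-independent maximum \cite{leinster2016maximizing}, so maximizers of $D_\infty^Z$ maximize every $D_q^Z$.
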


\begin{algorithm}
  \caption{\textsc{ApproximatePeel}$(d)$}
  \label{alg:ApproximatePeel}
\begin{algorithmic}[1]
  \REQUIRE Strict negative type metric $d$ on $[N] \equiv \{1, \dots, N\}$
  \STATE $p \leftarrow \frac{d^{-1} 1}{1^T d^{-1} 1}$
  \WHILE{$\exists i : p_i < 0$}
    \STATE $\mathcal{J} \leftarrow \{ j : p_j > 0 \}$ \hfill \emph{// Restriction of support}
    \STATE $p \leftarrow 0_{[N]}$
    \STATE $p_\mathcal{J} \leftarrow \frac{d|_\mathcal{J}^{-1} 1}{1^T d|_\mathcal{J}^{-1} 1}$ 
  \ENDWHILE
  \ENSURE $p \approx p_*(d)$
\end{algorithmic}
\end{algorithm}

\begin{figure}[htbp]
  \centering
  \includegraphics[trim = 20mm 15mm 15mm 10mm, clip, width=.9\textwidth,keepaspectratio]{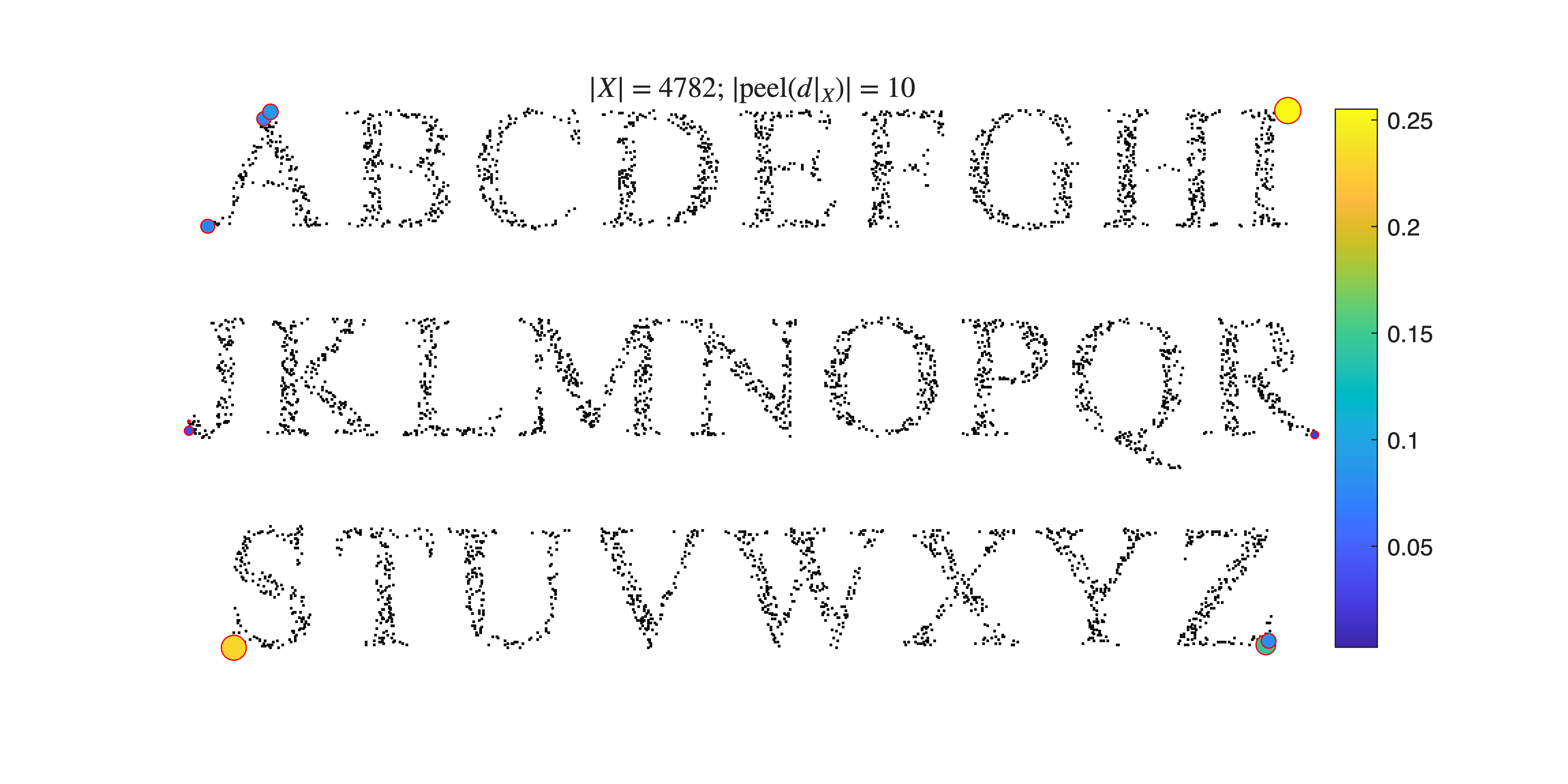}
\caption{The peel distribution, shown with overlaid colored and variably-sized disks {\color{red}circled in red}, of a sample of 4782 IID uniform points from a complicated region in $\mathbb{R}^2$, which are shown in black. The support of the peel distribution is just 10 points located at extremities. The peel and its heuristic approximation (from Algorithm \ref{alg:ApproximatePeel}) are exactly equal here.
} 
  \label{fig:letters_peel}
\end{figure}

Whenever $d$ is a strict negative type metric, we call $p_*(d)$ (or, as context warrants, its support) the \emph{peel} of $d$; we call the output of Algorithm \ref{alg:ApproximatePeel} the \emph{heuristic approximation} to the peel. Figures \ref{fig:letters_peel} and \ref{fig:MNIST_peel_layers} give a sense of why the peeling nomenclature is appropriate. We distinguish the peel distribution and its support using the notation $\textnormal{peel}(d) := \textnormal{supp}(p_*(d))$. 
Note that the peel of a one-point space is just the point mass. 

Figure \ref{fig:MNIST_peel_layers} shows the results of repeatedly peeling each digit class in the MNIST data set using $L^2$ distance. The $\ell$-fold peel support of a set minus previous peel supports, written $\textnormal{peel}^{(\ell)}$, tends from a set of outliers to a generalized medoid \cite{huntsman2025peeling}.

\begin{figure}[htbp]
  \centering
  \includegraphics[trim = 0mm 0mm 0mm 0mm, clip, width=.45\textwidth,keepaspectratio]{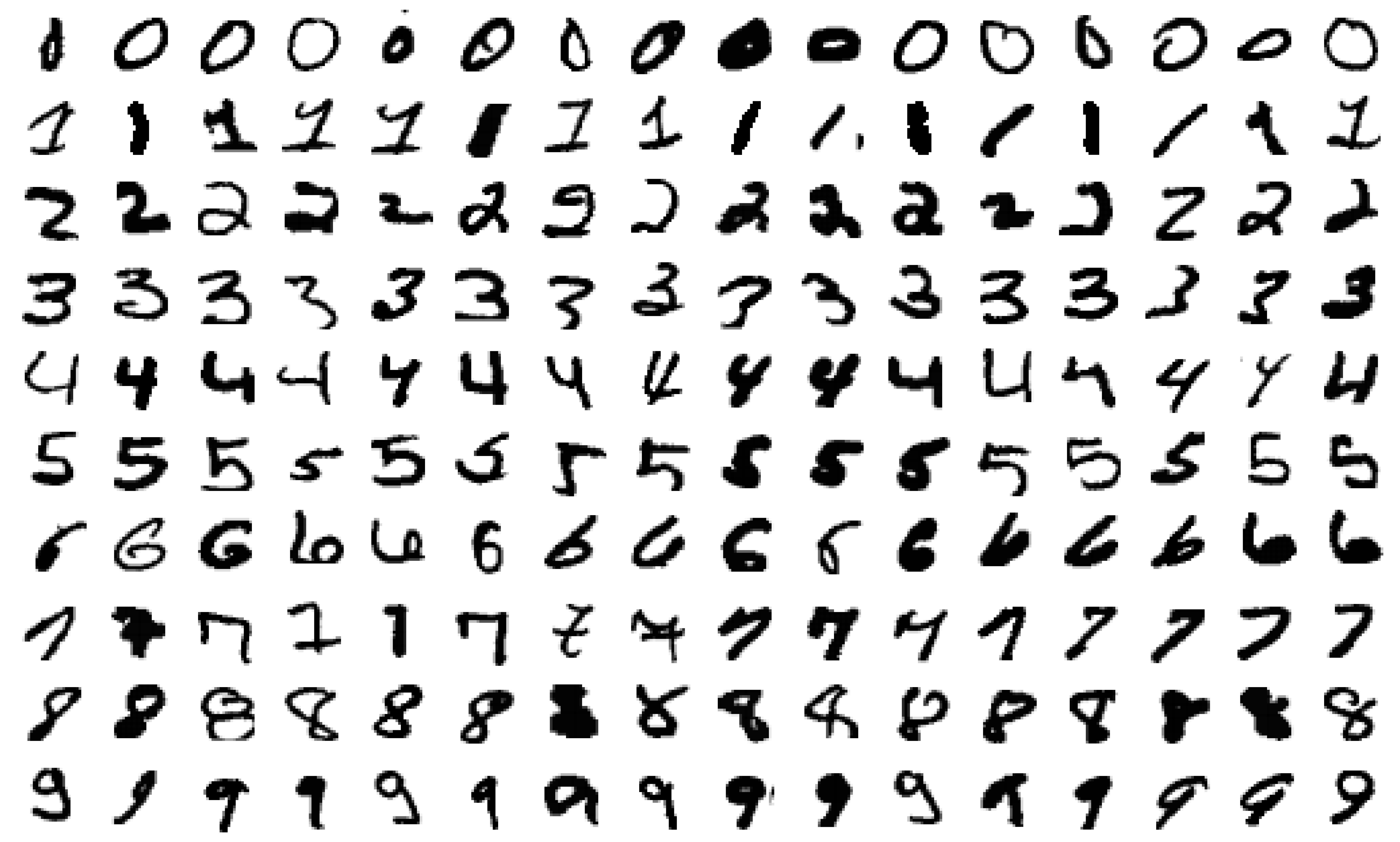}
  \includegraphics[trim = 0mm 0mm 0mm 0mm, clip, width=.45\textwidth,keepaspectratio]{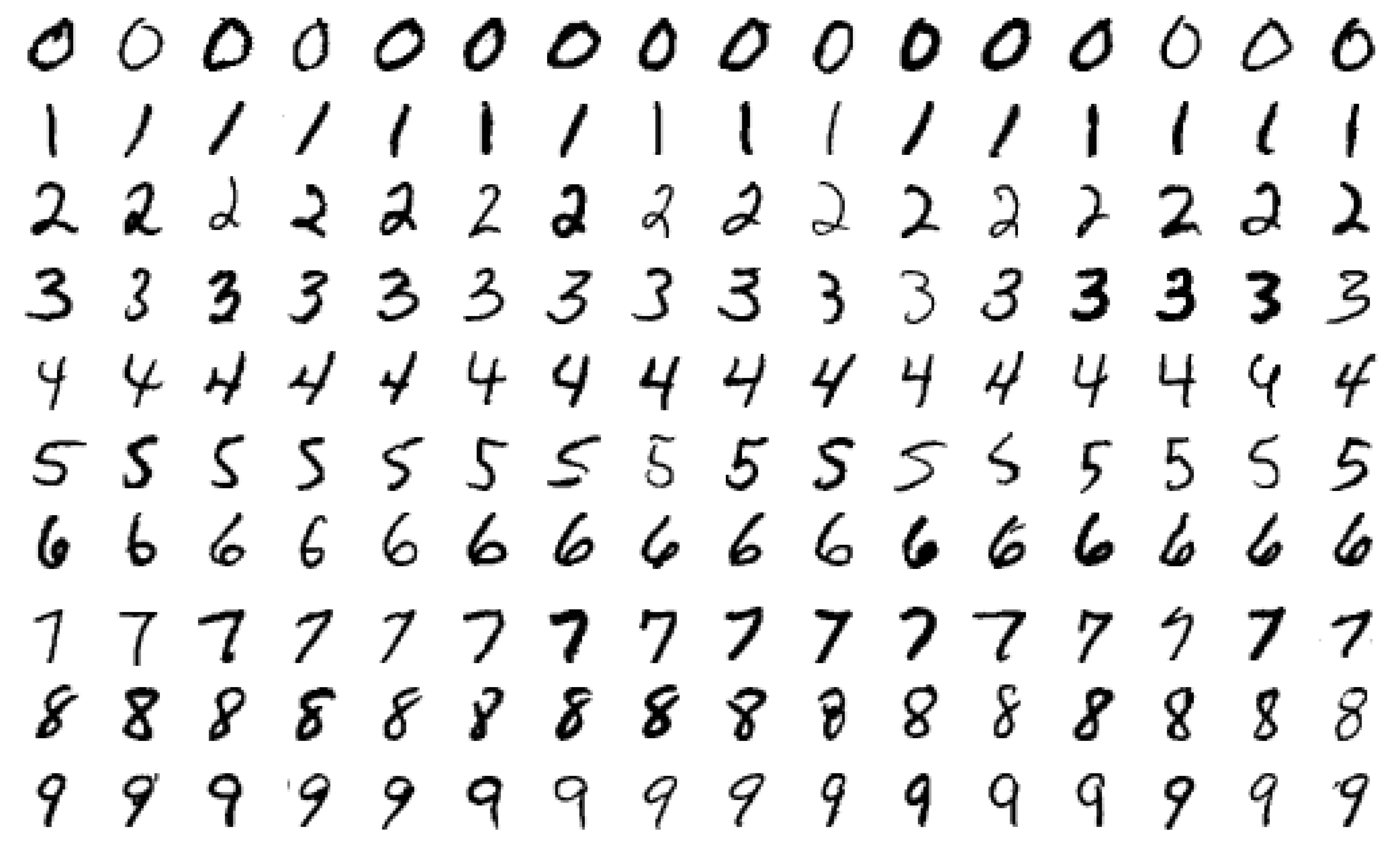}
\caption{Left: representative elements of $\textnormal{peel}^{(1)}(d|_{\textnormal{digit}})$ for each digit label in MNIST. 
Right: representative elements of $\textnormal{peel}^{(31)}(d|_{\textnormal{digit}})$. The digits $0 \dots 9$ respectively have 42, 59, 37, 41, 41, 37, 42, 48, 40, and 43 total peel layers. Across digits, $\textnormal{peel}^{(1)}(d|_{\textnormal{digit}})$ and its heuristic approximation by Algorithm \ref{alg:ApproximatePeel} respectively support at least 96.8\% and 99.47\% of their corresponding distributions on their intersection.
} 
  \label{fig:MNIST_peel_layers}
\end{figure}

\subsection{Some useful results}\label{sec:someUsefulResults}

\begin{lemma}\label{lem:1eigenvector}
    If $d$ is a strict negative type metric on $[N]$ for $N \ge 2$ and $1$ is an eigenvector of $d$, then $d^{-1}$ is also strict negative type.
    \footnote{Such matrices are called \emph{regular Euclidean distance matrices} in \S 4.1.1 of \cite{alfakih2018euclidean}.}
\end{lemma}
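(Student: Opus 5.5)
The plan is a spectral argument. The only structural inputs are that $d$ is symmetric, that $1$ is an eigenvector, and that strict negative type means the quadratic form $x^T d x$ is negative definite on the hyperplane $1^\perp := \{x : 1^T x = 0\}$. Since $d^{-1}$ need not be a metric, I read ``$d^{-1}$ is strict negative type'' as the matrix statement from \S\ref{sec:Preliminaries}: $x^T d^{-1} x < 0$ whenever $1^T x = 0$ and $x \ne 0$. The strategy is to show that $d$ splits orthogonally as $\mathrm{span}(1) \oplus 1^\perp$, is invertible, and is negative definite on $1^\perp$; then $d^{-1}$ inherits the same splitting with reciprocal eigenvalues.

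\textbf{Invariance of $1^\perp$.} Write $d1 = \lambda 1$. Because $d$ is symmetric, for $x \in 1^\perp$ we have $1^T d x = (d1)^T x = \lambda 1^T x = 0$. Hence $d$ maps $1^\perp$ into itself. So we may choose an orthonormal eigenbasis of $d$ consisting of $1/\sqrt{N}$ together with an orthonormal eigenbasis $v_2,\dots,v_N$ of $d|_{1^\perp}$, with eigenvalues $\mu_2,\dots,\mu_N$.

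\textbf{Signs of the eigenvalues.} For each $k$, strict negative type gives $\mu_k = v_k^T d v_k < 0$. For the remaining eigenvalue, $\lambda = 1^T d 1 / N = \frac{1}{N}\sum_{j,k} d_{jk}$. This is strictly positive because $d$ is a metric with $N \ge 2$ points, so its off-diagonal entries are positive. Hence every eigenvalue of $d$ is nonzero and $d$ is invertible.

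\textbf{Conclusion for $d^{-1}$.} The inverse $d^{-1}$ is symmetric and has the same eigenvectors, with eigenvalues $1/\lambda > 0$ on $\mathrm{span}(1)$ and $1/\mu_k < 0$ on $v_k$. Take $x \in 1^\perp$ with $x \ne 0$ and write $x = \sum_{k \ge 2} c_k v_k$. Then $x^T d^{-1} x = \sum_{k\ge 2} c_k^2/\mu_k < 0$, which is the claim.

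The only step that needs care is the invariance of $1^\perp$ under $d$. This is exactly where the eigenvector hypothesis enters. Without it, $d^{-1}$ restricted to $1^\perp$ is not simply the inverse of the restriction of $d$, and the sign argument fails.
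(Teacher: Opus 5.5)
Your proof is correct and follows the same route as the paper's: an orthonormal eigenbasis made of $N^{-1/2}\cdot 1$ and eigenvectors spanning $1^\perp$, negative eigenvalues on $1^\perp$, and reciprocal eigenvalues for $d^{-1}$. The differences are minor. You get $\mu_k = v_k^T d v_k < 0$ directly from the definition of strict negative type, where the paper cites Lemma 3.6 of \cite{hjorth1998finite}, and you explicitly justify the invariance of $1^\perp$ and the invertibility of $d$ (via $1^T d 1 > 0$), both of which the paper leaves implicit.
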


\begin{proof}
    Let the orthonormal eigenvectors of $d$ be $N^{-1/2} \cdot 1,v_1,\dots,v_{N-1}$. The eigenvalue corresponding to $N^{-1/2} \cdot 1$ is $\lambda_0 := N^{-1} 1^T d 1$: let the other eigenvalues be $\lambda_1,\dots,\lambda_{N-1}$. By Lemma 3.6 of \cite{hjorth1998finite}, $\lambda_j < 0$ for $j \in [N-1]$. Now let $x \ne 0$ be such that $1^T x = 0$: then we can write $x = \sum_{j=1}^{N-1} \alpha_j v_j$, and $x^T d^{-1} x = \sum_{j=1}^{N-1} \alpha_j^2 \lambda_j^{-1} < 0$, so $d^{-1}$ is strict negative type.
\end{proof}

\begin{lemma}\label{lem:scale0magnitude}
    If $d$ is a strict negative type metric on $[N]$ for $N \ge 2$, then $1^T d^{-1} 1 > 0$. (In other words, the magnitude of $d$ is positive at scale 0).
\end{lemma}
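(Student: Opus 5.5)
We need to show $1^T d^{-1} 1 > 0$ for a strict negative type metric $d$ on $[N]$ with $N \ge 2$. The plan is to use the known spectral structure of $d$, namely that it has exactly one positive eigenvalue and $N-1$ negative ones (Lemma 3.6 of \cite{hjorth1998finite}, as already invoked in the proof of Lemma \ref{lem:1eigenvector}). In particular $d$ is invertible. We then convert the question about $1^T d^{-1} 1$ into a statement about the sign of $d$ restricted to a suitable subspace.

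The key step is a Schur-complement / inertia argument. Consider the bordered matrix
\[
B := \begin{pmatrix} d & 1 \\ 1^T & 0 \end{pmatrix}.
\]
We compute its inertia in two ways.

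\textbf{First computation.} Since $d$ is invertible, the Schur complement of $d$ in $B$ is $-1^T d^{-1} 1$. Haynsworth's inertia additivity then gives
\[
\mathrm{In}(B) = \mathrm{In}(d) + \mathrm{In}(-1^T d^{-1} 1) = (1, N-1, 0) + \mathrm{In}(-1^T d^{-1} 1).
\]

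\textbf{Second computation.} Take $x \perp 1$, $x \ne 0$. Strict negative type gives $(x,0)^T B (x,0) = x^T d x < 0$, so $B$ is negative definite on an $(N-1)$-dimensional subspace. The plane spanned by $(1,0)$ and $(0,1)$ carries the form
\[
\begin{pmatrix} 1^T d 1 & N \\ N & 0 \end{pmatrix},
\]
whose determinant is $-N^2 < 0$, so it is indefinite. A standard argument (e.g. the inertia of the constrained form, as in the Finsler/Debreu lemma) shows $\mathrm{In}(B) = (1, N, 0)$: $B$ has one positive eigenvalue and $N$ negative ones. Concretely, $B$ restricted to $1^\perp \oplus 0$ contributes $N-1$ negatives, and the hyperbolic complement contributes one positive and one negative.

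Comparing the two computations forces $\mathrm{In}(-1^T d^{-1} 1) = (0,1,0)$, i.e. $1^T d^{-1} 1 > 0$.

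The main obstacle is rigorously justifying $\mathrm{In}(B) = (1,N,0)$, since $1^\perp \oplus 0$ and the hyperbolic plane are not $B$-orthogonal. To handle this, I would first pass to the basis $\{v_j\} \cup \{1, e_{N+1}\}$ and then perform congruence row reductions to decouple the blocks.

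A more elementary alternative is available. Set $y := d^{-1} 1$ and suppose $1^T y \le 0$.
\begin{itemize}
\item If $1^T y = 0$, then $y \ne 0$ lies in $1^\perp$, so $y^T d y < 0$ by strict negative type. But $y^T d y = y^T 1 = 0$, a contradiction.
\item If $1^T y < 0$, then the positive eigenvector $u$ of $d$ can be combined with $y$, using $y^T d y = 1^T y < 0$ and $d$-orthogonality, to produce a 2-dimensional subspace on which $d$ is negative definite. That subspace meets $1^\perp$ nontrivially in any case, so this alone gives no contradiction. Instead I would argue as follows: $d$ restricted to $\mathrm{span}(1^\perp, y)$ is an $N$-dimensional space, and it equals all of $\mathbb{R}^N$ because $y \notin 1^\perp$. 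On this space $d$ is negative on $1^\perp$ and negative on $y$, with $y$ being $d$-orthogonal to $1^\perp$, since $x^T d y = x^T 1 = 0$. Hence $d$ would be negative definite on $\mathbb{R}^N$, contradicting $1^T d 1 > 0$ (which holds because $d$ is a metric with $N \ge 2$).
\end{itemize}

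This last argument is clean, and I would present it as the proof.
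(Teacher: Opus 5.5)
Your final argument (the one you say you would present) is correct, and it takes a different route from the paper's. The paper argues directly. It writes $d^{-1}1=\mu 1+u$ with $\mu=N^{-1}1^Td^{-1}1$ and $1^Tu=0$, and expands $u^Tdu<0$ (using $1^Td\,d^{-1}1=N$) into $1^Td^{-1}1>\mu^2\,1^Td1\ge 0$. The paper's display prints $\mu^{-2}$, but the expansion gives $\mu^2$. It handles $u=0$, i.e.\ $1$ an eigenvector of $d$, separately.

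You instead use the $d$-orthogonal splitting $\mathbb{R}^N=1^\perp\oplus\mathrm{span}(d^{-1}1)$, valid once $1^Td^{-1}1\neq0$. Under it, the inertia of $d$ is that of $d|_{1^\perp}$ plus the sign of $(d^{-1}1)^Td\,(d^{-1}1)=1^Td^{-1}1$, and a negative sign would make $d$ negative definite, contradicting $1^Td1>0$. The two proofs use the same inputs: invertibility, negativity on $1^\perp$, and $1^Td1>0$. Your degenerate case $1^Td^{-1}1=0$ is essentially the paper's check that $\mu\neq0$.

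What each route buys:
\begin{itemize}
\item Your version explains the sign structurally: it is inertia additivity (Sylvester/Haynsworth) done by hand. It also needs no eigenvector special case. This is also why your bordered-matrix attempt would have collapsed to the same facts: pinning down $\mathrm{In}(B)=(1,N,0)$ requires the $B$-orthogonal complement of $1^\perp\oplus 0$, which is spanned by $(d^{-1}1,0)$ and $(0,1)$ and is nondegenerate exactly when $1^Td^{-1}1\neq0$.
\item The paper's expansion gives a quantitative by-product. Dividing $1^Td^{-1}1\ge(1^Td^{-1}1)^2\,1^Td1/N^2$ by $1^Td^{-1}1>0$ yields $1^Td^{-1}1\le N^2/1^Td1$, with equality iff $1$ is an eigenvector of $d$.
\end{itemize}

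When writing it up, delete the abandoned sentence about combining the positive eigenvector with $d^{-1}1$. That eigenvector has positive $d$-norm and need not be $d$-orthogonal to $d^{-1}1$, so the claim as stated is false.
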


\begin{proof}
    If 1 is an eigenvector of $d$ and $d^{-1}$, then we are done by Lemma \ref{lem:1eigenvector}. Otherwise, let $\mu := N^{-1} 1^T d^{-1} 1$ and $u := d^{-1}1-\mu 1$. Without loss of generality, $1$ is not an eigenvector of $d^{-1}$, so $u \ne 0$ and $1^T u = 0$. Since $d$ is strict negative type, $u^T d u < 0$. Expanding this gives $1^T d^{-1} 1 - 2\mu 1^T 1 + \mu^2 1^T d 1 < 0.$ Since $\mu 1^T 1 = 1^T d^{-1} 1$, the preceding inequality simplifies to $1^T d^{-1} 1 > \mu^{-2} 1^T d 1.$ It only remains to show that $\mu \ne 0$. To show this, suppose that $\mu = 0$ and let $v := d^{-1}1$, so that $v^T d v = 1^T d^{-1} 1 = \mu = 0$. At the same time, $v^T d v = 1^T v = 0$. Since $v \ne 0$ and $d$ is strict negative type, we must have that $v^T d v < 0$, which is a contradiction.
\end{proof}

\begin{proposition}\label{prop:peel2Points}
    If $d$ is a strict negative type metric on $[N]$ for $N \ge 2$, then $\textnormal{peel}(d)$ contains at least two points. 
\end{proposition}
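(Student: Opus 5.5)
The plan is to argue directly from the definition $p_*(d) = \arg\max_{p \in \Delta_{N-1}} p^T d p$ and show that no point mass can be a maximizer. Suppose for contradiction that $\textnormal{peel}(d)$ is a single point, so $p_*(d) = e_j$ for some $j \in [N]$. Since $d$ is a metric, $d_{jj} = 0$, and the objective value at $e_j$ is $e_j^T d e_j = 0$. It therefore suffices to exhibit some $p \in \Delta_{N-1}$ with $p^T d p > 0$.

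For this, use $N \ge 2$ to pick any $k \ne j$ and set $p = \tfrac12(e_j + e_k)$. Then $p^T d p = \tfrac14(d_{jk} + d_{kj}) = \tfrac12 d_{jk}$. This is strictly positive because $d$ is a metric and $j \ne k$. Hence $e_j$ does not maximize the quadratic entropy, which is a contradiction. The support of $p_*(d)$ is nonempty because $1^T p_* = 1$, so it must contain at least two points.

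One subtlety is well-definedness. The definition of $\textnormal{peel}(d)$ presupposes that the argmax is a single distribution; I take this from Theorem \ref{thm:peeling} and \cite{huntsman2025peeling}, where strict negative type gives strict concavity of $p \mapsto p^T d p$ on the affine hull of the simplex. The argument above, however, does not use uniqueness at all: it shows that \emph{every} maximizer has support of size at least two. So the real content is simply that the maximum value is positive while every vertex of the simplex scores zero.

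I expect no genuine obstacle. The only point needing care is the use of $d_{jj} = 0$ and $d_{jk} > 0$ for $j \ne k$, which come from the metric axioms rather than from negative type. Lemma \ref{lem:scale0magnitude} is not needed for this argument. If one instead wanted the corresponding statement for the heuristic output of Algorithm \ref{alg:ApproximatePeel}, Lemma \ref{lem:scale0magnitude} would be the relevant tool, since it ensures the normalization $1^T d|_{\mathcal{J}}^{-1} 1$ is positive whenever $|\mathcal{J}| \ge 2$.
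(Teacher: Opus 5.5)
Your proof is correct and matches the paper's argument: the paper also assumes $p_*(d) = e_j$ and exhibits a two-point distribution $(1-\varepsilon)e_j + \varepsilon e_k$ whose quadratic entropy $2\varepsilon(1-\varepsilon)d_{jk}$ is positive, and your choice $\varepsilon = 1/2$ is a special case of this. Your added observation, that the argument applies to every maximizer and so does not need uniqueness, is accurate and harmless.
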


\begin{proof}
    Suppose that $\textnormal{peel}(d) = \{j\}$ for $j \le N$. Then $p_*(d)$ is the point mass $e_{(j)}$, and $p_*^T d p_* = 0$. Let $j \ne k \le N$ and $q := (1-\varepsilon) e_{(j)} + \varepsilon e_{(k)}$. Now $q^T d q = 2\varepsilon (1-\varepsilon) d_{jk} > 0$ for $\varepsilon > 0$ sufficiently small. Then $q^T d q > p_*^T d p_*$, a contradiction.
\end{proof}

\begin{proposition}\label{prop:peelOfPeel}
    If $d$ is strict negative type on $[N]$ and $\textnormal{peel}(d) = \mathcal{J} \subseteq [N]$, then $\textnormal{peel}(d|_\mathcal{J}) = \mathcal{J}$.
\end{proposition}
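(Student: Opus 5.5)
The approach is to compare the two optimization problems directly: the maximizer over the full simplex, restricted to $\mathcal{J}$, should also maximize over the smaller simplex.

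Let $p_* := p_*(d)$, so $\textnormal{supp}(p_*) = \mathcal{J}$. Write $p_*|_\mathcal{J} \in \Delta_{|\mathcal{J}|-1}$ for its restriction to $\mathcal{J}$. Since $p_*$ vanishes off $\mathcal{J}$,
\[
p_*^T d p_* = (p_*|_\mathcal{J})^T \, d|_\mathcal{J} \, (p_*|_\mathcal{J}).
\]

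Now take any $q \in \Delta_{|\mathcal{J}|-1}$ and extend it by zero to $\tilde q \in \Delta_{N-1}$. Then
\[
q^T d|_\mathcal{J} q = \tilde q^T d \tilde q \le p_*^T d p_*,
\]
because $p_*$ maximizes the quadratic entropy over $\Delta_{N-1}$. Hence $p_*|_\mathcal{J}$ attains the maximum of $q \mapsto q^T d|_\mathcal{J} q$ over $\Delta_{|\mathcal{J}|-1}$.

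It remains to check that this maximizer is unique, so that $p_*(d|_\mathcal{J}) = p_*|_\mathcal{J}$ is well defined. The restriction $d|_\mathcal{J}$ is again a strict negative type metric. Indeed, for $x$ supported on $\mathcal{J}$ with $1^T x = 0$ and $x \neq 0$, the zero-extension gives $x^T d|_\mathcal{J} x = \tilde x^T d \tilde x < 0$.

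Given this, the quadratic form $q \mapsto q^T d|_\mathcal{J} q$ is strictly concave on the affine hyperplane $\{1^T q = 1\}$. For $q \ne q'$ in the simplex, the difference $q - q'$ sums to zero, so the second-order term in $q - q'$ is strictly negative. A strictly concave function has at most one maximizer on the convex set $\Delta_{|\mathcal{J}|-1}$. This step, which underlies the well-definedness of $p_*$ in Theorem \ref{thm:peeling}, is the only point that needs care. Note that the paper's informal phrase ``quadratic entropy is convex'' should be read as concavity on the simplex, which is what makes the argmax in \eqref{eq:argMaxQE} unique.

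Therefore $p_*(d|_\mathcal{J}) = p_*|_\mathcal{J}$, whose support is exactly $\mathcal{J}$, because every entry of $p_*$ on $\mathcal{J}$ is positive by definition of the support. This gives $\textnormal{peel}(d|_\mathcal{J}) = \mathcal{J}$. The case $|\mathcal{J}| = 1$ cannot occur when $N \ge 2$, by Proposition \ref{prop:peel2Points}, and is trivial when $N = 1$ since the peel of a one-point space is the point mass.
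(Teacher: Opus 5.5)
Your proof is correct, but it takes a different route from the paper's. The paper argues algebraically. By Proposition~\ref{prop:dp}, $d\,p_*(d)$ is constant on $\mathcal{J}$, so $p_*(d)|_\mathcal{J} = d|_\mathcal{J}^{-1}1/(1^T d|_\mathcal{J}^{-1}1)$. Since this vector is positive, Algorithm~\ref{alg:ApproximatePeel} run on $d|_\mathcal{J}$ never enters its loop, and its output is taken to be the peel. You instead argue variationally: a maximizer over $\Delta_{N-1}$ supported in $\mathcal{J}$ also maximizes over the face $\Delta_{|\mathcal{J}|-1}$. You then get uniqueness because $d|_\mathcal{J}$ inherits strict negative type, which gives strict concavity on the hyperplane $1^Tq=1$.

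Your route is more self-contained. It needs neither Proposition~\ref{prop:dp} nor the invertibility of $d|_\mathcal{J}$. It also avoids a fact the paper uses only implicitly: that Algorithm~\ref{alg:ApproximatePeel} is exact when it stops at the first step. Theorem~\ref{thm:peeling} calls the algorithm's output only an approximation, and that exactness really rests on the same strict concavity you verify. So your argument makes explicit the well-definedness that the paper takes for granted.

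The paper's route, in exchange, yields the explicit formula $p_*(d|_\mathcal{J}) \propto d|_\mathcal{J}^{-1}1$. Lemma~\ref{lem:inclusionBound} and Proposition~\ref{prop:inclusionBound2} build directly on that formula when passing to subsets $\mathcal{I}\subset\mathcal{J}$.

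Your closing case analysis for $|\mathcal{J}|=1$ is unnecessary, since the argument works verbatim in that case.
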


\begin{proof}
    The restriction of $p_*(d)$ to $\mathcal{J}$ is $(d|_\mathcal{J}^{-1} 1)/(1^T d|_\mathcal{J}^{-1} 1)$. Because this is positive, Algorithm \ref{alg:ApproximatePeel} immediately terminates on $d|_\mathcal{J}$.
\end{proof}

The following proposition characterizing peel distributions is a restatement of Proposition 5.20 of \cite{devriendt2022graph} phrased in the language of this paper and of \cite{huntsman2025peeling}.
\begin{proposition}\label{prop:dp}
    If $d$ is strict negative type on $[N]$ then $(dp_*(d))_j \ge (dp_*(d))_k$ for all $j \in \textnormal{peel}(d)$ and $k \in [N]$. In particular, $dp_*(d)$ is constant on $\textnormal{peel}(d)$.
\end{proposition}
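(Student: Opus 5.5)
This is the KKT characterization of a maximizer of a quadratic over the simplex; I would prove it by a first-order (directional derivative) argument. Write $p_* := p_*(d)$ and $f(p) := p^T d p$, so that $\nabla f(p) = 2dp$ since $d$ is symmetric. Uniqueness of $p_*$ comes from strict negative type: on the affine hull of $\Delta_{N-1}$, $f$ is strictly concave, because for $p \ne p'$ in $\Delta_{N-1}$ the difference $x := p-p'$ satisfies $1^T x = 0$, hence $x^T d x < 0$. So $p_*$ is a well-defined maximizer. This is the "convexity" of quadratic entropy the paper refers to, read as concavity for maximization.

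The key step uses feasible perturbations. Take $j \in \textnormal{peel}(d)$, so $(p_*)_j > 0$, and any $k \in [N]$. For small $\varepsilon \in (0,(p_*)_j]$ set $q_\varepsilon := p_* + \varepsilon(e_{(k)} - e_{(j)})$. This stays in $\Delta_{N-1}$ because we only move mass off a coordinate that is positive. Expanding gives
\begin{equation*}
f(q_\varepsilon) - f(p_*) = 2\varepsilon\bigl((dp_*)_k - (dp_*)_j\bigr) + \varepsilon^2 (e_{(k)}-e_{(j)})^T d (e_{(k)}-e_{(j)}).
\end{equation*}
Since $p_*$ is a maximizer, the left side is $\le 0$ for all small $\varepsilon > 0$. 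Dividing by $\varepsilon$ and letting $\varepsilon \downarrow 0$ yields $(dp_*)_k \le (dp_*)_j$, which is the first claim.

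The "in particular" follows by symmetry. If both $j, k \in \textnormal{peel}(d)$, applying the inequality both ways gives $(dp_*)_j = (dp_*)_k$, so $dp_*$ is constant on the peel. Here the quadratic term is in fact $-2d_{jk} \le 0$, but it is not needed, because the linear term dominates as $\varepsilon \downarrow 0$.

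The main obstacle is just making sure $p_*$ really denotes the exact global maximizer, not the heuristic output of Algorithm \ref{alg:ApproximatePeel}, and that it exists and is unique. Existence follows from compactness of the simplex and uniqueness from strict concavity as above. Apart from that, the argument is the standard necessary-condition check and needs no further earlier results. One could also cite Proposition 5.20 of \cite{devriendt2022graph} directly, but the self-contained variational argument is cleaner.
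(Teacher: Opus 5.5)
Your argument is correct. It differs from the paper mainly in that the paper gives no proof of its own: it presents Proposition \ref{prop:dp} as a restatement of Proposition 5.20 of \cite{devriendt2022graph} and defers to that source.

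Your two-coordinate mass transfer $p_* + \varepsilon(e_{(k)} - e_{(j)})$ is a self-contained first-order (KKT) optimality check. It is essentially the same variational inequality the paper proves later inside Theorem \ref{thm:peelContinuity}, namely $(q-p_*)^T d p_* \le 0$ for all $q \in \Delta_{N-1}$. Taking $q = e_{(k)}$ there gives $(dp_*)_k \le p_*^T d p_*$. Since $p_*^T d p_* = \sum_j (p_*)_j (dp_*)_j$ is a weighted average of the values on the support, each of those values must equal $p_*^T d p_*$. That variant also identifies the common value on $\textnormal{peel}(d)$ as the maximal quadratic entropy. Your swap instead gives the pairwise comparison $(dp_*)_j \ge (dp_*)_k$ directly, without the averaging step.

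Your remarks on existence and uniqueness, via strict concavity on the affine hull of the simplex, are the right ones. Note that the inequality itself uses only that $p_*$ is some maximizer; strict negative type is needed only so that $p_*(d)$ is well defined. The citation route ties the statement to the framework of \cite{devriendt2022graph}. For this particular statement, your proof makes the paper self-contained at essentially no cost.
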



\begin{theorem}\label{thm:peelConvex}
    If $d$ is the distance matrix of $X = \{x_1,\dots,x_N\} \subset \mathbb{R}^m$, then $\textnormal{peel}(d)$ is a subset of the vertices of the convex hull of $X$.
\end{theorem}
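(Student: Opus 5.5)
Here $d$ is the Euclidean ($L^2$) distance matrix, which is strict negative type. So $p_* := p_*(d)$ is the unique maximizer of the strictly concave-on-the-simplex quadratic entropy $p^T d p$ over $\Delta_{N-1}$. The plan is to argue by contradiction. Suppose some $j \in \textnormal{peel}(d)$ with $x_j$ not a vertex of the convex hull of $X$. Then either $x_j$ coincides with another point, which is impossible since $d$ is a metric, or $x_j$ is a convex combination $x_j = \sum_{k \ne j} \lambda_k x_k$ with $\lambda_k \ge 0$, $\sum_k \lambda_k = 1$, and $\lambda \ne e_{(j)}$. We will exhibit a feasible perturbation of $p_*$ that strictly increases $p^T d p$, contradicting optimality.

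The key tool is strict convexity of the Euclidean norm along non-degenerate combinations. For each $i$, the map $y \mapsto \|x_i - y\|$ is convex, so
\begin{equation*}
d_{ij} = \Bigl\| x_i - \sum_k \lambda_k x_k \Bigr\| \le \sum_k \lambda_k d_{ik} .
\end{equation*}
We therefore move mass $\varepsilon := (p_*)_j > 0$ off $x_j$ and spread it according to $\lambda$, obtaining $q := p_* - \varepsilon e_{(j)} + \varepsilon \lambda$. Since $\lambda_j = 0$, this $q$ lies in $\Delta_{N-1}$. Writing $v := \lambda - e_{(j)}$, we get $q^T d q = p_*^T d p_* + 2\varepsilon (d p_*)^T v + \varepsilon^2 v^T d v$. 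The linear term is $\sum_i (p_*)_i \bigl( \sum_k \lambda_k d_{ik} - d_{ij} \bigr) \ge 0$ by the displayed inequality. The quadratic term has $1^T v = 0$, so $v^T d v < 0$ by strict negative type. That sign is unfavorable, so I will not use the full move but only the first-order variation.

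To handle this, I will instead use the first-order optimality condition, Proposition \ref{prop:dp}. Let $c$ be the constant value of $d p_*$ on the peel; then $(d p_*)_k \le c$ for all $k$. Since $j$ lies in the peel, $(d p_*)_j = c \ge \sum_k \lambda_k (d p_*)_k$. On the other hand, summing the convexity inequality against $p_*$ gives $(d p_*)_j \le \sum_k \lambda_k (d p_*)_k$. So equality holds throughout: $(d p_*)_k = c$ for every $k$ with $\lambda_k > 0$, and $\|x_i - x_j\| = \sum_k \lambda_k \|x_i - x_k\|$ for every $i$ in the support of $p_*$.

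The main obstacle is turning this equality case into a contradiction. Equality in the triangle/Jensen inequality for the Euclidean norm forces all $x_k$ with $\lambda_k > 0$ to lie on a common ray from $x_i$ through $x_j$, for each $i$ in the peel. I would take $i = j$ itself, which is allowed since $j$ is in the peel. This gives $0 = \sum_k \lambda_k \|x_j - x_k\|$, so every $x_k$ with $\lambda_k > 0$ equals $x_j$. That is impossible for distinct points, and the contradiction is immediate. So the obstacle largely dissolves once one notes that $i = j$ is a legitimate index in the support. Only care about strictness, and about $\lambda$ possibly having a nonzero $j$-coordinate, remains; I would first normalize $\lambda_j = 0$ by solving $x_j$ out of the combination. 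Finally, I would remark that Proposition \ref{prop:dp} is exactly the KKT condition, so the argument is self-contained given that proposition.
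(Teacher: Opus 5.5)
Your argument is correct and is essentially the paper's proof: both combine Proposition \ref{prop:dp} (so $(dp_*)_j = c \ge \sum_k \lambda_k (dp_*)_k$) with the triangle inequality $(dp_*)_j \le \sum_k \lambda_k (dp_*)_k$. Both then get the contradiction from the $i=j$ term, which is strict because $(p_*)_j>0$ and every $x_k$ with $\lambda_k>0$ differs from $x_j$. The differences are cosmetic: the paper writes this as an immediate strict inequality rather than your equality-case analysis, and it expands $x_j$ over hull vertices rather than over arbitrary other points; your second-order perturbation detour is unnecessary but harmless.
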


\begin{proof}
    Let $V(X)$ be the set of vertices of the convex hull of $X$. Suppose for contradiction that there exists $x_j \in \textnormal{peel}(d) - V(X)$. Since $x_j \notin V(X)$, we can write it as a nontrivial convex combination of points in $V(X)$, say $x_j = \sum_{\ell = 1}^L \alpha_\ell x_{j_\ell}$ with $x_{j_\ell} \in V(X)$, $\alpha_\ell > 0$, and $j_\ell \ne j$ for all $\ell \in [L]$, and with $\sum_{\ell = 1}^L \alpha_\ell = 1$: finally, nontriviality here means that $L > 1$.


    By Proposition \ref{prop:dp} and writing $p_*(d) \equiv p_*$ here, $c := (dp_*)_j$ is a fixed constant since $x_j \in \textnormal{peel}(d)$. We therefore have that
    \begin{align}
    c & = \sum_{k \in [N]} d_{jk} p_{*k} \nonumber \\
    & = \sum_k \|x_j - x_k \| p_{*k} \nonumber \\
    & = \sum_k \left \|\sum_\ell \alpha_\ell x_{j_\ell} - x_k \right \| p_{*k} \nonumber \\
    & = \sum_k \left \|\sum_\ell \alpha_\ell (x_{j_\ell} - x_k) \right \| p_{*k} \nonumber \\
    & = \left \| \sum_\ell \alpha_\ell (x_{j_\ell} - x_j) \right \| p_{*j} + \sum_{k \ne j} \left \|\sum_\ell \alpha_\ell (x_{j_\ell} - x_k) \right \| p_{*k}. \label{eq:peelConvexConstant}
    \end{align}
    Now since 
    $$0 = \left \| \sum_\ell \alpha_\ell (x_{j_\ell} - x_j) \right \| p_{*j} < \sum_\ell \alpha_\ell \left \| x_{j_\ell} - x_j \right \| p_{*j},$$
    we have by \eqref{eq:peelConvexConstant} the strict inequality
    \begin{equation}\label{eq:peelConvexStrict}
        c < \sum_k \sum_\ell \alpha_\ell\left \| x_{j_\ell} - x_k \right \| p_{*k}.
    \end{equation}
    Meanwhile,
    \begin{align}
    \sum_k \sum_\ell \alpha_\ell \left \| x_{j_\ell} - x_k \right \| p_{*k} & = \sum_\ell \alpha_\ell \sum_k \left \| x_{j_\ell} - x_k \right \| p_{*k} \nonumber \\
    & = \sum_\ell \alpha_\ell \sum_k d_{{j_\ell}k} p_{*k} \nonumber \\
    & = \sum_\ell \alpha_\ell (dp_*)_{j_\ell} \nonumber \\
    & \le \sum_\ell \alpha_\ell c \nonumber \\
    & = c, \label{eq:peelConvexContradiction}
    \end{align}
    where the inequality is by Proposition \ref{prop:dp}. Combining \eqref{eq:peelConvexStrict} and \eqref{eq:peelConvexContradiction} gives $c<c$, a contradiction. It must therefore be the case that $\textnormal{peel}(d) - V(X) = \varnothing$.
\end{proof}

For example, the peel in Figure \ref{fig:letters_peel} consists of 10 of the 20 convex hull vertices. 

The following results show that peel distributions are continuous in $d$, and that peel supports are generically stable. 

\begin{theorem}\label{thm:peelContinuity}
    Let $d$ and $d'$ be strict negative type metrics on $[N]$, and write $$\gamma(d) := -\max_{\|x\| = 1; \ 1^T x = 0} x^T d x > 0.$$ 
    Then 
    \begin{equation}\label{eq:continuity}
        \left \| p_*(d') - p_*(d) \right \|_2 \le \frac{2}{\max \{ \gamma(d), \gamma(d') \}} \left \| d' - d \right \|_\textnormal{op}
    \end{equation}
    where $\| \cdot \|_\textnormal{op}$ indicates the operator norm.
\end{theorem}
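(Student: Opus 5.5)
We prove this with the standard perturbation argument for strongly concave maximizers over a convex set. Write $p := p_*(d)$ and $p' := p_*(d')$, and set $f(x) := x^T d x$ and $f'(x) := x^T d' x$. By symmetry of the claimed bound in $d$ and $d'$, it suffices to show $\|p'-p\|_2 \le 2\|d'-d\|_\textnormal{op}/\gamma(d)$. The same argument with the roles of $d$ and $d'$ swapped gives the bound with $\gamma(d')$, and taking the better of the two gives the maximum in the denominator of \eqref{eq:continuity}.

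\textbf{Step 1 (first-order optimality).} Since $p$ maximizes the concave (on $\Delta_{N-1}$) quadratic $f$ over the convex set $\Delta_{N-1}$, the variational inequality $\nabla f(p)^T (q - p) \le 0$ holds for all $q \in \Delta_{N-1}$. Here $\nabla f(p) = 2dp$. One can also see this directly from Proposition \ref{prop:dp}: $(dp)_j$ equals the constant $c = p^T d p$ on $\textnormal{peel}(d)$ and is at most $c$ elsewhere, so $(dp)^T q \le c = (dp)^T p$. Likewise $(d'p')^T(p - p') \le 0$.

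\textbf{Step 2 (strong concavity along the simplex).} Set $x := p' - p$. Then $1^T x = 0$, so by the definition of $\gamma$ we have $x^T d x \le -\gamma(d)\|x\|_2^2$. Adding the two inequalities from Step 1, $2(dp)^T x \le 0$ and $-2(d'p')^T x \le 0$, gives
\begin{equation*}
0 \ge (dp - d'p')^T x = -x^T d x + \bigl((d - d')p'\bigr)^T x .
\end{equation*}
Combining the two displays yields
\begin{equation*}
\gamma(d)\|x\|_2^2 \le -x^T d x \le \bigl|\bigl((d'-d)p'\bigr)^T x\bigr| \le \|d'-d\|_\textnormal{op}\,\|p'\|_2\,\|x\|_2 .
\end{equation*}

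\textbf{Step 3 (conclude).} Dividing by $\|x\|_2$ and using $\|p'\|_2 \le \|p'\|_1 = 1$ gives $\|x\|_2 \le \|d'-d\|_\textnormal{op}/\gamma(d)$. This is even better than the claimed constant $2$, so the stated bound follows.

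If we instead run the argument with the symmetric splitting $\tfrac12(p+p')$, the bound becomes $\|d'-d\|_\textnormal{op}\|p+p'\|_2 \le 2\|d'-d\|_\textnormal{op}$, which is presumably the source of the factor $2$ in \eqref{eq:continuity}.

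\textbf{Main obstacle.} The only delicate point is Step 1: it needs the first-order optimality condition on the simplex, including at boundary faces where the peel has small support. Proposition \ref{prop:dp} supplies exactly this condition without invoking KKT theory separately. Strict negative type ensures $\gamma > 0$, so the division in Step 3 is legitimate.
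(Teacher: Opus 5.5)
Your proof is correct. It differs from the paper's at one step, and that step is what lets you get a better constant.

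\textbf{What the paper does.} The paper also starts from the variational inequality $(q-p)^T dp \le 0$ at $p = p_*(d)$, and uses it to derive $\gamma(d)\|p'-p\|_2^2 \le p^Tdp - p'^Tdp'$. At $p' = p_*(d')$, however, it uses only the value comparison $p'^Td'p' \ge p^Td'p$, not the first-order condition. Writing $E = d'-d$ and $x = p'-p$, this gives $\gamma(d)\|x\|_2^2 \le p'^TEp' - p^TEp = x^TE(p+p')$. Bounding $\|Ep\|_2 + \|Ep'\|_2 \le 2\|E\|_\textnormal{op}$ is what produces the factor $2$.

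So the factor $2$ does not come from a symmetric splitting, as you guess. It comes from using the value comparison at $p'$, which is weaker than the first-order condition there by the strong-concavity slack $-x^Td'x \ge \gamma(d')\|x\|_2^2$.

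\textbf{What your route buys.} You use the first-order condition at both maximizers, i.e.\ monotonicity of the gradient map. This removes the cross term $p^TEx$ and gives $\|p'-p\|_2 \le \|d'-d\|_\textnormal{op}/\max\{\gamma(d),\gamma(d')\}$, which is stronger than the stated bound by a factor of $2$. Your intermediate inequality $-x^Tdx \le x^TEp'$ is in fact an equality whenever $p$ and $p'$ have the same support. This follows from Proposition \ref{prop:dp}: $dp$ and $d'p'$ are then constant on the common support of $x$, and $1^Tx = 0$. The paper's version invokes the variational inequality only once and otherwise uses maximality of $p'$ directly, but it pays for this with the extra factor.
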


\begin{proof}
    For concision, temporarily write $p = p_*(d)$ and $p' = p_*(d')$. Also, without loss of generality, assume that $d \ne d'$. Note that 
    \begin{equation}\label{eq:continuityEq1}
        p'^T d p' = p^T d p + 2(p'-p)^T d p + (p'-p)^T d (p'-p). 
    \end{equation}

    We claim that for all $q \in \Delta_{N-1}$, $(q-p)^T dp \le 0$. To see this, write 
    \begin{align}
        h(p,q;t) := & \ (p+t[q-p])^T d (p+t[q-p]). \nonumber \\
        = & \ p^T d p + 2t(q-p)^T dp + t^2(q-p)^T d (q-p). \nonumber
    \end{align} 
    Since $p$ maximizes $p^T d p$ over $\Delta_{N-1}$ and $p+t[q-p] \in \Delta_{N-1}$ for $t \in [0,1]$, we have that $h(p,q;0) \ge h(p,q;t)$ for $t \in [0,1]$. In particular, the derivative of $h$ is nonpositive at $t = 0^+$, which establishes the claim. 

    Applying this to \eqref{eq:continuityEq1} and subsequently noting that $1^T(p'-p) = 0$ yields the successive inequalities
    \begin{equation}\label{eq:continuityEq2}
        p'^T d p' \le \ p^T d p + (p'-p)^T d (p'-p) \le \ p^T d p - \gamma(d) \cdot \left \| p' - p \right \|_2^2.
    \end{equation}
    Since $p'^T d' p' \ge p^T d' p'$, we obtain
    \begin{equation}\label{eq:continuityEq3}
        \gamma(d) \cdot \left \| p' - p \right \|_2^2 \le \ p^T d p - p'^T d p'.
    \end{equation}

    Writing $E = d'-d$, we have that 
    \begin{equation}\label{eq:continuityEq4}
        p'^T d p' =  \ p'^T d' p' - p'^T E p' \ge \ p^T d' p - p'^T E p' = \ p^T d p + p^T E p - p'^T E p'.
    \end{equation}
    Using \eqref{eq:continuityEq4} in \eqref{eq:continuityEq3} now yields
    \begin{align}\label{eq:continuityEq5}
        \gamma(d) \cdot \left \| p' - p \right \|_2^2 \le & \ p^T d p - \left ( p^T d p + p^T E p - p'^T E p' \right ) \nonumber \\
        = & \ p'^T E p' - p^T E p \nonumber \\
        = & \ (p'-p)^T E p' + p^T E (p'-p).
    \end{align}

    By Cauchy-Schwarz, $|(p'-p)^T E p'| \le \| p'-p \|_2 \cdot \| Ep' \|_2$ and $|p^T E (p'-p)| \le \| Ep \|_2 \cdot \| p'-p \|_2$, so 
    \begin{equation}\label{eq:continuityEq6}
        \gamma(d) \cdot \left \| p' - p \right \|_2^2 \le \| p'-p \|_2 \cdot \| Ep' \|_2 + \| Ep \|_2 \cdot \| p'-p \|_2.
    \end{equation}
    Dividing both sides of \eqref{eq:continuityEq6} by $\| p'-p \|_2$ (which we can assume is positive) and applying $\| E x \|_2 \le \| E \|_\textnormal{op} \cdot \| x \|_2$ yields
    \begin{equation}\label{eq:continuityEq7}
        \gamma(d) \cdot \left \| p' - p \right \|_2 \le \| E \|_\textnormal{op} \cdot \left ( \| p' \|_2 + \| p \|_2 \right ).
    \end{equation}
    Finally, $\| p' \|_2 + \| p \|_2 \le 2$, so
    \begin{equation}\label{eq:continuityEq8}
        \gamma(d) \cdot \left \| p' - p \right \|_2 \le 2\| d' - d \|_\textnormal{op}.
    \end{equation}
    The theorem now follows by symmetry.
\end{proof}

\begin{corollary}\label{cor:peelStability}
    Let $d$ be strict negative type on $[N]$. If for all $j \in \textnormal{peel}(d)$ 
    $$(d p_*(d))_j > \max_{k \not \in \textnormal{peel}(d)} (d p_*(d))_k,$$ 
    then there exists $\varepsilon > 0$ such that $\|d' - d\|_\textnormal{op} < \varepsilon \Rightarrow \textnormal{peel}(d') = \textnormal{peel}(d)$.
\end{corollary}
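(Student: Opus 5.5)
Write $\mathcal{J} := \textnormal{peel}(d)$ and $p := p_*(d)$. By Proposition \ref{prop:dp}, $dp$ takes a constant value $c$ on $\mathcal{J}$. The plan is to turn the strict hypothesis into a margin and then transfer both the support and this margin to $d'$ using Theorem \ref{thm:peelContinuity}. The two quantities that should stay robust under perturbation are
\[
\delta := \min_{j \in \mathcal{J}} p_j > 0, \qquad \eta := c - \max_{k \notin \mathcal{J}} (dp)_k > 0,
\]
where $\eta > 0$ is exactly the hypothesis. If $\mathcal{J} = [N]$, the second half of the argument is vacuous.

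First, I will make sure $p_*(d')$ is well defined. For unit $x$ with $1^T x = 0$ we have $x^T d' x \le x^T d x + \|d'-d\|_\textnormal{op}$. Hence $\gamma(d') \ge \gamma(d) - \|d'-d\|_\textnormal{op}$, so taking $\varepsilon < \gamma(d)$ keeps $d'$ strict negative type. I read $d'$ as ranging over metrics, as in Theorem \ref{thm:peelContinuity}. Write $p' := p_*(d')$ and assume $\|d'-d\|_\textnormal{op} < \varepsilon$.

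Next, I will show $\mathcal{J} \subseteq \textnormal{supp}(p')$. Theorem \ref{thm:peelContinuity} gives $\|p'-p\|_\infty \le \|p'-p\|_2 \le 2\varepsilon/\gamma(d)$. Choosing $\varepsilon < \delta\gamma(d)/2$ therefore forces $p'_j > 0$ for every $j \in \mathcal{J}$.

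Then I will show the reverse inclusion $\textnormal{supp}(p') \subseteq \mathcal{J}$, which I expect to be the main step. I will bound $d'p' - dp$ coordinatewise:
\[
\|d'p' - dp\|_\infty \le \|(d'-d)p'\|_2 + \|d(p'-p)\|_2 \le \varepsilon + \|d\|_\textnormal{op}\cdot \frac{2\varepsilon}{\gamma(d)},
\]
using $\|p'\|_2 \le 1$. Choose $\varepsilon$ so that the right-hand side is less than $\eta/2$. Then for $j \in \mathcal{J}$ we get $(d'p')_j > c - \eta/2$, and for $k \notin \mathcal{J}$ we get $(d'p')_k < c - \eta + \eta/2 = c - \eta/2$. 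Now suppose some $k \notin \mathcal{J}$ lies in $\textnormal{supp}(p') = \textnormal{peel}(d')$. Proposition \ref{prop:dp} applied to $d'$ says $d'p'$ is constant on $\textnormal{peel}(d')$. That set also contains $\mathcal{J}$ by the previous step, so $(d'p')_k = (d'p')_j$ for any $j \in \mathcal{J}$. This contradicts the strict separation just derived.

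Finally, take $\varepsilon$ to be the minimum of the three thresholds: $\gamma(d)$, $\delta\gamma(d)/2$, and the one making the bound above less than $\eta/2$. Both inclusions then hold, so $\textnormal{peel}(d') = \textnormal{peel}(d)$ whenever $\|d'-d\|_\textnormal{op} < \varepsilon$.
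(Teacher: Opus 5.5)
Your proof is correct and follows the same route as the paper. Theorem \ref{thm:peelContinuity} transfers the strict separation of $d\,p_*(d)$ to $d'p_*(d')$, and Proposition \ref{prop:dp} then rules out any point outside $\textnormal{peel}(d)$. You are more complete than the paper's brief argument: you explicitly obtain $\textnormal{peel}(d) \subseteq \textnormal{peel}(d')$ from the $\ell^\infty$ bound forcing $p_*(d')_j > 0$, and you check that $d'$ remains strict negative type, whereas the paper leaves both implicit.
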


\begin{proof}
    First, note that by Proposition \ref{prop:dp}, the left hand side of the inequality in the hypothesis does not depend on $j$ apart from the requirement $j \in \textnormal{peel}(d)$. By Theorem \ref{thm:peelContinuity}, we have for sufficiently small $\varepsilon$ that $(d' p_*(d'))_j > \max_{k \not \in \textnormal{peel}(d)} (d' p_*(d'))_k$. The result now follows from Proposition \ref{prop:dp}.
\end{proof}

Suppose that $\mathcal{I} \subseteq \mathcal{J} := \textnormal{peel}(d)$. By Theorem \ref{thm:peeling} and Proposition \ref{prop:peelOfPeel}, if $d|_\mathcal{I}^{-1} 1 > 0$, then $\textnormal{peel}(d|_\mathcal{I}) = \mathcal{I}$. With this in mind, consider the following lemma.

\begin{lemma}\label{lem:inclusionBound}
Suppose that $d$ is strict negative type on $[N]$ and $\mathcal{I} = [J-1] \subset \mathcal{J} = [J] = \textnormal{peel}(d)$, writing
$$d|_\mathcal{J} = \begin{pmatrix} d|_\mathcal{I} & \delta \\ \delta^T & 0 \end{pmatrix}.$$ Then there is a positive constant $C \equiv C(d|_\mathcal{I}, \delta)$ such that
$$d|_\mathcal{I}^{-1} 1 > C d|_\mathcal{I}^{-1} \delta.$$ In particular, if $d|_\mathcal{I}^{-1} \delta > 0$, then $d|_\mathcal{I}^{-1} 1 > 0$ and $\textnormal{peel}(d|_\mathcal{I}) = \mathcal{I}$.
\end{lemma}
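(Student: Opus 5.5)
We will exploit the fact that, since $\mathcal{J} = \textnormal{peel}(d)$, the restriction of $p_*(d)$ to $\mathcal{J}$ is the strictly positive vector $d|_\mathcal{J}^{-1} 1 / (1^T d|_\mathcal{J}^{-1} 1)$, as in the proof of Proposition \ref{prop:peelOfPeel}. Here $1^T d|_\mathcal{J}^{-1} 1 > 0$ by Lemma \ref{lem:scale0magnitude}, applied to $d|_\mathcal{J}$, which is strict negative type. So $x := d|_\mathcal{J}^{-1} 1 > 0$ componentwise.

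Write $x = (y, s)$ with $y \in \mathbb{R}^{J-1}$ and $s > 0$. The block equation $d|_\mathcal{J} x = 1$ then reads
\[ d|_\mathcal{I} y + s\,\delta = 1, \qquad \delta^T y = 1 . \]
The principal submatrix $d|_\mathcal{I}$ is again strict negative type, and strict negative type matrices are invertible (this is used implicitly in Algorithm \ref{alg:ApproximatePeel}; it can also be seen from the argument of Lemma \ref{lem:scale0magnitude}). Solving the first equation gives
\[ d|_\mathcal{I}^{-1} 1 = y + s\, d|_\mathcal{I}^{-1}\delta . \]
Since $y > 0$, this yields $d|_\mathcal{I}^{-1} 1 > s\, d|_\mathcal{I}^{-1}\delta$ componentwise. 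We take $C := s$, which is the last coordinate of $d|_\mathcal{J}^{-1}1$ and hence positive.

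To see that $C$ depends only on $(d|_\mathcal{I}, \delta)$, we use a Schur complement. Substituting $y = d|_\mathcal{I}^{-1}(1 - s\delta)$ into $\delta^T y = 1$ gives
\[ s = \frac{\delta^T d|_\mathcal{I}^{-1} 1 - 1}{\delta^T d|_\mathcal{I}^{-1}\delta} . \]
The denominator is the negative of the Schur complement $0 - \delta^T d|_\mathcal{I}^{-1}\delta$, which is nonzero because $d|_\mathcal{J}$ is invertible. So $s$ is indeed a function of $d|_\mathcal{I}$ and $\delta$ alone.

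For the final claim, suppose $d|_\mathcal{I}^{-1}\delta > 0$. Then $d|_\mathcal{I}^{-1}1 > C\, d|_\mathcal{I}^{-1}\delta > 0$. Normalizing by $1^T d|_\mathcal{I}^{-1}1 > 0$ (Lemma \ref{lem:scale0magnitude}) produces a strictly positive distribution, so Algorithm \ref{alg:ApproximatePeel} terminates immediately on $d|_\mathcal{I}$. We then conclude $\textnormal{peel}(d|_\mathcal{I}) = \mathcal{I}$ by the remark preceding the lemma, which combines Theorem \ref{thm:peeling} and Proposition \ref{prop:peelOfPeel}.

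The main obstacle is that last step. Positivity of the unconstrained stationary point $d|_\mathcal{I}^{-1}1$ must imply that it is the maximizer over the simplex. For this we would invoke concavity of $p^T d p$ on the affine hull of the simplex: strict negative type makes the quadratic form negative definite on $\{1^T x = 0\}$. Hence an interior stationary point is the global maximum, and its support is all of $\mathcal{I}$.
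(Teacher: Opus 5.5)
Your proof is correct and is essentially the paper's argument. The paper writes $d|_\mathcal{J}^{-1}$ via the block-inverse/Schur-complement formula to get $d|_\mathcal{J}^{-1}1 = \bigl(d|_\mathcal{I}^{-1}1 - C\,d|_\mathcal{I}^{-1}\delta,\ C\bigr)$ with $C = (\delta^T d|_\mathcal{I}^{-1}1 - 1)/(\delta^T d|_\mathcal{I}^{-1}\delta)$, which is exactly your identity $d|_\mathcal{I}^{-1}1 = y + s\,d|_\mathcal{I}^{-1}\delta$ with $C = s$, obtained more directly from the block equations; your concavity argument for the last step simply makes self-contained what the paper cites from Theorem~\ref{thm:peeling} and Proposition~\ref{prop:peelOfPeel}.
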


\begin{proof}
For clarity in the presence of Schur complements, in this proof we temporarily write $d_\mathcal{I} \equiv d|_\mathcal{I}$. The Schur complement is a scalar:
\begin{equation}
\label{eq:SchurComplement}
d_\mathcal{J} / d_\mathcal{I} = -\delta^T d_\mathcal{I}^{-1} \delta.
\end{equation}
Now writing $\Delta := d_\mathcal{I}^{-1} \delta$, we have that
\begin{align}
\label{eq:blockInverse}
d_\mathcal{J}^{-1} = & \ \begin{pmatrix} d_\mathcal{I}^{-1} + \Delta (d_\mathcal{J} / d_\mathcal{I})^{-1} \Delta^T & -\Delta (d_\mathcal{J} / d_\mathcal{I})^{-1} \\ -(d_\mathcal{J} / d_\mathcal{I})^{-1} \Delta^T & (d_\mathcal{J} / d_\mathcal{I})^{-1} \end{pmatrix} \nonumber \\
= & \ \begin{pmatrix} d_\mathcal{I}^{-1} & 0 \\ 0 & 0 \end{pmatrix} + (d_\mathcal{J} / d_\mathcal{I})^{-1} \cdot \begin{pmatrix} \Delta \\ -1 \end{pmatrix} \begin{pmatrix} \Delta^T & -1 \end{pmatrix}.
\end{align}
Applying \eqref{eq:blockInverse} to $1_\mathcal{J}$ and using \eqref{eq:SchurComplement} yields
\begin{align}
\label{eq:dJJ11}
d_\mathcal{J}^{-1} 1 = & \ \begin{pmatrix} d_\mathcal{I}^{-1} 1 \\ 0 \end{pmatrix} + \frac{\Delta^T 1 - 1}{d_\mathcal{J} / d_\mathcal{I}} \cdot \begin{pmatrix} \Delta \\ -1 \end{pmatrix} \nonumber \\
= & \ \begin{pmatrix} d_\mathcal{I}^{-1} 1 \\ 0 \end{pmatrix} + \frac{1 - \delta^T d_\mathcal{I}^{-1} 1}{\delta^T d_\mathcal{I}^{-1} \delta} \cdot \begin{pmatrix} \Delta \\ -1 \end{pmatrix}.
\end{align}

By hypothesis, $d_\mathcal{J}^{-1} 1 > 0$. Writing 
$$C := -\frac{1 - \delta^T d_\mathcal{I}^{-1} 1}{\delta^T d_\mathcal{I}^{-1} \delta},$$
we therefore have by \eqref{eq:dJJ11} that $C > 0$ and 
\begin{equation}
d_\mathcal{I}^{-1} 1 > C \Delta.
\end{equation}
\end{proof}


\begin{proposition}\label{prop:inclusionBound2}
Under the hypotheses of Lemma \ref{lem:inclusionBound}, using its notation and writing also $w^{(0)} := d|_\mathcal{J}^{-1}1$, we have that
$$\textnormal{peel}(d|_\mathcal{I}) = \mathcal{I} \iff w^{(0)}|_\mathcal{I} + w^{(0)}_J \Delta > 0.$$
\end{proposition}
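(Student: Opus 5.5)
The approach is to rewrite $d|_\mathcal{I}^{-1}1$ in terms of $w^{(0)}$ and $\Delta$ using the block-inverse formula \eqref{eq:dJJ11}, and then reduce the peel condition to a sign condition.

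First, read \eqref{eq:dJJ11} componentwise. Write $\sigma := \delta^T d_\mathcal{I}^{-1}\delta$ and $\kappa := (1-\delta^T d_\mathcal{I}^{-1}1)/\sigma$. The last coordinate gives $w^{(0)}_J = -\kappa$. The first $J-1$ coordinates give $w^{(0)}|_\mathcal{I} = d_\mathcal{I}^{-1}1 + \kappa\Delta$. Substituting the first identity into the second yields
\[
d|_\mathcal{I}^{-1}1 = w^{(0)}|_\mathcal{I} + w^{(0)}_J \Delta .
\]
This is purely algebraic. It needs $d_\mathcal{I}$ to be invertible, which holds because $d_\mathcal{I}$ is a strict negative type metric on at least two points; when $J-1=1$ the claim is checked directly. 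It also needs $\sigma\neq 0$. Since $d_\mathcal{J}$ is invertible, the Schur complement $-\sigma$ is nonzero, so $\sigma\neq 0$.

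Given this identity, the right-hand side of the proposition says exactly that $d|_\mathcal{I}^{-1}1>0$. It remains to show $\textnormal{peel}(d|_\mathcal{I})=\mathcal{I}\iff d|_\mathcal{I}^{-1}1>0$.

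For ($\Leftarrow$), suppose $d|_\mathcal{I}^{-1}1>0$. Then $p=d|_\mathcal{I}^{-1}1/(1^Td|_\mathcal{I}^{-1}1)$ is a positive point of the simplex, and the denominator is positive by Lemma~\ref{lem:scale0magnitude}. This $p$ is stationary for the quadratic entropy restricted to the affine hull. Strict negative type makes $p^Tdp$ strictly concave on the simplex, so $p$ is the unique maximizer. Hence $\textnormal{peel}(d|_\mathcal{I})=\mathcal{I}$. This is the same reasoning the paper uses just before Lemma~\ref{lem:inclusionBound}.

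For ($\Rightarrow$), suppose $\textnormal{peel}(d|_\mathcal{I})=\mathcal{I}$, so $p_*$ is fully supported. By Proposition~\ref{prop:dp}, $d|_\mathcal{I}p_*=c1$ for some constant $c$, hence $p_*=c\,d|_\mathcal{I}^{-1}1$. Summing coordinates gives $1=c\,1^Td|_\mathcal{I}^{-1}1$. Lemma~\ref{lem:scale0magnitude} then gives $c>0$. Therefore $d|_\mathcal{I}^{-1}1=p_*/c>0$.

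The main obstacle is the ($\Rightarrow$) direction, specifically ensuring that "$\textnormal{peel}$ equals $\mathcal{I}$" means strict positivity of every coordinate of $p_*$. That is exactly the definition of support, so positivity transfers through the positive scalar $c$. The algebraic identity is routine once \eqref{eq:dJJ11} is in hand.
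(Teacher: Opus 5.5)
Your proof is correct and is essentially the paper's. The paper reads off $d|_\mathcal{I}^{-1}1 = w^{(0)}|_\mathcal{I} + w^{(0)}_J\Delta$ from \eqref{eq:dJJ11} exactly as you do, then simply cites Theorem~\ref{thm:peeling} for $\textnormal{peel}(d|_\mathcal{I}) = \mathcal{I} \iff d|_\mathcal{I}^{-1}1 > 0$; you instead verify that equivalence directly via strict concavity, Proposition~\ref{prop:dp} and Lemma~\ref{lem:scale0magnitude}, which is a self-contained substitute for the informally stated peeling theorem, and your aside that the case $J-1=1$ is ``checked directly'' does not hold up, since there $d|_\mathcal{I}=0$ is singular and $\Delta$ is undefined, so that case is implicitly excluded by the statement.
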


\begin{proof}
    To begin, $w^{(0)}_J = C$, so if we write $w^{(1)} := d_\mathcal{I}^{-1} 1$, then \eqref{eq:dJJ11} yields $w^{(1)} = w^{(0)}|_\mathcal{I} + w^{(0)}_J \Delta$. Applying Theorem \ref{thm:peeling} now yields the result.
\end{proof}

\section{Peel neighborhoods}\label{sec:peelNeighborhoods}

The notion of enclosure that peels provide immediately suggests a notion of locality in finite metric spaces of strict negative type, based on when a basepoint is enclosed by the peel of a ball around it. By growing a ball and checking if the peel still contains the basepoint, we get a computationally tractable ``hard'' neighborhood.

As usual, suppose that $X$ is finite and endowed with a metric of strict negative type. For $x \in X$ define
\begin{equation}\label{eq:peelNeighborhoodRadius}
\rho(x) := \inf \{r > 0 : x \not \in \textnormal{peel}(d|_{X \cap B_r(x)}) \}.
\end{equation}
The \emph{peel neighborhood} $\nu(x)$ is roughly the smallest ball around $x$ whose peel does not contain $x$:
\begin{equation}\label{eq:peelNeighborhood}
\nu(x) := X \cap B_{\rho(x)}(x).
\end{equation}
Figures \ref{fig:rho_ball_2} and \ref{fig:rho_ball_10} show examples of peel neighborhoods, and Figure \ref{fig:rho_ball_histograms} shows their cardinalities. 

Suppose that instead of \eqref{eq:peelNeighborhoodRadius} we considered 
$$\tilde \rho(x) := \inf \{ r > 0: \textnormal{peel}(d|_{X \cap B_r(x)}) = \textnormal{peel}(d|_{[X-\{x\}] \cap B_r(x)}) \}.$$ This is harder to work with and less precise than \eqref{eq:peelNeighborhoodRadius}, i.e., $\rho(x) \le \tilde \rho(x)$.

\begin{figure}[htbp]
  \centering
  \includegraphics[trim = 10mm 10mm 10mm 10mm, clip, width=.32\textwidth,keepaspectratio]{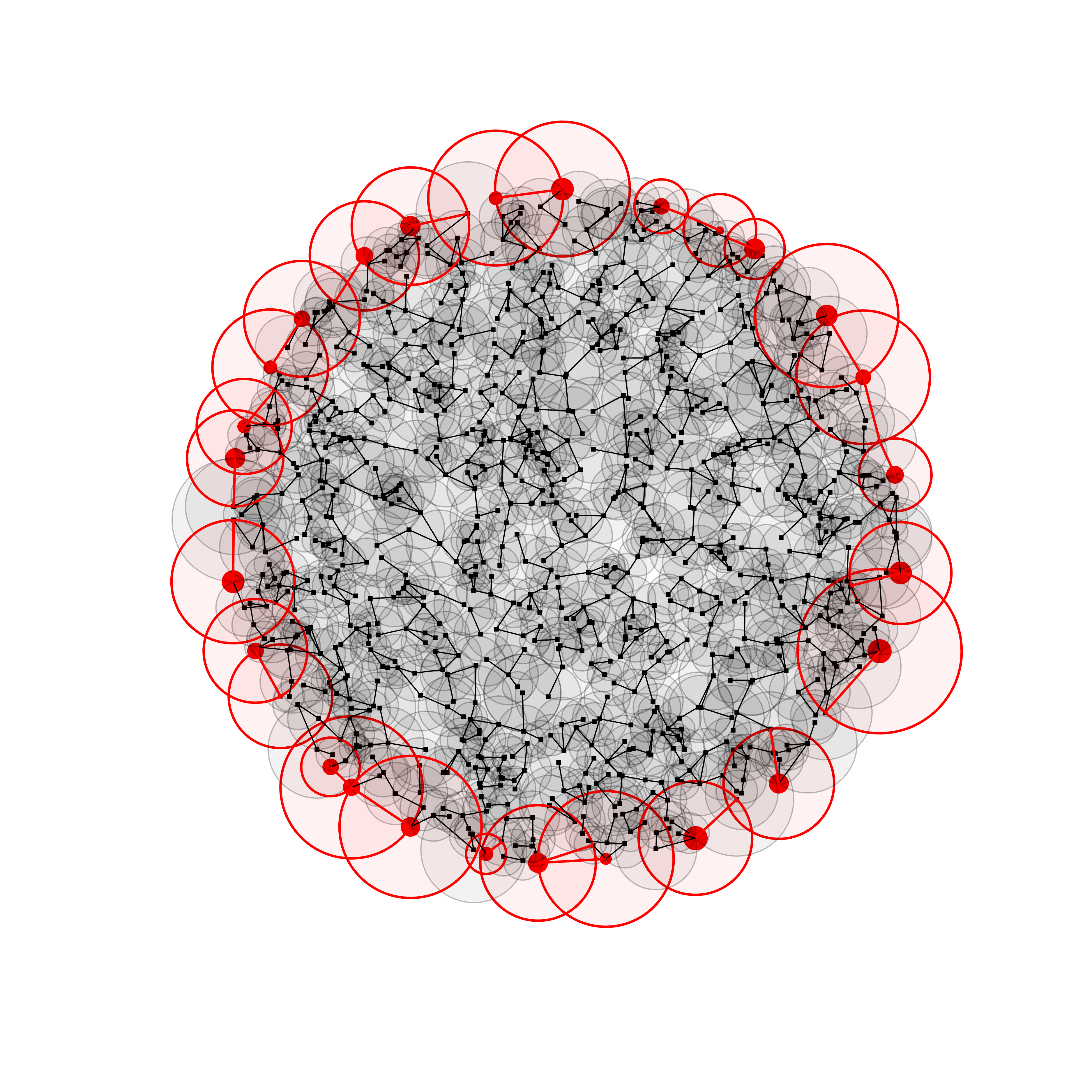}
  \includegraphics[trim = 0mm 0mm 10mm 0mm, clip, width=.60\textwidth,keepaspectratio]{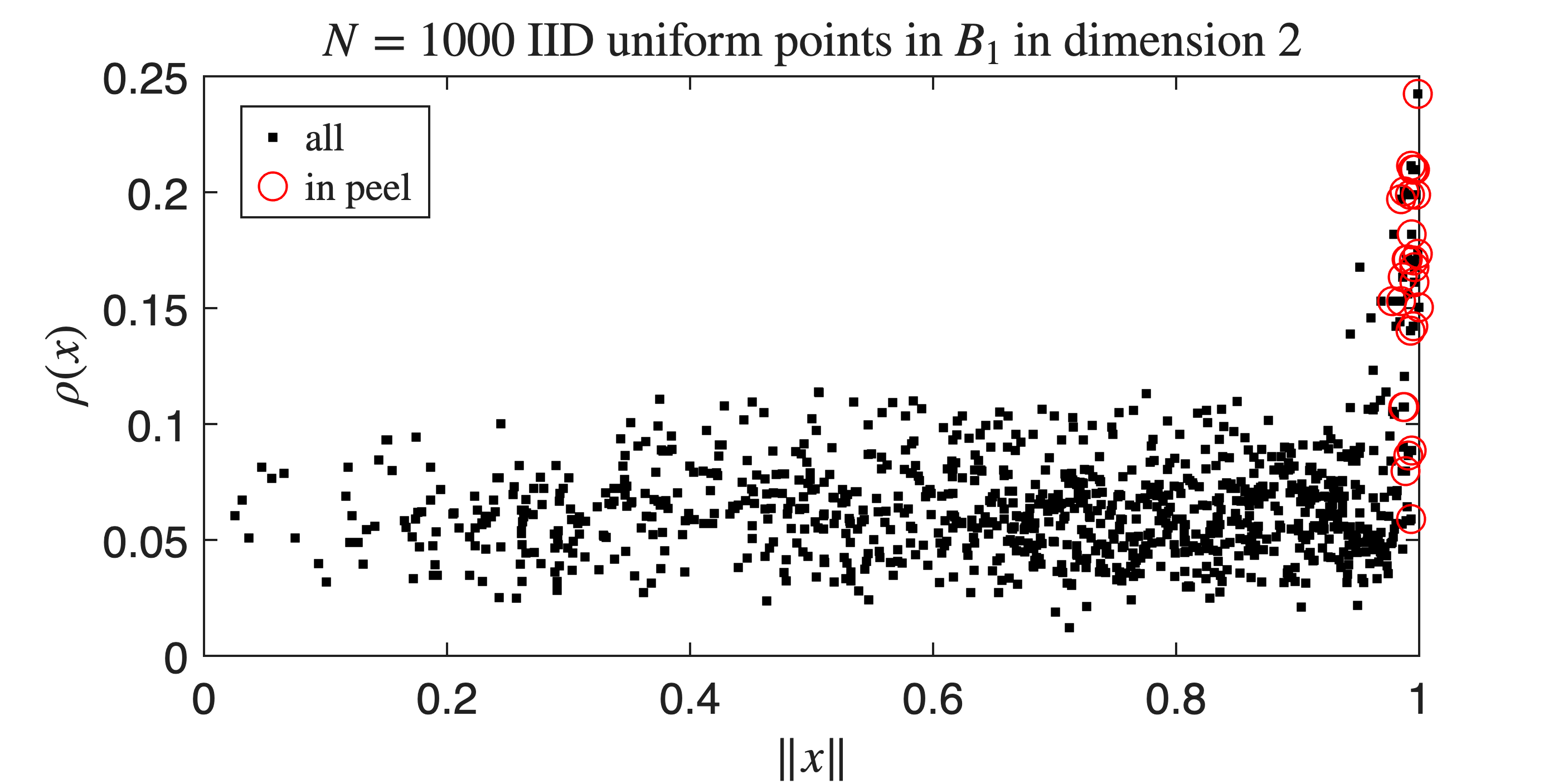}
\caption{Left: unthresholded (for thresholding, see below) peel neighborhoods of 1000 IID uniform points in $B_1 \subset \mathbb{R}^2$, indicated by shaded disks and line segments from basepoints to the point at distance $\rho(x)$. {\color{red}Neighborhoods and segments for points in the overall peel are shown in pink and red}, while others are shown as gray and black. The overall peel and its heuristic approximation are exactly equal here, as are the exact and heuristic approximations of all peel neighborhoods.
Right: $\rho(x)$ is larger for points in the overall peel, which comprise a notion of boundary. Not shown: imposing a practical default radial threshold discussed below affects just 58 of 1000 neighborhoods, near the upper envelope of the plotted points.} 
  \label{fig:rho_ball_2}
\end{figure}

\begin{figure}[htbp]
  \centering
  \includegraphics[trim = 10mm 10mm 10mm 10mm, clip, width=.32\textwidth,keepaspectratio]{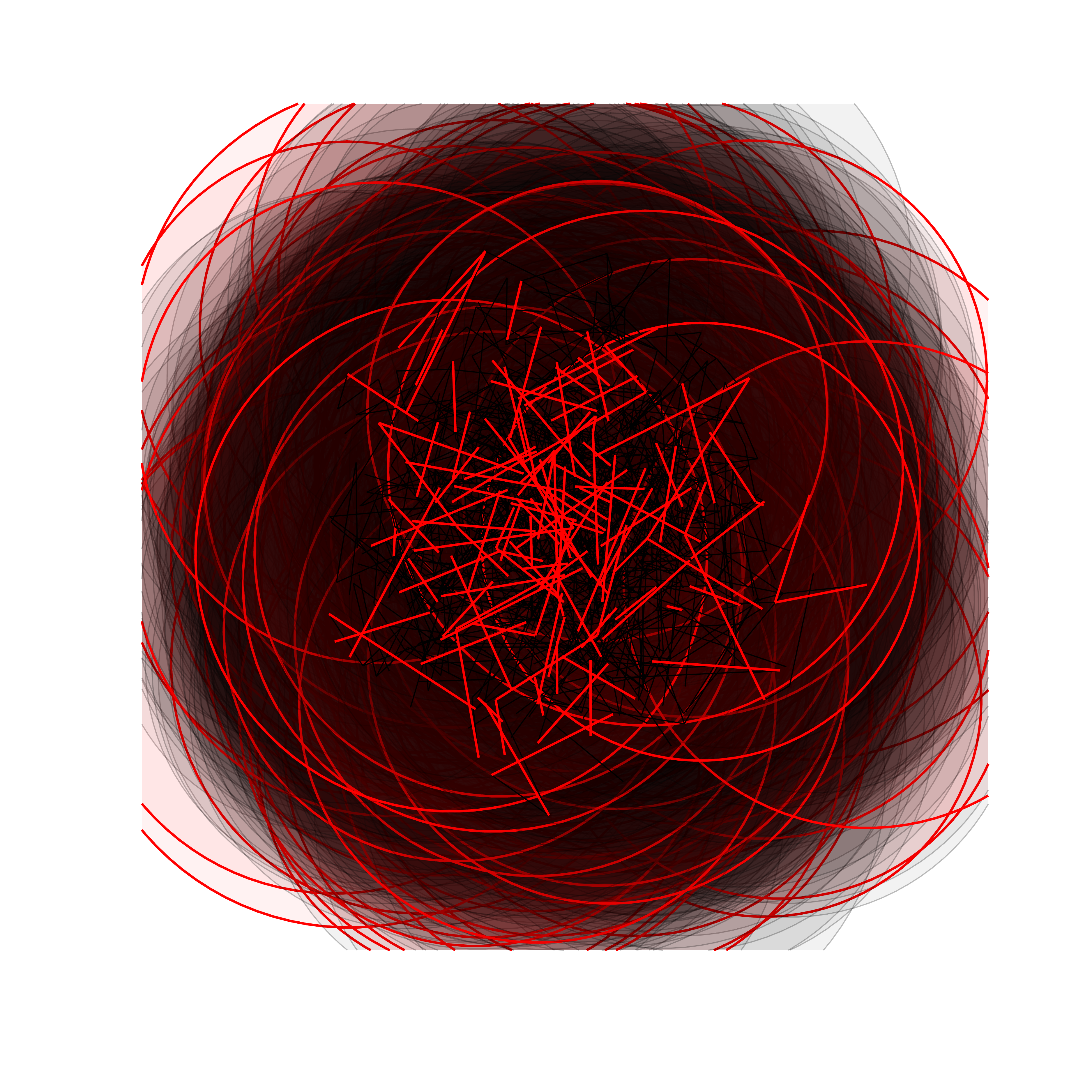}
  \includegraphics[trim = 0mm 0mm 10mm 0mm, clip, width=.60\textwidth,keepaspectratio]{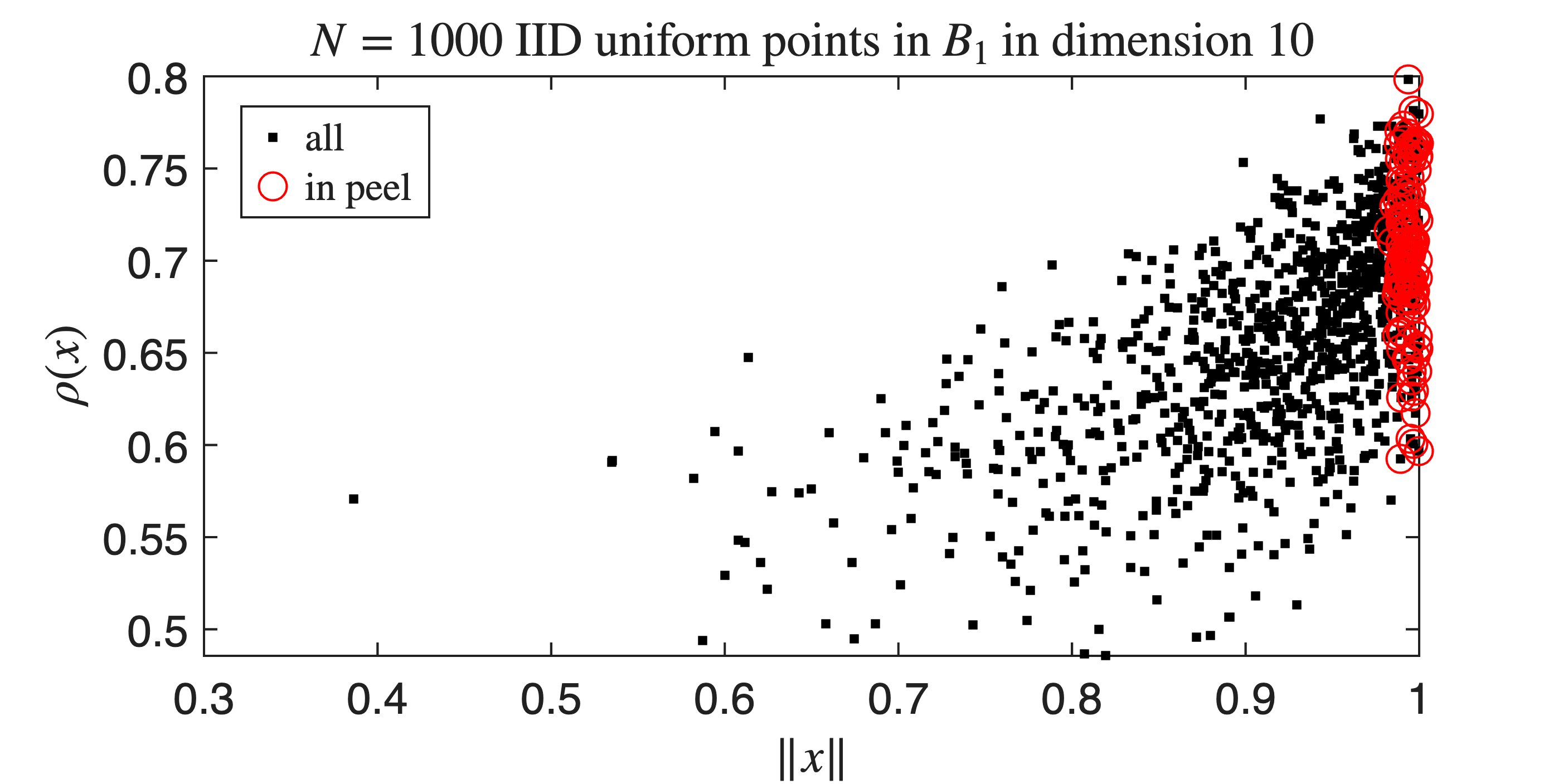}
\caption{As in Figure \ref{fig:rho_ball_2}, but for $B_1 \subset \mathbb{R}^{10}$. 
The overall peel and its heuristic approximation respectively support 99.997\% and 100\% of their corresponding distributions on their intersection; only two peel neighborhoods differ, with heuristic radii $\approx 0.9949$ and $\approx 0.9961$ times the exact values, and one fewer point in each of the two neighborhoods.
Not shown: despite the large radii of the peel neighborhoods, none contains more than 11 points, and 752 of 1000 peel neighborhoods contain six or fewer points. Imposing a practical default radial threshold discussed below affects just 74 of 1000 neighborhoods, near the upper right corner of the plot.} 
  \label{fig:rho_ball_10}
\end{figure}

\begin{figure}[htbp]
  \centering
  \includegraphics[trim = 15mm 50mm 15mm 0mm, clip, width=.9\textwidth,keepaspectratio]{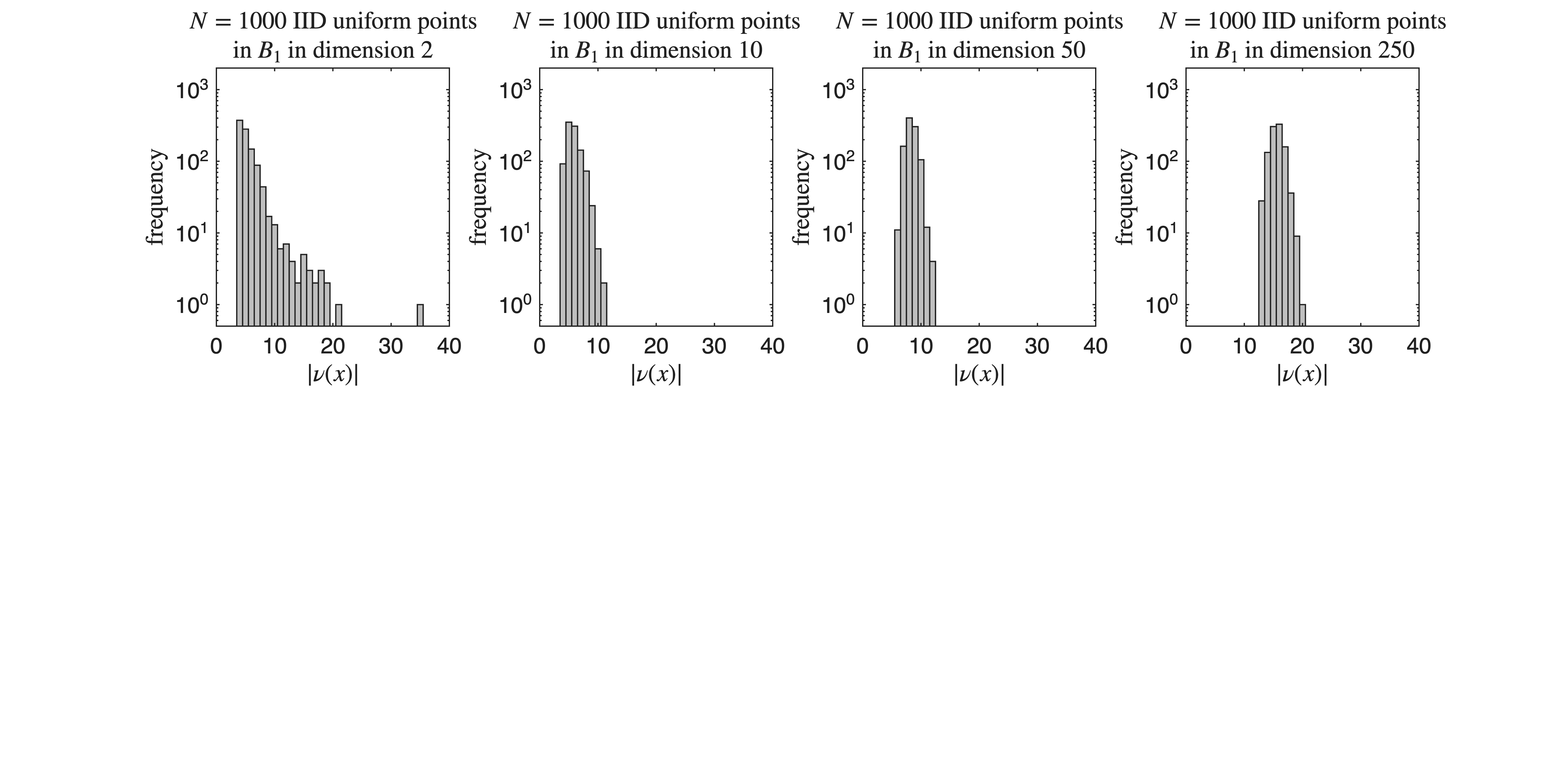}
\caption{Histograms of the cardinalities of unthresholded peel neighborhoods for 1000 IID uniform points in $B_1 \subset \mathbb{R}^m$ for $m \in \{2,10,50,250\}$. 
The exact neighborhoods and their heuristic approximations are identical except for the case $m = 10$, where two of the 1000 heuristic approximations are inexact, with radii $\approx 0.9949$ and $\approx 0.9961$ times the exact values, and one fewer point in each of these two peel neighborhood approximations (cf. Figure \ref{fig:rho_ball_10}, which used the same pseudorandom number generator seed to generate identical data).
} 
  \label{fig:rho_ball_histograms}
\end{figure}

Note that it may be the case that $\rho(x) = \infty$, so that $\nu(x) = X$. It is tempting to speculate that $\rho(x) = \infty$ and $\nu(x) = X$ iff $x \in \textnormal{peel}(X)$, but the example in Figures \ref{fig:letters_peel} and \ref{fig:letters_peel_neighborhood_graph_nearly_full} shows that this is not the case. The example in Figure \ref{fig:letters_peel_neighborhood_graph_nearly_full} also illustrates that it is usually practically expedient to consider a soft threshold on $\rho(x)$ or a maximum number of points in a peel neighborhood. In other words, in practice we grow a ball and check its peel, stopping as soon as the radius or number of points exceeds a threshold, as in Figure \ref{fig:letters_peel_neighborhood_graph}. We usually do not bother with any explicit notation for thresholded peel neighborhoods (Figure \ref{fig:letters_peel_neighborhood_graph} is an exception).

\begin{figure}[htbp]
  \centering
  \includegraphics[trim = 48mm 39mm 40mm 34mm, clip, width=.9\textwidth,keepaspectratio]{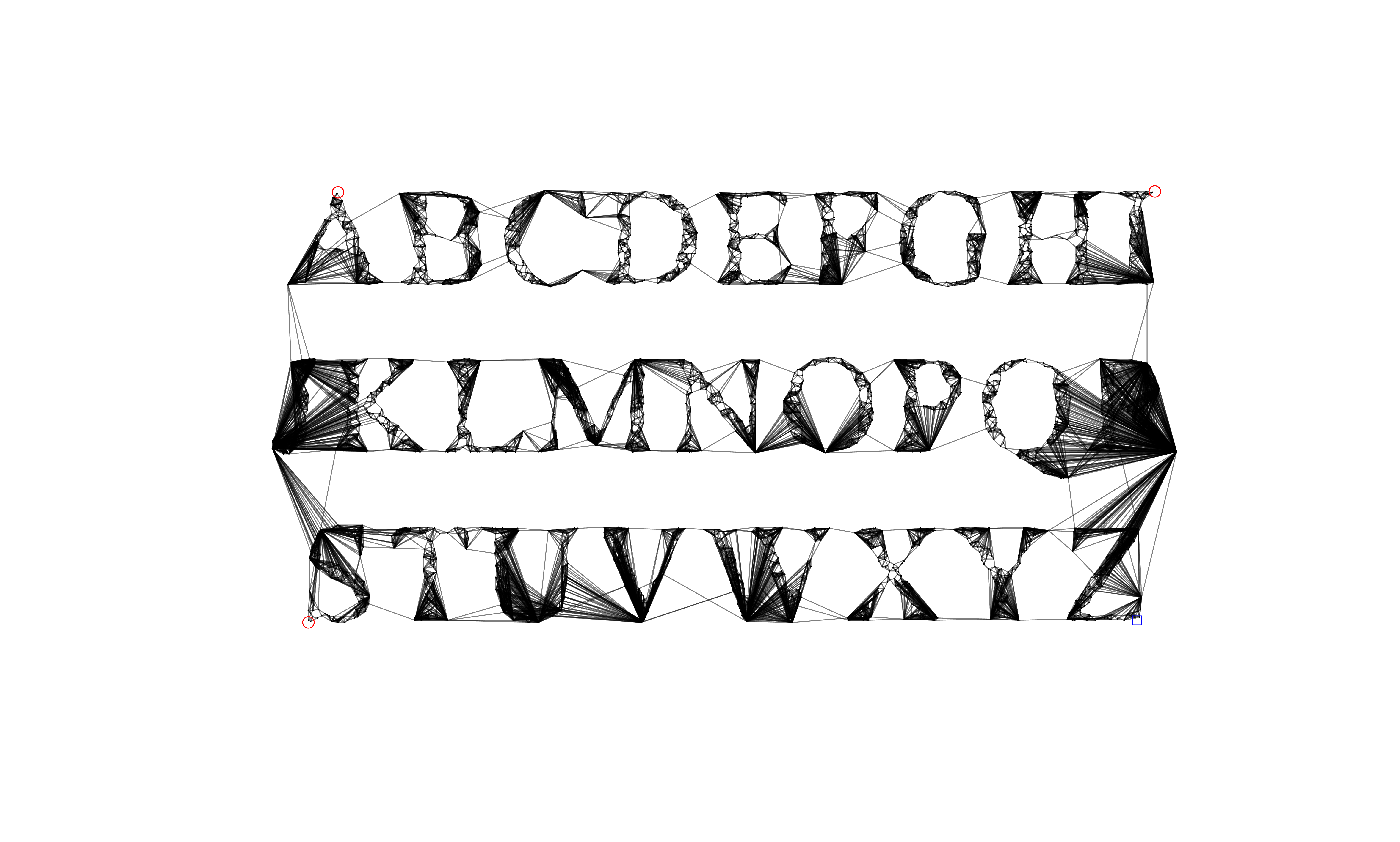}
\caption{A graph on $N = 4782$ vertices embedded in $\mathbb{R}^2$ with edge set contained in $E_\nu := \{(j,k): j \in [N], k \in \nu(j) \} \cup \{(j,k): k \in [N], j \in \nu(k) \}$. {\color{red}Vertices at extreme points circled in red are those for which $\nu(x) = X$, and the corresponding edges are excluded}; {\color{blue}the vertex at the lower right corner inside a blue square has very large but finite $\rho(x)$: the corresponding peel neighborhood is also excluded here, but reaches to the letters F, N, and V}. 
The exact neighborhoods and their heuristic approximations are identical.
} 
  \label{fig:letters_peel_neighborhood_graph_nearly_full}
\end{figure}

\begin{figure}[htbp]
  \centering
  \includegraphics[trim = 48mm 39mm 40mm 34mm, clip, width=.9\textwidth,keepaspectratio]{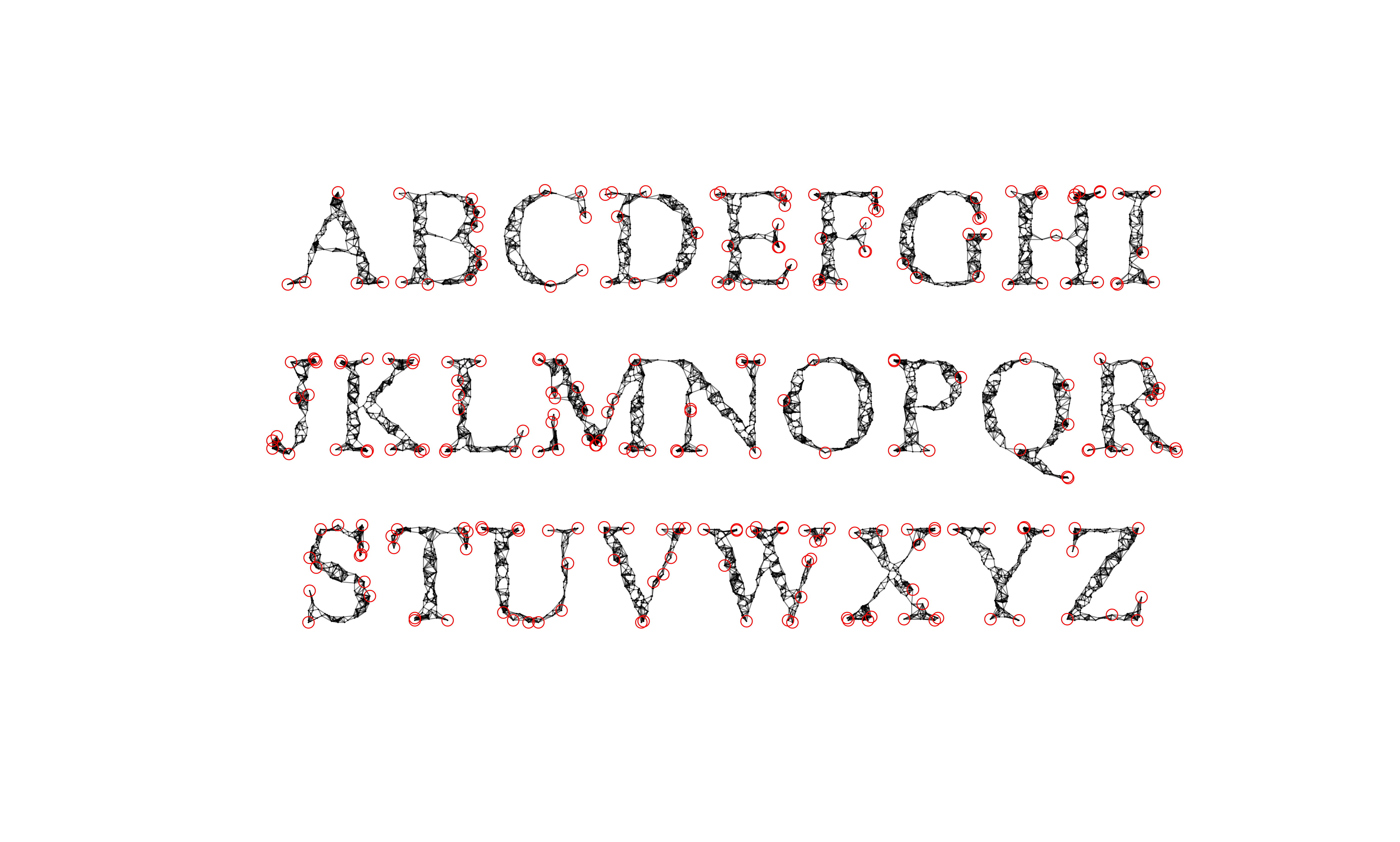}
\caption{A graph on $N = 4782$ vertices embedded in $\mathbb{R}^2$ with the edge set $E_{\rho<R} :=\{(j,k): j \in [N], k \in \nu_{<R}(j) \} \cup \{(j,k): k \in [N], j \in \nu_{<R}(k) \}$, where $\nu_{<R}(x)$ indicates a thresholded peel neighborhood with radius $\min\{R,\rho(x)\}$ with the threshold $R$ the default described in the main text. {\color{red}Vertices circled in red are those for which the default threshold in the main text is reached: these capture a notion of local extreme points.} Note that this graph exhibits an excellent clustering.
There is no difference here between the exact thresholded peel neighborhoods and the thresholded heuristic.
} 
  \label{fig:letters_peel_neighborhood_graph}
\end{figure}

In practice, we efficiently compute approximate nearest neighbors using a hierarchical navigable small world graph \cite{malkov2018efficient}. We typically set a radius threshold to $(1+1/m)$ times the median $k$th nearest neighbor distance, where $k = \lceil \log_2 |X| \rceil$. This threshold typically strikes a reasonable balance between connectivity and efficiency in low to moderate data dimension. The model and Poisson process experiments in \S \ref{sec:aUsefulModel} and our other experiments in \S \ref{sec:experiments} suggest that this is a conservative and effective bound in practice that avoids having a few points take much longer to handle than all of the others combined. 
All of the computational demonstrations in this paper use this default threshold unless explicitly stated otherwise. 
\footnote{Any soft threshold of practical use can itself be largely canonicalized while saving computational effort at the expense of some added code complexity. The idea is to predict and compute peel neighborhoods that are small first, keeping track of neighborhoods that have not been fully formed while expanding the set of small neighborhoods that are fully formed; and to only determine the large ``boundary-like'' approximate peel neighborhoods at the end of this process, capping their size at or slightly above the sizes of their already-computed neighbors. We do not take this route here, but we anticipate taking it in practice going forward.}

Thresholding peel neighborhoods avoids computing many peels of $B_r(x)$ for many values of $r$ when $x$ is an outlier. This allows substantial variability in the radii of peel neighborhoods without substantial risk of ``oversmoothing'' and keeps the computation of thresholded peel neighborhoods efficient. In particular, the number of loops in Algorithm \ref{alg:ApproximatePeel} becomes explicitly bounded or at least practically small, and the computation of peel neighborhoods scales linearly with dimension (because of atomic distance calculations), and polylogarithmically with cardinality in practice (because of approximate nearest neighbor search \cite{malkov2018efficient}). In practice, we have computed peel neighborhoods with this observed scaling (not shown) for data sets involving hundreds of thousands of points and thousands of dimensions, with the calculations requiring less than 20 minutes on average for a platform with two high-end GPUs. Note that once we have computed $\rho(x)$ for all $x \in X$, it is fairly straightforward and much less computationally burdensome to use that information for tighter (re)thresholding.

By Theorem \ref{thm:peeling}, if the cardinality of a peel neighborhood is $n$, then it can be computed in time $O(n^{\omega+1})$. More generally, if the cardinality of all (approximate) peel neighborhoods is bounded by $n$, then they can all be approximated in time $O(mn^{\omega+1} N \cdot \textnormal{polylog}(N))$. The factor of $m$ arises from the complexity of distance calculations, and there are $N$ peel neighborhoods to compute, with polylogarithmic time required for the approximate nearest neighbor search. In practice, we often use the radial threshold described above instead of a cardinality threshold to highlight isolated or boundary-like points efficiently. In any event, choosing an appropriate threshold in practice effectively yields linear scaling in both $m$ and $N$. Any such threshold, including our default choice, can be efficiently determined and justified by random sampling paired with computational timeouts.

\subsection{A useful model and a Poisson process experiment}\label{sec:aUsefulModel}

A reasonable cartoon of many high-dimensional data is that all distances to nearby points are approximately the same, that all distances between pairs of nearby points are approximately the same, and that all angles subtended by pairs of nearby points are approximately right angles. \cite{hall2005geometric} A paradigmatic example is that for $m \gg 1$, the probability mass of the standard Gaussian $\mathcal{N}(0,I_m)$ on $\mathbb{R}^m$ is concentrated in a narrow shell of radius $\sqrt{m}$; the probability mass of distance between two points drawn from $\mathcal{N}(0,I_m)$ is approximately $\mathcal{N}(\sqrt{2(m-1)},1)$ and hence concentrated near $\sqrt{2(m-1)}$; and the inner product of two points will be $\pm O(1/\sqrt{m})$, so the angle they subtend will be $\pi/2 \pm O(1/\sqrt{m})$. \cite{blum2020foundations}

In other words, under a Gaussian \emph{Ansatz} and suppressing irrelevant factors of $\sqrt{m}$, a reasonable model distance matrix on $N$ points for $m \gg 1$ is
\begin{equation}\label{eq:modelDistanceMatrix}
    D_N := \begin{pmatrix}
        0 & 1_{N-1}^T \\
        1_{N-1} & \sqrt{2} \cdot (11^T-I_{N-1})
    \end{pmatrix}
\end{equation}
which has inverse
\begin{equation}\label{eq:modelDistanceMatrixInverse}
    D_N^{-1} = \frac{\sqrt{2}}{2(N-1)} \begin{pmatrix}
        -2(N-2) & \sqrt{2} \cdot 1_{N-1}^T \\
        \sqrt{2} \cdot 1_{N-1} & 11^T-(N-1)I_{N-1}
    \end{pmatrix}.
\end{equation}
Because \eqref{eq:modelDistanceMatrix} is the distance matrix of the origin and points of a regular simplex centered at the origin in sufficiently high dimension, it is strict negative type. We have that
\begin{equation}\label{eq:modelDistanceMatrixInverseONE}
    D_N^{-1} 1_N := \frac{\sqrt{2}}{2} \begin{pmatrix}
        -2\frac{N-2}{N-1} + \sqrt{2} \\
        \frac{\sqrt{2}}{N-1} 1
    \end{pmatrix}.
\end{equation}

Now the origin is not in $\textnormal{peel}(D_N)$ iff $\sqrt{2} (N-1) \le 2(N-2)$, or equivalently iff 
\begin{equation}\label{eq:modelDistanceMatrixThresholdN}
    N \ge \frac{4 - \sqrt{2}}{2-\sqrt{2}} \approx 4.414.
\end{equation}
This suggests that for $m \gg 1$, the peel neighborhood of a basepoint surrounded by standard Gaussian-distributed points will have around five points, counting the basepoint itself. This is a remarkable if soft invariance with respect to dimension that helps explain why peel neighborhood cardinalities are so low in Figure \ref{fig:rho_ball_histograms}. We illustrate this phenomenon in Figure \ref{fig:origin_gaussian}.

\begin{figure}[htbp]
  \centering
  \includegraphics[trim = 15mm 52.5mm 15mm 0mm, clip, width=.9\textwidth,keepaspectratio]{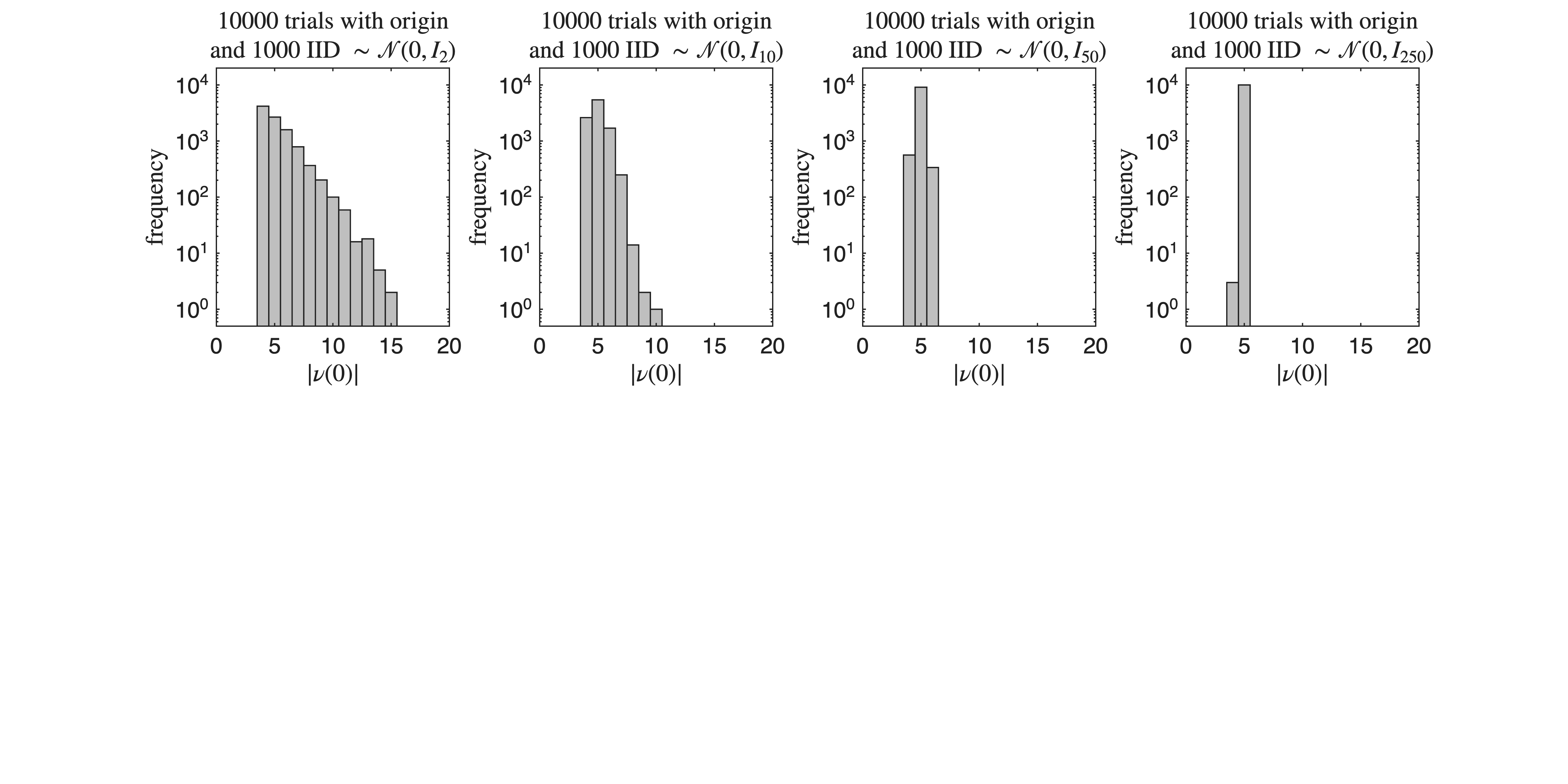}
\caption{Histograms of the cardinalities of unthresholded peel neighborhoods of the origin amongst 1000 IID $\mathcal{N}(0,I_m)$ points, for $m \in \{2, 10, 50, 250\}$. 
The exact neighborhoods and their heuristic approximations are always identical here.}
  \label{fig:origin_gaussian}
\end{figure}

We also performed a similar experiment with unthresholded peel neighborhoods of uniformly distributed points (i.e., a Poisson process) on flat tori $(\mathbb{R}/\mathbb{Z})^m$. However, unless the number $N$ of points grows exponentially with $m$, sparsity controls the cardinalities of peel neighborhoods in this setting. Figure \ref{fig:poisson} still shows clear evidence of convergence as $m$ grows, albeit illustrated over a smaller range and using more points to prevent sparsity from controlling. Reducing $N$ from $10^5$ in Figure \ref{fig:poisson} to $10^4$ results in nearly identical histograms, and even for $N = 10^3$ the quantitative effects are relatively small, on the order of 10 percent smaller maximal bin populations and longer tails for $N$ smaller (not shown). Meanwhile, since peel neighborhoods tend to not have many more than five to ten elements, a sample of more than a thousand ($\approx 2^{10}$) or so points from a constant-intensity Poisson point process on a flat torus can generally be expected to have peel neighborhoods rarely if ever saturate a soft threshold: we have observed this for dimensions $2 \le m \le 10$.

Figure \ref{fig:poisson_threshold} illustrates that unthresholded peel neighborhood radii are well below the default radial threshold across the range $10^3 \le N \le 10^5$ and $2 \le m \le 10$. In short, the default threshold is only triggered for certain boundary-like points (or very sparse data in high dimension, which is in many ways a subcase). Recall that Figure \ref{fig:letters_peel_neighborhood_graph} illustrated this same point. Viewed from this perspective, the default threshold offers analytical as well as computational benefits, with little to no impact on points where the notion of a peel neighborhood is most directly relevant: i.e., interior-like points.

\begin{figure}[htbp]
  \centering
  \includegraphics[trim = 0mm 0mm 0mm 0mm, clip, width=\textwidth,keepaspectratio]{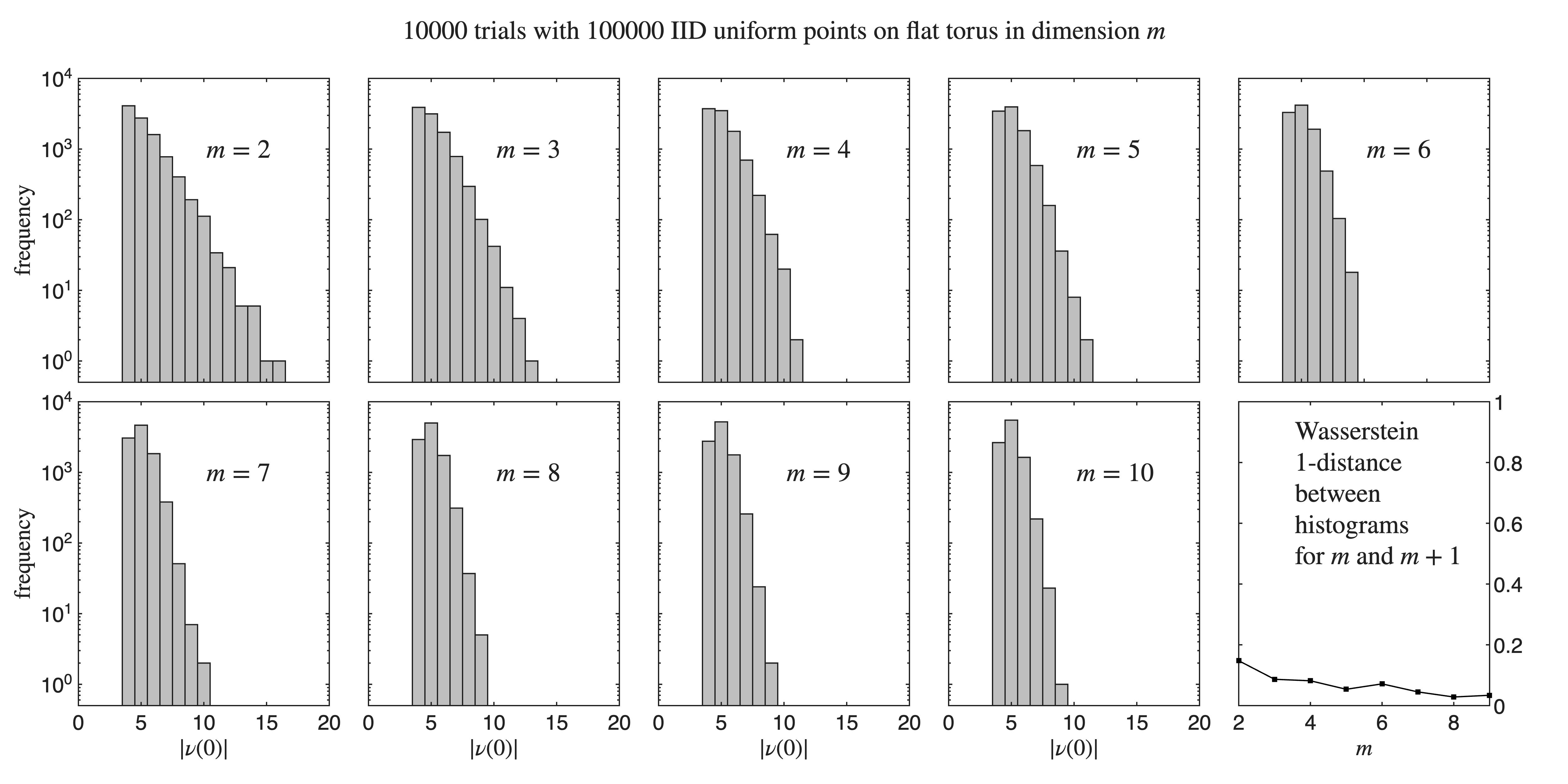}
\caption{First nine panels: histograms of the cardinalities of unthresholded peel neighborhoods for 100000 IID uniform points on $(\mathbb{R}/\mathbb{Z})^m $ for $m \in \{2, \dots, 10\}$. Lower right panel: the 1-Wasserstein distance between successive histograms. 
The cardinalities of exact peel neighborhoods and their heuristic approximations are very nearly equal and visually indistinguishable. There are just six disagreements out of $90000$ peel neighborhood cardinalities, with heuristic radii between 0.98 and 1 times the exact radii except in a single case, where the heuristic radius is $\approx 0.84$ times the radius of the exact radius.
(NB. An earlier version of this paper on arXiv had a bug in the underlying distance matrix computations that yielded different but still well-behaved histograms.)
}
  \label{fig:poisson}
\end{figure}

    

\begin{figure}[htbp]
  \centering
  \includegraphics[trim = 5mm 0mm 5mm 0mm, clip, width=.48\textwidth,keepaspectratio]{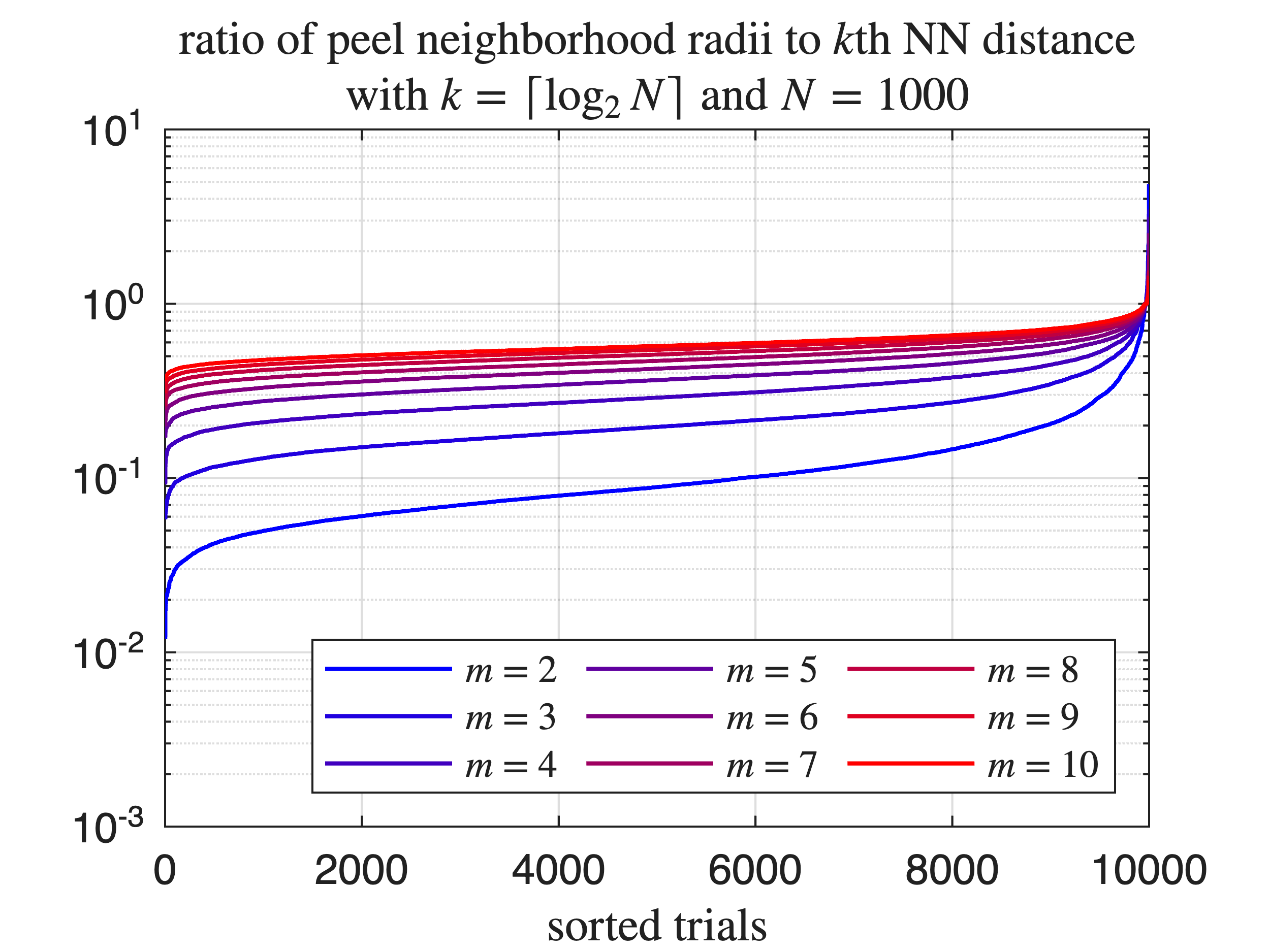}
  \includegraphics[trim = 5mm 0mm 5mm 0mm, clip, width=.48\textwidth,keepaspectratio]{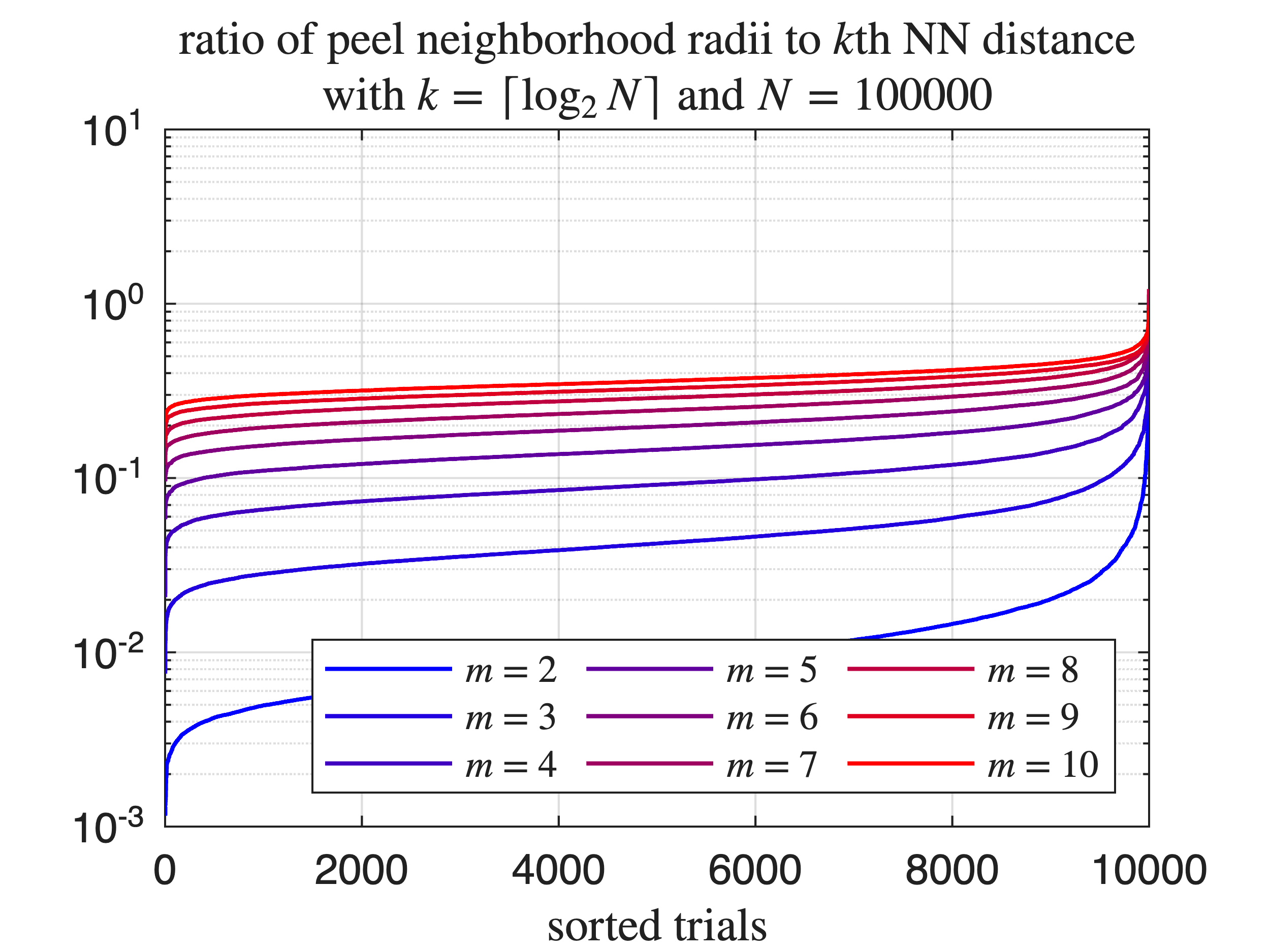}
\caption{Left: sorted ratios of unthresholded peel neighborhood radii to the $k$th nearest neighbor distance with $k = \lceil \log_2 N \rceil$ over $10^4$ trials with $N = 10^3$ IID uniform points on $(\mathbb{R}/\mathbb{Z})^m$ for $m \in \{2,\dots,10\}$. Right: as in the left panel, but for $N = 10^5$. Note that $k \gg 5$ in any practical situation.} 
  \label{fig:poisson_threshold}
\end{figure}

\section{Experiments}\label{sec:experiments}

\subsection{Comparison with fixed-radius and $k$-nearest neighborhoods}\label{sec:comparison}

Overloading a word, we will give quantitative evidence that peel neighborhoods encode an ``efficient'' notion of locality. For context, the \emph{efficiency} of a distance-weighted graph $G$ on $[N]$ is $$\textnormal{eff}(G) := \frac{1}{N(N-1)} \sum_{j,k \in [N]; j\ne k} d_{jk}^{-1},$$ where $d_{jk}$ is the shortest-path distance \cite{latora2001efficient}. More sparsity-aware variants of this are the efficiencies per edge and per length, given by $\textnormal{eff}(G)/\sum_{j,k} A_{jk}$ with $A$ respectively equal to the unweighted and distance-weighted adjacency matrix of $G$ \cite{huntsman2020fast}.

With this context, consider the ``radial'' graphs\footnote{These are usually called \emph{(random) geometric graphs} in the literature.} obtained by connecting vertices within a given radius and the ``$k$NN'' graphs obtained by connecting the $k$ nearest neighbors. Obviously, as the radius and/or $k$ increase, the efficiency will increase as well, but the efficiency per edge or per length may not. Meanwhile, the number of connected components is nonincreasing and ultimately decreases to one.

\begin{figure}[htbp]
  \centering
  \includegraphics[trim = 0mm 15mm 0mm 15mm, clip, width=\textwidth,keepaspectratio]{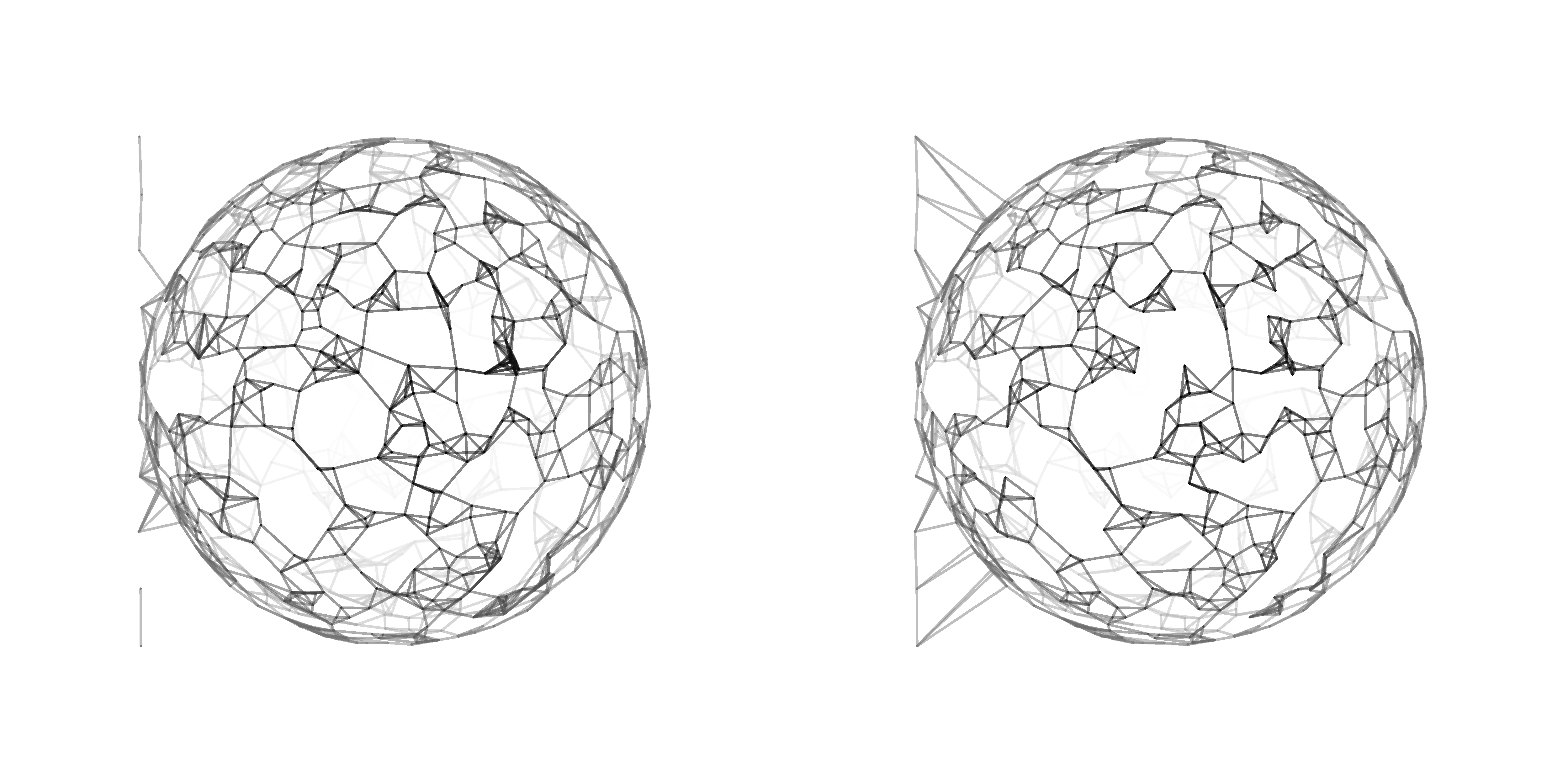}
  \includegraphics[trim = 15mm 0mm 15mm 0mm, clip, width=\textwidth,keepaspectratio]{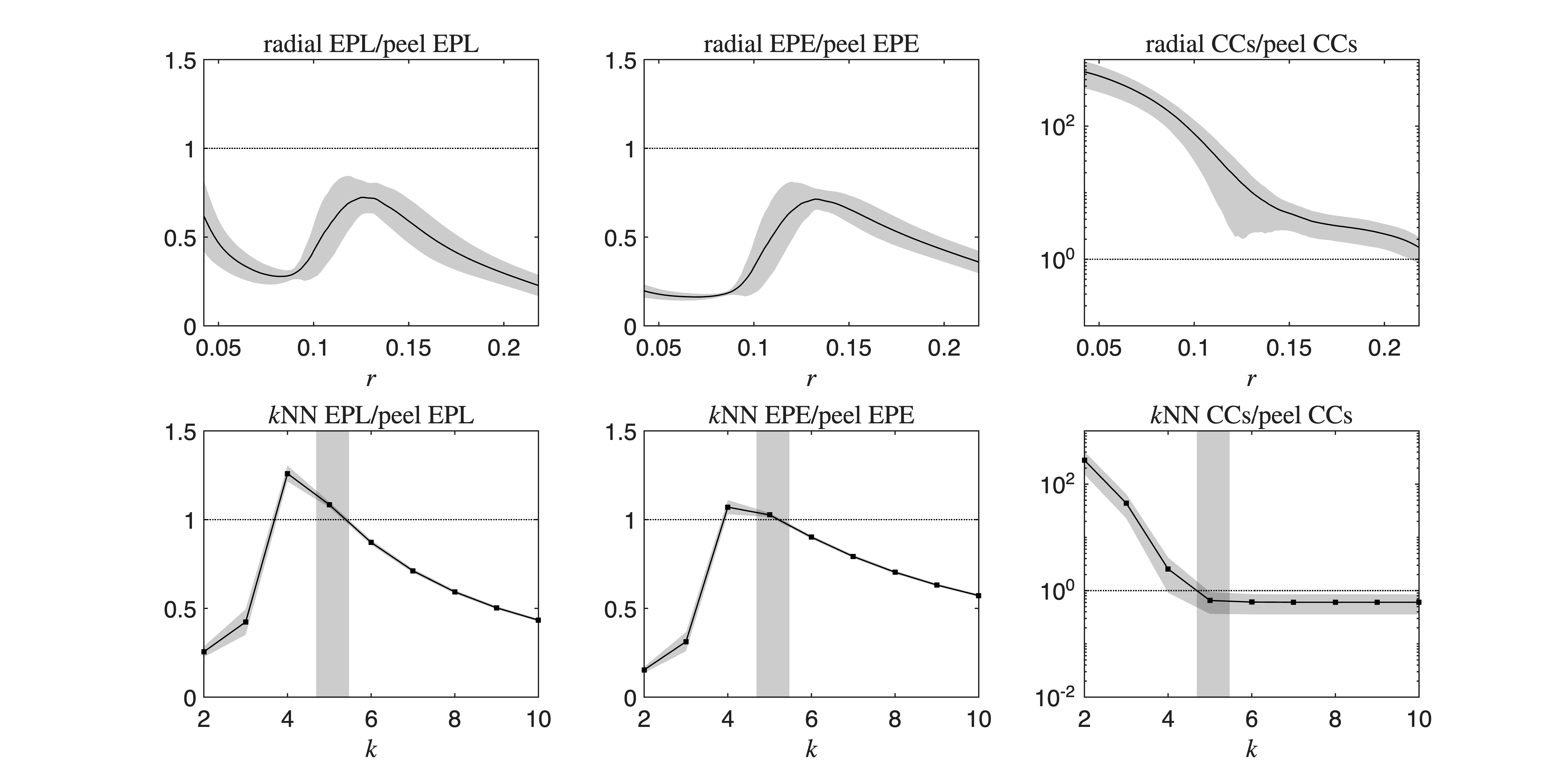}
\caption{Top left: the graph with edges given by peel neighborhoods on a uniform sample of approximately $1000$ points from a sphere $S^2 \subset \mathbb{R}^3$, plus $11$ equispaced points slightly to the left, embedded in 10 dimensions with small Gaussian noise added. 
The exact neighborhoods and their heuristic approximations are always identical here.
Edges are shaded according to their average in the third dimension. Top right: the similar graph with edges given by $k$-nearest neighbors with the minimal $k$ such that the graph has a single connected component (here, $k = 5$). Middle: the relative efficiencies per edge and per length of radial graphs \emph{versus} peel neighborhood graphs, along with the relative numbers of connected components, all as functions of radius, with means and standard deviations over 100 trials indicated. Bottom: as in the middle panels, but for $k$NN graphs. Initial values of $k$ for which the $k$NN graph is fully connected are indicated by the vertical patch of $\pm$ a standard deviation about the mean.} 
  \label{fig:neighborhood_graphs_sphere}
\end{figure}

Figure \ref{fig:neighborhood_graphs_sphere} (and a similar figure for the torus in Appendix \S \ref{sec:another51})
illustrates that the efficiencies per edge and per length are usually higher for peel neighborhoods than for radial or $k$-nearest neighborhoods, and that the number of connected components is usually lower for peel neighborhoods for most reasonable radii and for small $k$. Meanwhile, for larger $k$, efficiencies decrease and spurious connectivity emerges. Importantly, the peel neighborhoods require no hard parameter choice. As Figure \ref{fig:letters_peel_neighborhood_graph} illustrates above and Figure \ref{fig:peelNeighborhoodsAnnular} illustrates below, the thresholds only matter for a relatively few boundary-like points, so they are robust.

\subsection{Samples from annuli}\label{sec:annuli}

Figure \ref{fig:peelNeighborhoodsAnnular} shows covers consisting of peel neighborhoods with the default threshold discussed in \S \ref{sec:peelNeighborhoods}, and Figure \ref{fig:peelNeighborhoodsHoles} shows a proxy $$\min_{x \in X} (\|x\|-\rho(x))$$ for the distance from the origin to the same sort of cover of a noisy embedding of a sample from an annulus for various sample sizes, embedding dimensions, and noise levels. 
Appendix \S \ref{sec:annuli_radii} contains additional statistical detail.
Note that the first panel of Figure \ref{fig:peelNeighborhoodsAnnular} can indicate how a negative value for this proxy does not automatically imply that the topology of the cover is trivial: on the other hand, a positive value for this proxy practically means that the topology of the cover has first Betti number equal to 1. It is evident that these covers can accurately reflect the underlying topology of noisy samples in high embedding dimension, though eventually sparsity, noise, and embedding dimension can conspire to overwhelm any construction.

\begin{figure}[htbp]
    \centering
    \includegraphics[width=\textwidth, trim={0 150 0 0mm}, clip]{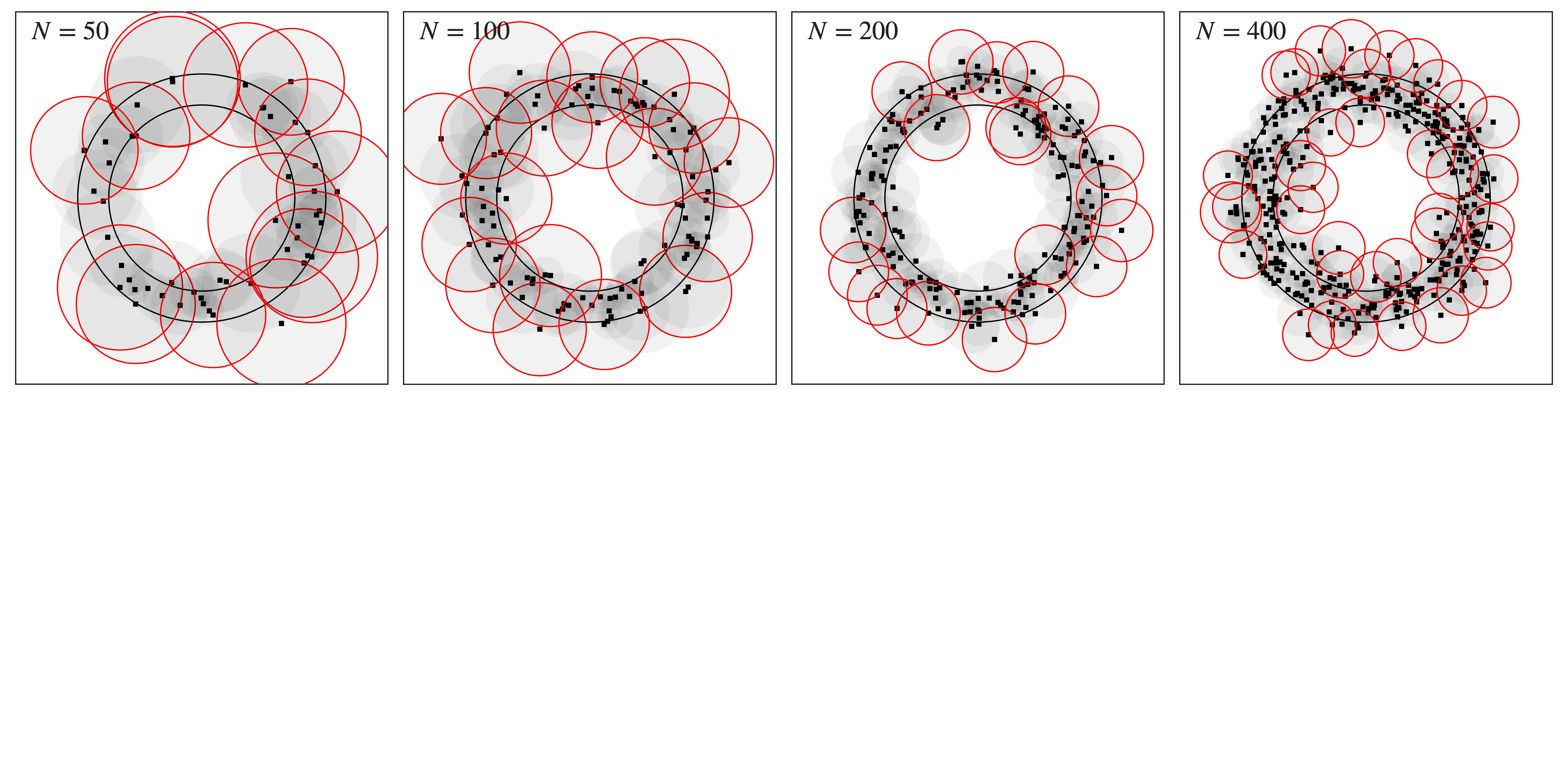}
        \caption{Peel neighborhoods of various points indicated by shaded disks, with the threshold on the radius described in \S \ref{sec:peelNeighborhoods} applied; {\color{red}neighborhoods that saturate the threshold are circled in red}.
        The exact neighborhoods and their heuristic approximations are always identical here.
        Note that saturating neighborhoods are at locally outlying points. Each sample is initially uniform over a plane annulus with outer radius 1, inner radius 0.75, and with isotropic Gaussian noise of standard deviation $0.1$ subsequently added. From left to right, top to bottom, the samples are of 50, 100, 200, and 400 points. 
        }
    \label{fig:peelNeighborhoodsAnnular}
\end{figure}

\begin{figure}[htbp]
    \centering
    \includegraphics[width=\textwidth, trim={0 0 0 0mm}, clip]{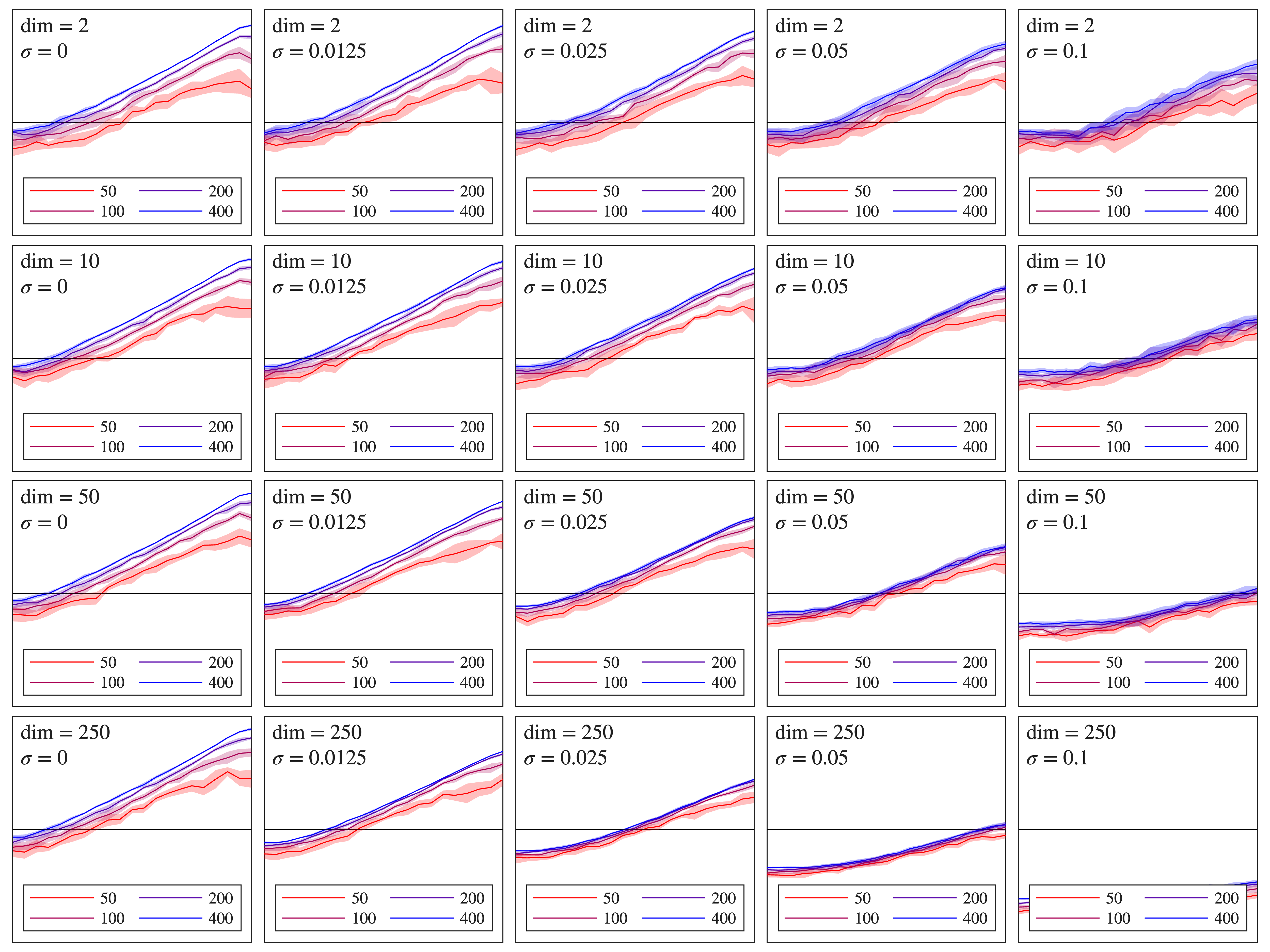}
        \caption{A proxy $\min_{x \in X} (\|x\|-\rho(x))$ for the distance from the origin to the cover of $X$ defined by its thresholded peel neighborhood, as a function of the inner radius of an annulus. Each panel is over $[0,1] \times [-1,1]$ and shows the average and standard deviation for this proxy taken over 10 realizations of $X$ sampled initially uniformly over a plane annulus with outer radius 1 and variable inner radius parametrizing the horizontal axes. Subsequently, $X$ is embedded into the number of dimensions indicated and isotropic Gaussian noise of standard deviation $\sigma$ is added. Sample sizes of 50, 100, 200, and 400 points are indicated with colors ranging from {\color{red}red} to {\color{blue}blue}. Exact and heuristic peel neighborhoods (the latter not shown) yield visually indistinguishable results, with the differences concentrated in dimension $\ge 10$ and more prevalent as noise increases. Of the 16800 total realizations, only 918 had any differences between exact and heuristic peel neighborhoods at all; of those, only eight had any heuristic peel neighborhood radii smaller than 80 percent of the actual values. 
        }
    \label{fig:peelNeighborhoodsHoles}
\end{figure}

\subsection{Peel neighborhood scaling and relation to peels}\label{sec:scaling}

We have seen that peel neighborhoods robustly encode locality, including local boundaries. A natural question is if (and how) they can encode global boundaries, including peels themselves. While Lemma \ref{lem:inclusionBound} and the propositions that follow it are the best formal results we have obtained in this direction, computational demonstrations give more insight.

Figure \ref{fig:neighborhoodPeelApproximation} compares peel distributions $p_*$ to approximations $\hat p_*$ obtained by first computing thresholded peel neighborhoods, then restricting the metric to the points whose neighborhood radii exceed the threshold. The figure shows the normalized energy distance (NED) \cite{lyons2013distance} defined as $$\textnormal{NED}(p_*,\hat p_*) := \frac{2p_*^T d \hat p_*-p_*^Tdp_*-\hat p_*^Td\hat p_*}{2 \max_{j,k} d_{jk}}.$$ The NED saturates the unit interval on the space of probability distribution pairs (to see this, use convexity of the quadratic form and the fact that distances between point masses are saturating). The figure shows that the approximation $\hat p_*$ is good in dimension two, but degrades as the dimension increases (in particular but not shown here, the NED takes values comparable to that between the uniform distribution on $\textnormal{peel}(d)$ and the uniform distribution on $[N]$, though this is still much less than 1). Still, this approximation is very interesting for potential large-scale applications.

\begin{figure}[htbp]
    \centering
    \includegraphics[width=.32\textwidth, trim={35 70 25 10mm}, clip]{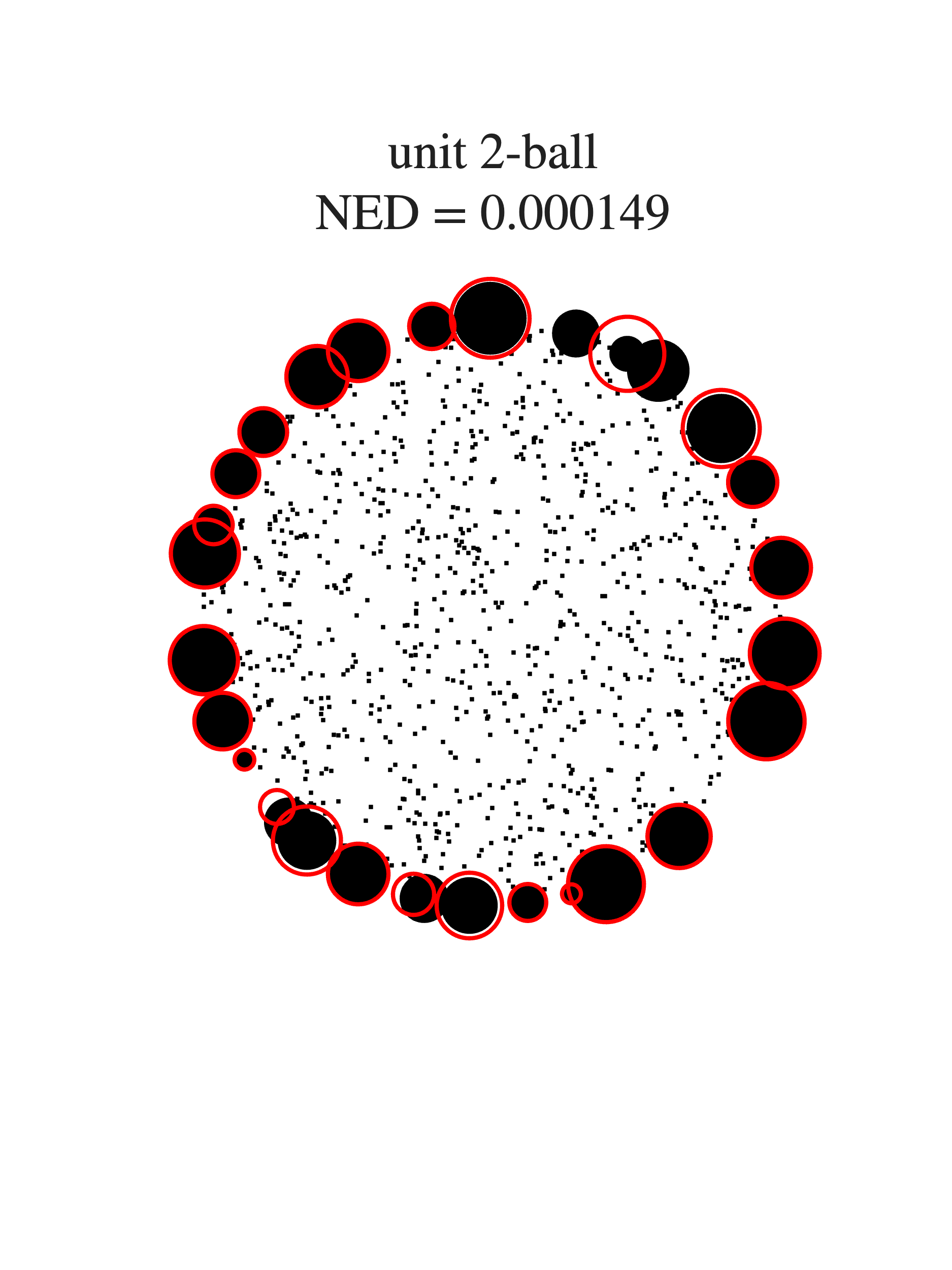}
    \includegraphics[width=.32\textwidth, trim={35 70 25 10mm}, clip]{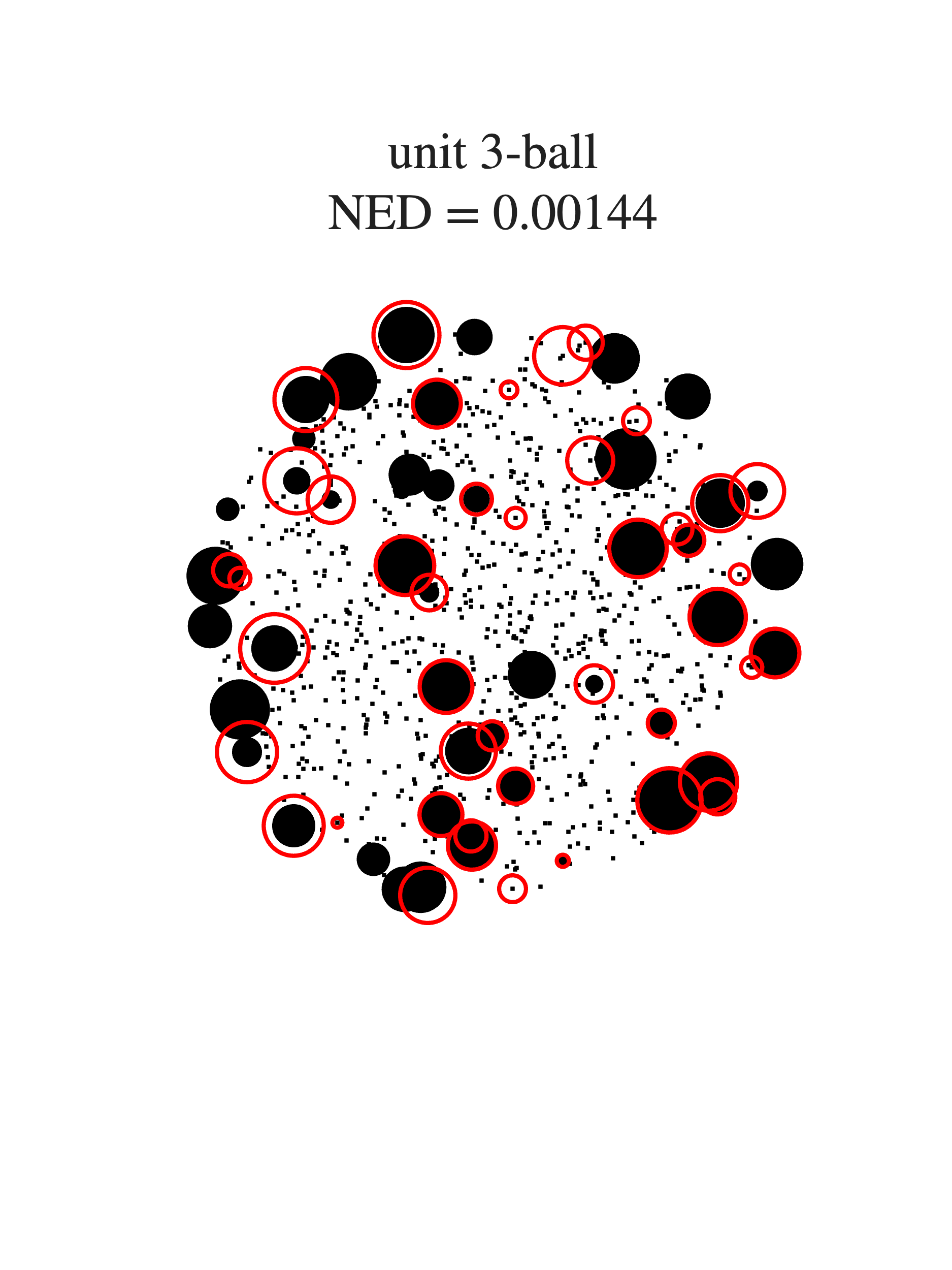}
    \includegraphics[width=.32\textwidth, trim={35 70 25 10mm}, clip]{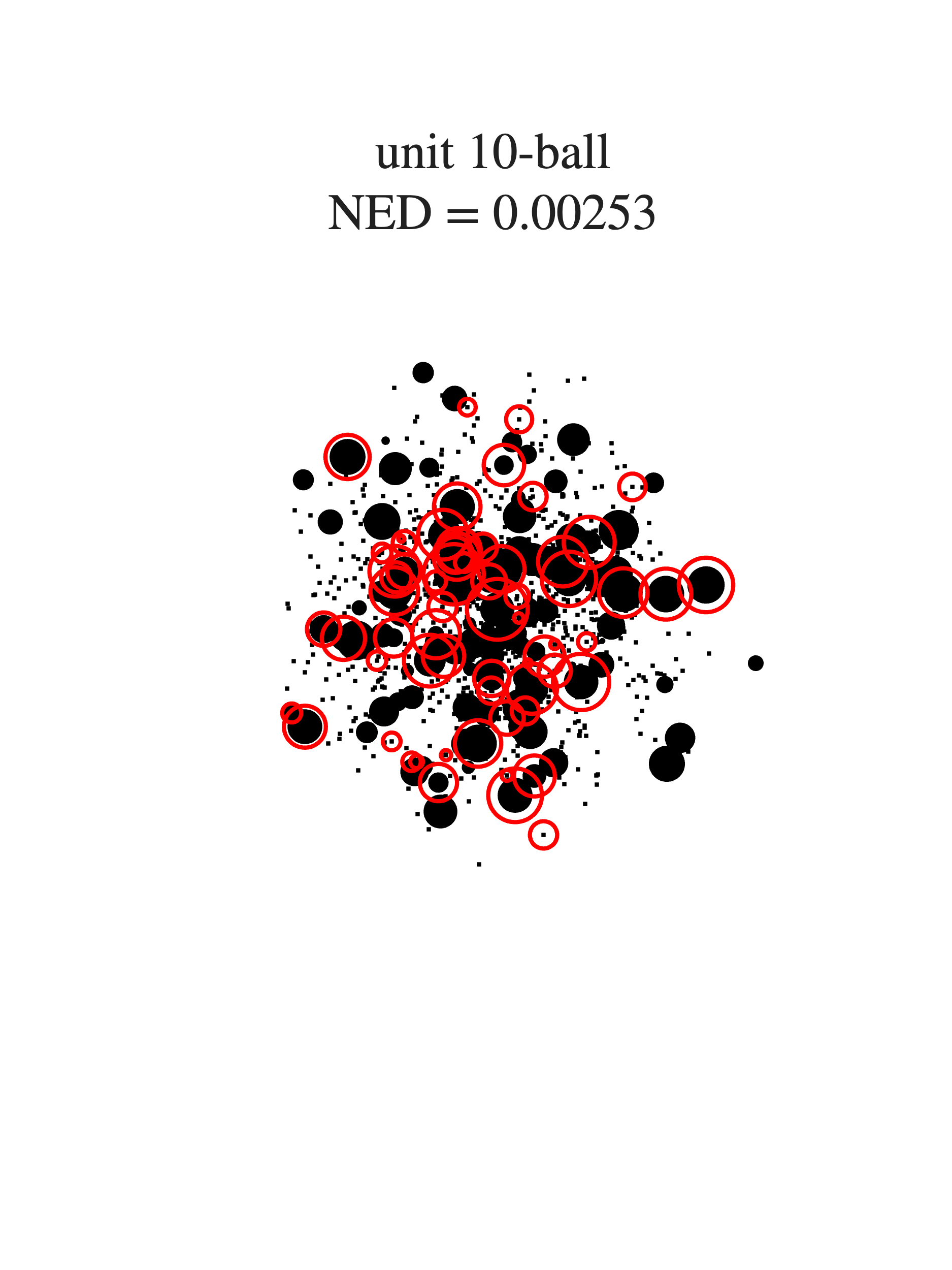}
    \includegraphics[width=.32\textwidth, trim={5 0 0 0mm}, clip]{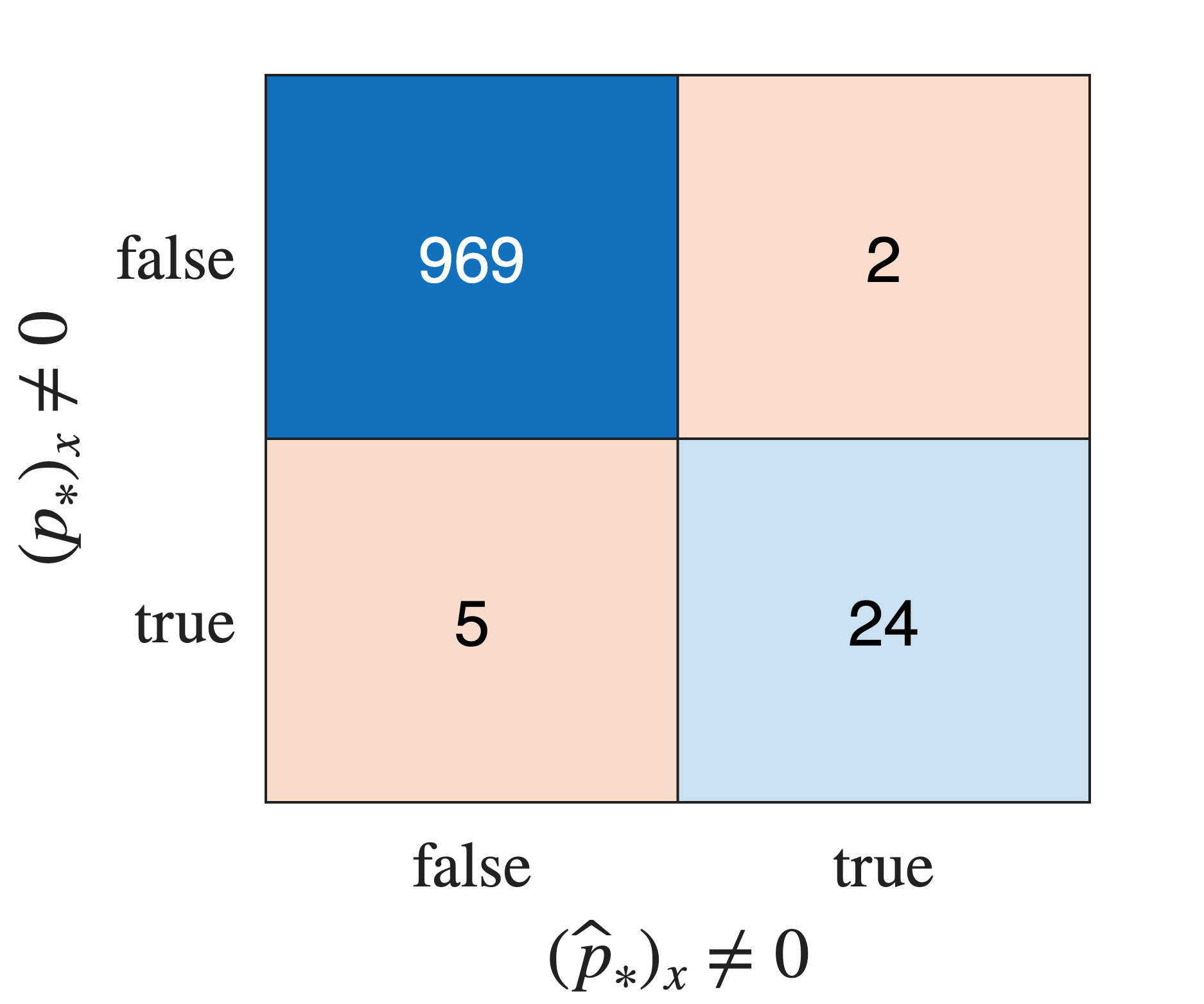}
    \includegraphics[width=.32\textwidth, trim={5 0 0 0mm}, clip]{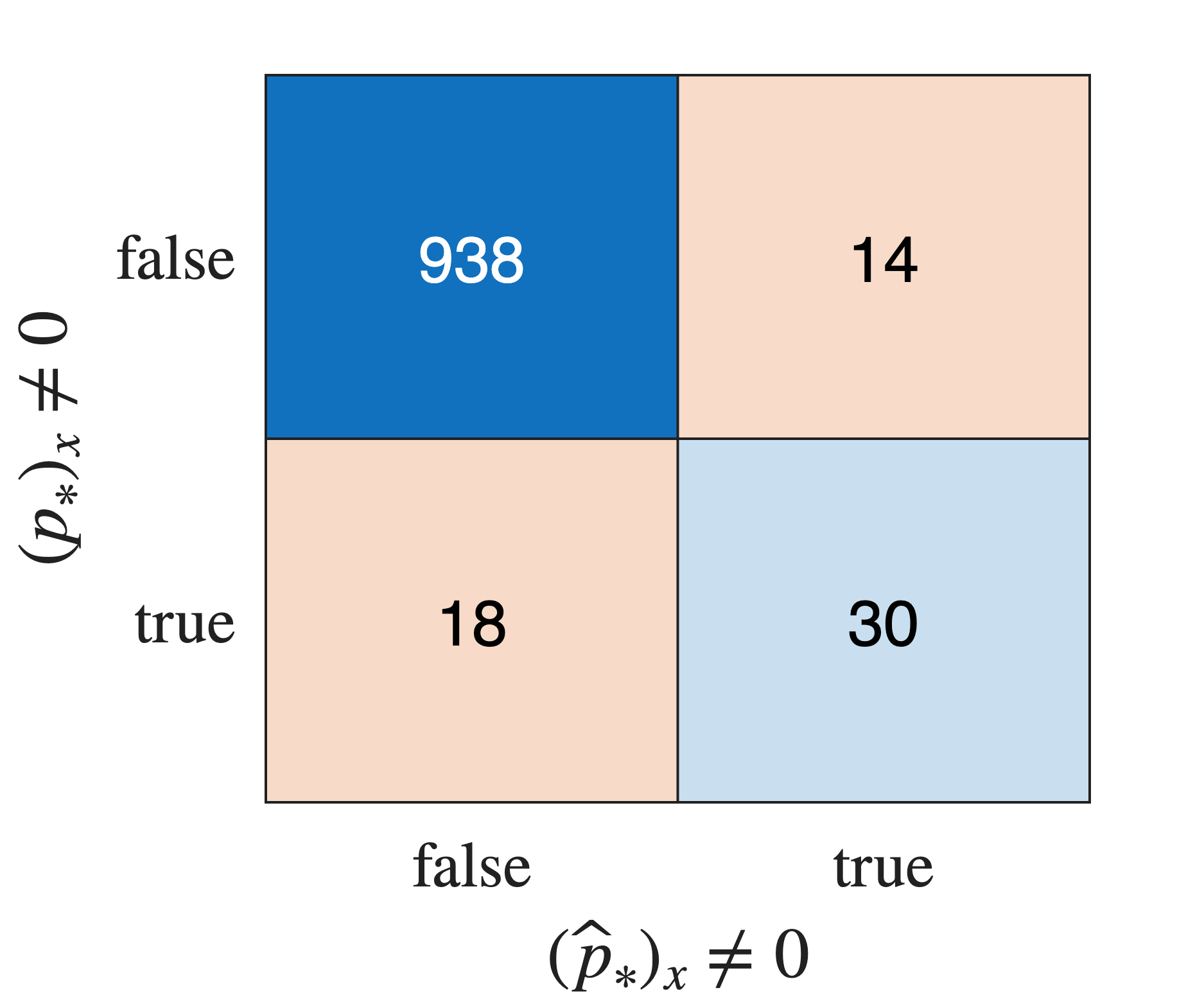}
    \includegraphics[width=.32\textwidth, trim={5 0 0 0mm}, clip]{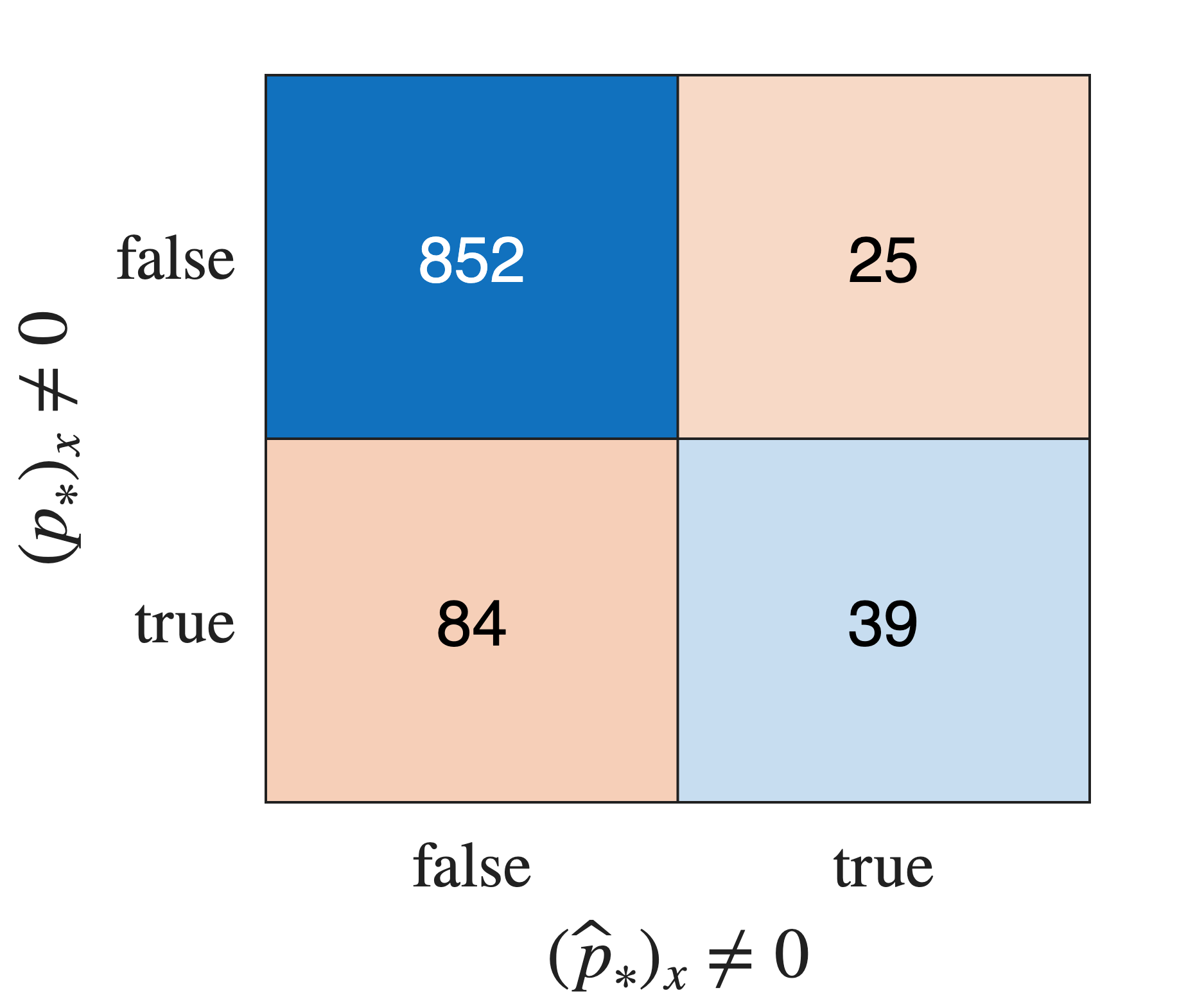}
        \caption{Top left: a uniform sample from the unit 2-ball, with peel distribution $p_*$ indicated by black disks and {\color{red}the neighborhood approximation $\hat p_*$ indicated by overlaid red circles}. The NED is very small. Lower left: the confusion matrix for zeros of $p_*$ and $\hat p_*$. Center and right: as in the left panels, but for (projections of) the 3- and 10-ball, respectively. 
        The heuristic and exact peel neighborhoods are the same except that dimension 10 case exhibits $2/1000$ discrepancies, as in Figures \ref{fig:rho_ball_10} and \ref{fig:rho_ball_histograms}.
        }
    \label{fig:neighborhoodPeelApproximation}
\end{figure}

\subsection{Local dimension estimation in sampled stratified manifolds}\label{sec:localDimension}

As a warmup, we first consider local dimension estimation for low-dimensional sampled manifolds. For $m \in \{2,3,4\}$, we sample $N = 10^m$ IID uniform points from the sphere $S^m$, embed the sample in the first dimensions of $\mathbb{R}^{10}$, and add isotropic Gaussian noise of expected norm $10^{-2}$ to every point. For various notions of local neighborhood, we compute local dimension estimates using expected simplex skewness of type a (ESSa) \cite{johnsson2014low}, representing the current state of the art \cite{binnie2025survey}.

The results of $k$NN estimates along these lines are shown in Figure \ref{fig:knn_sphere_dim}, which shows that reasonable values of $k$ lead to reasonable if noisy dimension estimates.

\begin{figure}[htbp]
    \centering
    \includegraphics[width=\textwidth, trim={0 125 0 0mm}, clip]{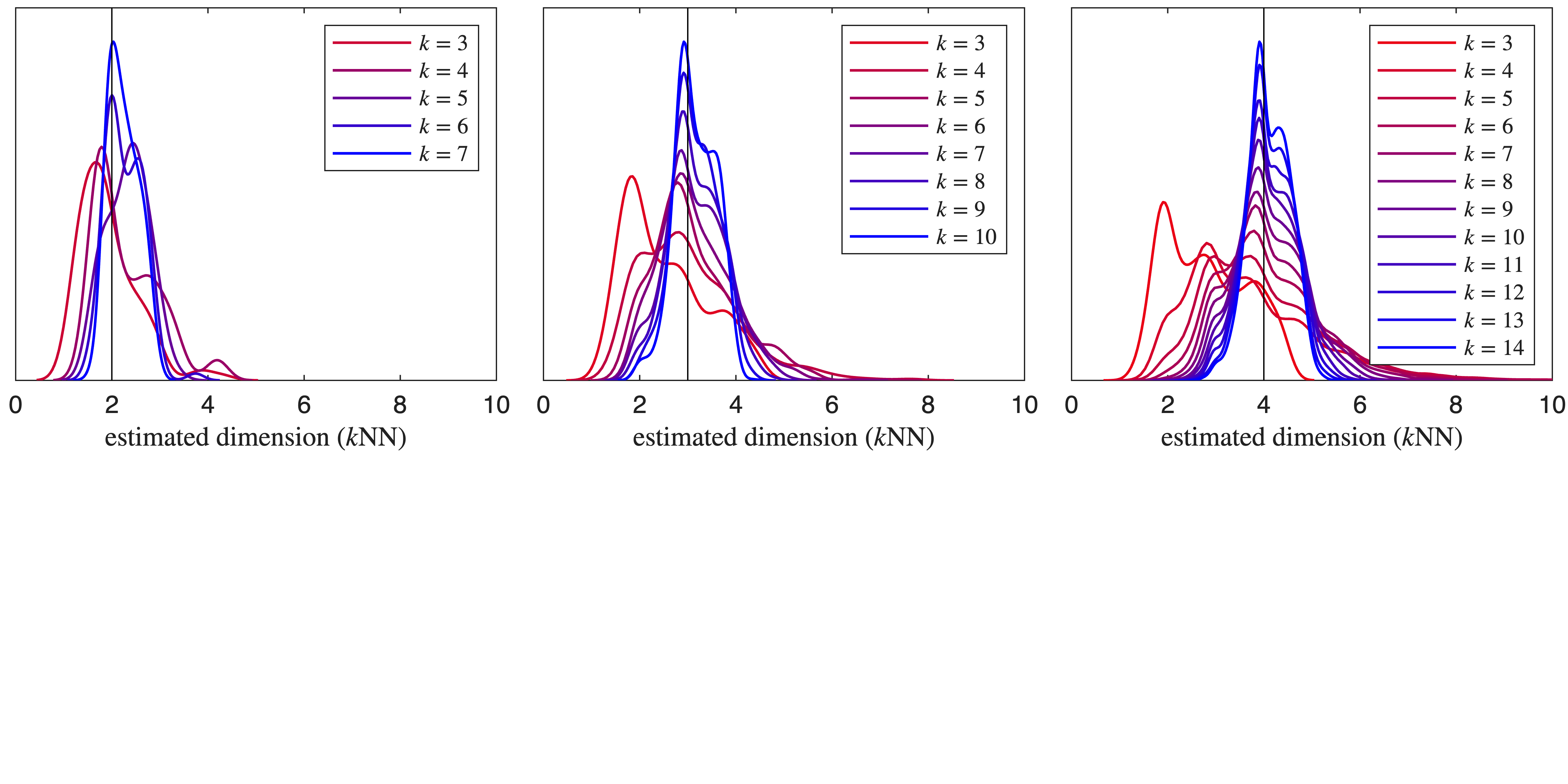}
        \caption{Left: kernel density estimates of estimated local dimension for $k$NN neighborhoods of a IID sample of size $N = 10^m$ from $S^m$ with $m = 2$, embedded in $\mathbb{R}^{10}$ with small isotropic Gaussian noise. $k$ varies from $3$ to $\lceil \log_2 N \rceil$. $m$ is indicated with a vertical line. Center and right panels: as in the left panel, but for $m = 3$ and $m = 4$, respectively. 
        }
    \label{fig:knn_sphere_dim}
\end{figure}

By comparison, the values obtained for peel neighborhoods are worse, though for related constructs they are competitive or better, as shown in Figure \ref{fig:local_sphere_dim}. The reason that peel neighborhoods are worse in this regard is because ESSa is concerned with angles, not distances \emph{per se}, and the peel neighborhood is in a sense the smallest ball that ``envelops'' its base point, so that the resulting dimension estimate is fatally biased. In other words, $\rho(x)$ is actually a reasonably hard \emph{lower} bound on the radius of a neighborhood suitable for estimating local dimension using angles.\footnote{In fact, an analysis along the lines of \S \ref{sec:aUsefulModel} suggests that ESSa will tend to saturate at $\approx 13$ on individual peel neighborhoods (cf. Figure \ref{fig:EssVsVGT50}), so larger neighborhoods are necessary. Such an analysis is possible because the ESSa statistic is a function of the relevant Gram matrix. 
}

This motivates larger constructs involving peel neighborhoods, e.g., the \emph{iterated peel neighborhood} $\nu_2(x) := \cup_{y \in \nu(x)} \nu(y)$ and the larger ball $B_{2\rho(x)}(x)$. The latter produces excellent estimates, but scales exponentially with data dimension in practice. On the other hand, the iterated peel neighborhood is easy, relatively small, and efficient to construct. While the iterated peel neighborhood tends to underestimate all but the smallest dimensions, it still performs adequately when $k$NN methods fail spectacularly, as we shall see below.

\begin{figure}[htbp]
    \centering
    \includegraphics[width=\textwidth, trim={0 125 0 0mm}, clip]{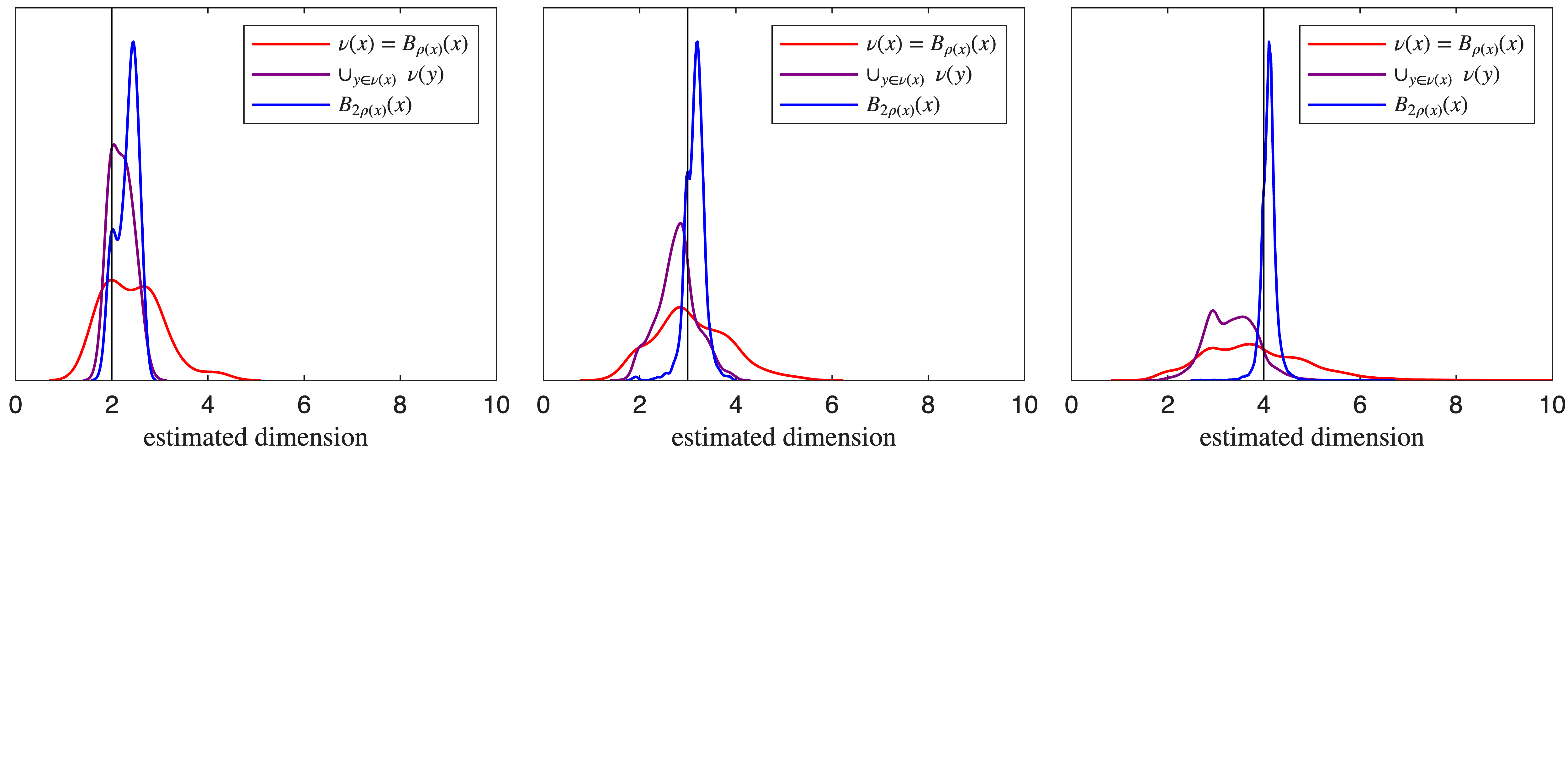}
        \caption{As in Figure \ref{fig:knn_sphere_dim} but for three different neighborhood families: {\color{red}(red) peel neighborhoods $\nu(x) = B_{\rho(x)}(x)$}; {\color[rgb]{0.5, 0.0, 0.5}(purple) iterated peel neighborhoods $\cup_{y \in \nu(x)} \nu(y)$}; and {\color{blue}(blue) $B_{2\rho(x)}(x)$}. 
        The heuristic and exact peel neighborhoods are identical except for one neighborhood in dimension 4, with that heuristic neighborhood radius $0.7705$ times the exact value. This has no visible effect on the plots.
        }
    \label{fig:local_sphere_dim}
\end{figure}
     
Figure \ref{fig:stratified_dim} shows that $k$NN neighborhoods slightly overestimate local dimension on a sample from a stratified manifold formed from a circle, a 2-ball, and a 3-ball; peel neighborhoods give very noisy estimates; iterated peel neighborhoods slightly underestimate local dimension; and ``double-radius'' neighborhoods of the form $B_{2\rho(x)}(x)$ give very accurate estimates of local dimension, but these are expensive to compute. 

We can compare the various \emph{relative} local dimension estimates here using a Kendall $\tau$-b coefficient relative to ground truth local dimension: the $k$NN coefficient is $0.308$; the peel neighborhood coefficient is $0.223$; the iterated peel neighborhood coefficient is $0.287$; and the $B_{2\rho(x)}(x)$ coefficient is $0.308$. That is, the iterated peel neighborhood coefficient is nearly as good as the best competition in this regard.

\begin{figure}[htbp]
    \centering
    \includegraphics[width=.24\textwidth, trim={5 30 5 5mm}, clip]{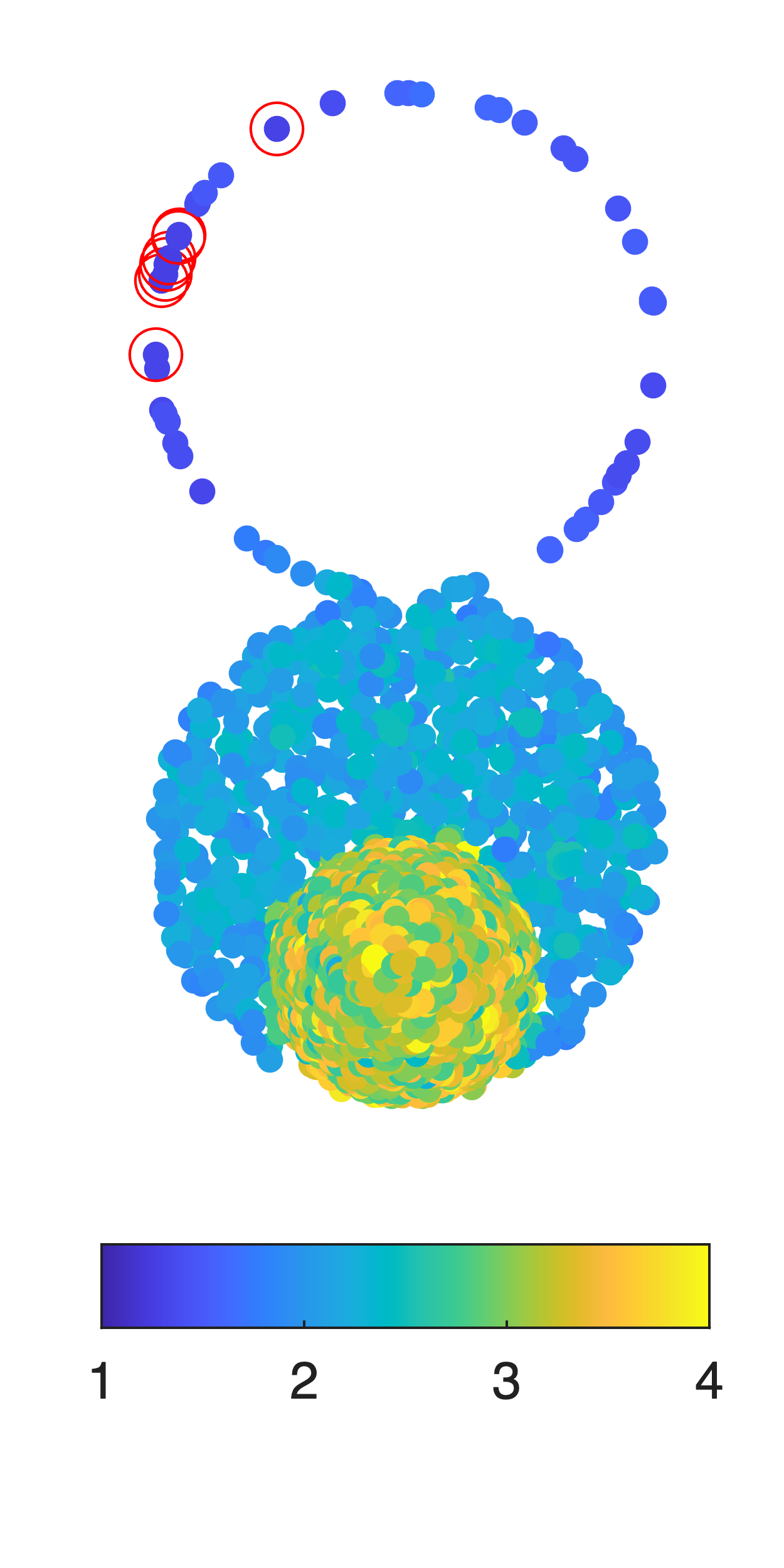}
    \includegraphics[width=.24\textwidth, trim={5 30 5 5mm}, clip]{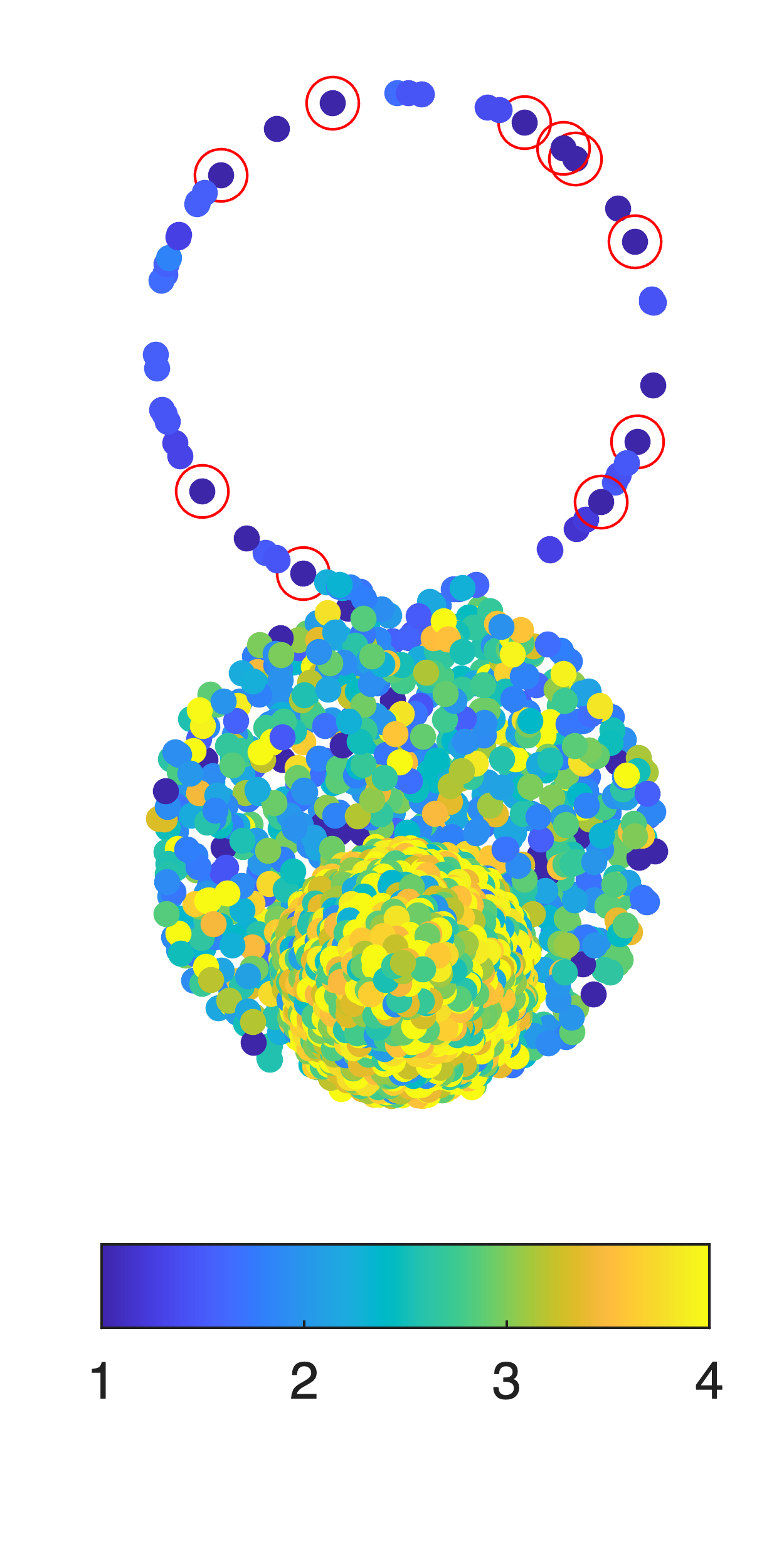}
    \includegraphics[width=.24\textwidth, trim={5 30 5 5mm}, clip]{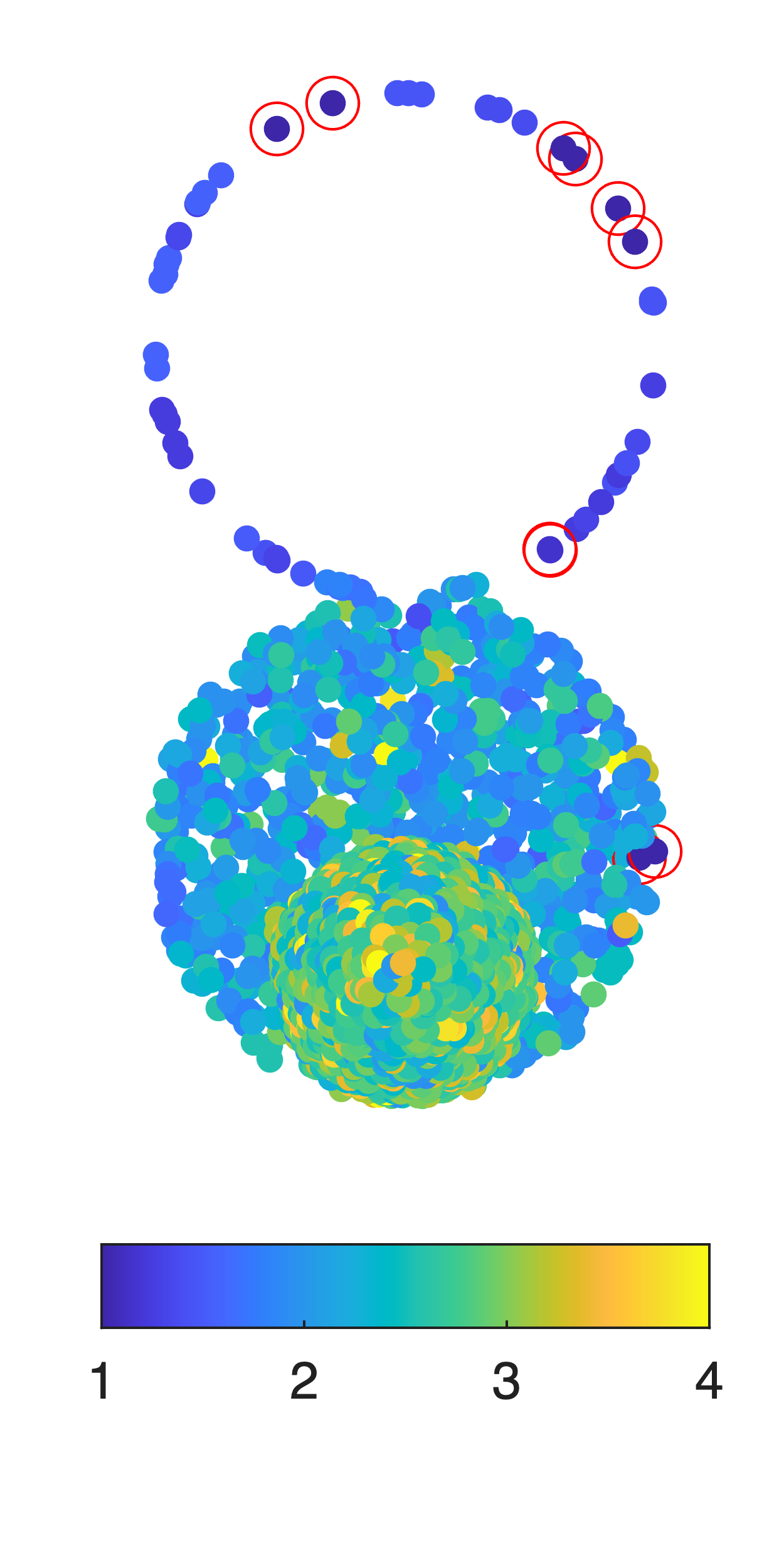}
    \includegraphics[width=.24\textwidth, trim={5 30 5 5mm}, clip]{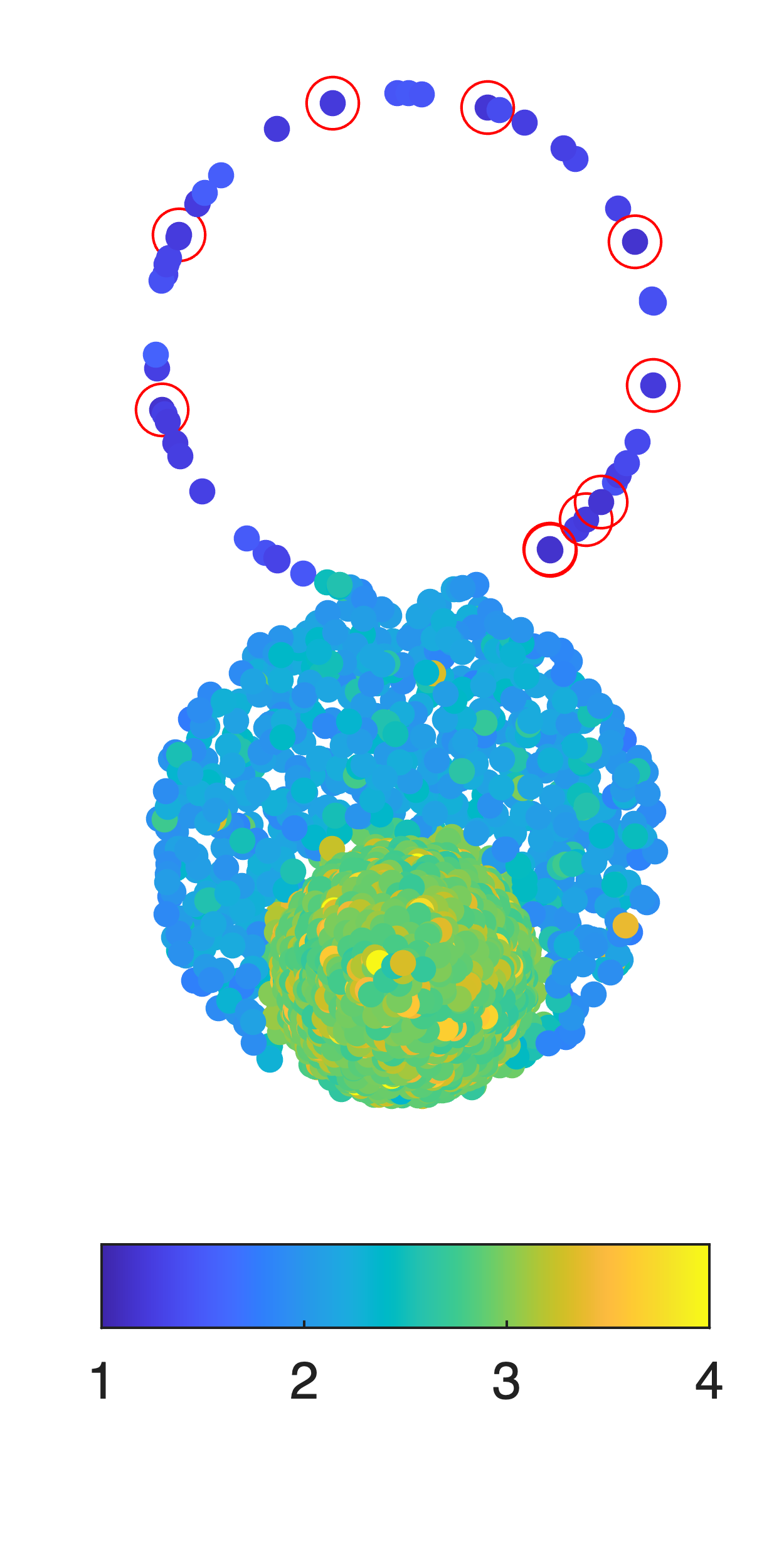}
        \caption{Left to right: local dimension estimates for $k$NN neighborhoods with $k = \lceil \log_2 N \rceil = 14$ with $N = 11294$; for peel neighborhoods $\nu(x) = B_{\rho(x)}(x)$; for iterated peel neighborhoods $\nu_2(x) = \cup_{y \in \nu(x)} \nu(y)$; and for $B_{2\rho(x)}(x)$. Ambient dimension and noise are the same as in Figures \ref{fig:knn_sphere_dim} and \ref{fig:local_sphere_dim}. 
        {\color{red}The 10 points with least estimated dimension are circled in red.} Only one heuristic peel neighborhood differs from the exact one: its radius is $\approx 0.9951$ times the exact value; this has no further discernible effect.
        }
    \label{fig:stratified_dim}
\end{figure}

While a simple $k$NN-based approach can lead to good local dimension estimates for stratified manifolds where the strata are reasonably sampled and dimensions vary slowly, such an approach also fails dramatically as behavior deteriorates even slightly, as Figure \ref{fig:strand} shows: the $B_{2\rho(x)}$ estimates also fail on the true low-dimensional points.

\begin{figure}[htbp]
    \centering
    \includegraphics[width=.24\textwidth, trim={5 30 5 5mm}, clip]{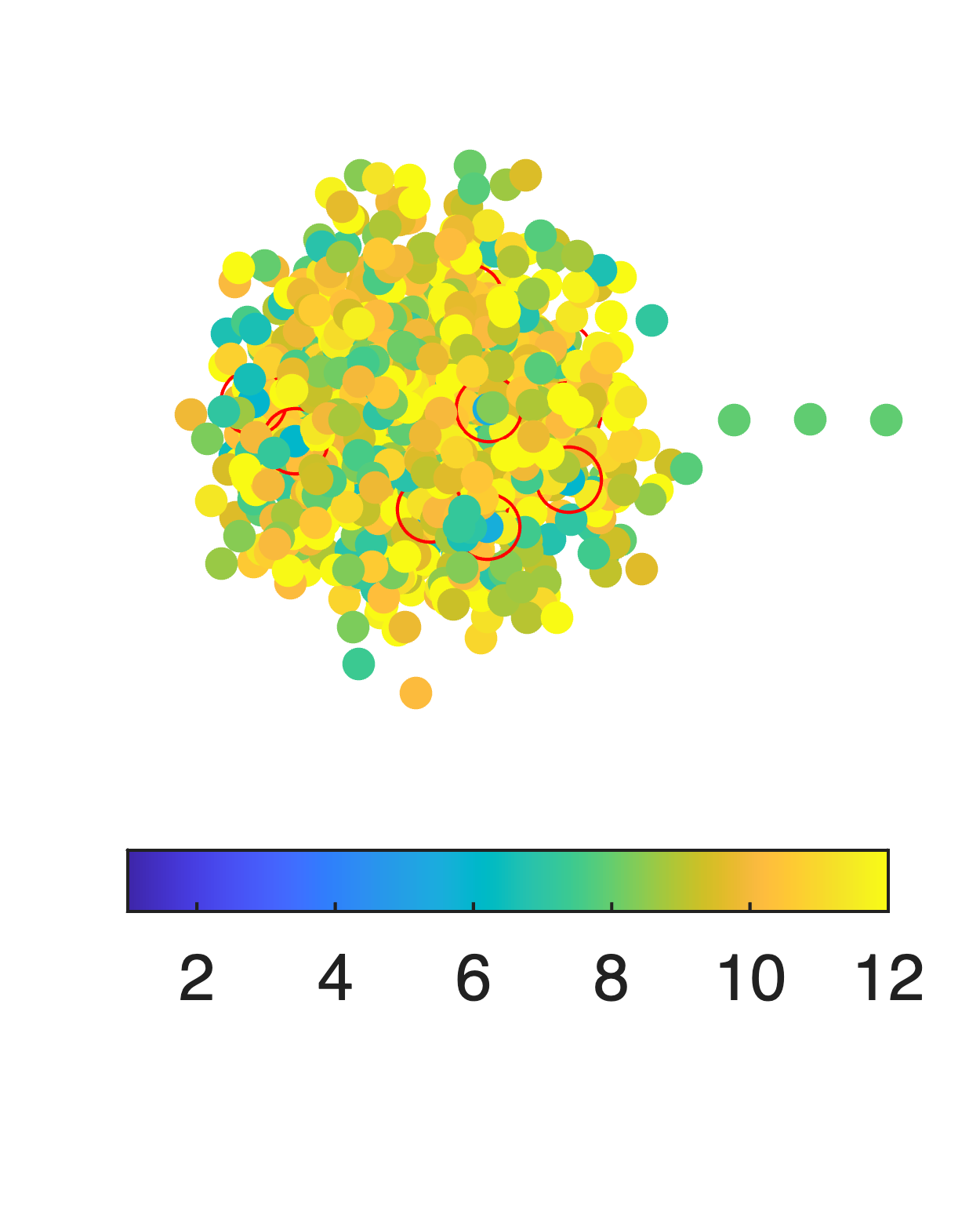}
    \includegraphics[width=.24\textwidth, trim={5 30 5 5mm}, clip]{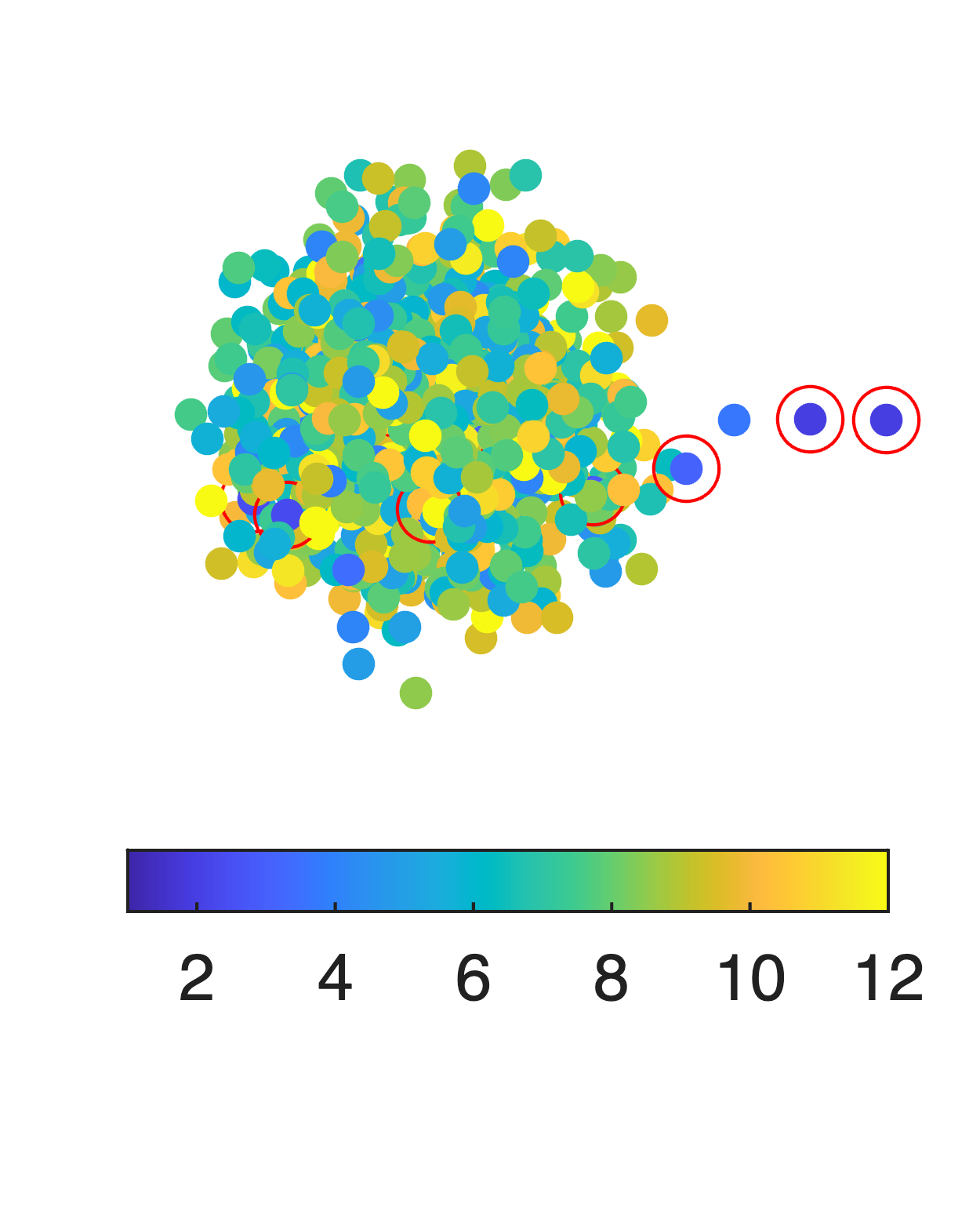}
    \includegraphics[width=.24\textwidth, trim={5 30 5 5mm}, clip]{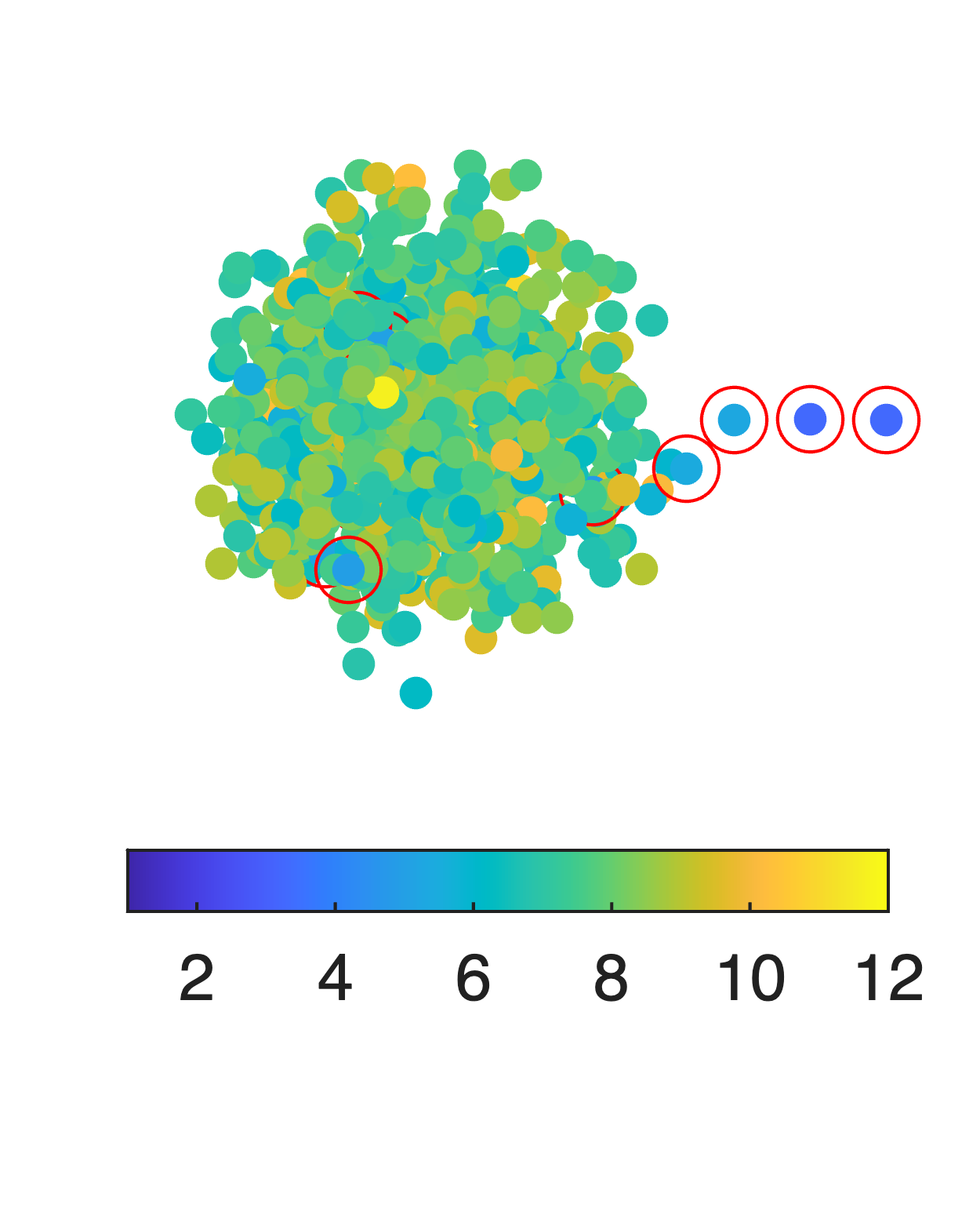}
    \includegraphics[width=.24\textwidth, trim={5 30 5 5mm}, clip]{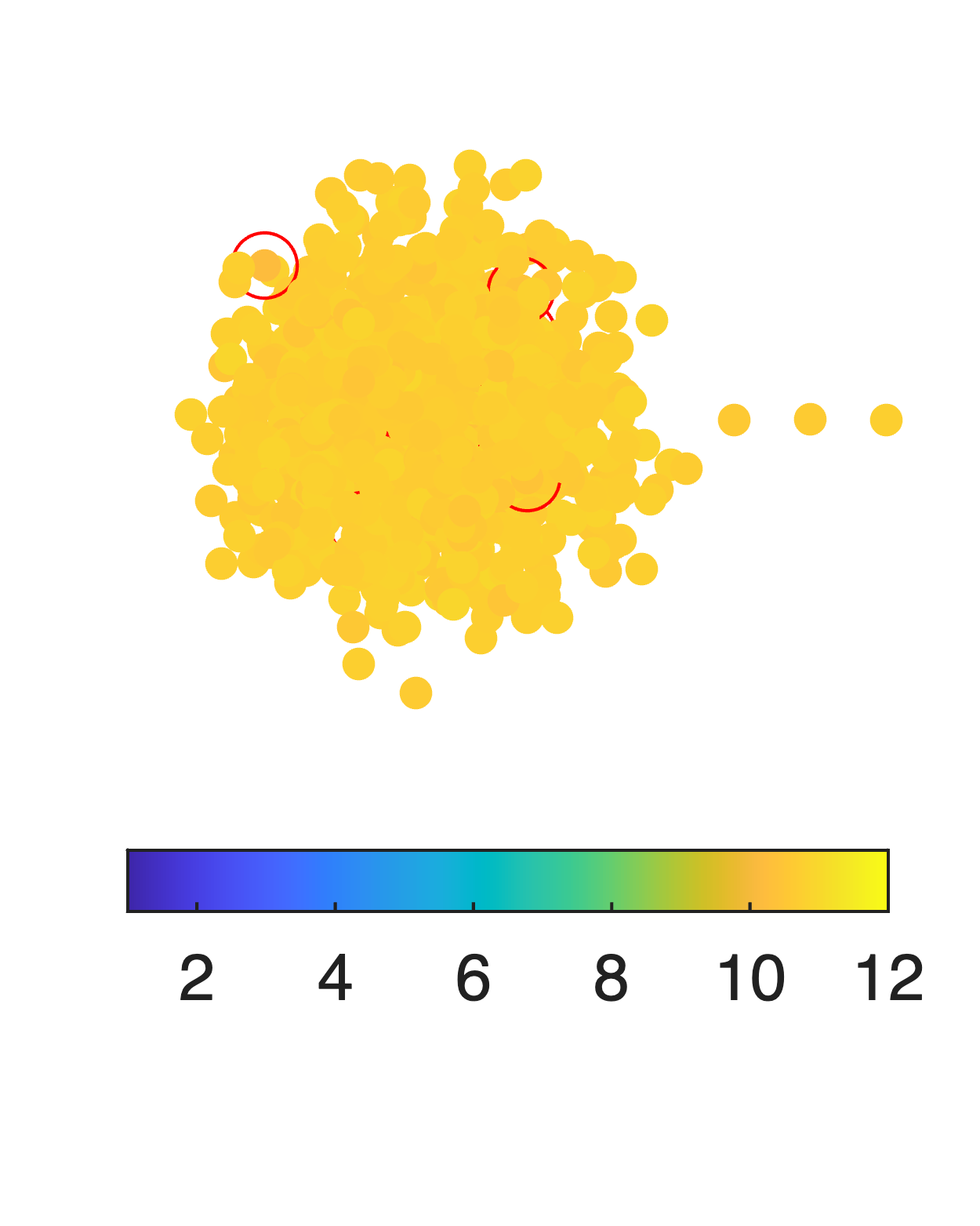}
        \caption{As in Figure \ref{fig:stratified_dim}, but for 1000 points sampled uniformly from $S^{10}$, along with three nearby points. Here, the data are embedded in $\mathbb{R}^{100}$ with small isotropic Gaussian noise added. 
        The exact neighborhoods and their heuristic approximations are always identical here.
        For each of the three points on the right, the estimated dimension with $k$NN neighborhoods is higher than that for over 10 percent of the points sampled from the sphere. The Kendall $\tau$-b coefficient between the estimates in the middle two panels is $0.3410$, while the coefficients between the first two panels and between the first and third panels are respectively $0.2409$ and $0.1976$.
        }
    \label{fig:strand}
\end{figure}

\subsubsection{Gradient norms of dimension estimates}\label{sec:gradients}

Many data of practical interest appear to be effectively sampled from stratified manifolds \cite{brown2023verifying,aamari2024theory,robinson2024structure,wang2024cw,curry2025exploring,li2025unraveling,sun2025sparsification}. Meanwhile, state of the art methods for identifying singularities in stratified manifolds such as \cite{stolz2020geometric,von2023topological,lim2025hades} still leave much performance to be desired. We have seen that for stratified data it is imperative to avoid $k$NN techniques, but peel neighborhoods are relatively usable.

With this in mind, for $X \subset \mathbb{R}^m$ finite, $f : X \rightarrow \mathbb{R}$, and $\{x\} \subset \lambda(x)$ for all $x \in X$, we can form the gradient estimate
\begin{equation}\label{eq:gradientEstimate}
    \hat \nabla_\lambda f(x) := \sum_{x' \in \lambda(x)-\{x\}} p_*(d|_{\lambda(x)})_{x'} \cdot \frac{f(x')-f(x)}{|x'-x|} \frac{x'-x}{|x'-x|}.
\end{equation}
Writing $\nu_0(x) := \{x\}$ and $\nu_j(x) := \cup_{y \in \nu_{j-1}(x)} \nu(y)$ for the $(j-1)$-fold iterated peel neighborhood, in practice we will restrict consideration to $\nu_1(x) = \nu(x)$ and $\nu_2(x)$. We write $\hat \nabla_j := \hat \nabla_{\nu_j}$ for convenience below.

For $f : X \rightarrow \mathbb{R}$ and $\alpha$ a summary statistic (e.g., mean, median, max), let $[f(y): y \in Y]$ be the multiset of values taken by $f(y)$ for $y \in Y \subseteq X$, so that $\alpha([f(y): y \in Y])$ is the summary statistic of $f$ on $Y$. Write $\hat m_j$ for a (local) dimension estimate of $\nu_j(x)$, nominally using ESSa. We demonstrate below that the \emph{gradient norm score}
\begin{equation}\label{eq:gradientNorm}
    s_j(x) := \left \| \hat \nabla_j \log \alpha([\hat m_j(x'): x' \in \nu_j(x)]) \right \|
\end{equation} 
with $\alpha = \textnormal{median}(\cdot)$ is capable of identifying singularities in sampled stratified manifolds, with $j = 2$ improving over $j = 1$ at the cost of (not infeasibly) more computation. The idea behind \eqref{eq:gradientNorm} is that the norm of the log-gradient measures the size of relative changes in median local dimension estimates. 
\footnote{The logarithm in \eqref{eq:gradientNorm} is inessential and included (only) because we decided to highlight relative \emph{versus} absolute dimension changes. We performed exactly the same experiments with and without the logarithm. We do not show the latter data because there is very little quantitative difference between it and the former, and there is no qualitative difference.}
Using iterated peel neighborhoods provides a robust local dimension estimate and corresponding gradient estimate via \eqref{eq:gradientEstimate}. Figure \ref{fig:nbhd2_pinchedTorus} shows $\hat m_2$ and $s_2$ on samples from a pinched torus.

\begin{figure}[htbp]
    \centering
    \includegraphics[width=.48\textwidth, trim={65 0 20 0mm}, clip]{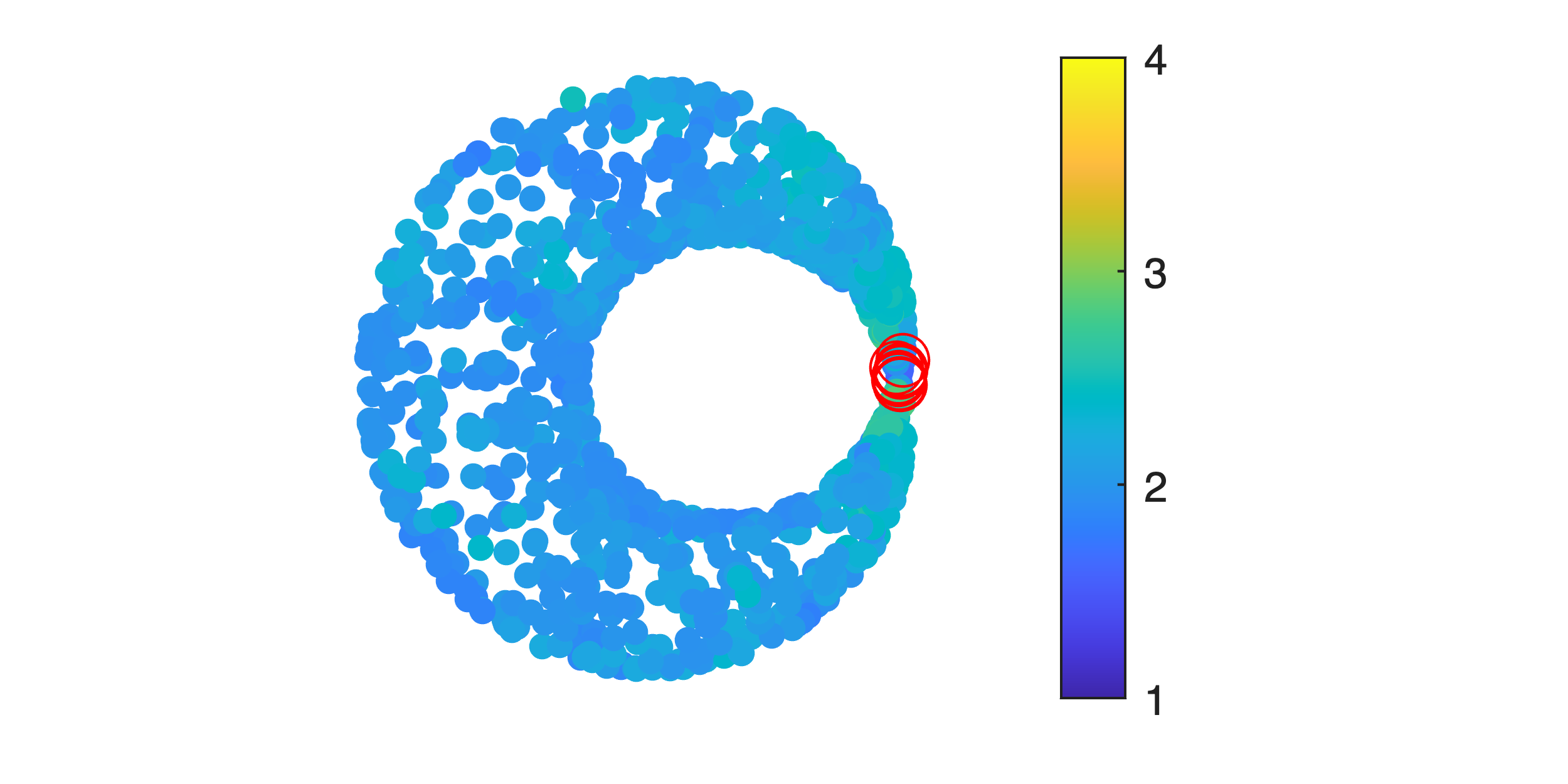}
    \includegraphics[width=.48\textwidth, trim={65 0 20 0mm}, clip]{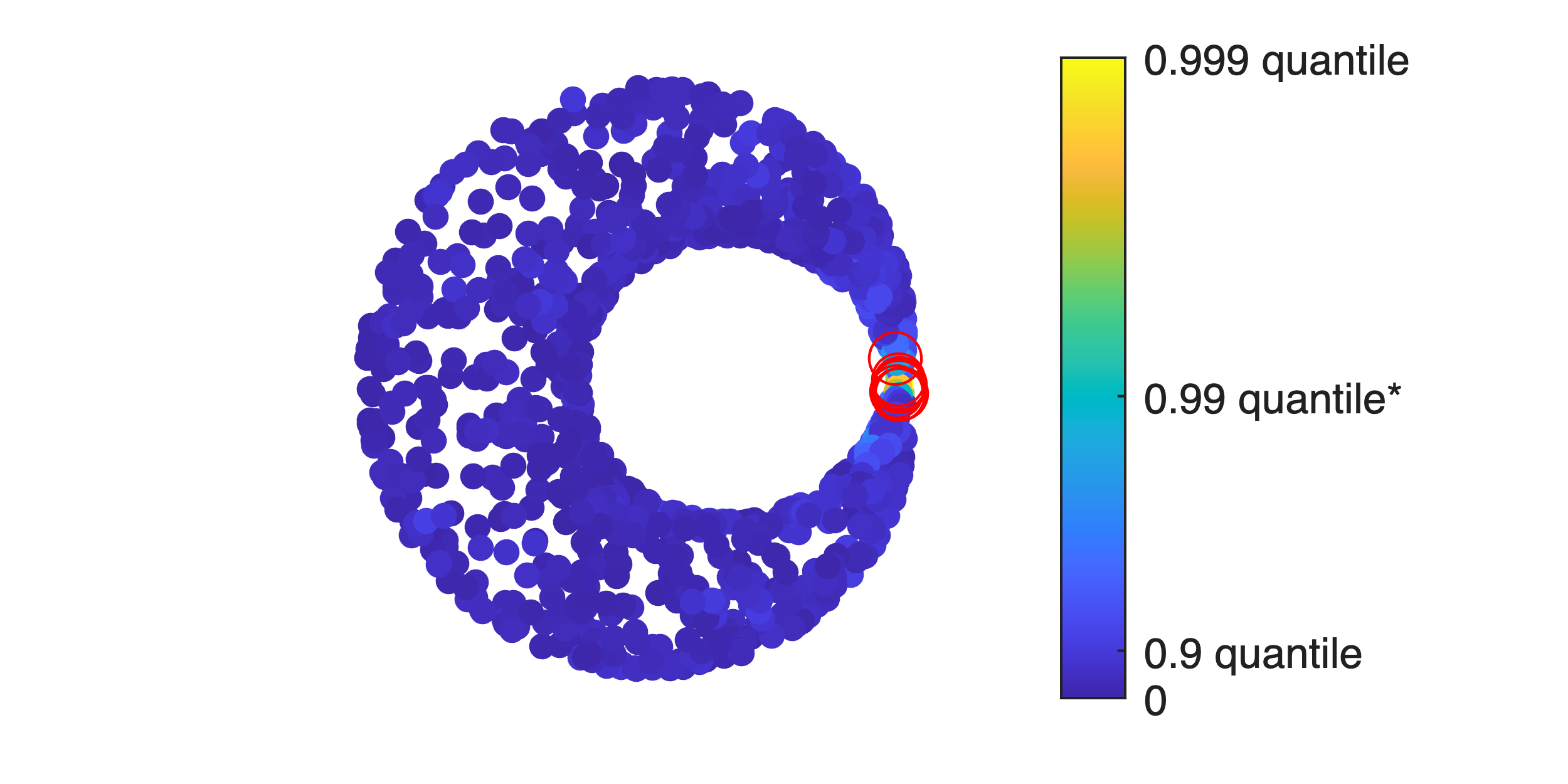}
        \caption{Left: $\hat m_2$ using ESSa for a sample from a pinched torus embedded in $\mathbb{R}^{10}$ with small but nonzero isotropic Gaussian noise added. {\color{red}The 10 lowest values are circled in red}. Right: $s_2$, with {\color{red}the values above the 0.99 quantile indicated by a * on the colorbar circled in red}. 
        The exact neighborhoods and their heuristic approximations are always identical here.
        }
    \label{fig:nbhd2_pinchedTorus}
\end{figure}

For a more controlled and statistically meaningful experiment, we want to be able to adjust the dimensionality of strata and (optionally) to make the expected spacing between points the same on different strata. The former requirement is easily met by sampling from the union of a unit sphere and an interval. The latter requirement is more demanding. To satisfy it, we need to find the distribution of angular distances between nearest neighbors from a uniform IID sample on $S^{m-1}$ of size $N$. Suppose that the angular distance between two uniform IID points has cumulative distribution $F(\theta)$. Then the cumulative distribution that any two given points in the sample are nearest neighbors is $1-(1-F(\theta))^{N-1}$.
 
Let $x \sim U(S^{m-1})$ and let $x_*$ be any fixed unit vector in $\mathbb{R}^m$. Now $$F(\theta) = \mathbb{P}(\langle x,x_* \rangle \le \theta) = I_{\sin^2 \theta} \left (\tfrac{m-1}{2},\tfrac{1}{2} \right ),$$ where the regularized incomplete beta function is indicated. \cite{li2011concise} For $N \gg 1$, the probability mass will be concentrated near $\theta = 0$. To leading order, $I_t(a,b) = t^a/(aB(a,b)) + o(t^{a+1})$, where the beta function is indicated. So $$F(\theta) \approx \frac{\theta^{m-1}}{(m-1) \cdot B(\tfrac{m-1}{2},\tfrac{1}{2})}.$$

Now $1-(1-F)^{N-1} \approx 1-\exp(-[N-1]F)$. The right hand side is approximately a Weibull distribution with shape parameter $m-1$ and scale parameter $$\left ( \frac{m-1}{N-1} \cdot B(\tfrac{m-1}{2},\tfrac{1}{2}) \right )^{1/(m-1)}.$$ The mean of this Weibull distribution is therefore $$\Gamma \left ( \frac{m}{m-1} \right ) \cdot \left ( \frac{m-1}{N-1} \cdot B(\tfrac{m-1}{2},\tfrac{1}{2}) \right )^{1/(m-1)}.$$ This expression gives the approximate expected spacing between nearest neighbors for a uniform IID sample of size $N$ on $S^{m-1}$, as desired.

There are now two reasonable ways to glue a sample from an interval to a sample from a sphere while preserving the expected spacing. We can either attach a zero-dimensional singular point at the intersection of the sphere and the interval, or not. Both choices are easy to implement, so we do both. Likewise, we can either choose to match the expected spacing in the interval, or take an interval of fixed length. Again, both choices are easy to implement, so we do both. The expected spacing between $N'$ uniform IID points on an interval of length $L$ is $L/(N'+1)$. Given $m$, $N$, and $N'$, matching the expected spacing just means choosing $L$ so that this expected spacing equals the mean of the Weibull distribution above. Separately, we take $L = 1$. 

Figures \ref{fig:hair_ball_dim_11_attach_false_match_false}-\ref{fig:hair_ball_dim_11_attach_true_match_true} show that the gradient norm score $s_2$ is consistently able to perform comparably to or better than HADES \cite{lim2025hades} for detecting actual stratification-induced singularities. In particular, when strata have equal inter-point spacing, $s_2$ clearly outperforms HADES close to their intersection. Furthermore, unlike $s_2$, HADES shows a visible bias towards global outliers that can be induced by very few points. 

For each of the four experiments illustrated in Figures \ref{fig:hair_ball_dim_11_attach_false_match_false}-\ref{fig:hair_ball_dim_11_attach_true_match_true}, we ran both tools 100 times. 
\footnote{All of the experiments in the paper were performed in MATLAB: we used a wrapper here for the HADES Python code.
}
We ran HADES 10 times in fully automatic mode with the hyperparameter search provided as an example (i.e., 5 values of radius and 3 PCA thresholds), then used the resulting median parameters for a subsequent 90 runs. The 10 fully automatic runs took about 365 seconds each, and the 90 other runs took about 25 seconds each, with variations of $\approx$ 1-2\%. Meanwhile, the corresponding $s_2$ calculations took about 0.6 seconds, with very little variation. That is, $s_2$ was conservatively over 40 times faster to compute than the HADES score, while the experiments themselves were roughly 100 times faster for $s_2$ than for the HADES score because of the (amortized) hyperparameter search overhead.

For lower data dimensions of bulk strata $S^m$, both techniques experience degradations in performance, as detailed in Appendix \S \ref{sec:analogues54}.
Peel neighborhoods considered broadly still enable competitive singularity detection capability, with faster runtime.

\begin{figure}[htbp]
    \centering
    \includegraphics[width=.48\textwidth, trim={0 50 0 15mm}, clip]{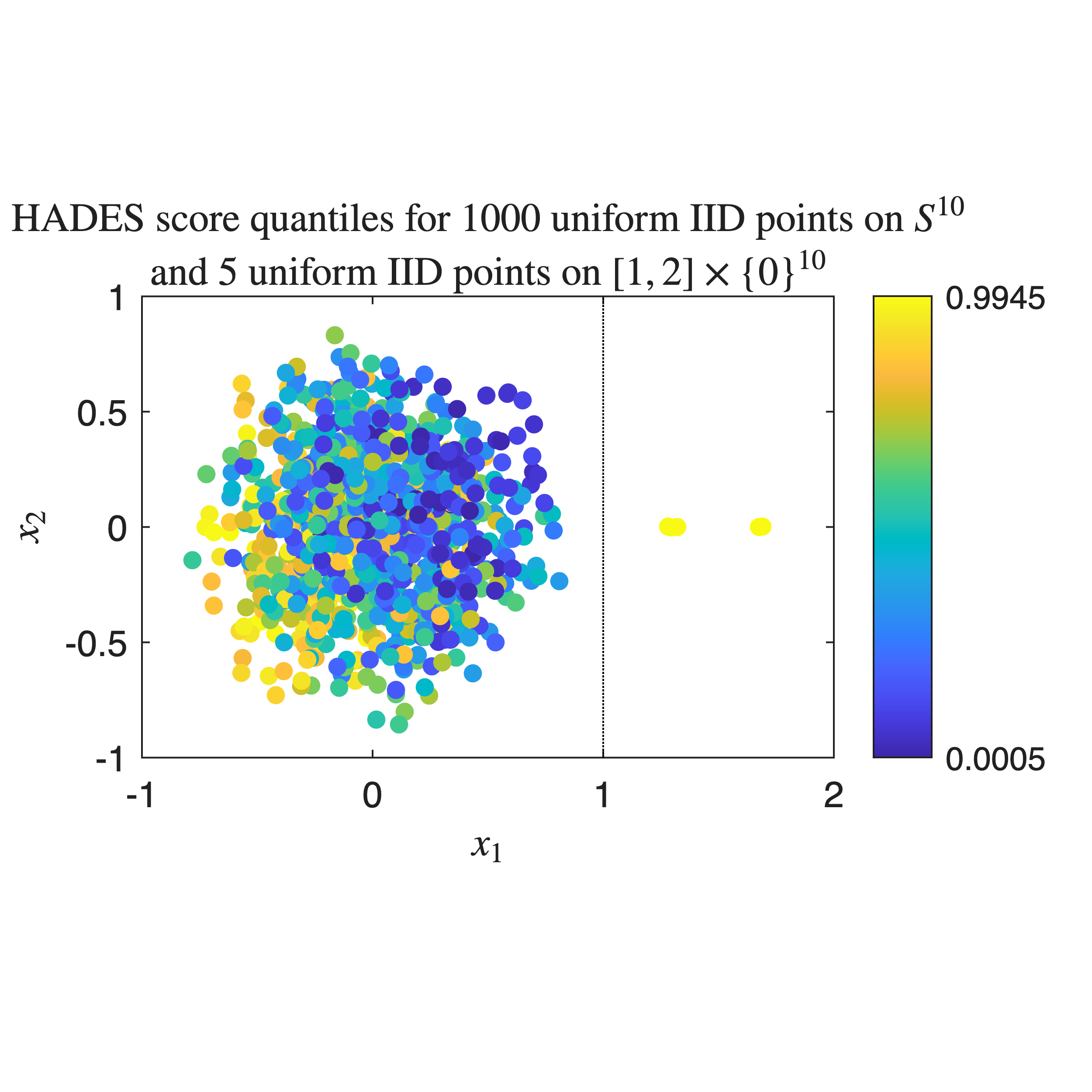}
    \includegraphics[width=.48\textwidth, trim={0 50 0 15mm}, clip]{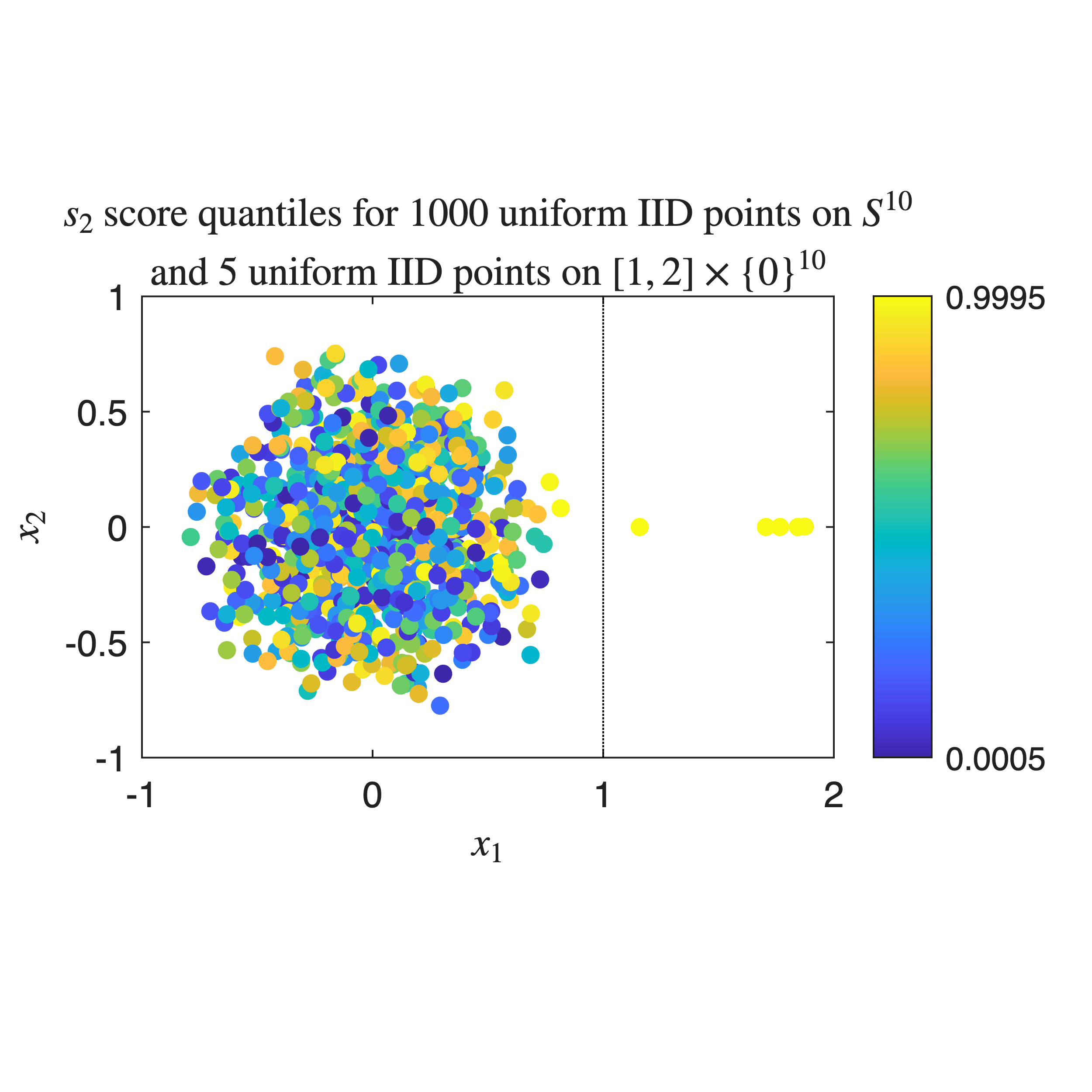}
    \includegraphics[width=.48\textwidth, trim={0 0 0 0mm}, clip]{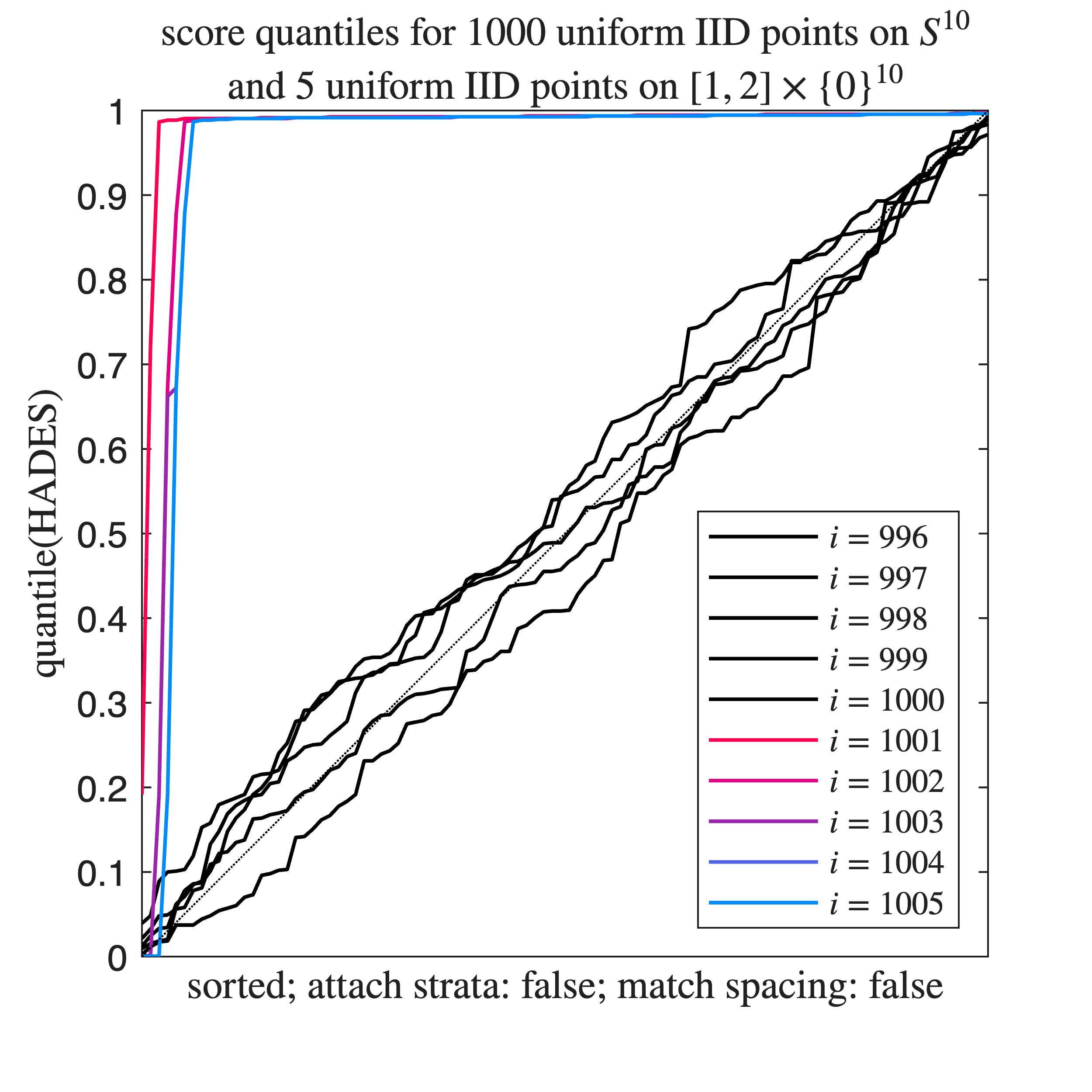}
    \includegraphics[width=.48\textwidth, trim={0 0 0 0mm}, clip]{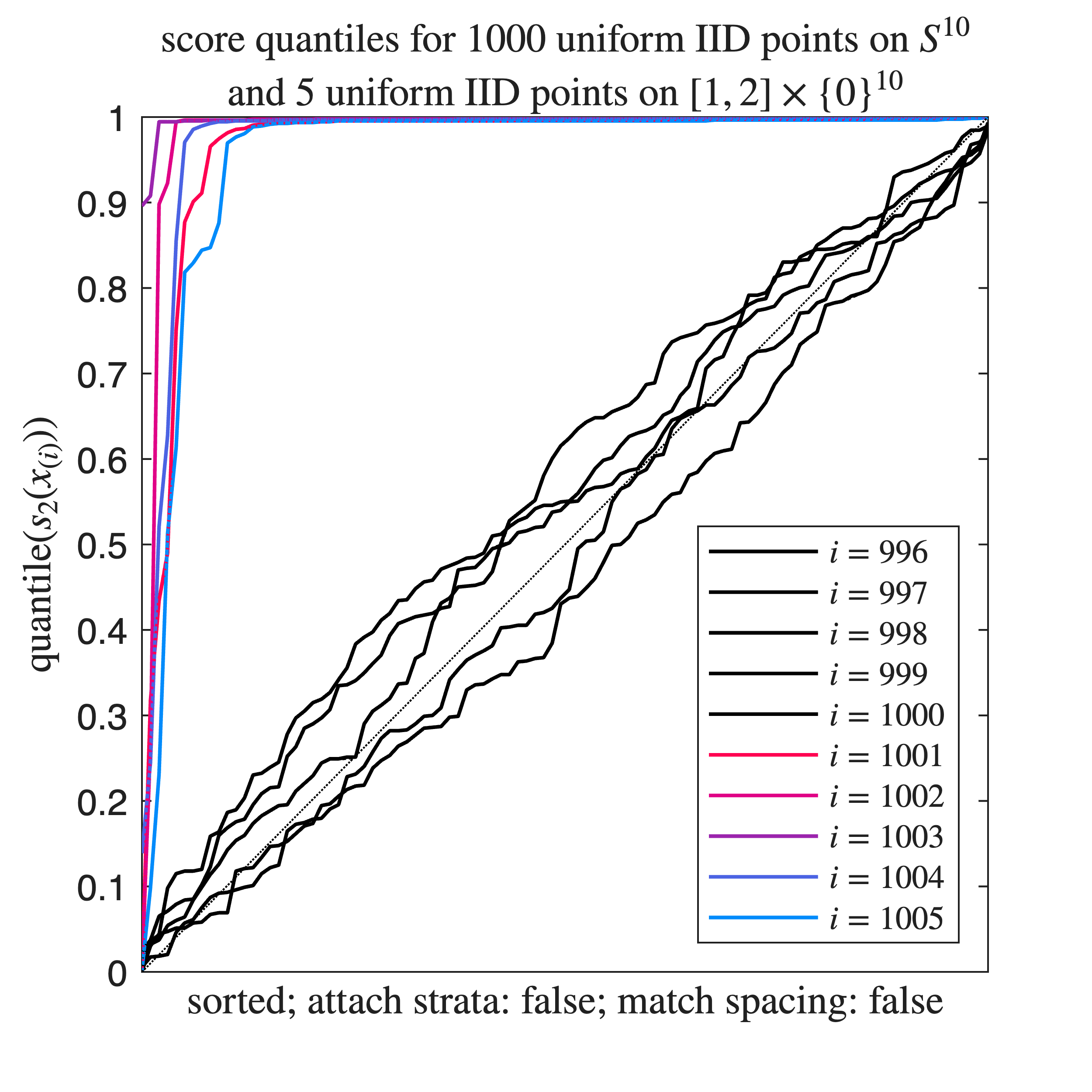}
        \caption{Top left: HADES score quantiles for a sample of size 1000 from $S^m$ for $m = 10$ and an ``unattached'' sample of size 5 from an interval of unit length, plus small but nonzero isotropic Gaussian noise after embedding in $\mathbb{R}^{100}$. Note that the scores are highly nonuniform on the sphere stratum. Top right: as in the top left panel, but for $s_2$. Note that unlike the HADES score, $s_2$ does not show any obvious nonuniformity on the sphere stratum. Bottom left: HADES score quantiles computed over 100 sample realizations for five points on the sphere stratum (in black) and the five points on the interval stratum, indexed in order of their distance from the sphere. Bottom right: as in the bottom left panel, but for $s_2$. 
        The $s_2$ plots actually show both exact and heuristic peel results (the latter as dotted lines), which differ slightly in a few places but are visually indistinguishable.
        }
    \label{fig:hair_ball_dim_11_attach_false_match_false}
\end{figure}

\begin{figure}[htbp]
    \centering
    \includegraphics[width=.48\textwidth, trim={0 50 0 15mm}, clip]{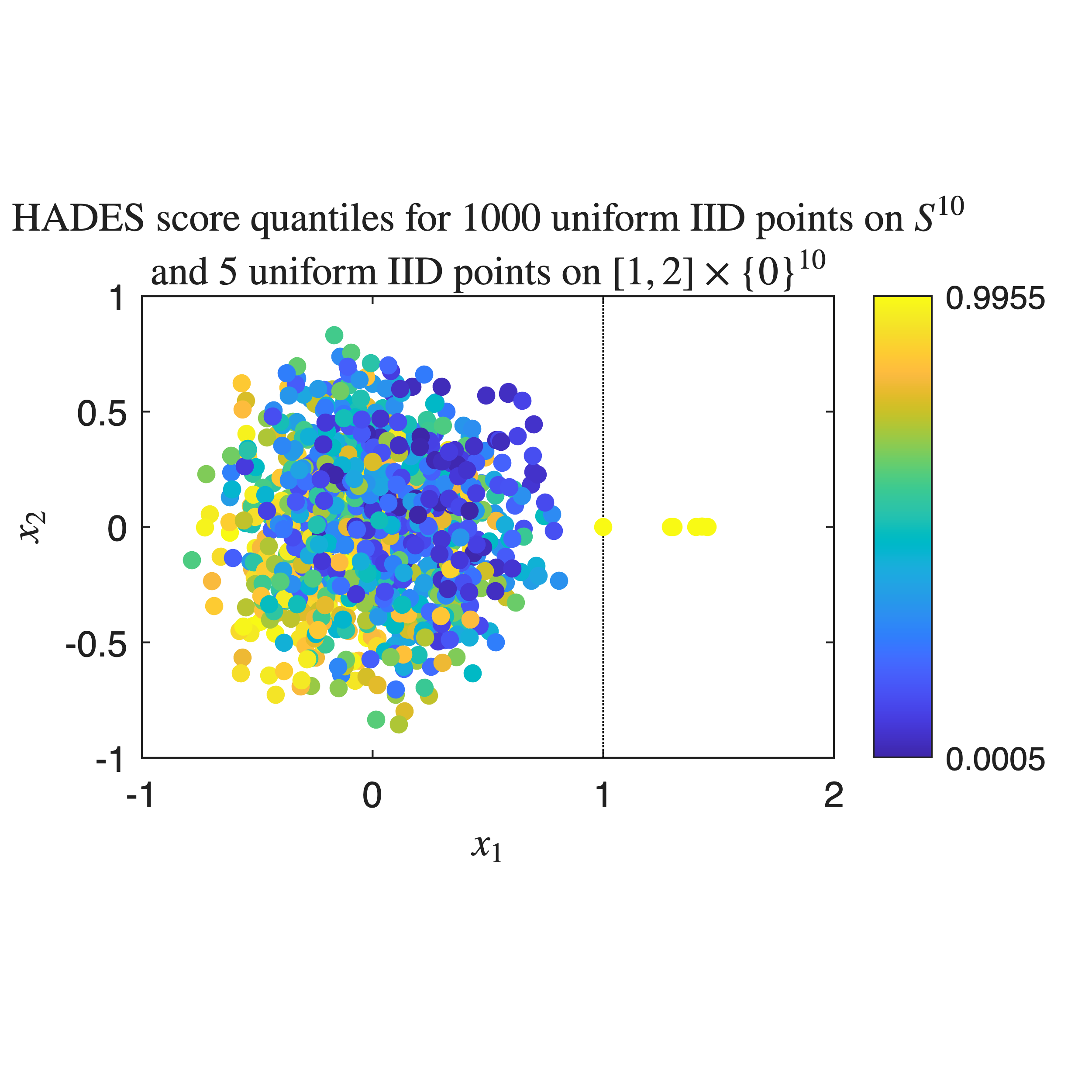}
    \includegraphics[width=.48\textwidth, trim={0 50 0 15mm}, clip]{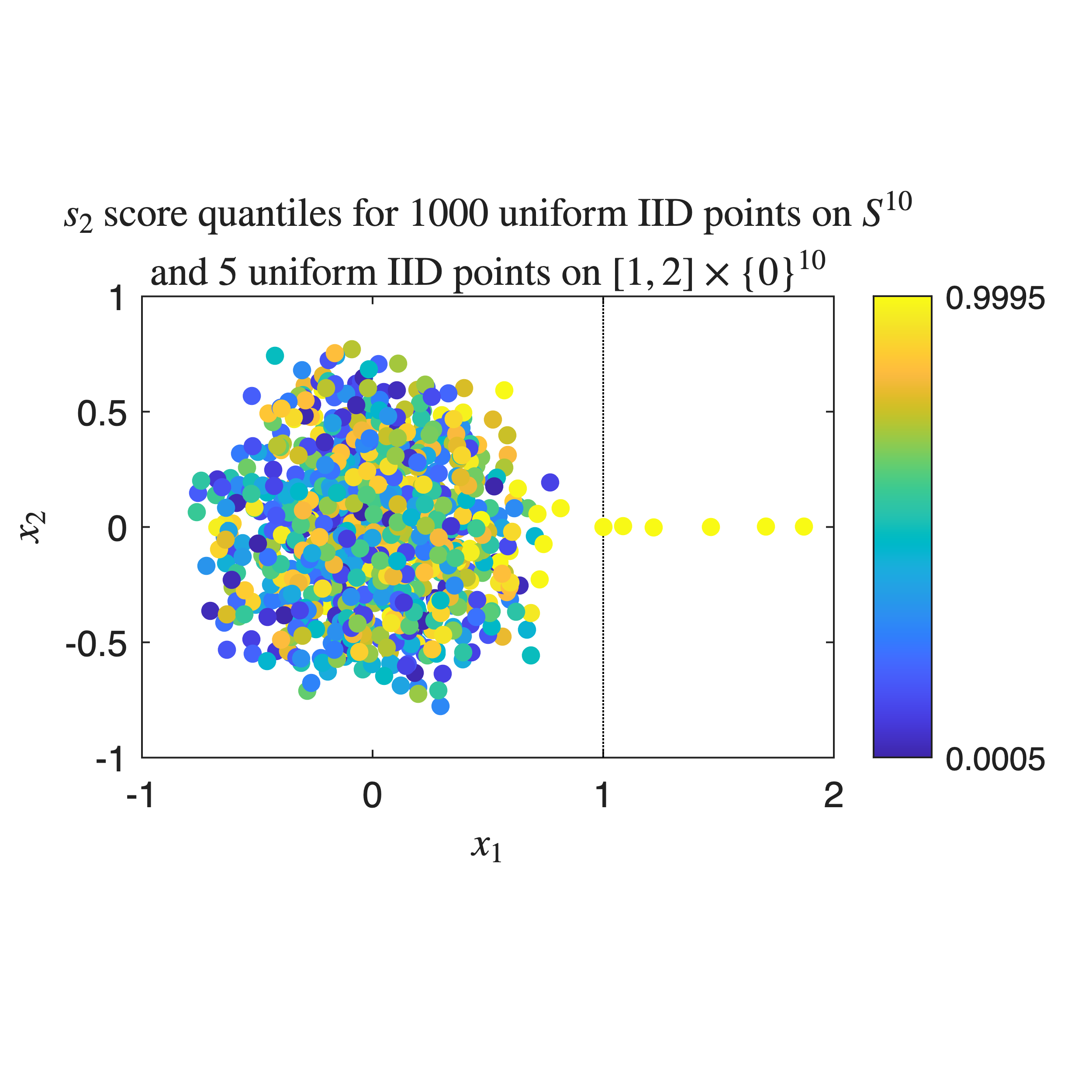}
    \includegraphics[width=.48\textwidth, trim={0 0 0 0mm}, clip]{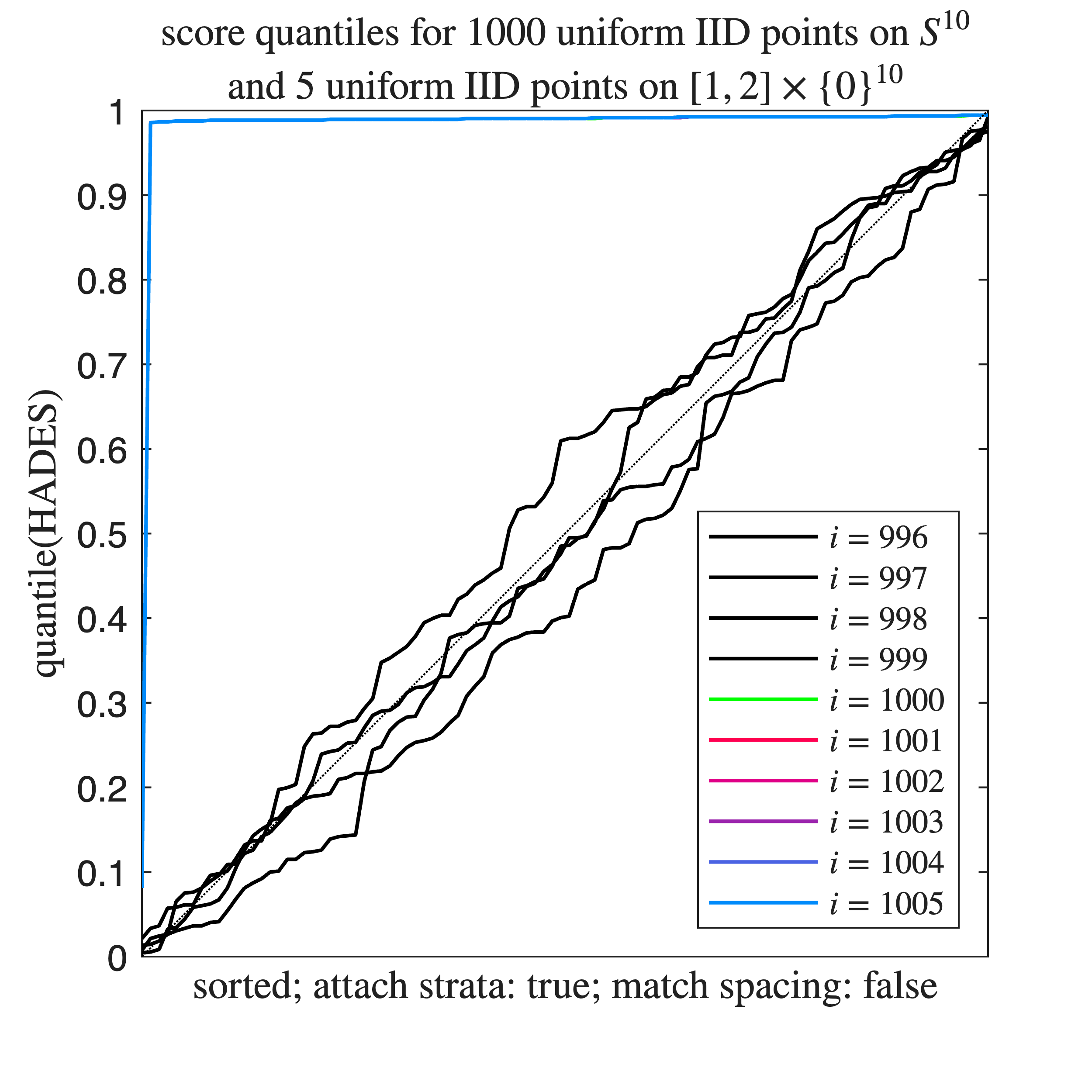}
    \includegraphics[width=.48\textwidth, trim={0 0 0 0mm}, clip]{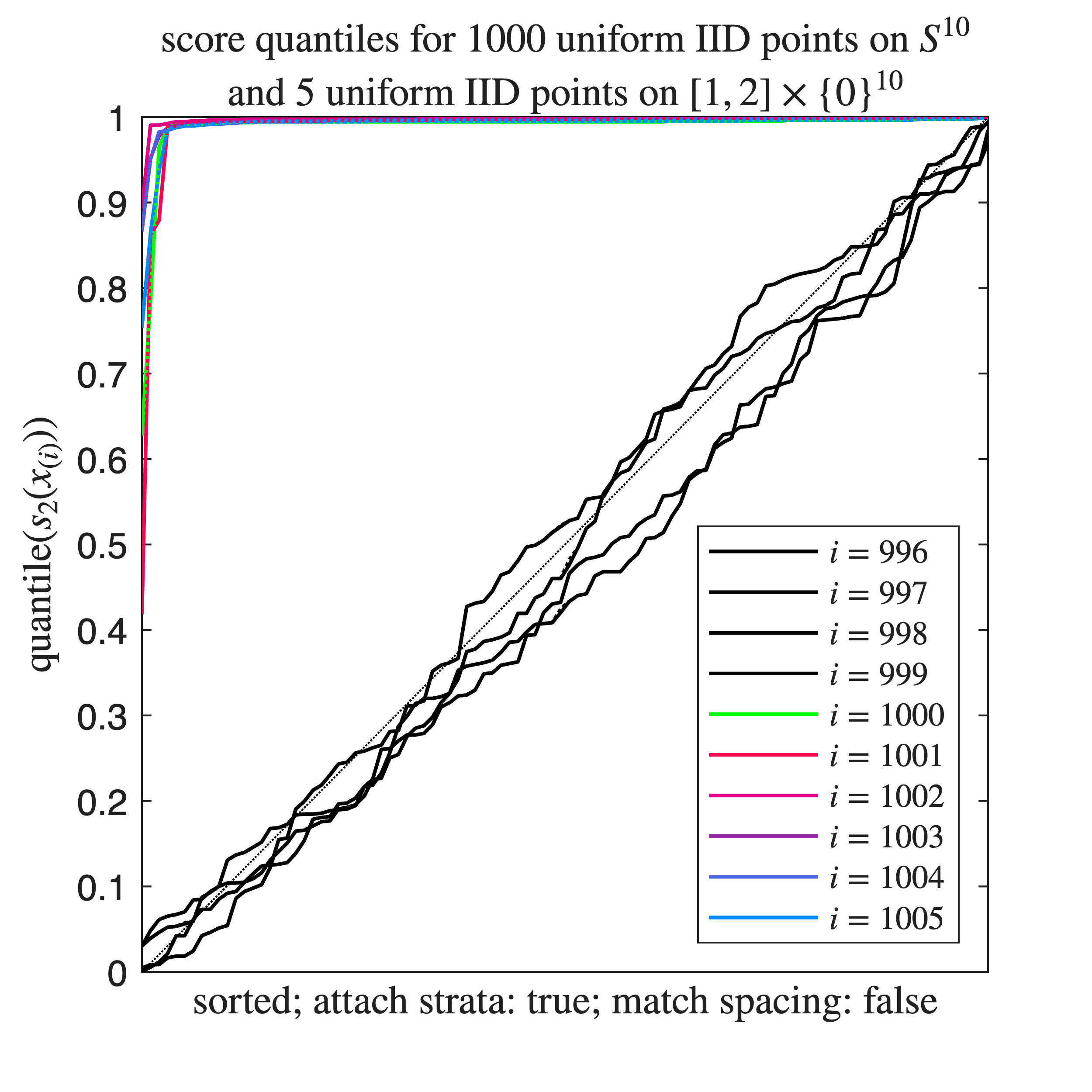}
        \caption{As in Figure \ref{fig:hair_ball_dim_11_attach_false_match_false}, but for an ``attached'' sample (with the attachment point indicated by a dashed line in the upper panels and {\color{green}in green in the lower panels}). 
        The $s_2$ plots actually show both exact and heuristic peel results, which differ slightly in a few places but are visually indistinguishable.
        }
    \label{fig:hair_ball_dim_11_attach_true_match_false}
\end{figure}

\begin{figure}[htbp]
    \centering
    \includegraphics[width=.48\textwidth, trim={0 80 0 15mm}, clip]{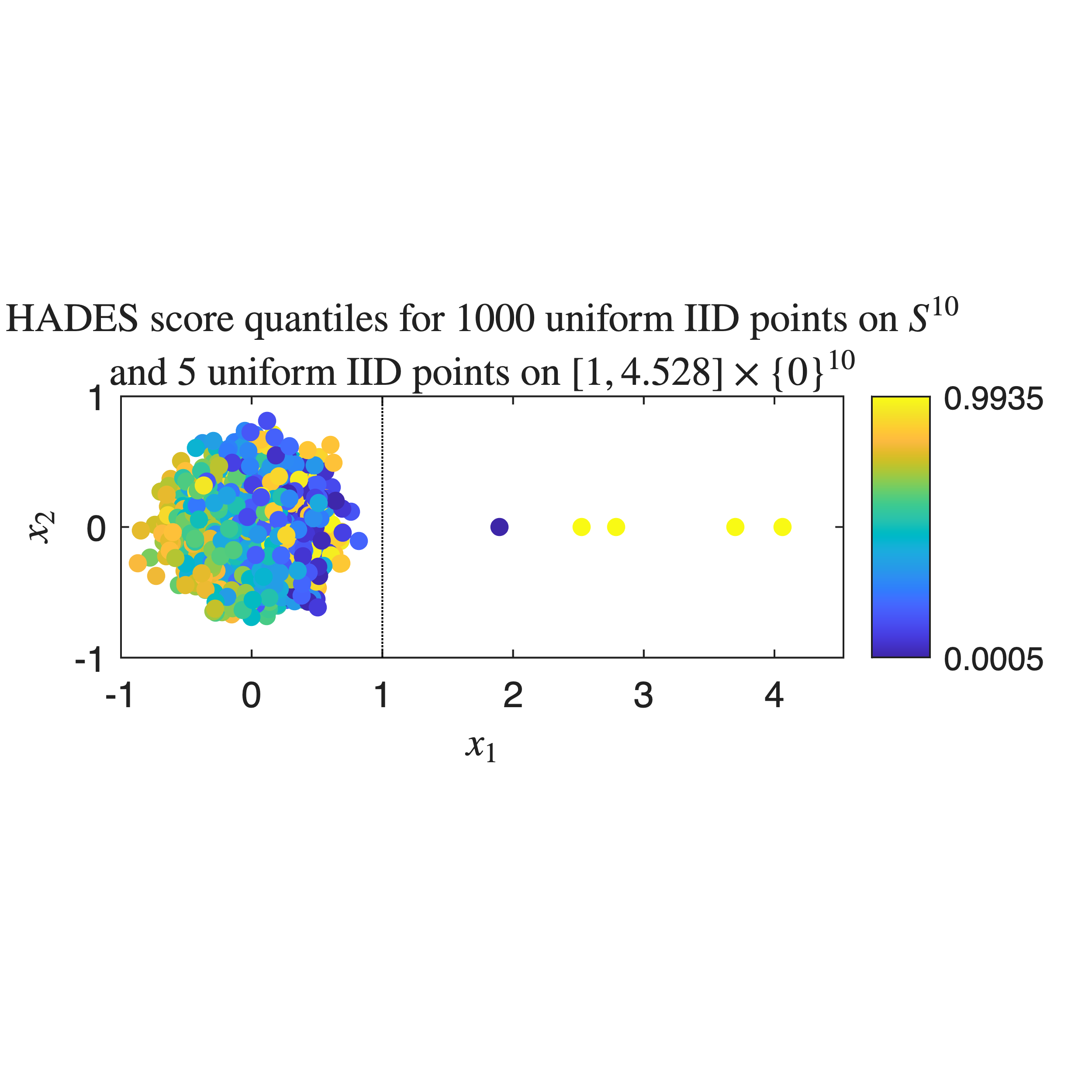}
    \includegraphics[width=.48\textwidth, trim={0 80 0 15mm}, clip]{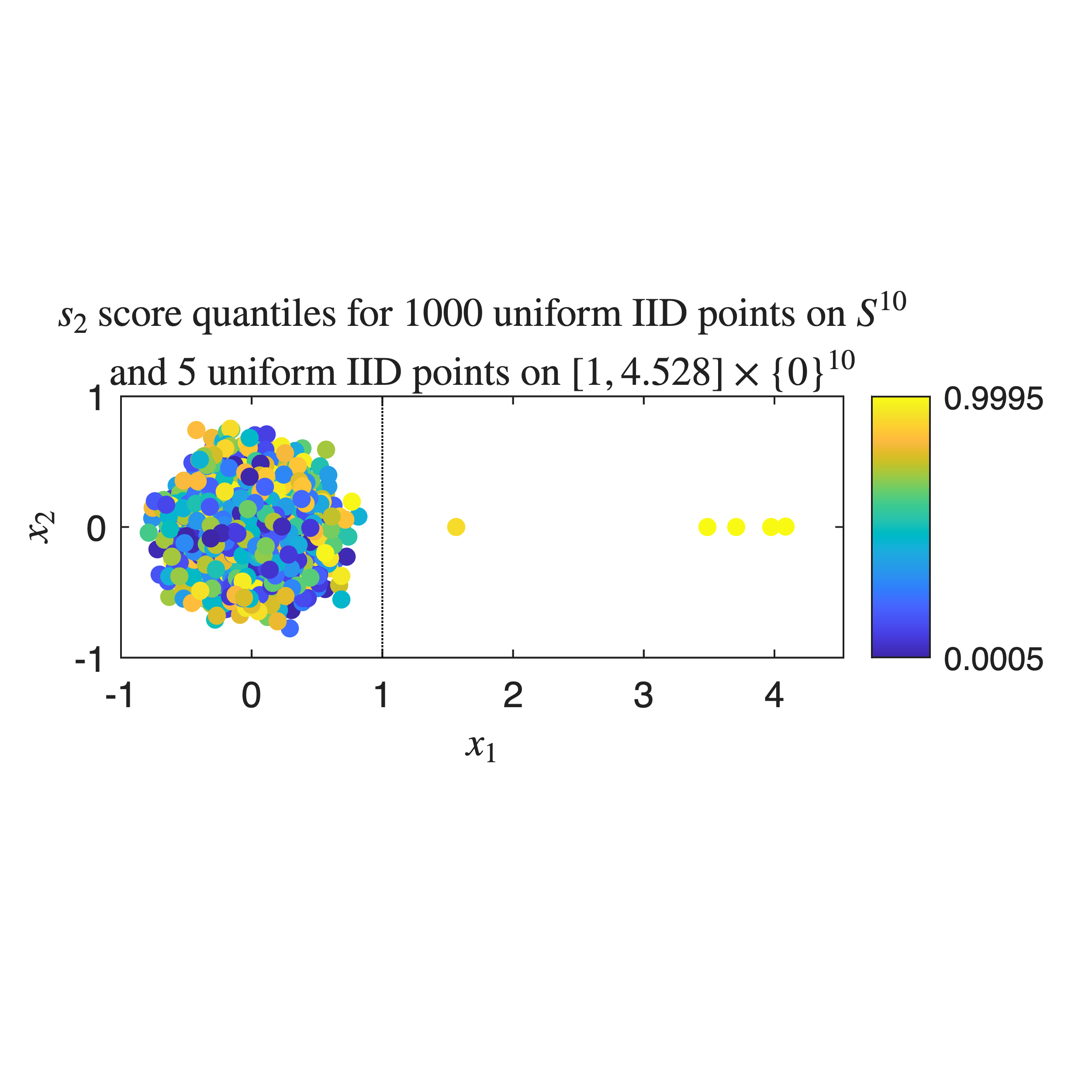}
    \includegraphics[width=.48\textwidth, trim={0 0 0 0mm}, clip]{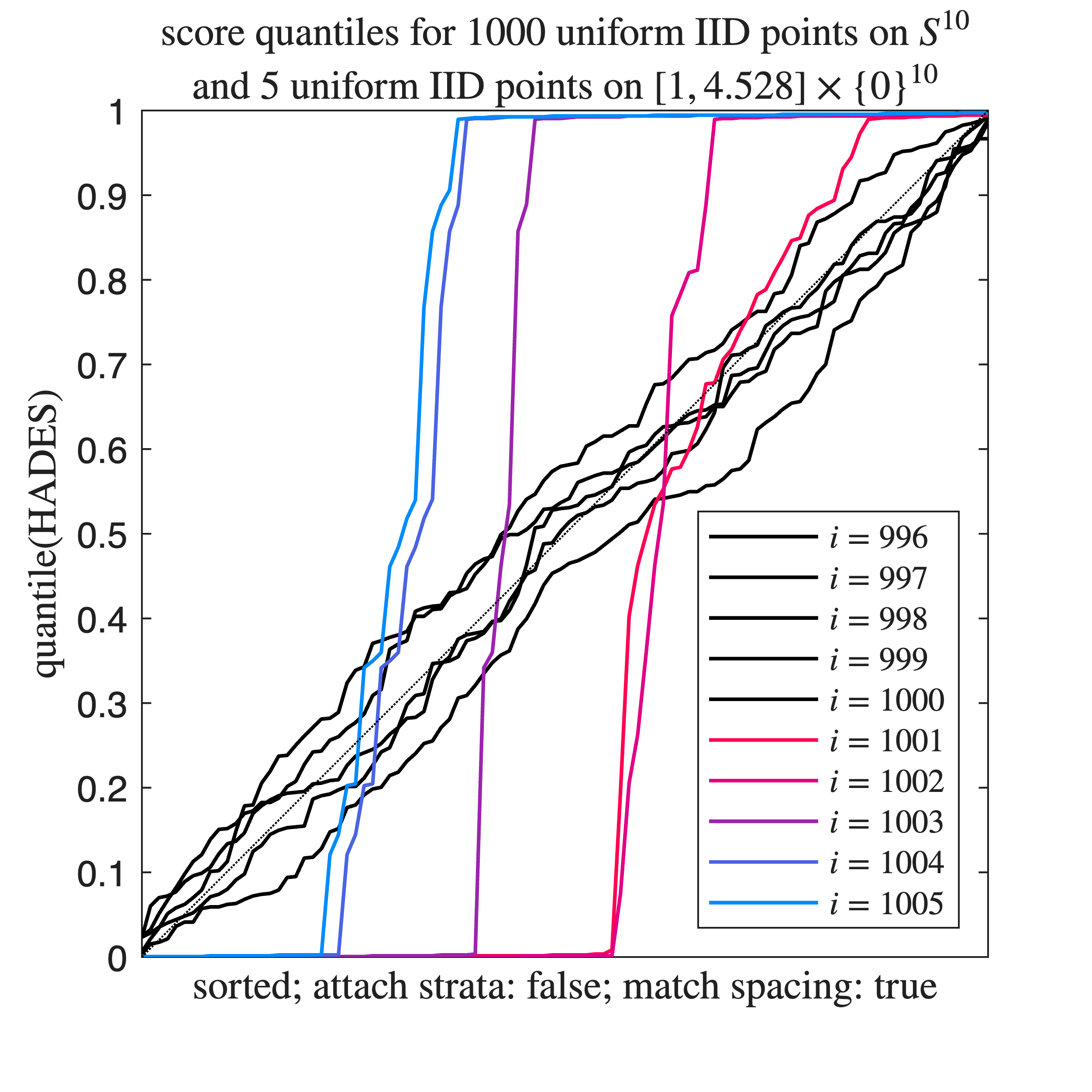}
    \includegraphics[width=.48\textwidth, trim={0 0 0 0mm}, clip]{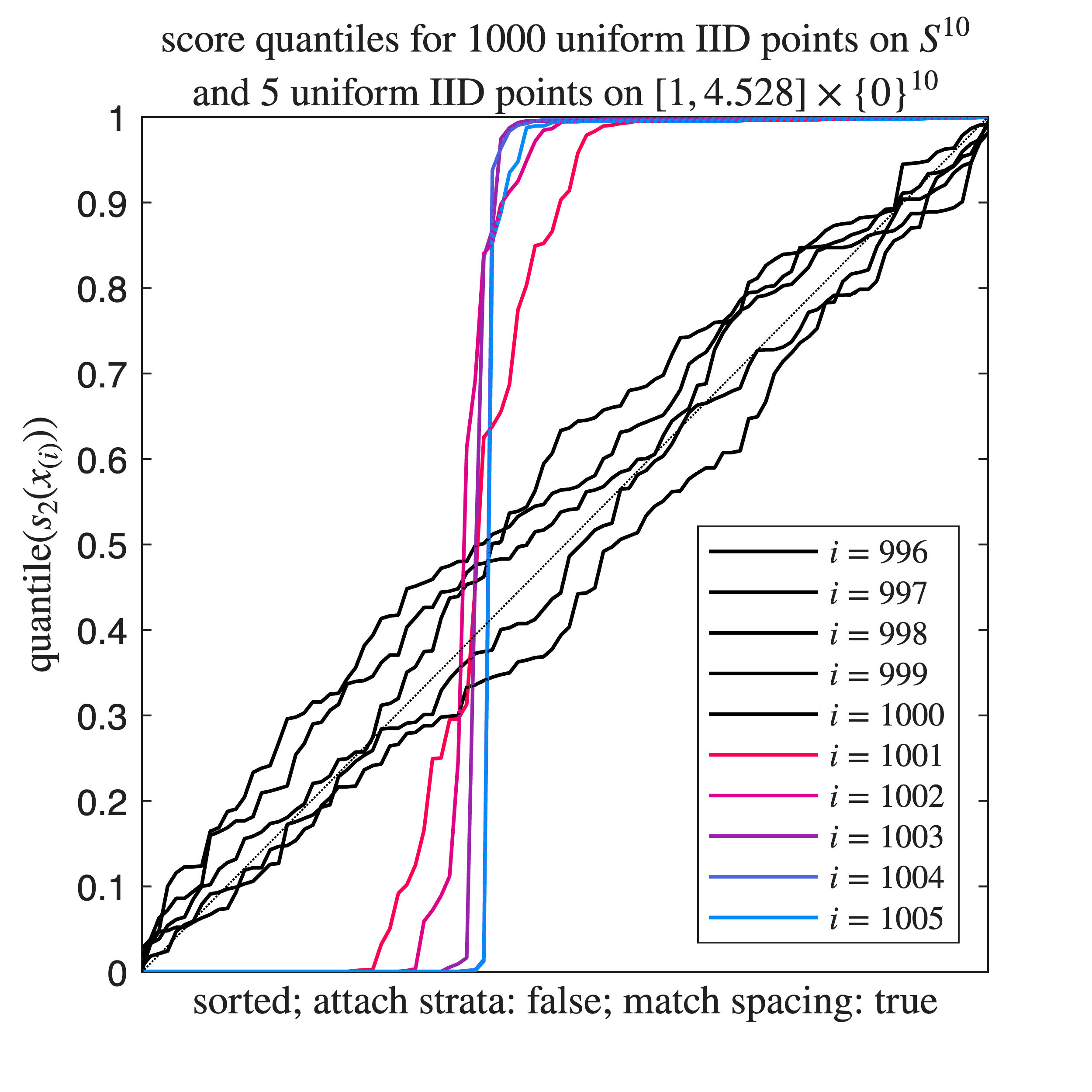}
        \caption{As in Figure \ref{fig:hair_ball_dim_11_attach_false_match_false}, but for a spacing-matched interval. Note that HADES scores consistently identifies interval stratum points closer to the sphere as much less singular than outliers, while $s_2$ identify interval stratum points closer to the sphere as slightly more singular than outliers. 
        The $s_2$ plots actually show both exact and heuristic peel results, which differ slightly in a few places but are visually indistinguishable.
        }
    \label{fig:hair_ball_dim_11_attach_false_match_true}
\end{figure}

\begin{figure}[htbp]
    \centering
    \includegraphics[width=.48\textwidth, trim={0 80 0 15mm}, clip]{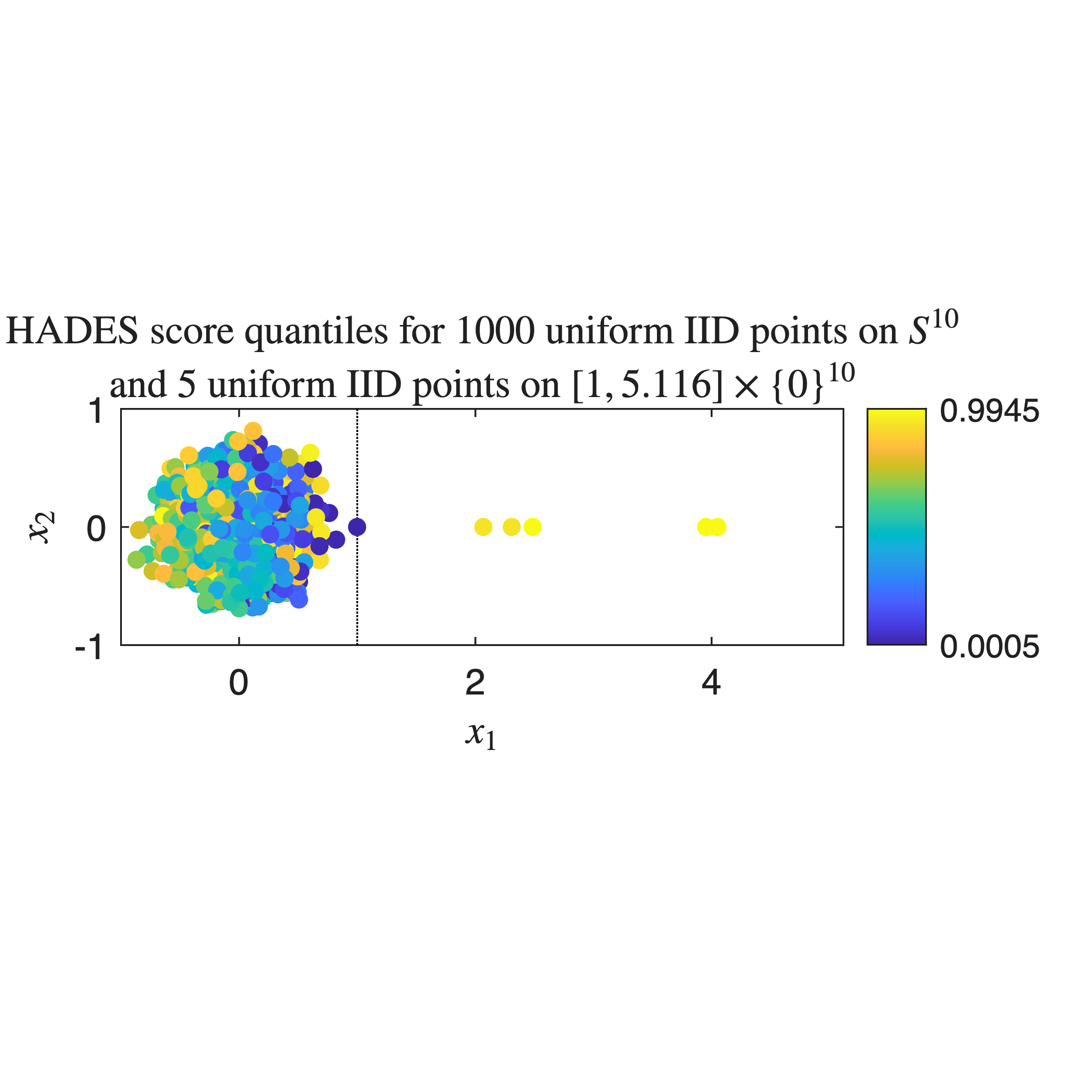}
    \includegraphics[width=.48\textwidth, trim={0 80 0 15mm}, clip]{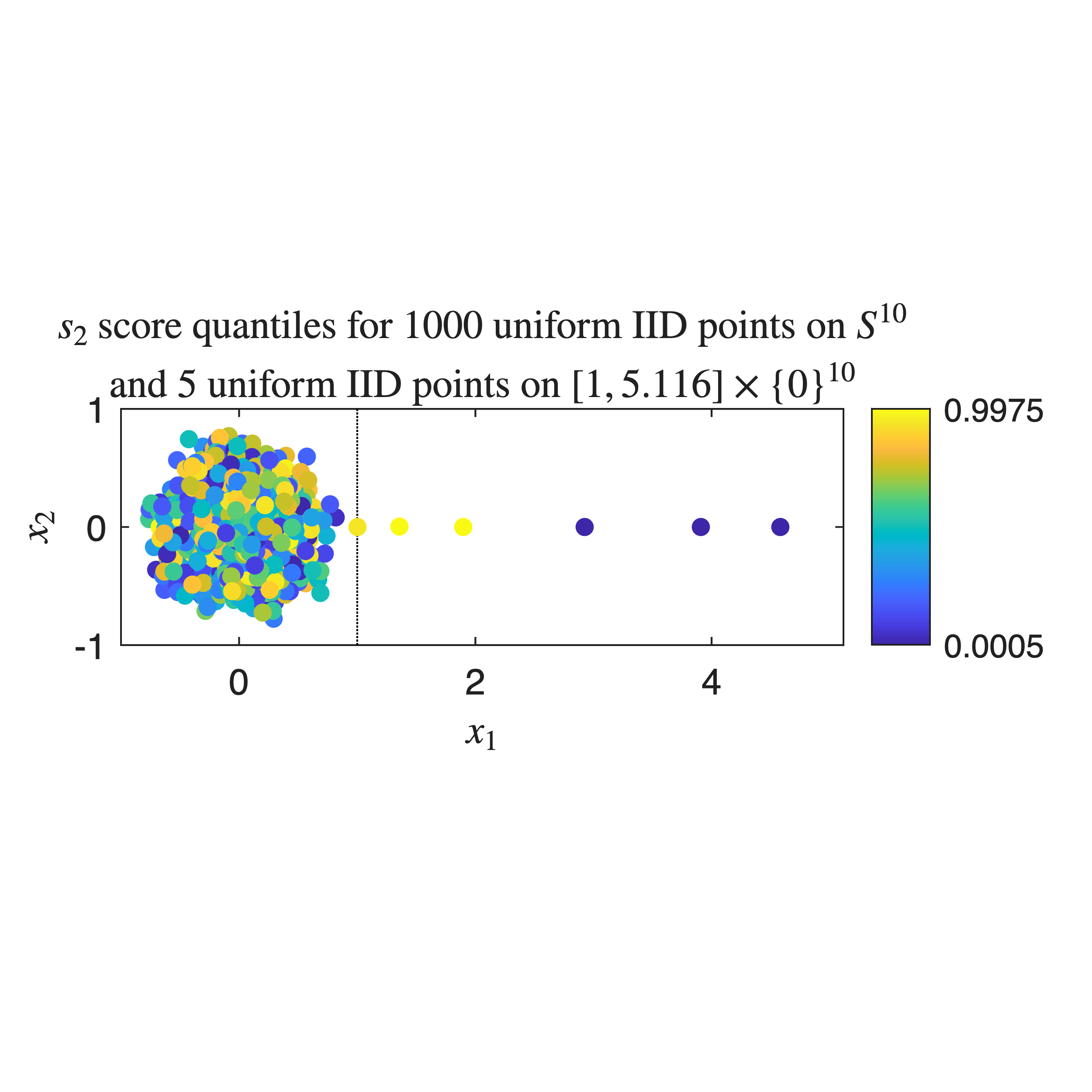}
    \includegraphics[width=.48\textwidth, trim={0 0 0 0mm}, clip]{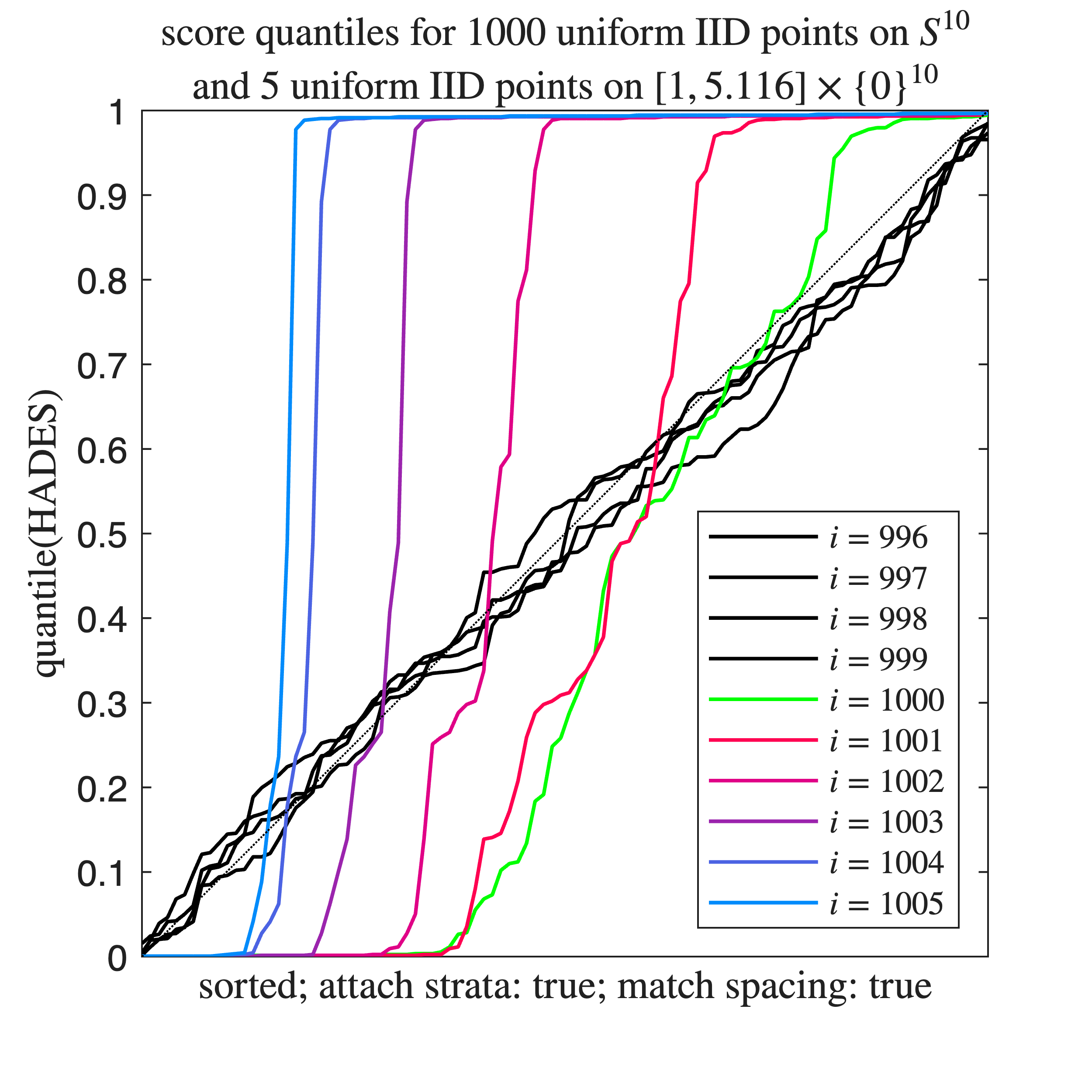}
    \includegraphics[width=.48\textwidth, trim={0 0 0 0mm}, clip]{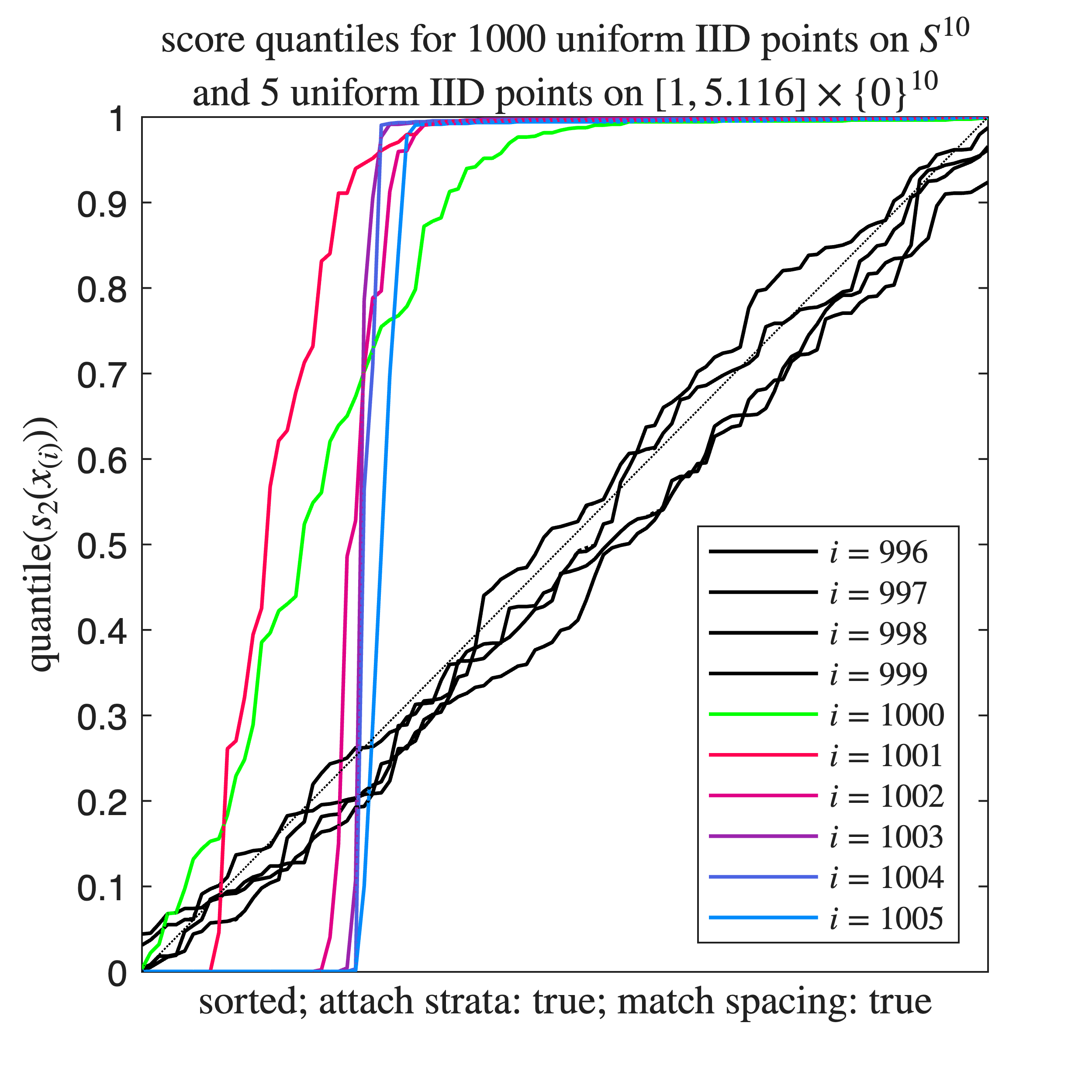}
        \caption{As in Figure \ref{fig:hair_ball_dim_11_attach_false_match_false}, but for an ``attached'' sample (with the attachment point indicated by a dashed line in the upper panels and {\color{green}in green in the lower panels}) and a spacing-matched interval. 
        The $s_2$ plots actually show both exact and heuristic peel results, which differ slightly in a few places but are visually indistinguishable.
        }
    \label{fig:hair_ball_dim_11_attach_true_match_true}
\end{figure}

\subsection{Comparison of dimension estimates on peel neighborhoods with the volume growth transform}\label{sec:VGT}

The \emph{volume growth transform} (VGT) of \cite{robinson2024structure,curry2025exploring} produces local dimension and scalar curvature estimates from a point cloud. The VGT is elegant in theory, but it is computationally expensive and it requires significant (typically manual) finesse in practice. The VGT practically requires a full distance matrix, and a good result depends on selecting a good window of points for regression, even when using a robust variant with iteratively reweighted least squares as we do here \cite{street1988note}. By comparison, we have seen that peel neighborhoods offer a way to efficiently compute local dimension estimates that are reliable in relative terms even if noisy in absolute terms. However, the dimension and curvature estimates of the VGT have only been subject to preliminary evaluations, and corroboration is useful. We proceed here with uniform sampling from compact manifolds of scalar curvature $-2$ (the Bolza surface), $0$ (the flat torus $(\mathbb{R}/\mathbb{Z})^2$), and $+2$ (the sphere $S^2$), respectively.

The first case is hardest to set up, and we relegate the details to Appendix \S \ref{sec:bolza}.
The second case works with the distance $$d_{(\mathbb{R}/\mathbb{Z})^m}(x,x') = \left ( \sum_{j=1}^m \left [ \frac{1}{2} - \left | \frac{1}{2} - |x_j - x'_j| \right | \right ]^2 \right )^{1/2}$$ on samples from $[0,1]^m$, and the third case works by normalizing samples from a standard Gaussian, computing cosine distances, and subsequently transforming these to angular distances.

Figure \ref{fig:EssVsVGT2} shows that multidimensional scaling (MDS) followed by ESSa consistently outperforms the robust VGT over comparable regions, without substantial dependence on curvature. However, without manually tuned bands, the VGT curvature estimates themselves are noisy and unreliable (not shown). These observations hold for IID uniform samples over surfaces of negative, zero, and positive curvature.

\begin{figure}[htbp]
    \centering
    \includegraphics[width=.32\textwidth, trim={0 0 0 0mm}, clip]{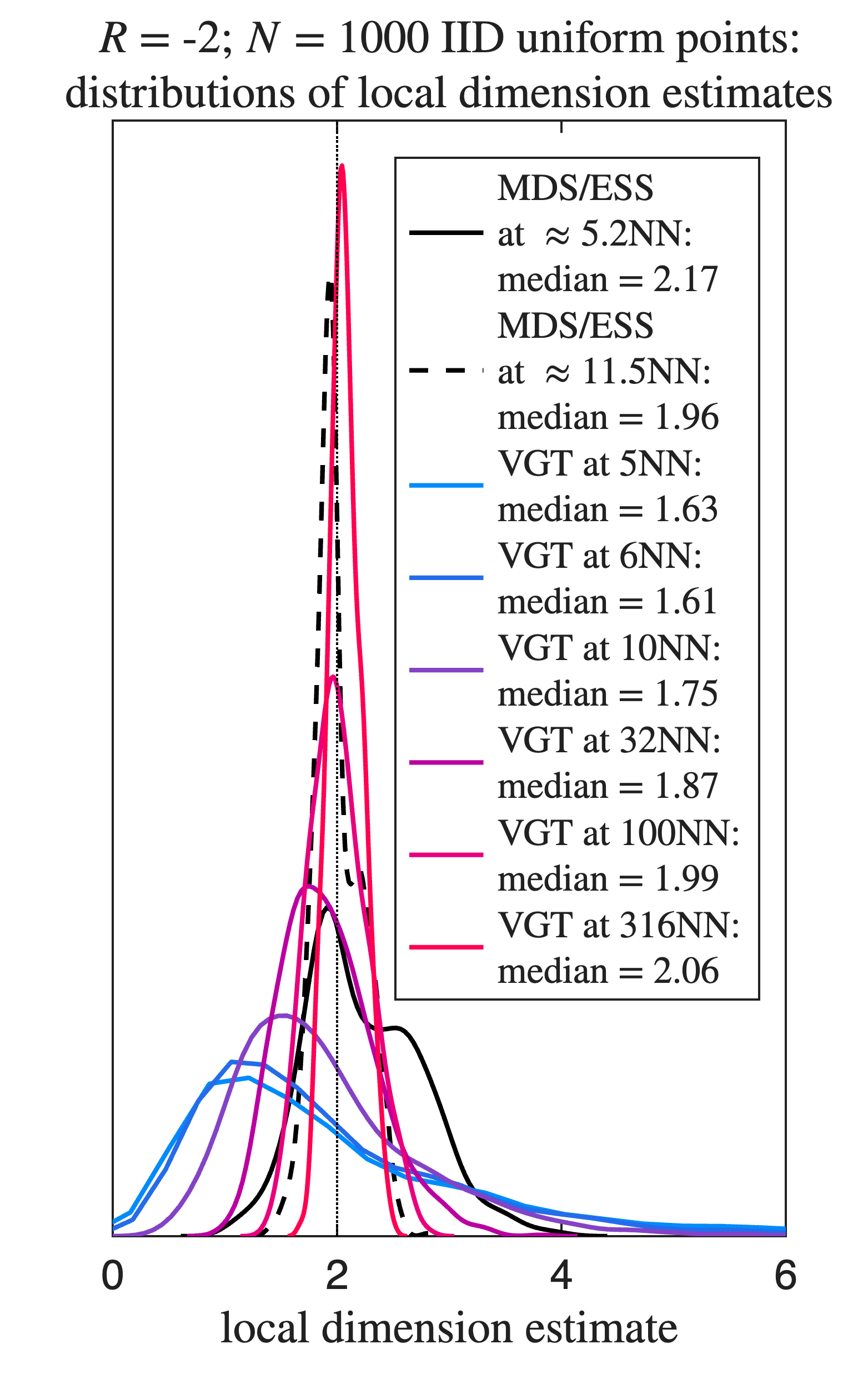}
    \includegraphics[width=.32\textwidth, trim={0 0 0 0mm}, clip]{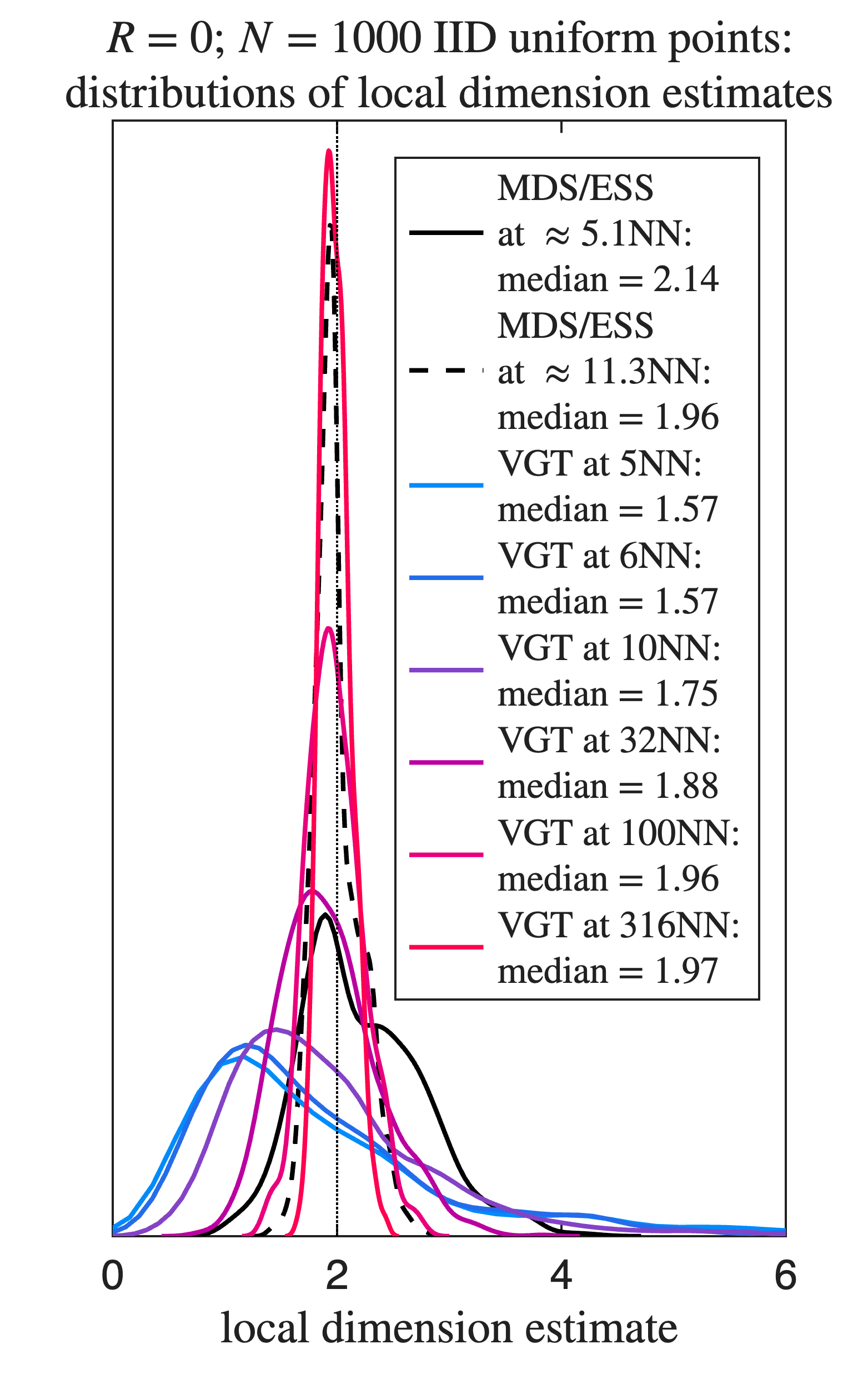}
    \includegraphics[width=.32\textwidth, trim={0 0 0 0mm}, clip]{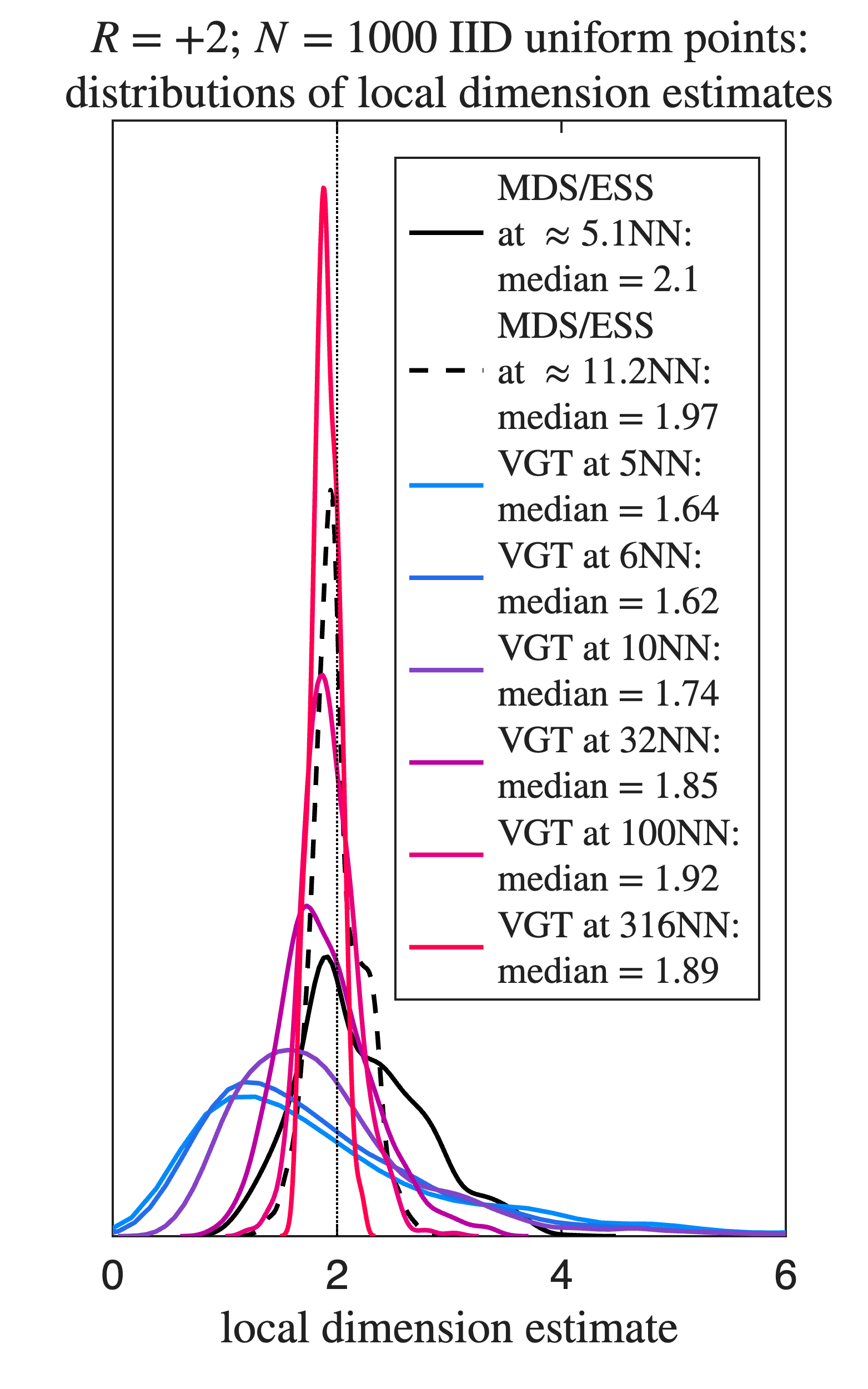}
        \caption{Left: local dimension estimates for a IID uniform sample from the Bolza surface. The panel shows the results of MDS followed by ESSa on the peel neighborhood $\nu(x) = B_{\rho(x)}(x)$ as a black solid line as well as on the iterated peel neighborhood $\nu_2(x) = \cup_{y \in \nu(x)} \nu(y)$ as a black dashed line (both with the median number of elements indicated), along with a robust VGT on the first $k$ nearest neighbors for indicated values of $k$. (We use a soft threshold of twice the median distance instead of $(1+1/m)$ times here, because our code consumes distance matrices and not point data, and in general only the former might be available without resorting to a full-dimensional Schoenberg embedding using negative type.) Center and right: as in the left panel, but for the flat 2-torus $(\mathbb{R}/\mathbb{Z})^2$ and $S^2$, respectively. 
        In these cases, data turn out to be identical for exact peels and heuristic approximations.
        }
    \label{fig:EssVsVGT2}
\end{figure}

With some evidence in hand that dimension estimates are insensitive to the sign of curvature, for convenience we restrict attention to the flat torus and sphere in higher dimensions. Figures \ref{fig:EssVsVGT3}-\ref{fig:EssVsVGT50} show that while the results for MDS followed by ESSa on (iterated) peel neighborhoods may not be good in an absolute sense for sparse, high-dimensional data, probably no other technique will deliver results that are good in an absolute sense in such regimes. This approach still delivers results that are relatively good, i.e., at least as good if not better than a robust VGT absent manual tuning (which is not guaranteed to improve performance). Moreover, our peel-oriented technique is computationally efficient at scale.

\begin{figure}[htbp]
    \centering
    \includegraphics[width=.48\textwidth, trim={0 0 0 0mm}, clip]{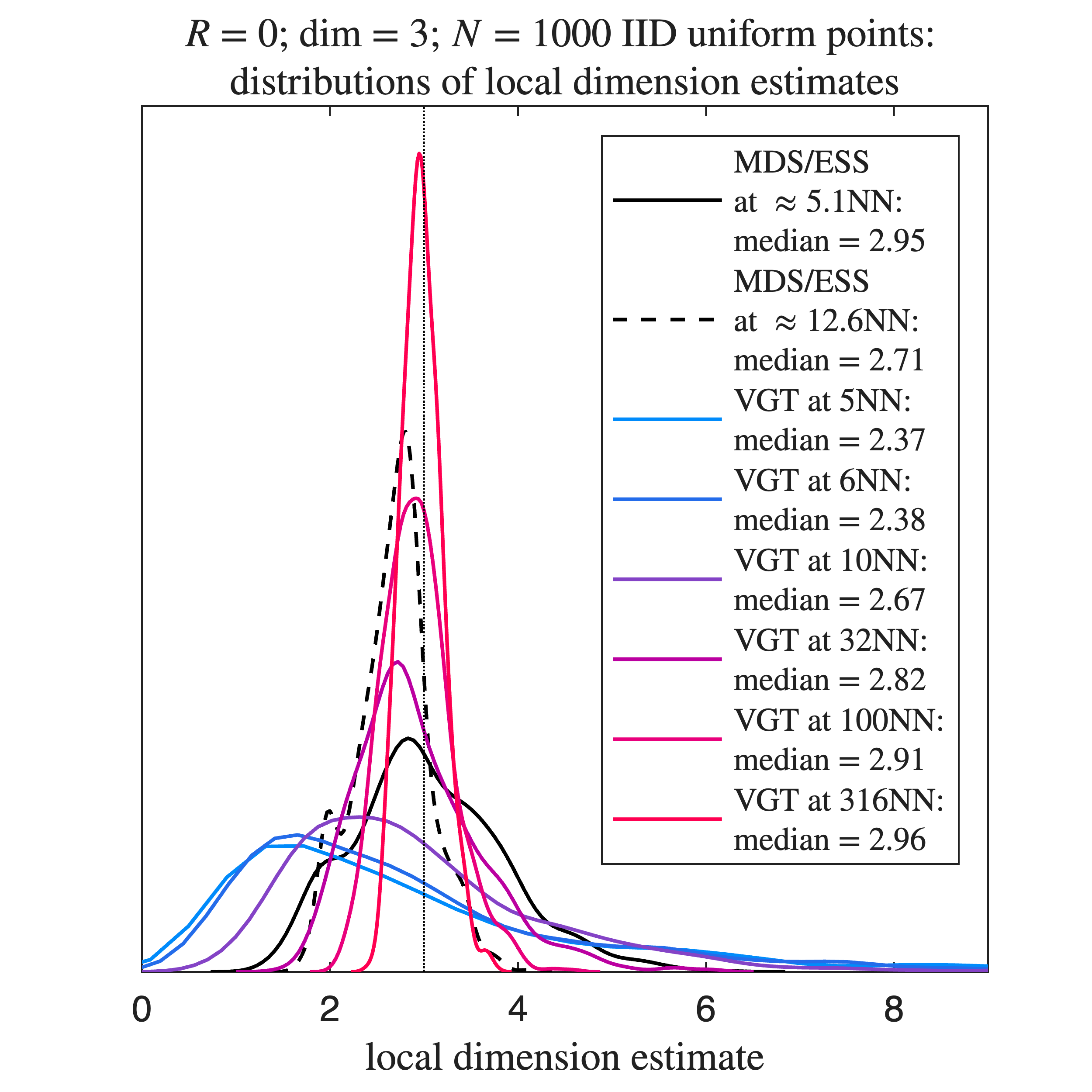}
    \includegraphics[width=.48\textwidth, trim={0 0 0 0mm}, clip]{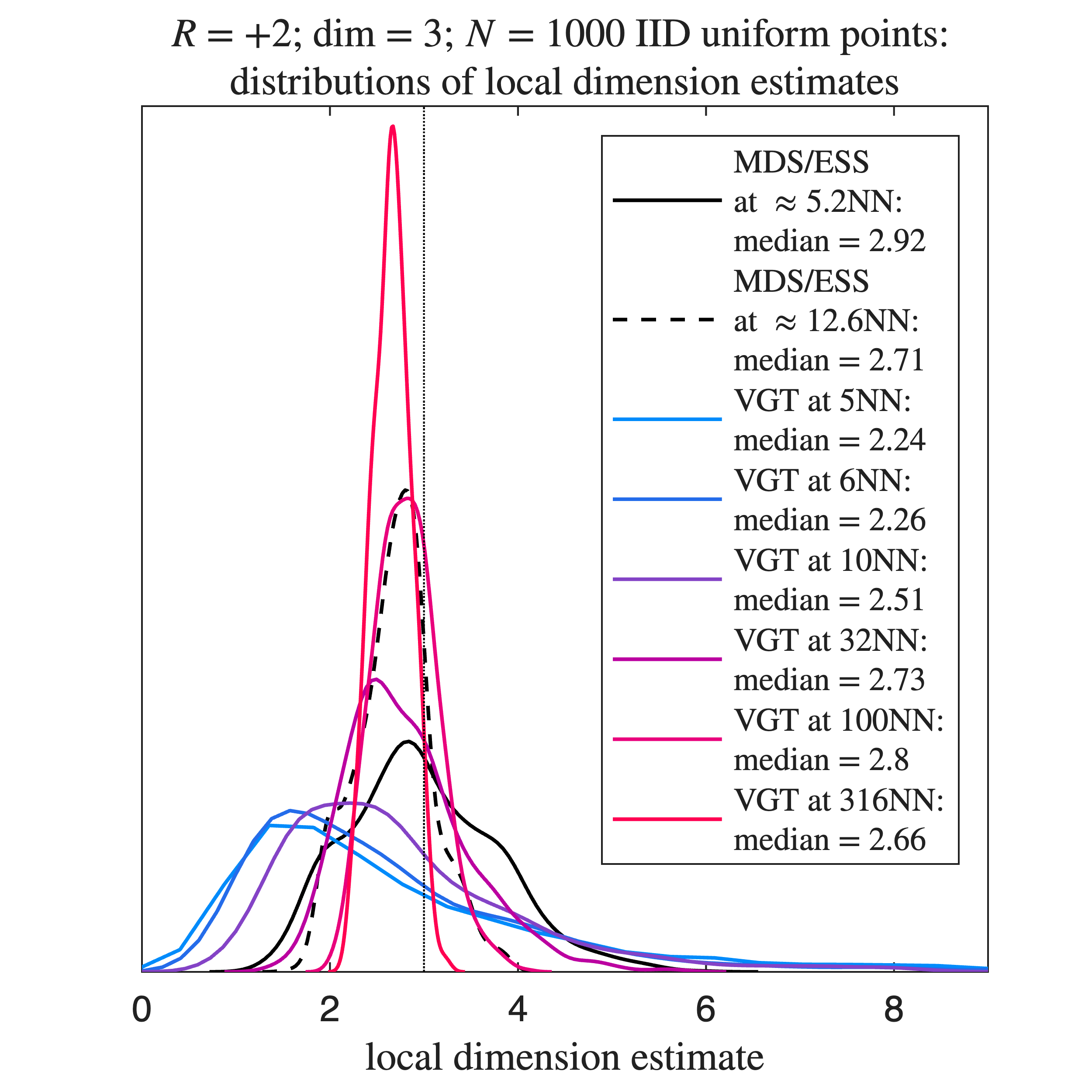}
        \caption{As in Figure \ref{fig:EssVsVGT2} but for $(\mathbb{R}/\mathbb{Z})^3$ (left panel) and $S^3$ (right panel). 
        In these cases, data turn out to be identical for exact peels and heuristic approximations.
        }
    \label{fig:EssVsVGT3}
\end{figure}

\begin{figure}[htbp]
    \centering
    \includegraphics[width=.48\textwidth, trim={0 0 0 0mm}, clip]{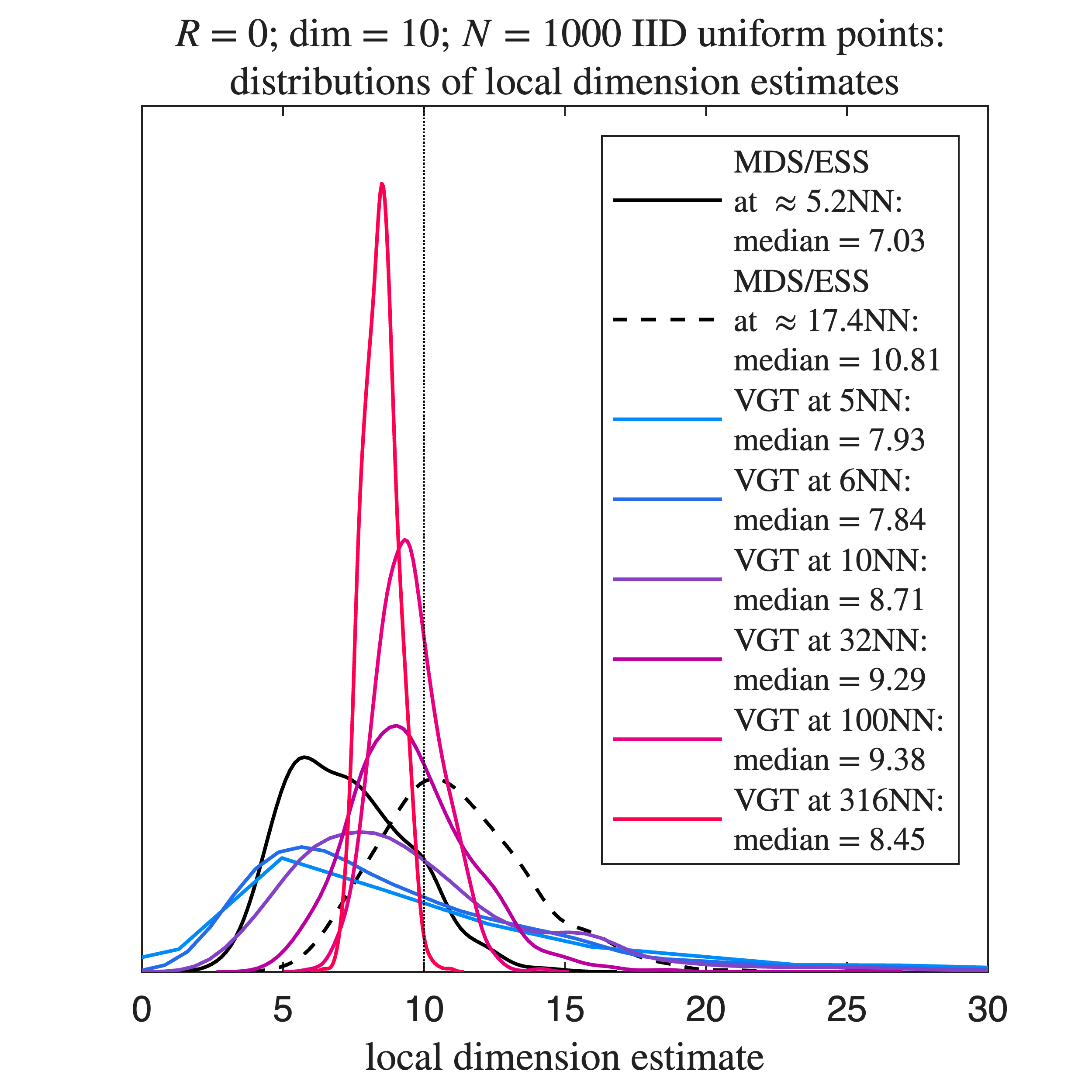}
    \includegraphics[width=.48\textwidth, trim={0 0 0 0mm}, clip]{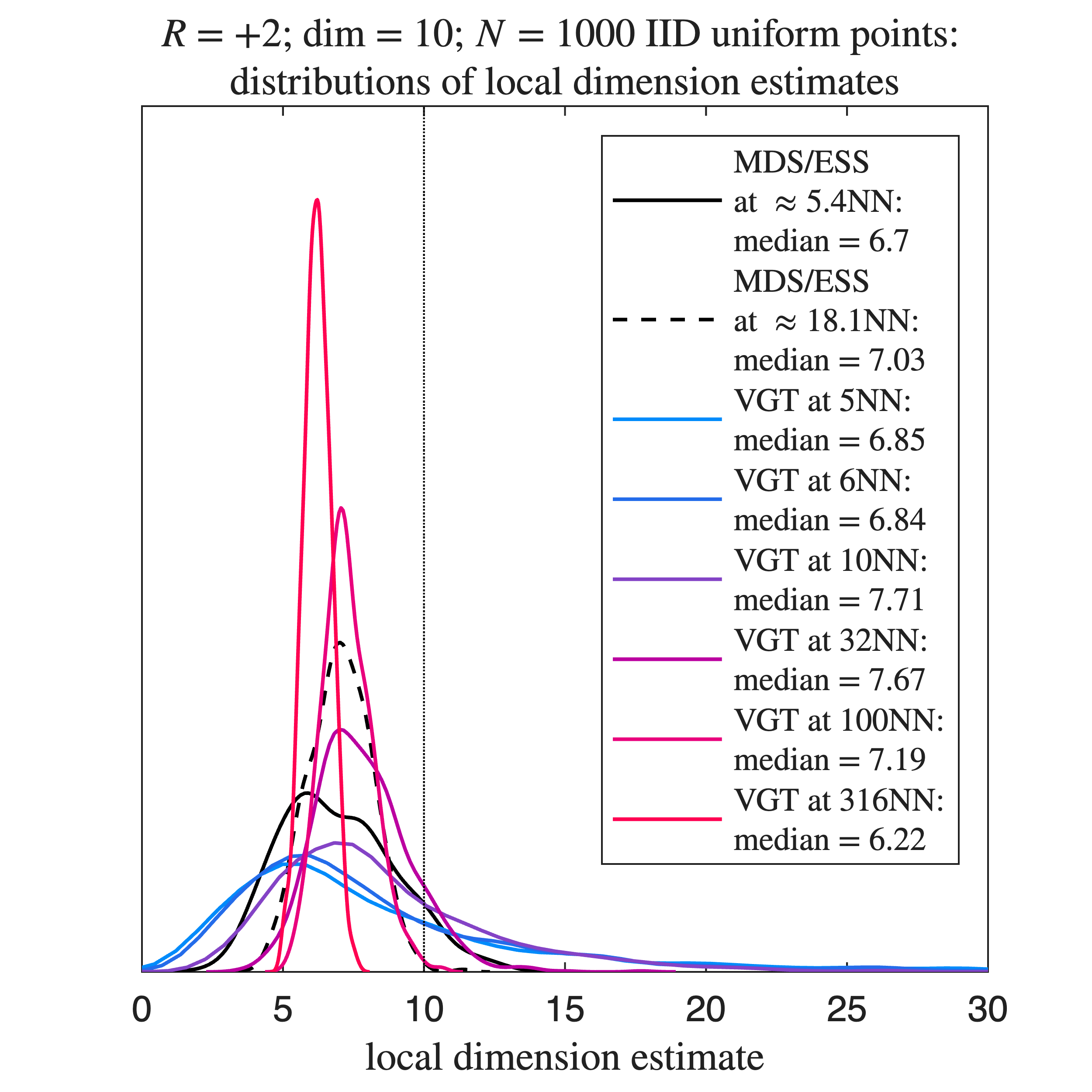}
        \caption{As in Figure \ref{fig:EssVsVGT2} but for $(\mathbb{R}/\mathbb{Z})^{10}$ (left panel) and $S^{10}$ (right panel). 
        In these cases, data turn out to be identical for exact peels and heuristic approximations.
        }
    \label{fig:EssVsVGT10}
\end{figure}

\begin{figure}[htbp]
    \centering
    \includegraphics[width=.48\textwidth, trim={0 0 0 0mm}, clip]{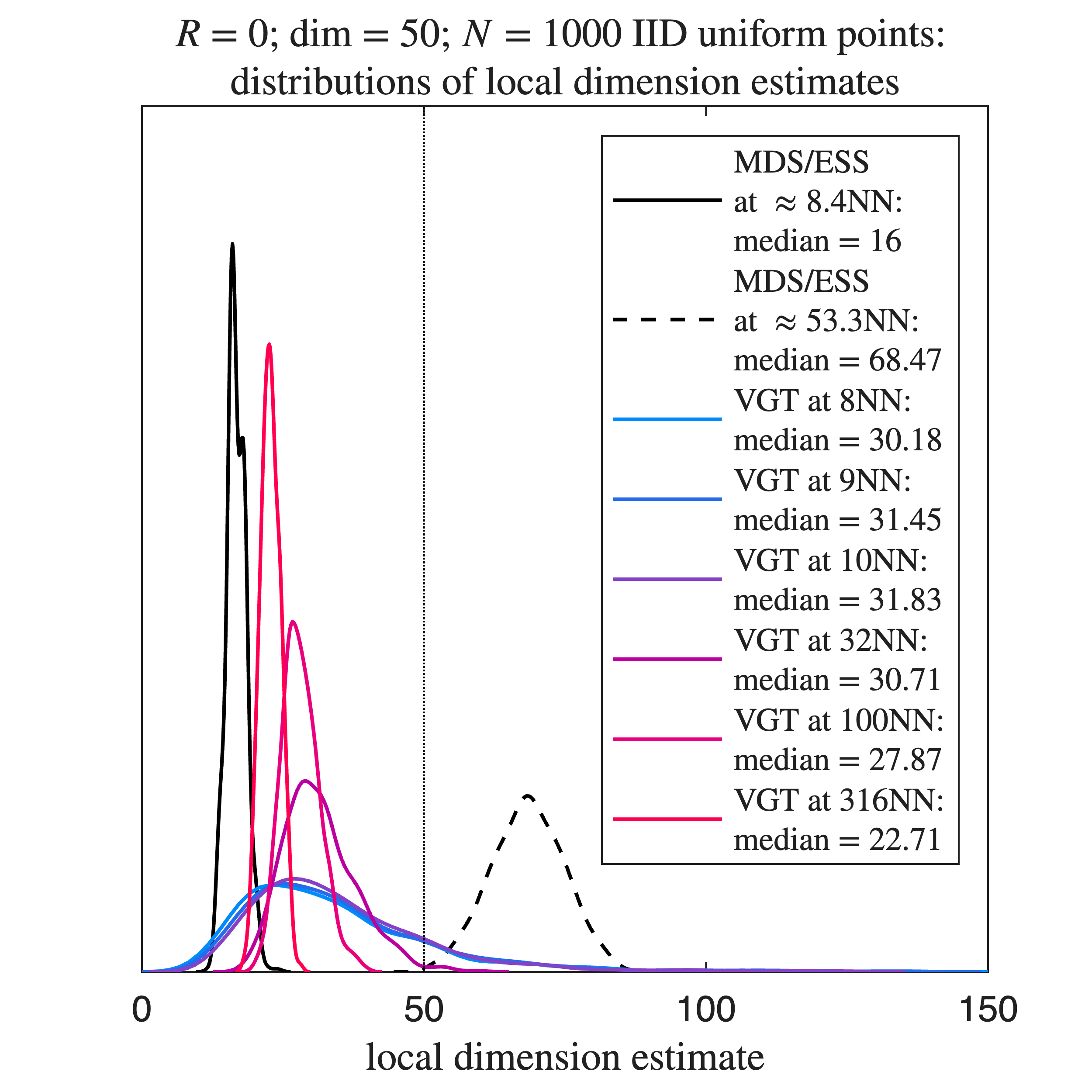}
    \includegraphics[width=.48\textwidth, trim={0 0 0 0mm}, clip]{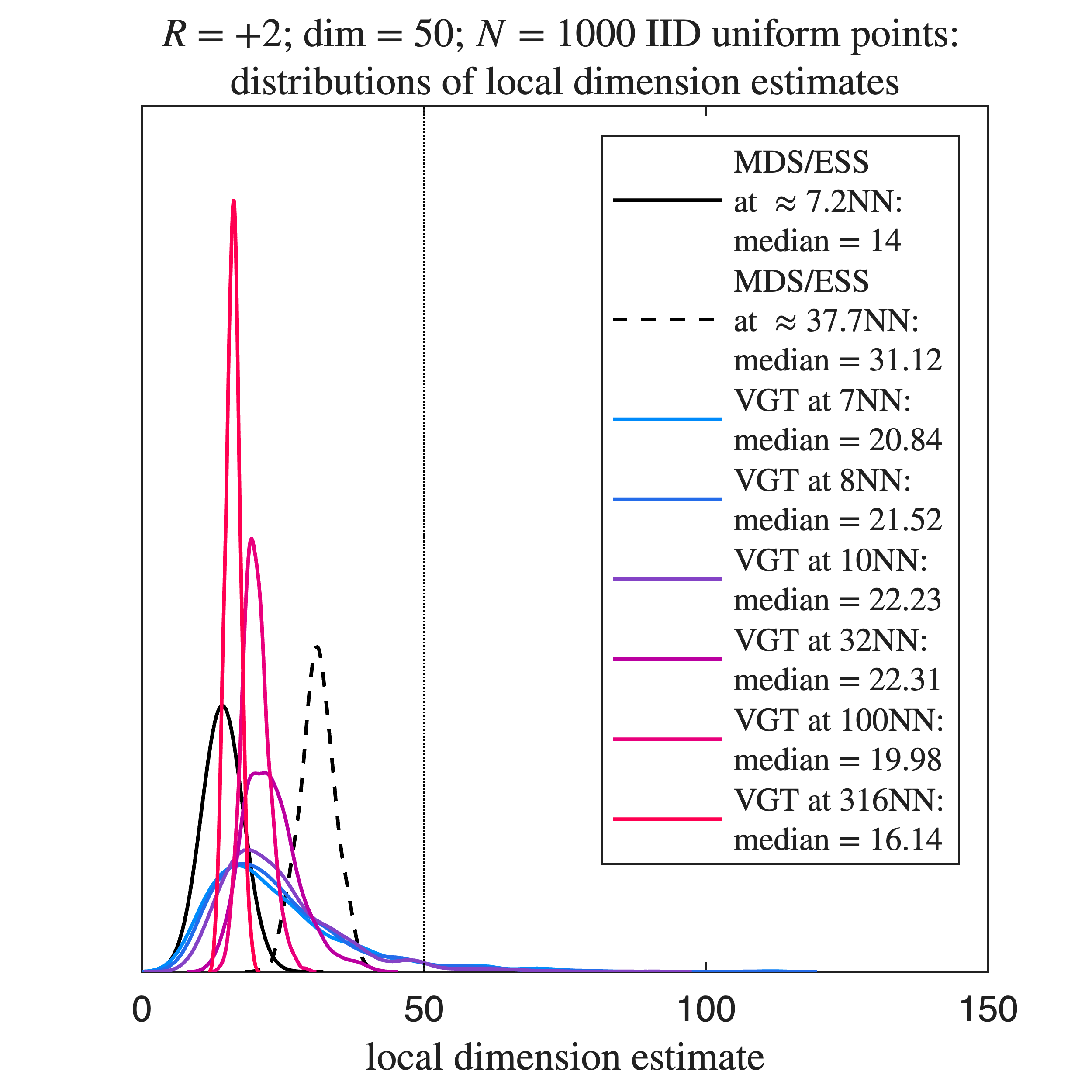}
        \caption{As in Figure \ref{fig:EssVsVGT2} but for $(\mathbb{R}/\mathbb{Z})^{50}$ (left panel) and $S^{50}$ (right panel). In these cases, data turn out to be identical for exact peels and heuristic approximations.
        }
    \label{fig:EssVsVGT50}
\end{figure}

\section{Conclusions}
\label{sec:conclusions}

Peel neighborhoods provide a canonical, parameter-free notion of locality via convexity in finite metric spaces of strict negative type. Their definition in terms of a peel (i.e., the support of a diversity-maximizing distribution at scale zero) inherits the simplicity of Algorithm \ref{alg:ApproximatePeel} and the still fairly simple exact variants described in \S G of \cite{huntsman2025peeling}. With a soft or adaptive threshold, peel neighborhoods are also very efficient to compute at scale (both of cardinality and dimension), and they reveal boundary-like points.

The experiments of \S \ref{sec:experiments} reveal that peel neighborhoods have utility along multiple axes. \S \ref{sec:comparison} shows they encode locality more flexibly and more efficiently than $k$-nearest neighbor or fixed-radius neighborhoods. \S \ref{sec:annuli} demonstrates that they provide a practical alternative to topological persistence. \S \ref{sec:scaling} illustrates how peel neighborhoods can approximate peels that are much more expensive to compute exactly. Finally, \S \ref{sec:localDimension} and \ref{sec:VGT} show that peel neighborhoods can help efficiently estimate local dimension and thereby identify singularities in stratified manifolds using the gradient norm score $s_2$ of \S \ref{sec:gradients}. This technique is robust, efficient, and performant. 

It is natural to ask if there is a useful nontrivial result involving peels of subsets of peels (or of peel neighborhoods of elements of peels). In the framing of Lemma \ref{lem:inclusionBound}, we want to be able to understand when $\Delta = d|_\mathcal{I}^{-1} \delta > 0$, i.e., when the peel of a subset is a subset of a peel. Numerical experiments with Gaussian data for $N \gg 5$ in various dimensions suggested 
\begin{conjecture}[disproved]\label{conj:gorman}
If $d$ is strict negative type on a finite space and $\mathcal{I} \subseteq \textnormal{peel}(d)$, then $\textnormal{peel}(d|_\mathcal{I}) = \mathcal{I}$.
\end{conjecture}

Evan Gorman produced the counterexample 
$$d = \begin{pmatrix}
0 & 1 & 1 & 2 \\ 
1 & 0 & 2 & 1 \\ 
1 & 2 & 0 & 2 \\ 
2 & 1 & 2 & 0
\end{pmatrix}.$$ 
This counterexample is readily verified to be a metric of strict negative type by a test from \cite{hjorth1998finite}. Calculations yield that $p_*(d) = (1,1,2,2)^T/6$, so $\textnormal{peel}(d) = \{1,2,3,4\}$, while $p_*(d|_{\{1,2,3\}}) = (0,1,1)^T/2$, so $\textnormal{peel}(d|_{\{1,2,3\}}) = \{2,3\} \ne \{1,2,3\}$. 
This counterexample is also exotic: it is not a Euclidean distance matrix by the Schoenberg-Young-Householder theorem (Theorem 3.1 of \cite{alfakih2018euclidean}), and the points are not in general position (e.g., the first point is between the second and third). In light of its non-genericity, it is not surprising that this counterexample is also brittle. If we take more generally
$$d_\varepsilon = \begin{pmatrix}0 & 1+\varepsilon & 1+\varepsilon & 2+\varepsilon \\ 1+\varepsilon & 0 & 2+\varepsilon & 1+\varepsilon \\ 1+\varepsilon & 2+\varepsilon & 0 & 2+\varepsilon \\ 2+\varepsilon & 1+\varepsilon & 2+\varepsilon & 0\end{pmatrix}$$
for $\varepsilon \ge 0$, numerical computations indicate that $d_\varepsilon$ is always a metric of strict negative type: it is Euclidean for $\varepsilon > 1/\phi \approx 0.618$, where $\phi = (1+\sqrt{5})/2$ is the golden ratio. Numerics indicate that $\textnormal{peel}(d_\varepsilon) = \{1,2,3,4\}$ and $\textnormal{peel}(d_\varepsilon|_{\{1,2,3\}}) = \{1,2,3\}$ for $\varepsilon > 0$.

The exoticness and brittleness of the counterexample above suggested that a slightly weakened variant of the conjecture might hold, e.g., for Euclidean and/or general position strict negative type metrics. However, a brute-force search over subsets of $\mathbb{Z}^2$ endowed with Euclidean distance produced the following pedestrian (i.e., perturbatively stable) counterexample on five points. Let $d$ now be the Euclidean distance matrix for $\{(0,0),(0,1),(1,0),(1,3),(3,1)\} \subset \mathbb{R}^2$, i.e., 
$$d = \begin{pmatrix}
0 & 1 & 1 & \sqrt{10} & \sqrt{10} \\ 
1 & 0 & \sqrt{2} & \sqrt{5} & 3 \\ 
1 & \sqrt{2} & 0 & 3 & \sqrt{5} \\ 
\sqrt{10} & \sqrt{5} & 3 & 0 & 2\sqrt{2} \\ 
\sqrt{10} & 3 & \sqrt{5} & 2\sqrt{2} & 0 
\end{pmatrix}.$$
Symbolic calculations yield
$$p_*(d) \propto 
\frac{1}{176}\begin{pmatrix}
(-10\sqrt{2}+24)\sqrt{5}-62\sqrt{2}+96 \\
(-30\sqrt{2}+14)\sqrt{5}-10\sqrt{2}+78 \\
(-30\sqrt{2}+14)\sqrt{5}-10\sqrt{2}+78 \\
(\sqrt{2}+2)\sqrt{5}-7\sqrt{2}+30 \\
(\sqrt{2}+2)\sqrt{5}-7\sqrt{2}+30
\end{pmatrix}
\approx
\begin{pmatrix}
0.1725 \\ 
0.0017 \\ 
0.0017 \\ 
0.1576 \\ 
0.1576
\end{pmatrix},
$$
while 
$$p_*(d|_{\{1,2,3,4\}}) \propto 
\frac{1}{30}\begin{pmatrix}
-3\sqrt{10}+15 \\
0 \\
-5\sqrt{10}+20 \\
\sqrt{10}+5
\end{pmatrix}
\approx
\begin{pmatrix}
0.1838 \\ 
0 \\ 
0.1396 \\ 
0.2721
\end{pmatrix}.
$$

As a final remark, developing results that connect peel neighborhoods to constructions commonly associated with topological persistence would be interesting and probably useful. In a more computational direction, kernel solvers might inform the large-scale computation of peels and provide avenues for further scaling \cite{strumpack2020}. We are pursuing this thread in separate work.

\section*{Acknowledgments}
Thanks to Michael Robinson for helpful conversations, and to Jewell Thomas and Cynthia Ukawu for parallel developments (as it were). Thanks to Evan Gorman for helpful conversations and his disproof of Conjecture \ref{conj:gorman}. Thanks also to other interlocutors at DARPA for helpful conversations. 


Claude Opus 
helped with idea elaboration, manuscript review, finding references and some initial proofs.
I read Claude's references and verified and rewrote its proofs, and I am solely responsible for their correctness. I personally wrote everything in this paper except for some BibTeX entries, all of which I inspected, along with the actual references they point to. 

This research was developed with funding from the Defense Advanced Research Projects Agency (DARPA). The views, opinions and/or findings expressed are those of the author and should not be interpreted as representing the official views or policies of the Department of Defense or the U.S. Government. 

\bibliographystyle{siamplain}
\bibliography{peeling}

\appendix

\section{\label{sec:quantBound}A quantitative bound building on Proposition 3.11}

\begin{proposition}\label{prop:inclusionBoundStrong}
Suppose that $d$ is strict negative type on $[N]$ and $\mathcal{I} \subset \mathcal{J} = \textnormal{peel}(d)$. For an enumeration $k_1,\dots,k_K$ of the points in $\mathcal{J} - \mathcal{I}$, define $\mathcal{J}_0 := \mathcal{J}$ and $\mathcal{J}_{\ell} := \mathcal{J}_{\ell-1} - \{k_{\ell}\}$ for $\ell \in [K]$, so that $\mathcal{J}_K = \mathcal{I}$. Write
$$d|_{\mathcal{J}_\ell} = \begin{pmatrix} d|_{\mathcal{J}_{\ell+1}} & \delta^{(\ell)} \\ (\delta^{(\ell)})^T & 0 \end{pmatrix}.$$ Finally, define 
$$\Delta^{(\ell)} := d|_{\mathcal{J}_{\ell+1}}^{-1} \delta^{(\ell)}, \quad \bar \Delta^{(\ell)} := \frac{1}{|\mathcal{J}|-\ell} 1^T \Delta^{(\ell)}, \quad \varepsilon^{(\ell)} := \Delta^{(\ell)} - \bar \Delta^{(\ell)} 1, \quad w^{(\ell)} := d|_{\mathcal{J}_{\ell}}^{-1} 1.$$ 
Then 
\begin{equation}\label{eq:inclusionBoundStrong}
    \left \| \varepsilon^{(\ell)} \right \|_\infty \overset{\forall \ell}{<} \bar \Delta^{(\ell)} + \frac{\min_{i \in \mathcal{J}_{\ell+1}} w^{(\ell)}_i}{w^{(\ell)}_{|\mathcal{J}|-\ell}} \Rightarrow \textnormal{peel}(d|_\mathcal{I}) = \mathcal{I}.
\end{equation}
\end{proposition}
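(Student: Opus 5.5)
We will argue by induction on $\ell$, removing one point of $\mathcal{J} - \mathcal{I}$ at a time and checking that each intermediate weighting stays positive. The inductive claim is that $w^{(\ell)} = d|_{\mathcal{J}_\ell}^{-1} 1 > 0$ for every $\ell = 0,\dots,K$. Once this holds, $w^{(K)} = d|_\mathcal{I}^{-1}1 > 0$. Algorithm \ref{alg:ApproximatePeel} then terminates immediately on $d|_\mathcal{I}$, and as in the remark preceding Lemma \ref{lem:inclusionBound} (via Theorem \ref{thm:peeling} and Proposition \ref{prop:peelOfPeel}) this gives $\textnormal{peel}(d|_\mathcal{I}) = \mathcal{I}$. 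The base case $\ell = 0$ holds because $\mathcal{J}_0 = \textnormal{peel}(d)$: as in the proof of Proposition \ref{prop:peelOfPeel}, $p_*(d)|_\mathcal{J}$ is proportional to $d|_\mathcal{J}^{-1}1$ with a positive normalizer (Lemma \ref{lem:scale0magnitude}), so $w^{(0)} > 0$. Every restriction of $d$ is again strict negative type, hence invertible, so all the matrices involved make sense.

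For the inductive step, assume $w^{(\ell)} > 0$. We will redo the block-inverse computation \eqref{eq:blockInverse}--\eqref{eq:dJJ11} with $\mathcal{J}_\ell$ in the role of $\mathcal{J}$, $\mathcal{J}_{\ell+1}$ in the role of $\mathcal{I}$, and $\delta^{(\ell)}$ in the role of $\delta$. This is the same identity used in the proof of Proposition \ref{prop:inclusionBound2}, and it gives
$$w^{(\ell+1)} = w^{(\ell)}|_{\mathcal{J}_{\ell+1}} + w^{(\ell)}_{|\mathcal{J}|-\ell}\, \Delta^{(\ell)}.$$
Here the last index of $\mathcal{J}_\ell$ (the removed point $k_{\ell+1}$) is identified with position $|\mathcal{J}|-\ell$. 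Only positivity of $w^{(\ell)}$ enters this derivation, not membership in a peel, so Lemma \ref{lem:inclusionBound} applies verbatim at each stage. Its constant is $C = w^{(\ell)}_{|\mathcal{J}|-\ell} > 0$.

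Next we substitute $\Delta^{(\ell)} = \bar\Delta^{(\ell)}1 + \varepsilon^{(\ell)}$ and bound each coordinate $i \in \mathcal{J}_{\ell+1}$ from below:
$$w^{(\ell+1)}_i \ge \min_{i'} w^{(\ell)}_{i'} + w^{(\ell)}_{|\mathcal{J}|-\ell}\bigl(\bar\Delta^{(\ell)} - \|\varepsilon^{(\ell)}\|_\infty\bigr).$$
Divide by $w^{(\ell)}_{|\mathcal{J}|-\ell} > 0$. The hypothesis \eqref{eq:inclusionBoundStrong} at index $\ell$ then says exactly that the right-hand side is positive, so $w^{(\ell+1)} > 0$ and the induction closes. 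At the final step we could instead cite Proposition \ref{prop:inclusionBound2} with $\mathcal{J}_{K-1}$ in the role of $\mathcal{J}$, which gives the same conclusion.

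The main obstacle is bookkeeping rather than analysis. Lemma \ref{lem:inclusionBound} and Proposition \ref{prop:inclusionBound2} are stated for $\mathcal{J} = \textnormal{peel}(d)$, but at later stages $\mathcal{J}_\ell$ is known only to satisfy $w^{(\ell)} > 0$. We therefore have to check that only this positivity is used, and not the peel property itself. Concretely, this means checking that $C = -(1-\delta^T d_\mathcal{I}^{-1}1)/(\delta^T d_\mathcal{I}^{-1}\delta)$ equals the last entry of $d_\mathcal{J}^{-1}1$ by \eqref{eq:dJJ11}, which is positive by the inductive hypothesis. We also need Proposition \ref{prop:peelOfPeel} in the form ``$d|_\mathcal{I}^{-1}1 > 0$ implies $\textnormal{peel}(d|_\mathcal{I}) = \mathcal{I}$,'' which is how its proof actually runs.
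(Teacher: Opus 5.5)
Your proof is correct and follows the same route as the paper. Both use the recursion $w^{(\ell+1)} = w^{(\ell)}|_{\mathcal{J}_{\ell+1}} + w^{(\ell)}_{|\mathcal{J}|-\ell}\Delta^{(\ell)}$ from the block-inverse identity \eqref{eq:dJJ11}, require positivity of every $w^{(\ell)}$, and take a worst-case coordinatewise bound after splitting $\Delta^{(\ell)} = \bar\Delta^{(\ell)}1 + \varepsilon^{(\ell)}$. Your explicit induction also makes precise something the paper's ``Equivalently'' leaves implicit: dividing by $w^{(\ell)}_{|\mathcal{J}|-\ell}$ preserves the inequality only because the previous stage has already established that this entry is positive.
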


\begin{proof}
    As in Proposition 3.11 from the main text, we have $w^{(\ell+1)} = w^{(\ell)}|_{\mathcal{J}_{\ell+1}} + w^{(\ell)}_{|\mathcal{J}|-\ell} \Delta^{(\ell)}$. Meanwhile, $\textnormal{peel}(d|_\mathcal{I}) = \mathcal{I}$ if $w^{(\ell)} > 0$ componentwise for all $\ell$. Equivalently, $$-\varepsilon^{(\ell)} \overset{\forall \ell}{<} \bar \Delta^{(\ell)} 1 + \frac{w^{(\ell)}|_{\mathcal{J}_{\ell+1}}}{w^{(\ell)}_{|\mathcal{J}|-\ell}}\Rightarrow \textnormal{peel}(d|_\mathcal{I}) = \mathcal{I}.$$ Taking worst-case bounds of each side of the family of inequalities yields the result.
\end{proof}

\section{\label{sec:another51}Another experiment along the lines of \S 5.1}

Figure \ref{fig:neighborhood_graphs_torus} is along the lines of Figure 11 from \S 5.1 in the main text.

\begin{figure}[htbp]
  \centering
  \includegraphics[trim = 0mm 15mm 0mm 15mm, clip, width=\textwidth,keepaspectratio]{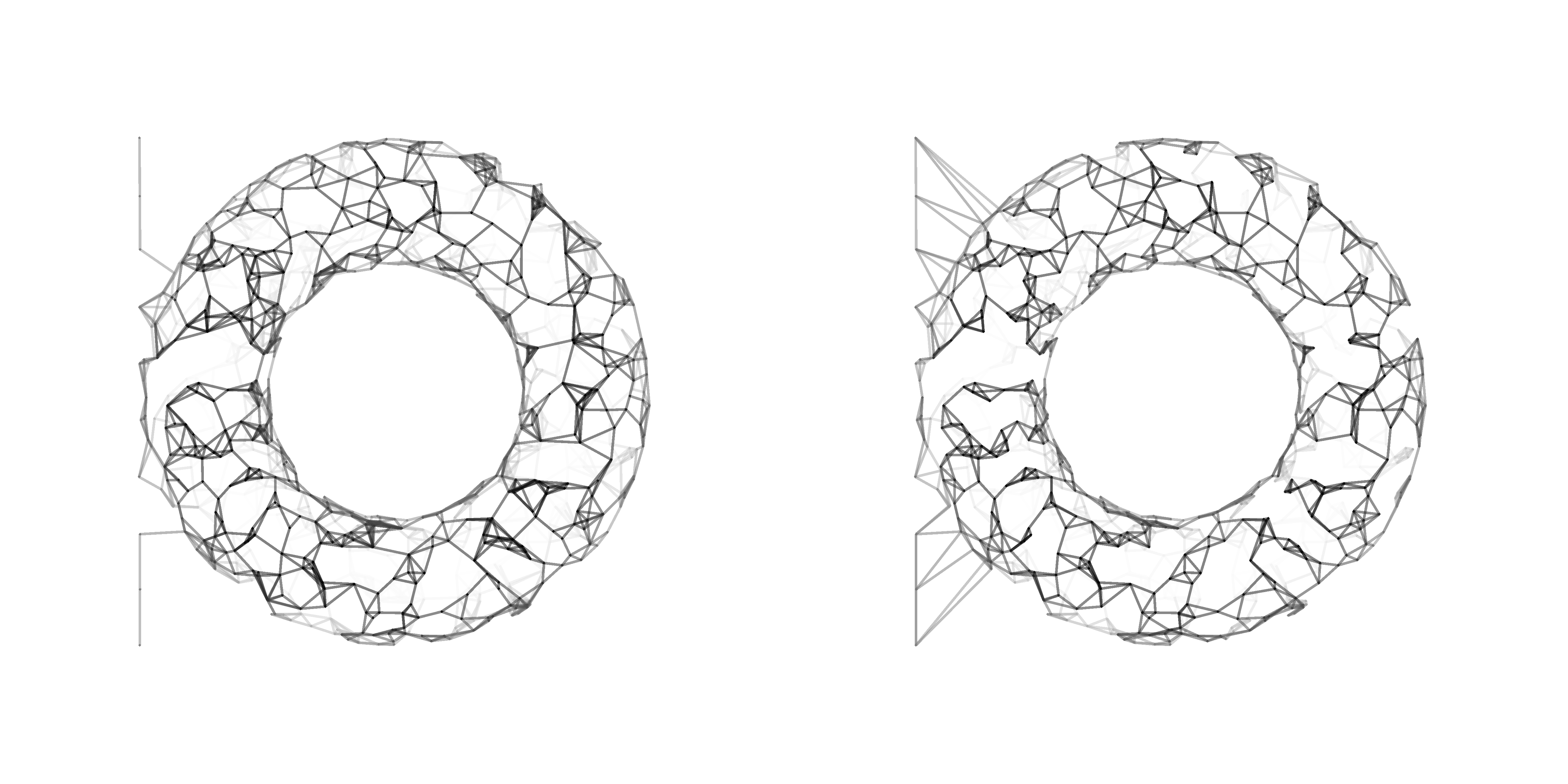}
  \includegraphics[trim = 15mm 0mm 15mm 0mm, clip, width=\textwidth,keepaspectratio]{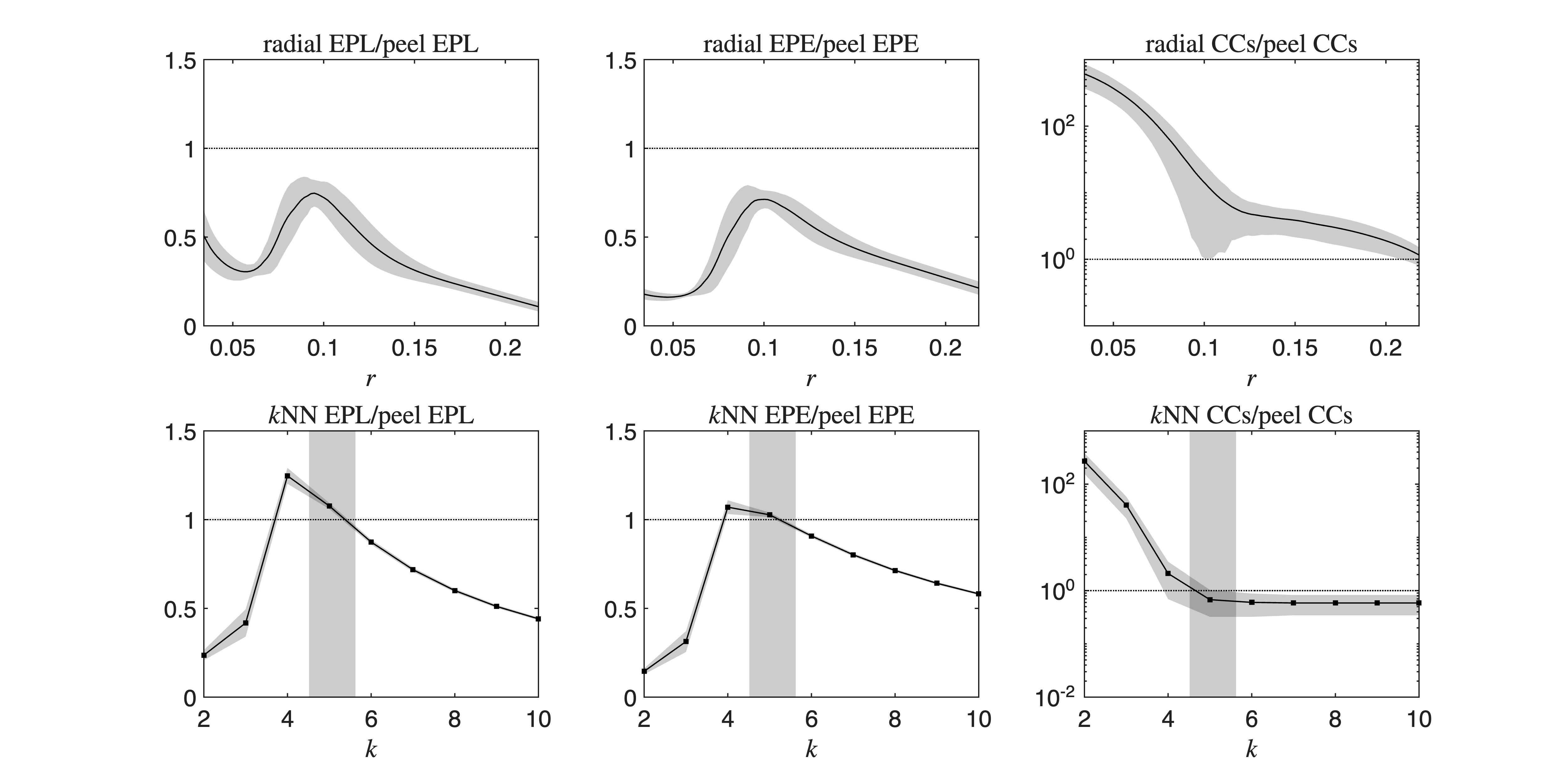}
\caption{Top left: the graph with edges given by peel neighborhoods on a uniform sample of approximately $1000$ points from a torus in $\mathbb{R}^3$, plus $11$ equispaced points slightly to the left, embedded in 10 dimensions with small Gaussian noise added. 
The exact neighborhoods and their heuristic approximations are always identical here.
Edges are shaded according to their average in the third dimension. Top right: the similar graph with edges given by $k$-nearest neighbors with the minimal $k$ such that the graph has a single connected component (here, $k = 5$). Middle: the relative efficiencies per edge and per length of radial graphs \emph{versus} peel neighborhood graphs, along with the relative numbers of connected components, all as functions of radius, with means and standard deviations over 100 trials indicated. Bottom: as in the middle panels, but for $k$NN graphs. Initial values of $k$ for which the $k$NN graph is fully connected are indicated by the vertical patch of $\pm$ a standard deviation about the mean.}
  \label{fig:neighborhood_graphs_torus}
\end{figure}

\section{Behavior of radii of thresholded peel neighborhoods from \S 5.2}\label{sec:annuli_radii}

Figures \ref{fig:peelNeighborhoodsRadiiMean} and \ref{fig:peelNeighborhoodsRadiiStd} show the behavior of radii of thresholded peel neighborhoods from the numerical experiment of \S 5.2 in the main text.

\begin{figure}[htbp]
    \centering
    \includegraphics[width=\textwidth, trim={0 0 0 0mm}, clip]{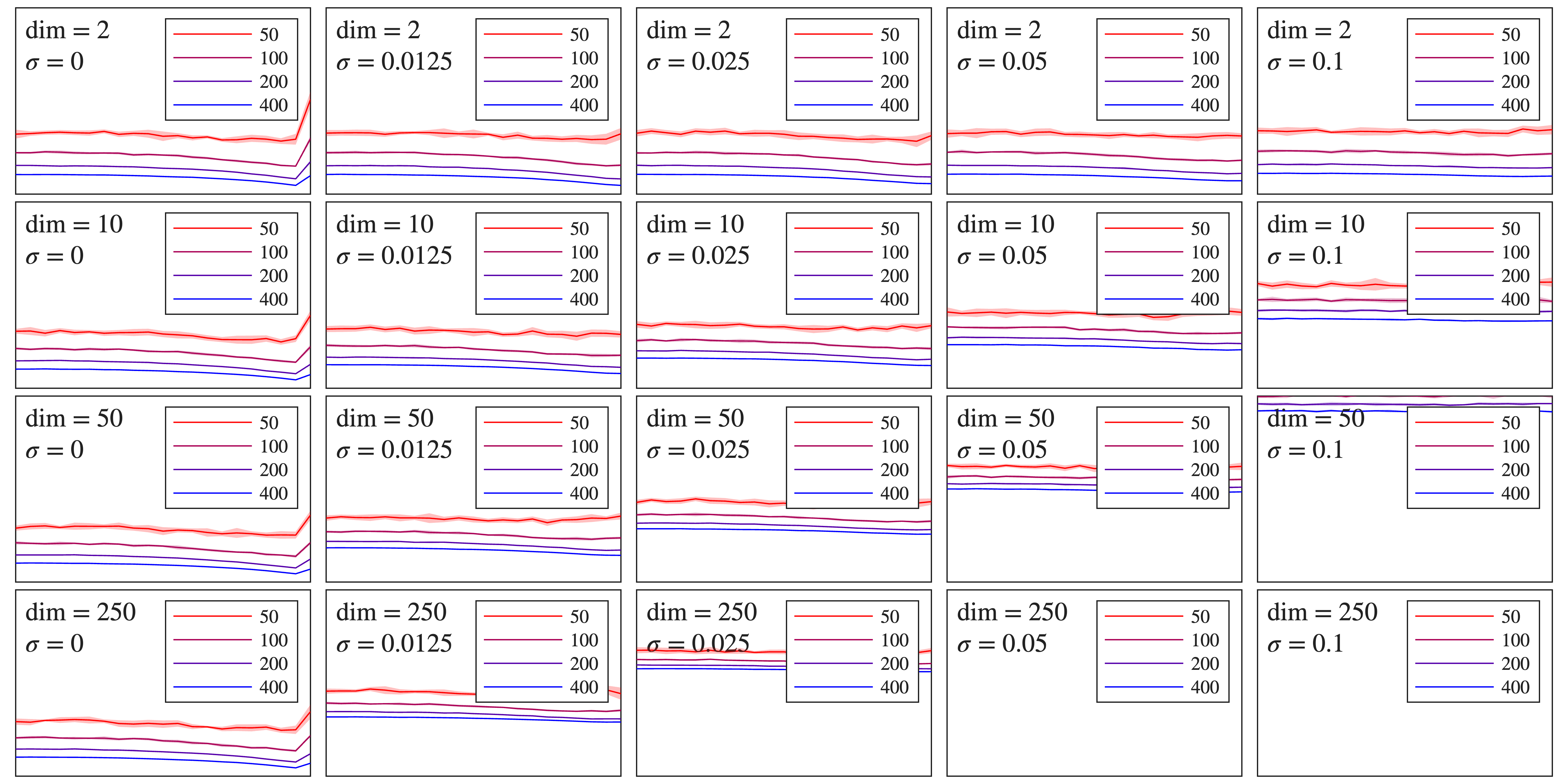}
        \caption{Average radii of thresholded peel neighborhoods (with standard deviations shown) in correspondence with Figure 13 in the main text. Each panel is over $[0,1] \times [0,1]$.
        }
    \label{fig:peelNeighborhoodsRadiiMean}
\end{figure}

\begin{figure}[htbp]
    \centering
    \includegraphics[width=\textwidth, trim={0 0 0 0mm}, clip]{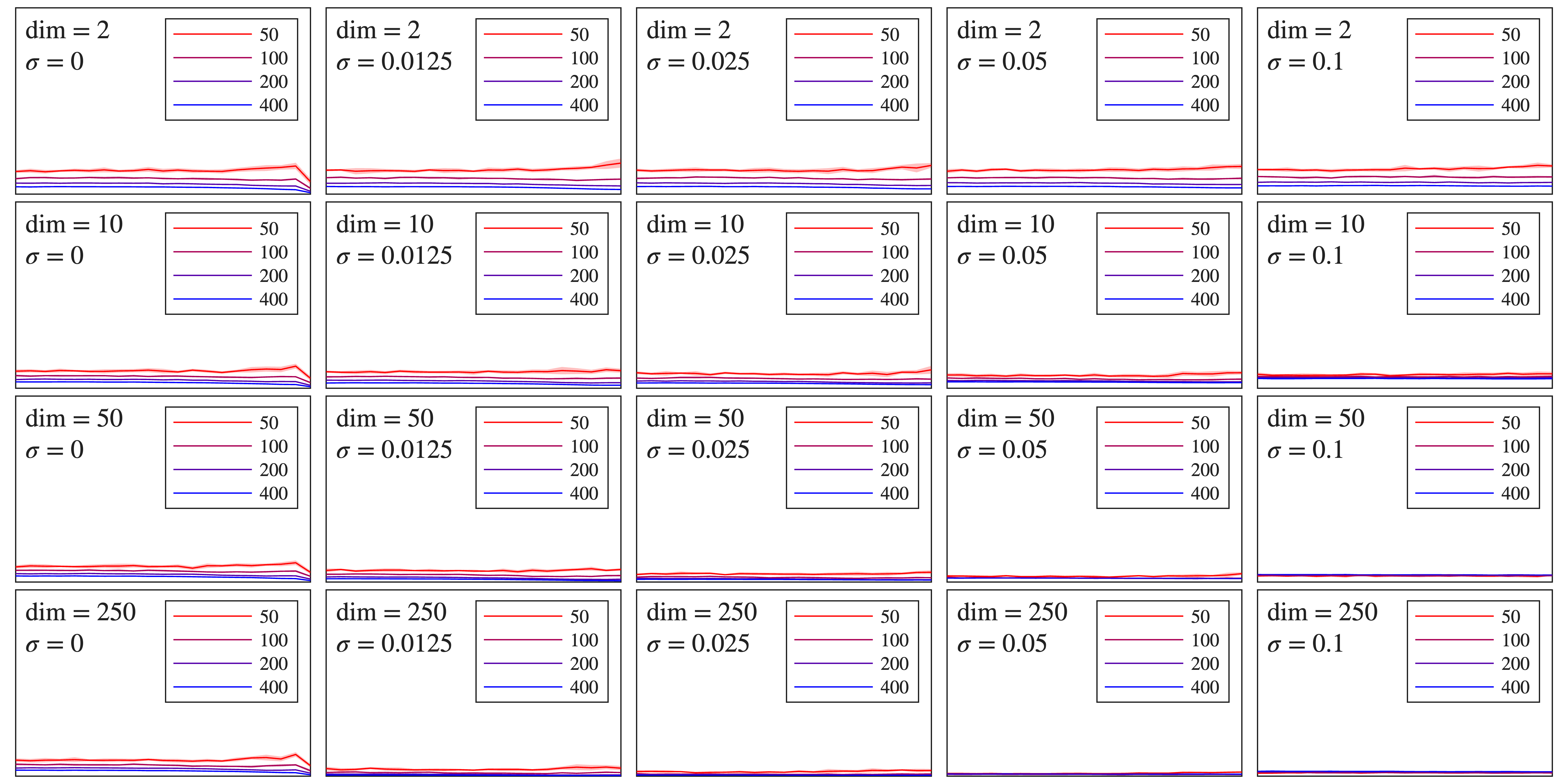}
        \caption{Standard deviations of radii of thresholded peel neighborhoods (with [their own] standard deviations shown) in correspondence with Figure 13 in the main text. Each panel is over $[0,1] \times [0,1]$. 
        }
    \label{fig:peelNeighborhoodsRadiiStd}
\end{figure}

\section{Analogues for $m=2$ of an experiment from \S 5.4}\label{sec:analogues54}

Figures \ref{fig:hair_ball_dim_3_attach_false_match_false}-\ref{fig:hair_ball_dim_3_attach_true_match_true} are direct respective analogues of Figures 20-23 in \S 5.4 in the main text, but for $m = 2$ instead of $m = 10$. 

\begin{figure}[htbp]
    \centering
    \includegraphics[width=.48\textwidth, trim={0 50 0 15mm}, clip]{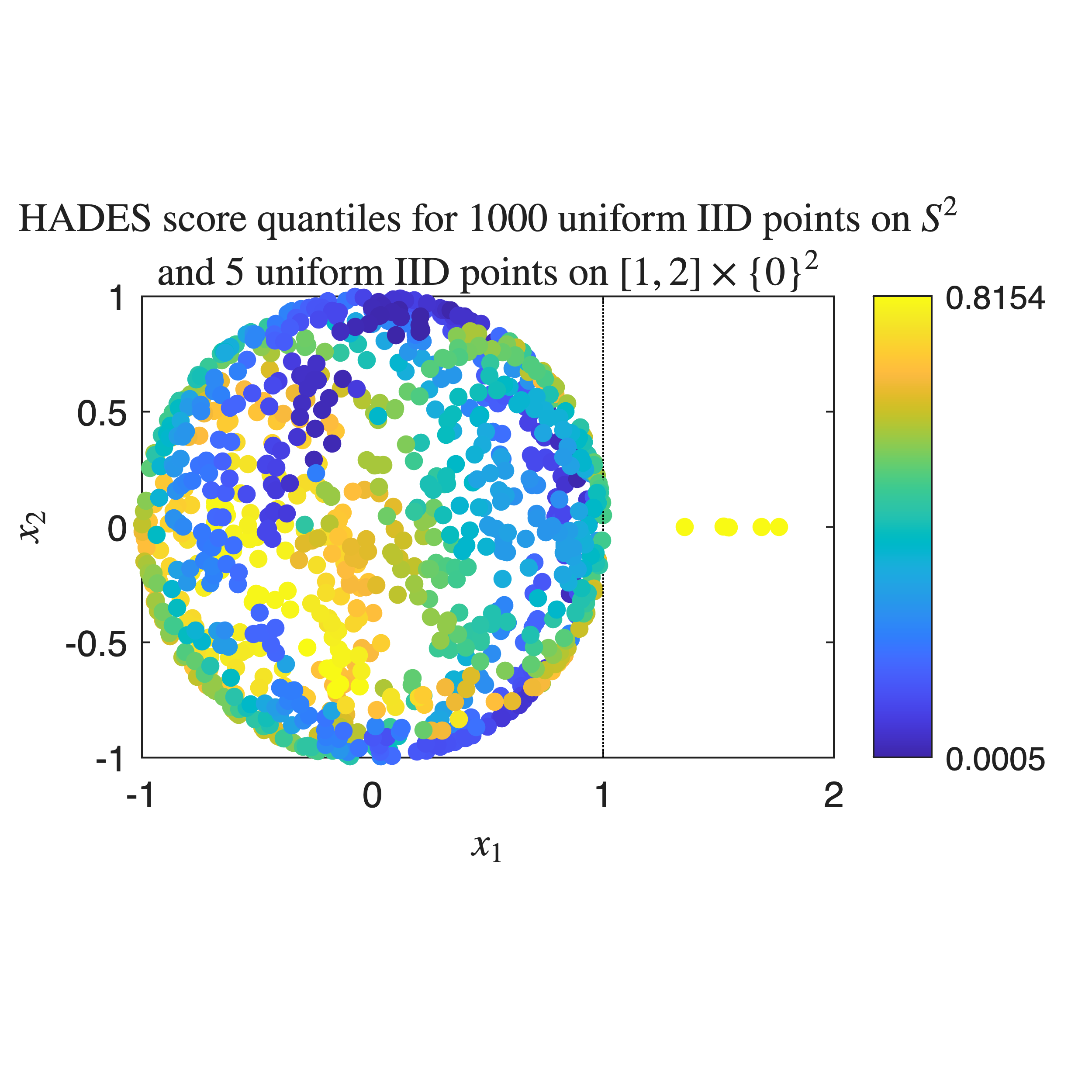}
    \includegraphics[width=.48\textwidth, trim={0 50 0 15mm}, clip]{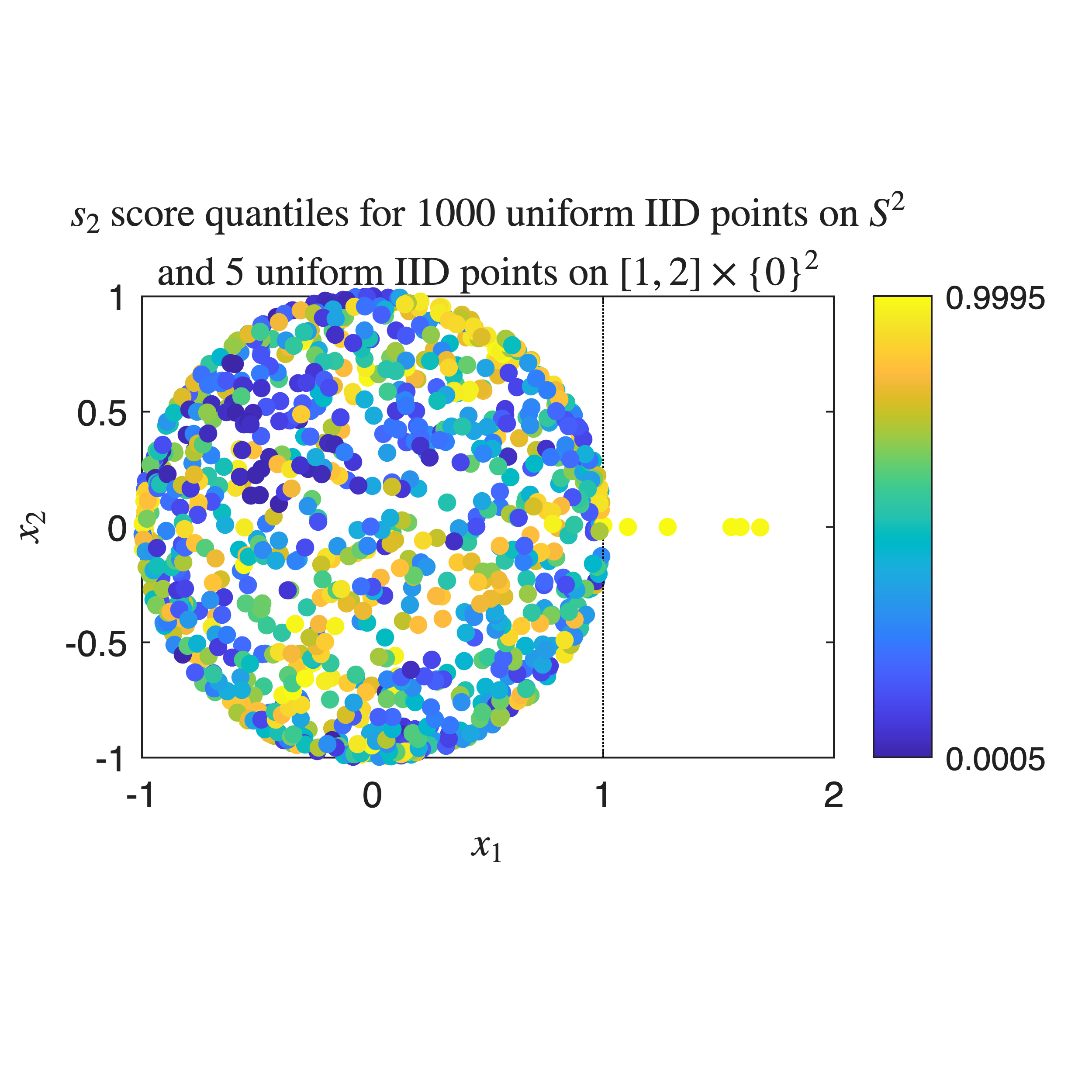}
    \includegraphics[width=.48\textwidth, trim={0 0 0 0mm}, clip]{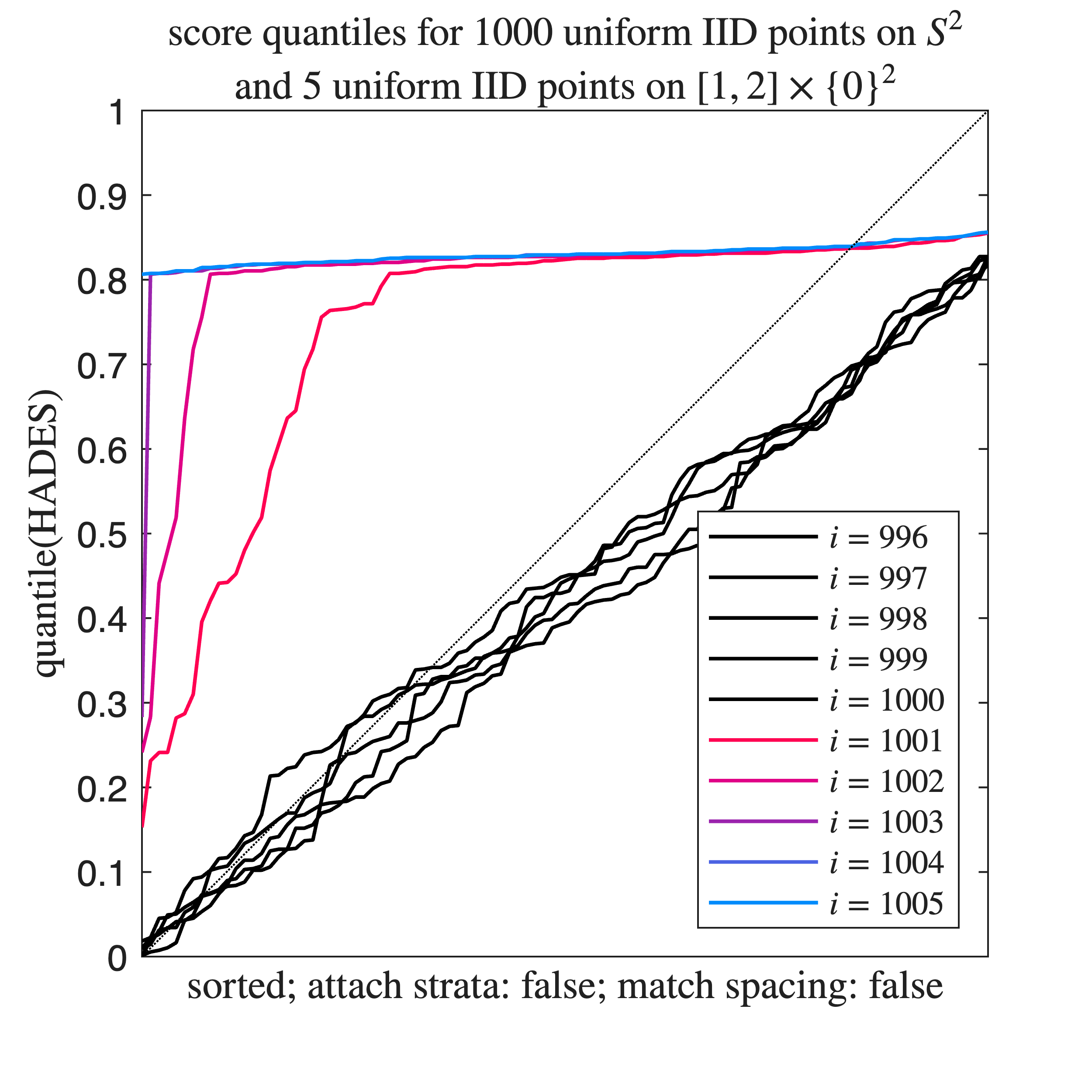}
    \includegraphics[width=.48\textwidth, trim={0 0 0 0mm}, clip]{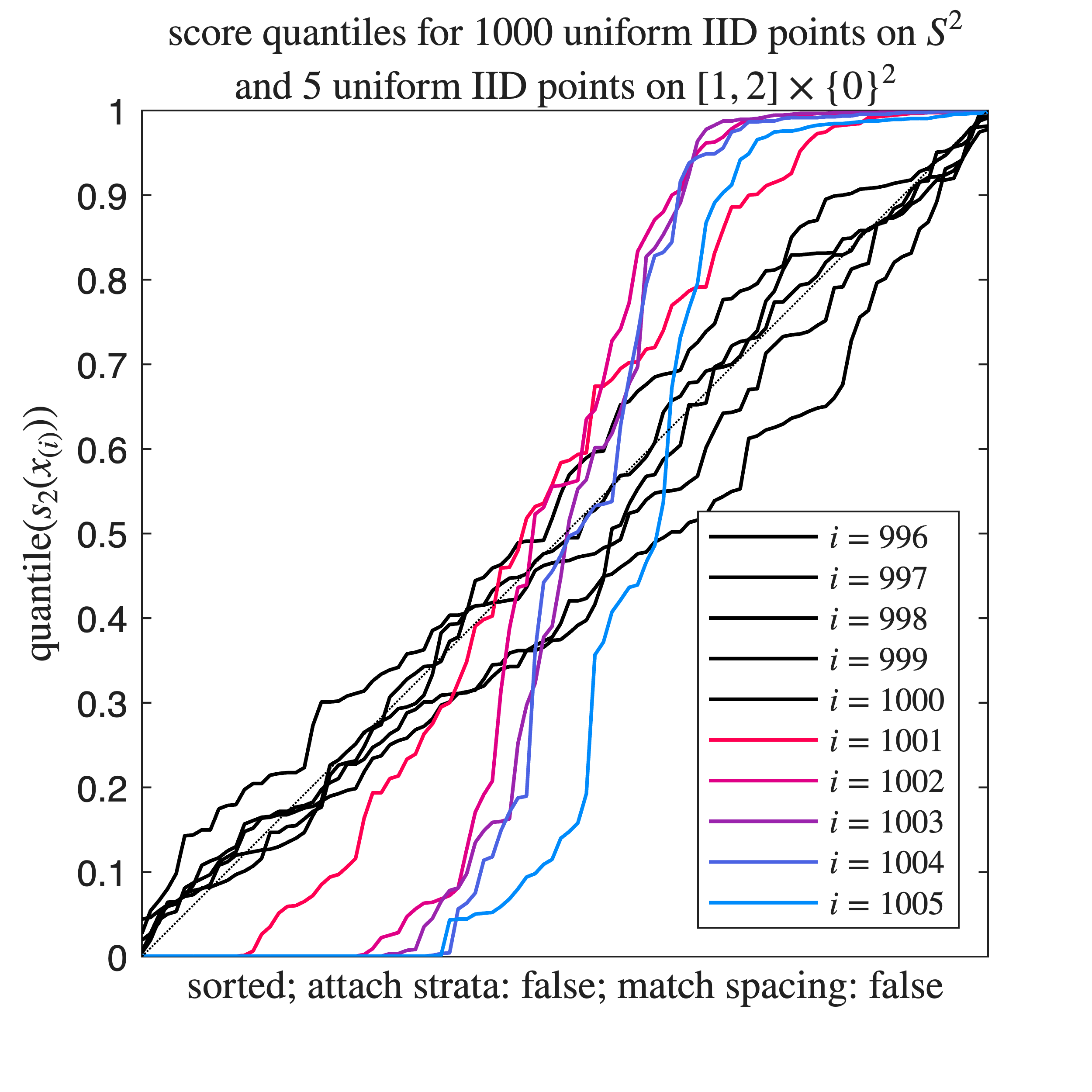}
        \caption{Top left: HADES score quantiles for a sample of size 1000 from $S^m$ for $m = 2$ and an ``unattached'' sample of size 5 from an interval of unit length, plus small but nonzero isotropic Gaussian noise after embedding in $\mathbb{R}^{100}$. Top right: as in the top left panel, but for $s_2$. Bottom left: HADES score quantiles computed over 100 sample realizations for five points on the sphere stratum (in black) and the five points on the interval stratum, indexed in order of their distance from the sphere. Bottom right: as in the bottom left panel, but for $s_2$. 
        In this case, $s_2$ data turn out to be identical for exact peels and heuristic approximations.}
    \label{fig:hair_ball_dim_3_attach_false_match_false}
\end{figure}

\begin{figure}[htbp]
    \centering
    \includegraphics[width=.48\textwidth, trim={0 50 0 15mm}, clip]{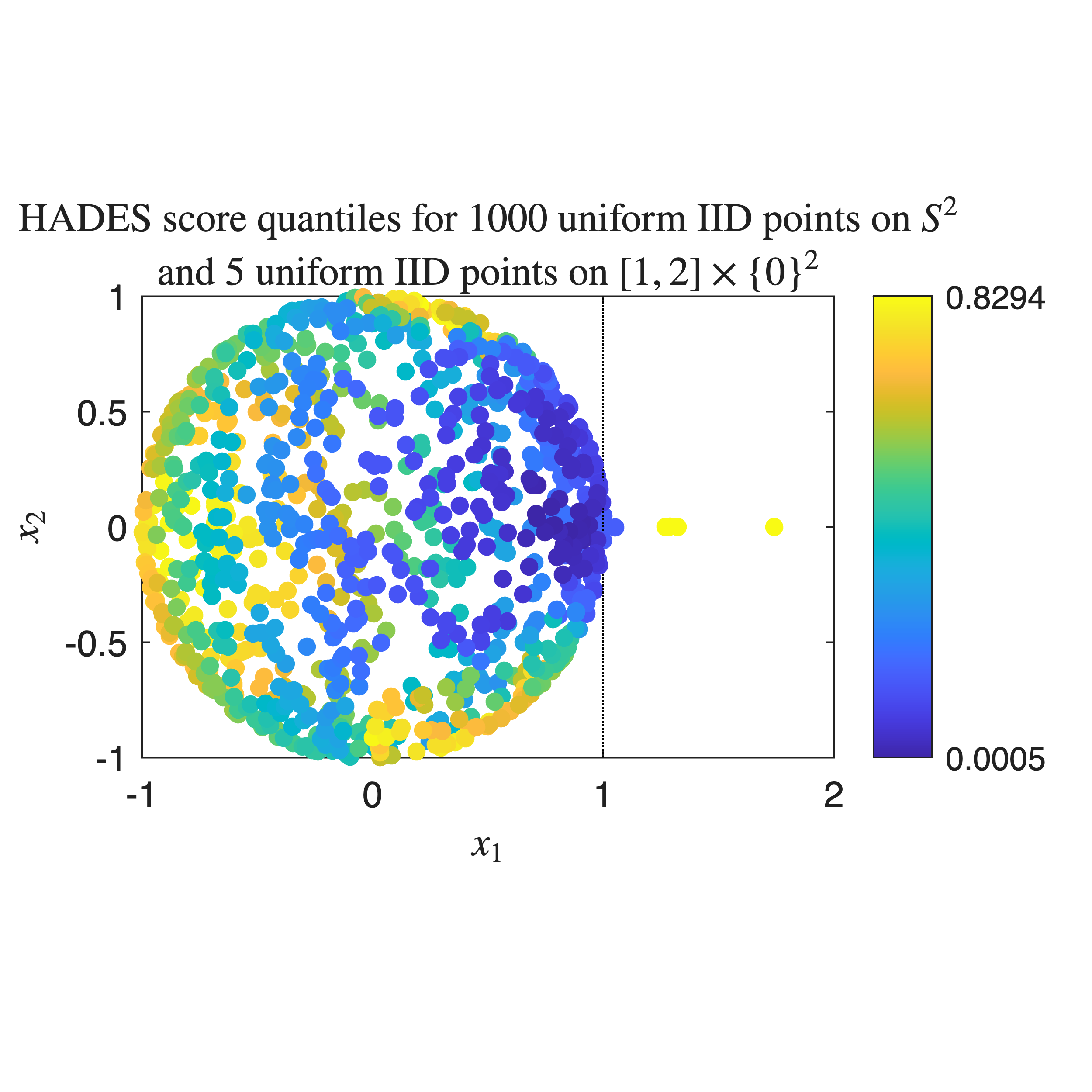}
    \includegraphics[width=.48\textwidth, trim={0 50 0 15mm}, clip]{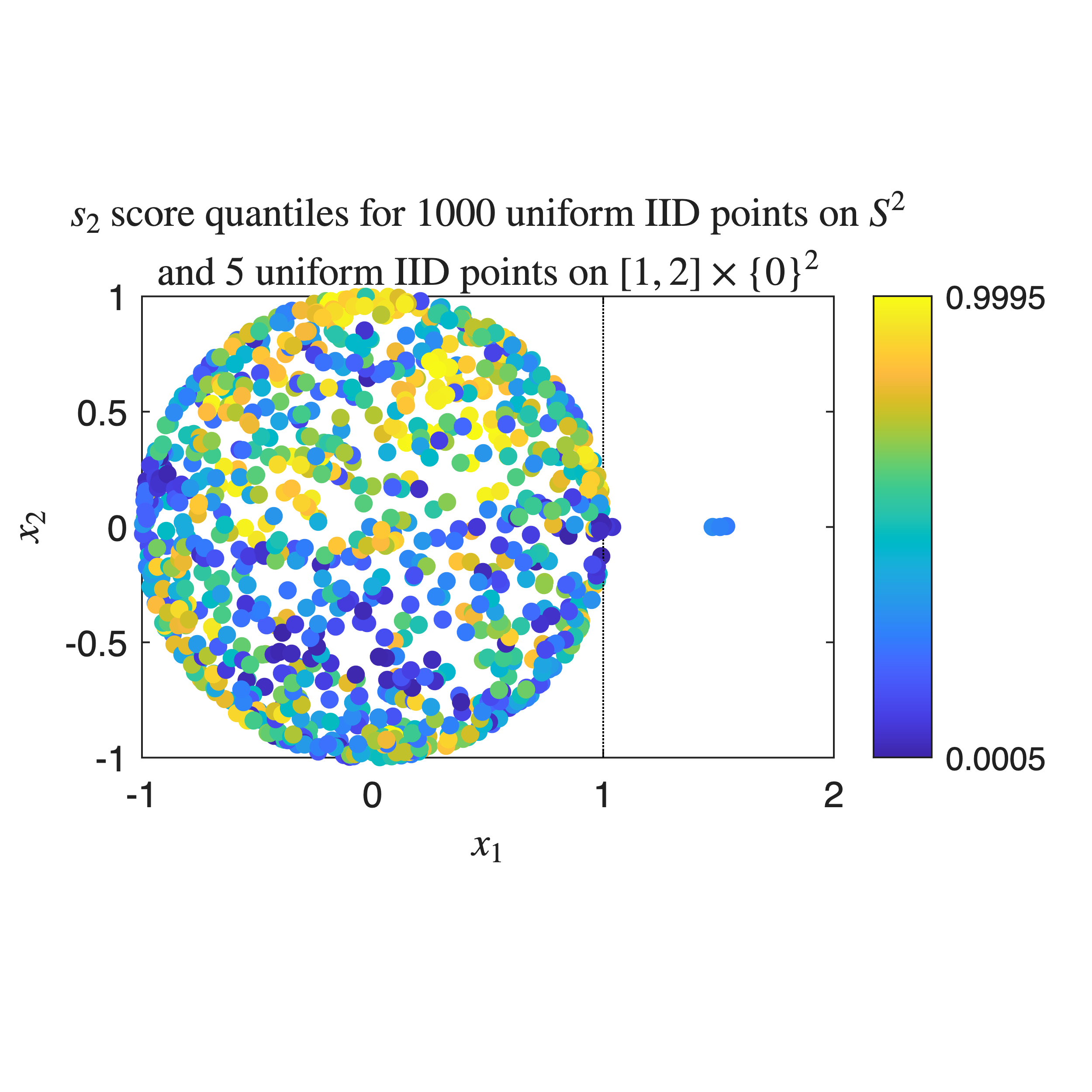}
    \includegraphics[width=.48\textwidth, trim={0 0 0 0mm}, clip]{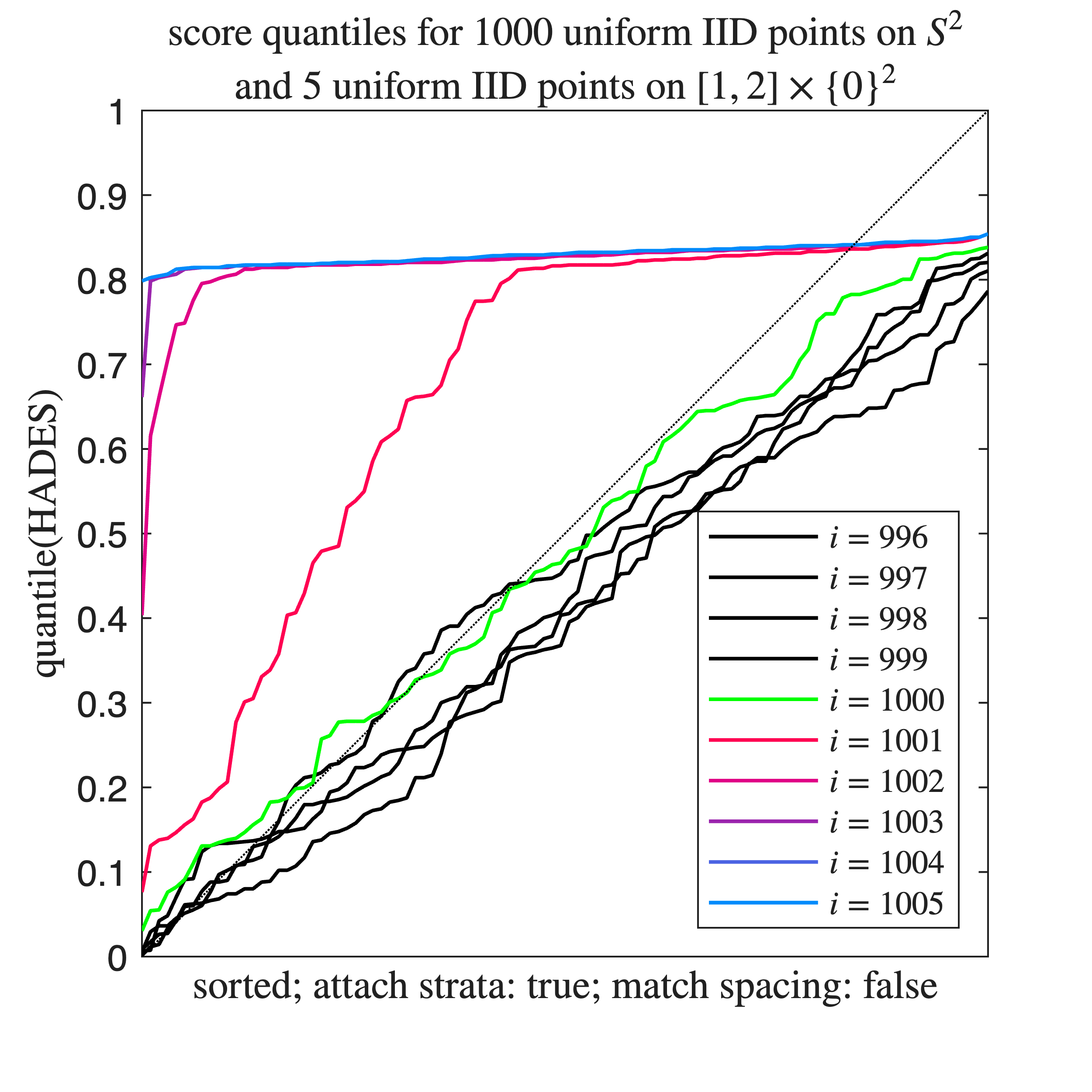}
    \includegraphics[width=.48\textwidth, trim={0 0 0 0mm}, clip]{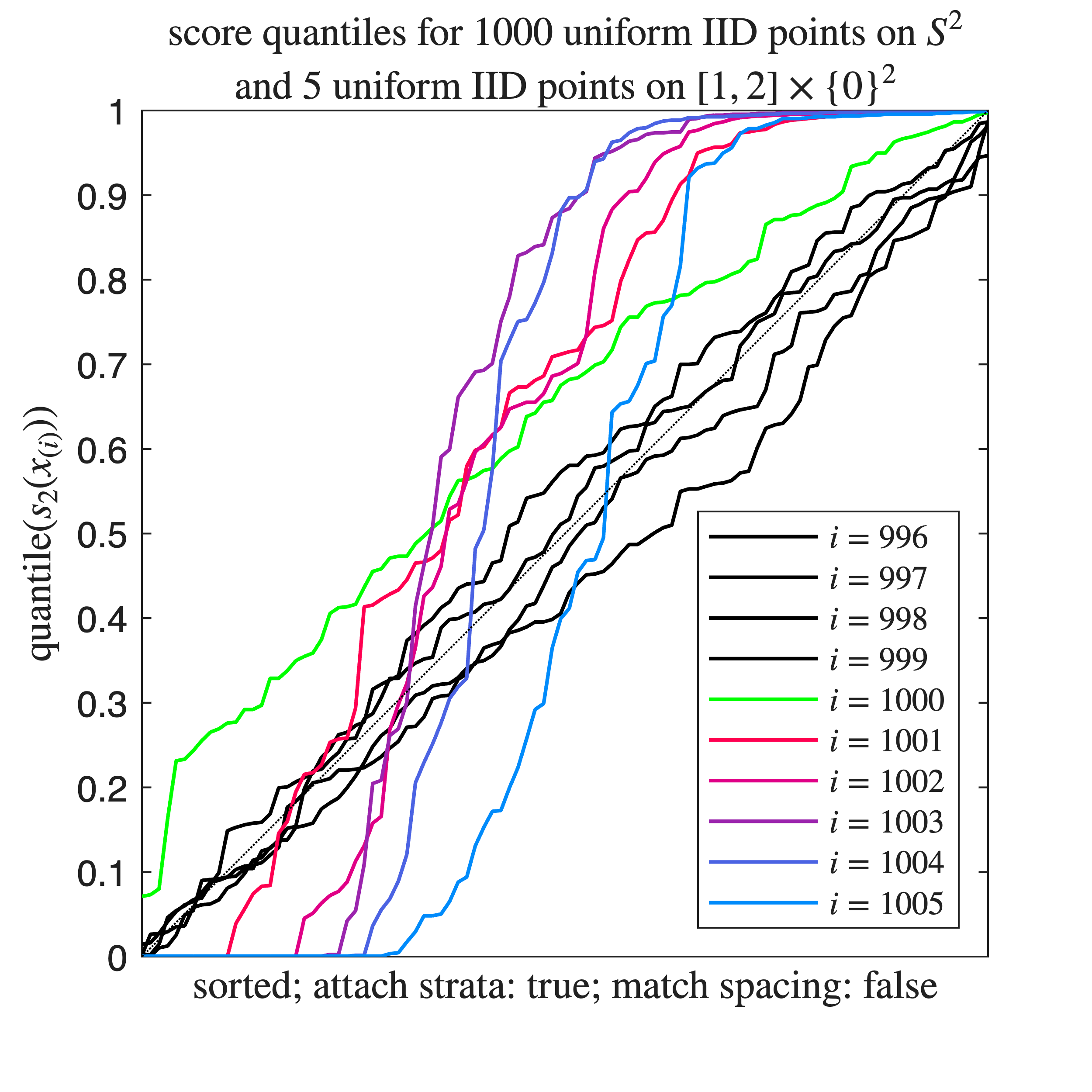}
        \caption{As in Figure \ref{fig:hair_ball_dim_3_attach_false_match_false}, but for an ``attached'' sample (with the attachment point indicated by a dashed line in the upper panels and {\color{green}in green in the lower panels}). 
        In this case, $s_2$ data turn out to be identical for exact peels and heuristic approximations.
        }
    \label{fig:hair_ball_dim_3_attach_true_match_false}
\end{figure}

\begin{figure}[htbp]
    \centering
    \includegraphics[width=.48\textwidth, trim={0 40 0 5mm}, clip]{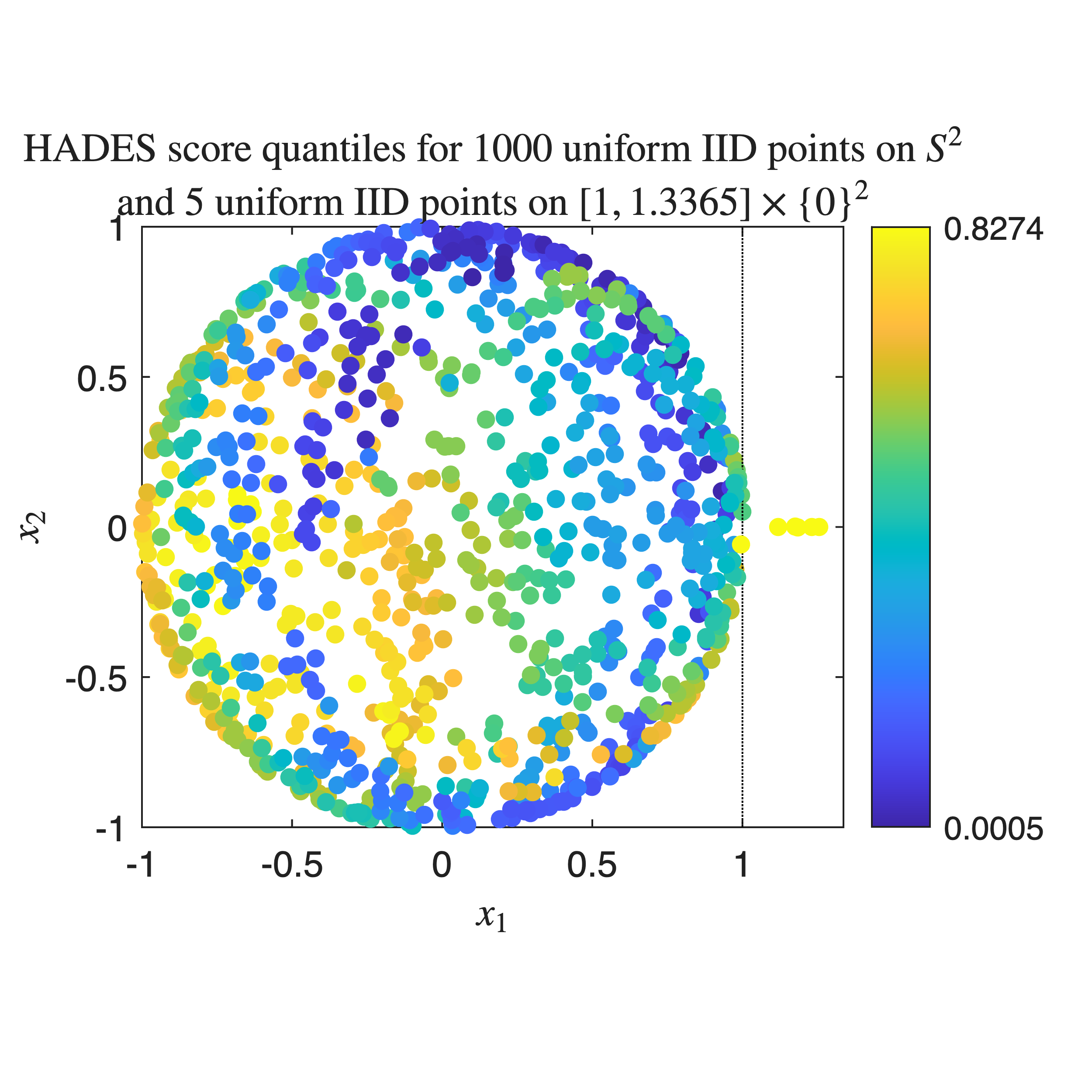}
    \includegraphics[width=.48\textwidth, trim={0 40 0 5mm}, clip]{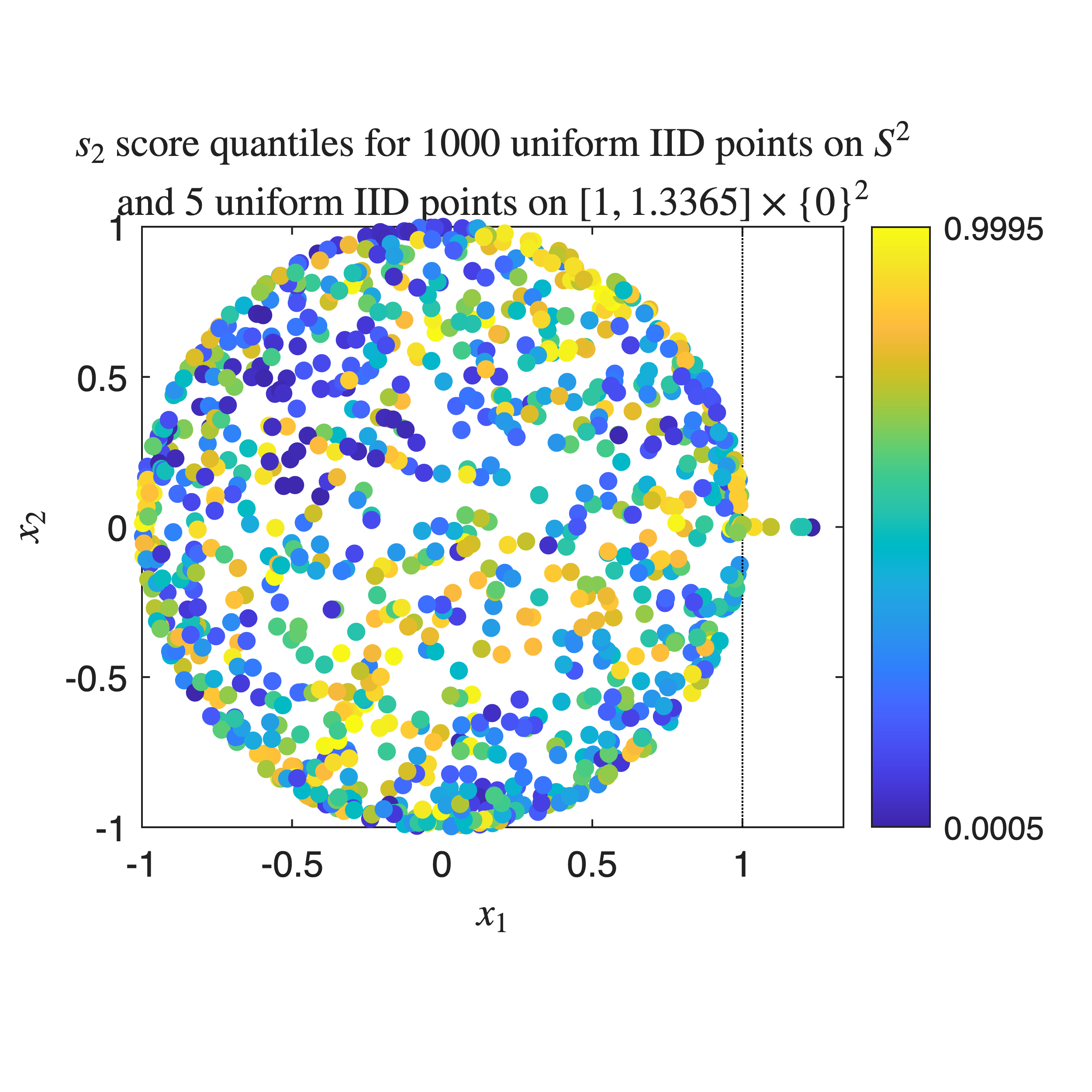}
    \includegraphics[width=.48\textwidth, trim={0 0 0 0mm}, clip]{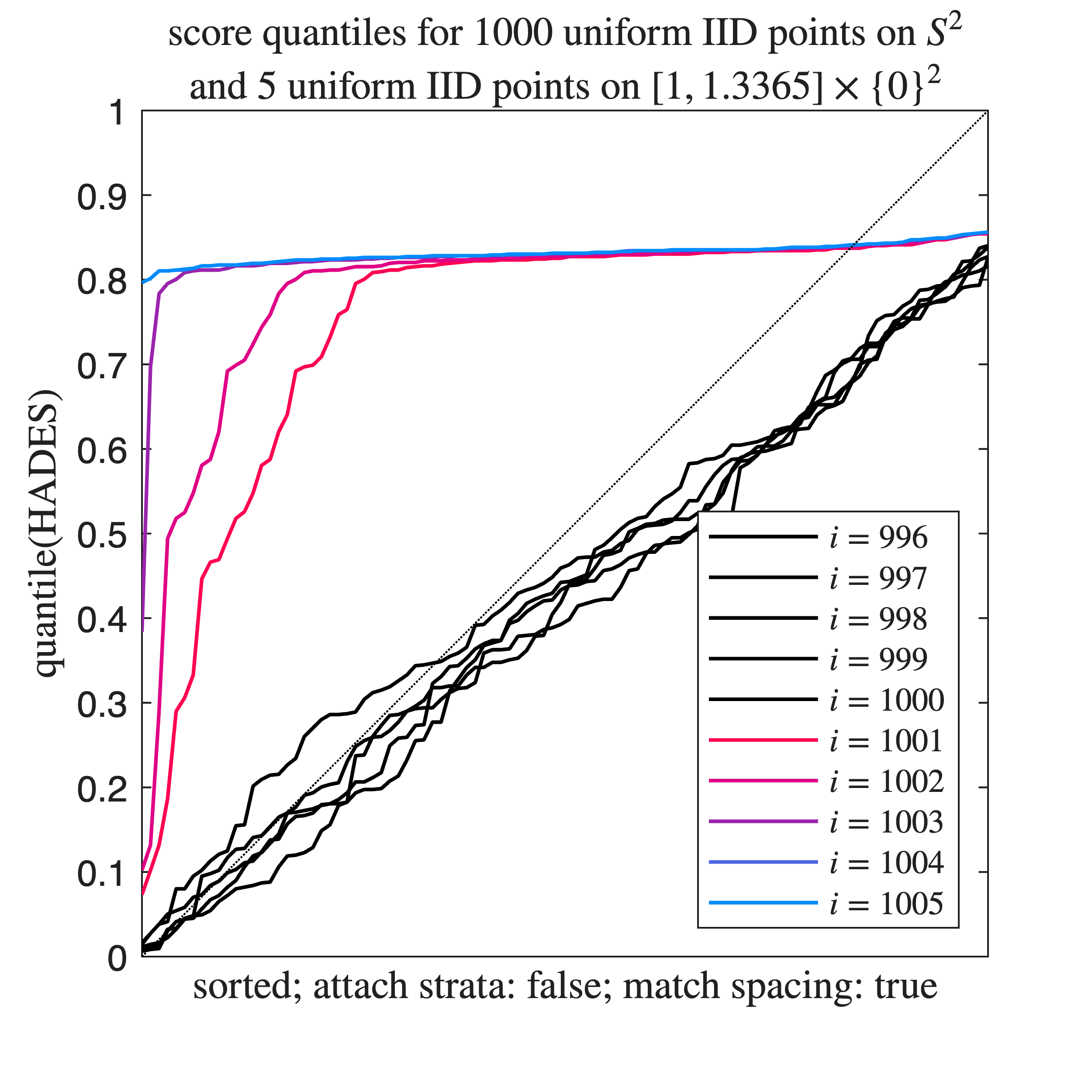}
    \includegraphics[width=.48\textwidth, trim={0 0 0 0mm}, clip]{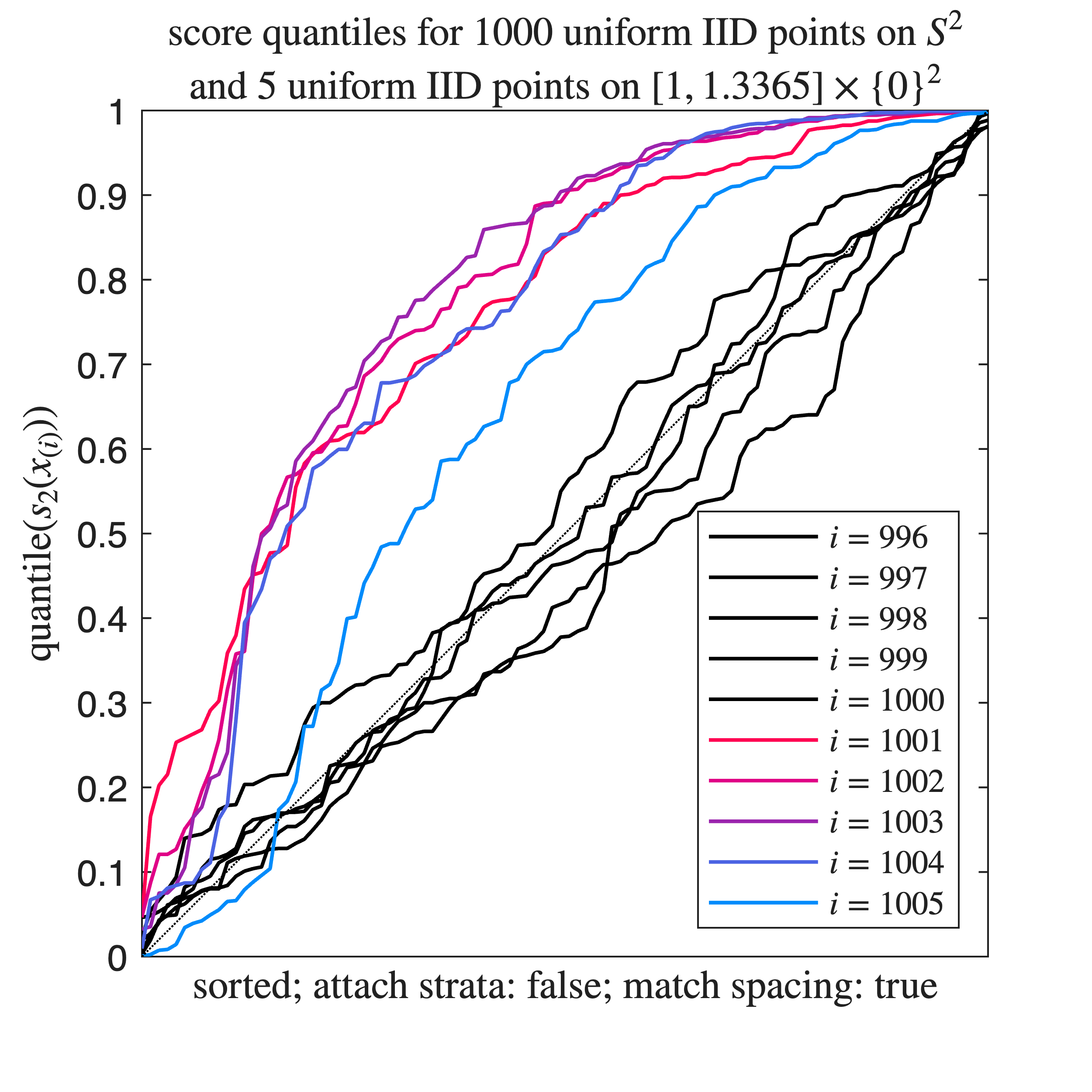}
        \caption{As in Figure \ref{fig:hair_ball_dim_3_attach_false_match_false}, but for a spacing-matched interval. 
        In this case, $s_2$ data turn out to be identical for exact peels and heuristic approximations.
        }
    \label{fig:hair_ball_dim_3_attach_false_match_true}
\end{figure}

\begin{figure}[htbp]
    \centering
    \includegraphics[width=.48\textwidth, trim={0 40 0 5mm}, clip]{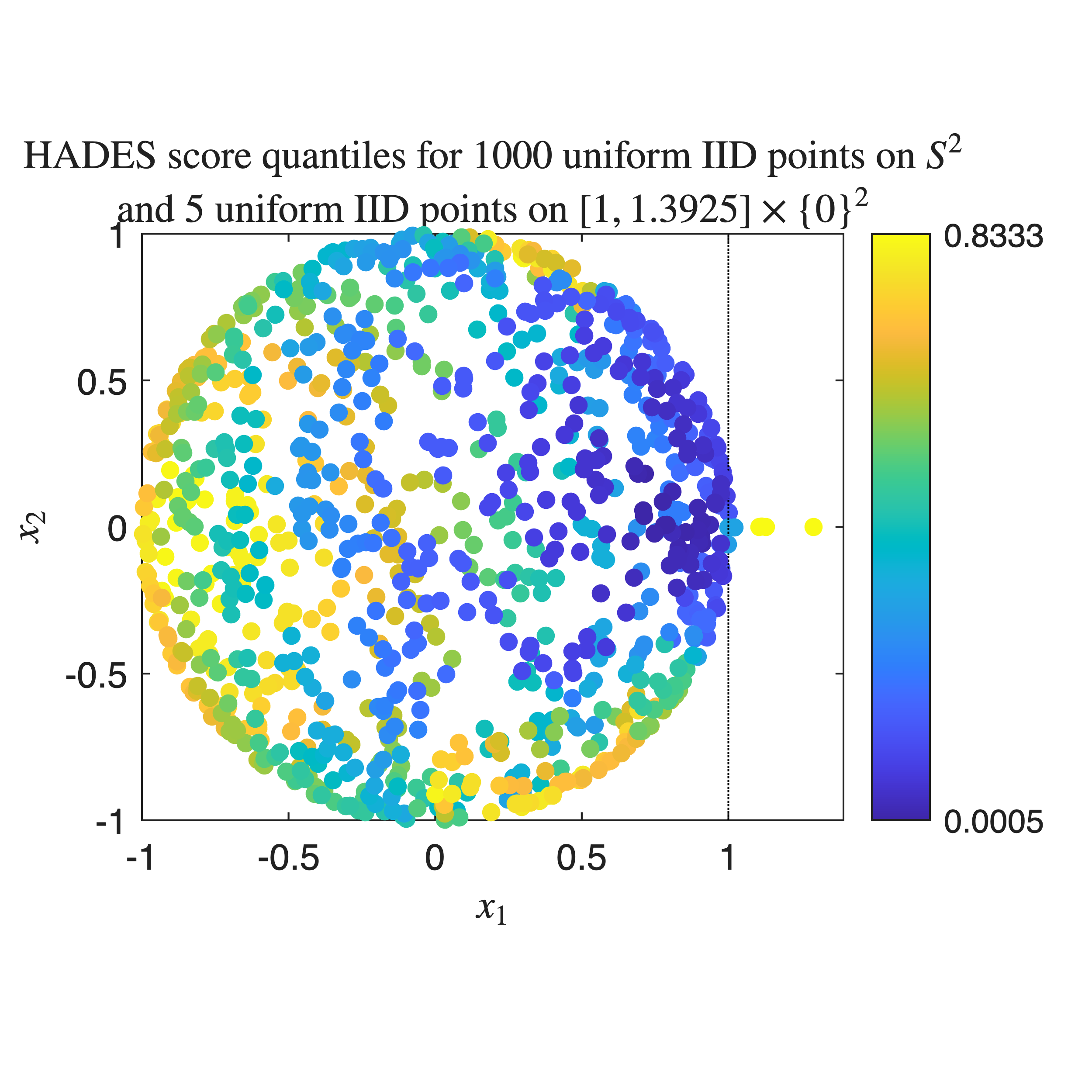}
    \includegraphics[width=.48\textwidth, trim={0 40 0 5mm}, clip]{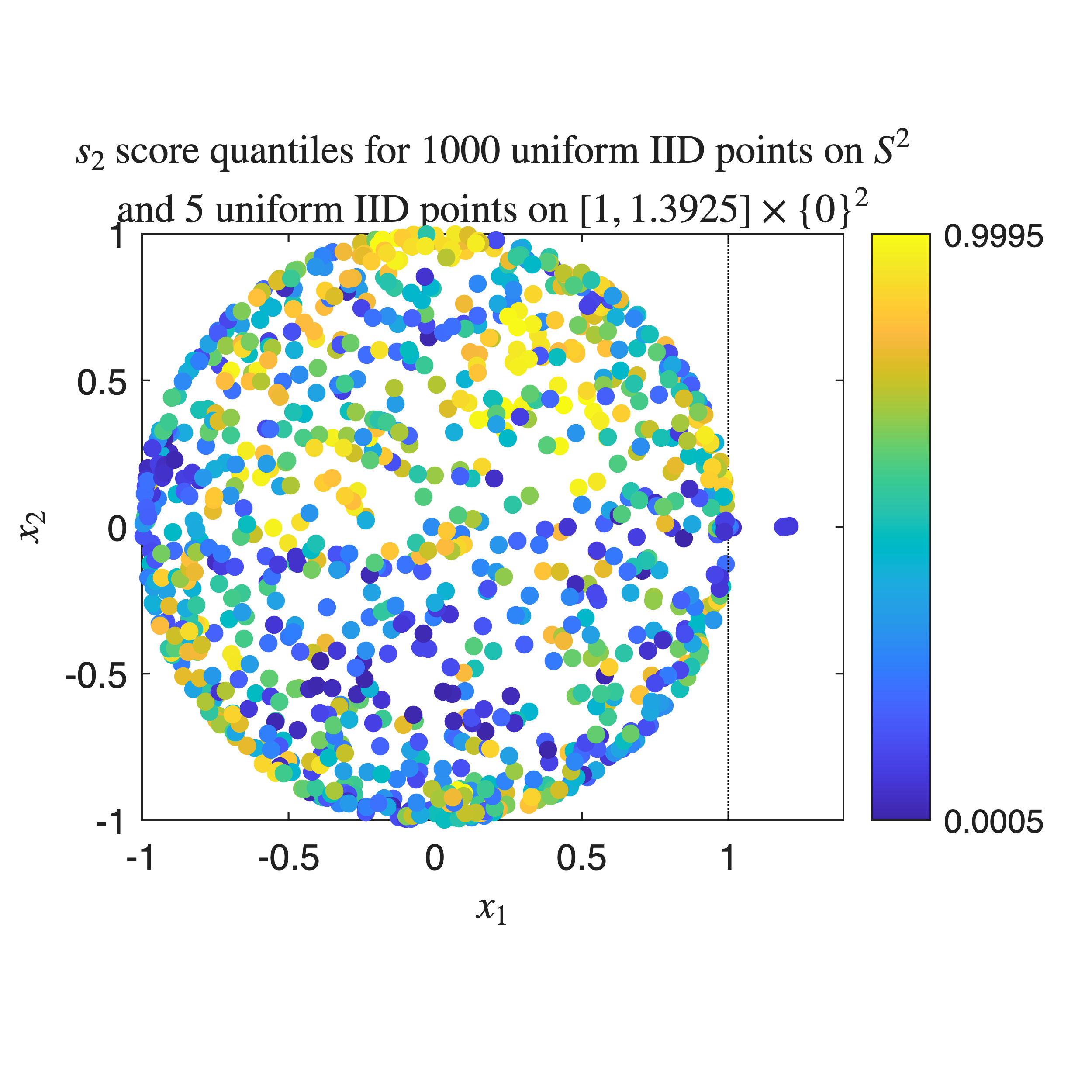}
    \includegraphics[width=.48\textwidth, trim={0 0 0 0mm}, clip]{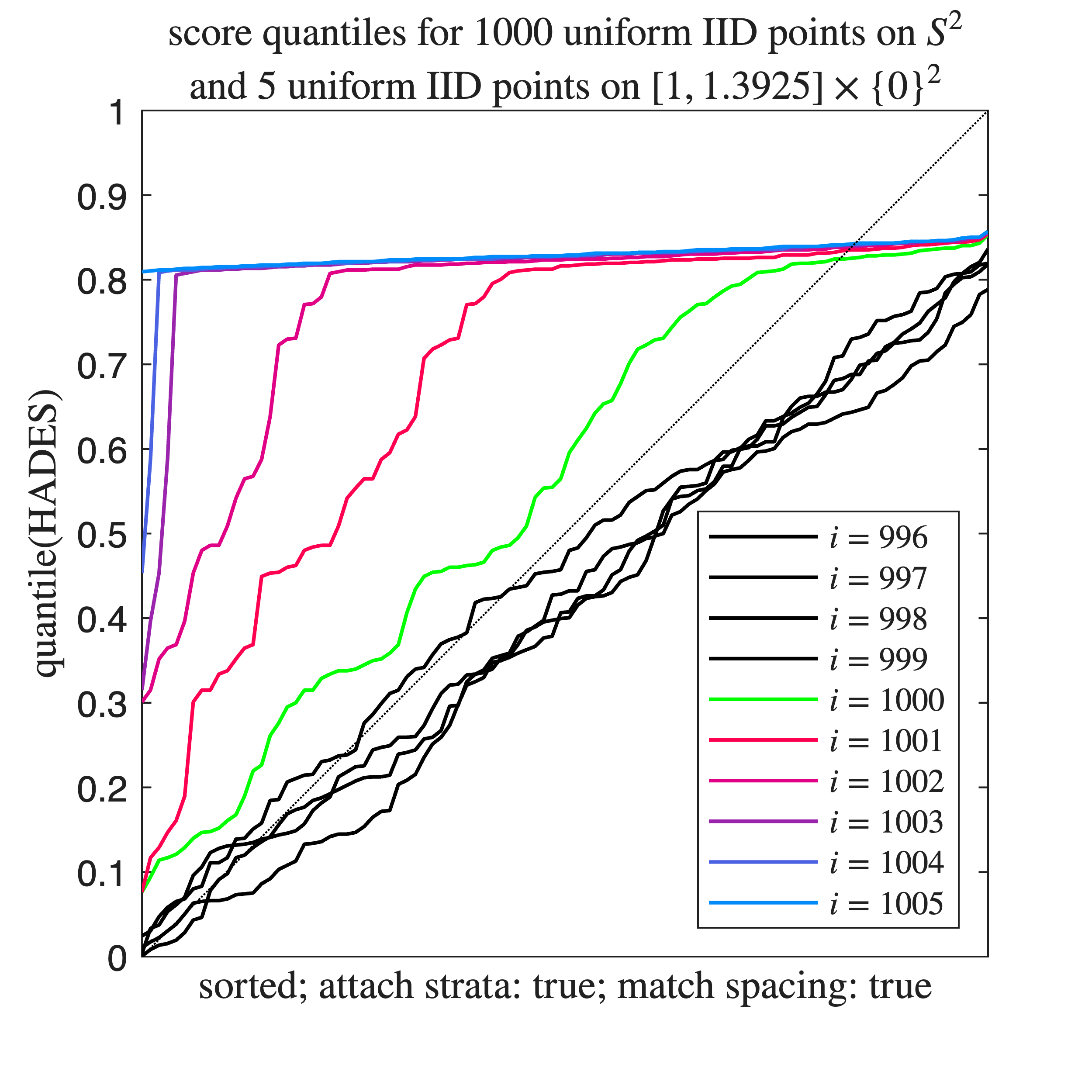}
    \includegraphics[width=.48\textwidth, trim={0 0 0 0mm}, clip]{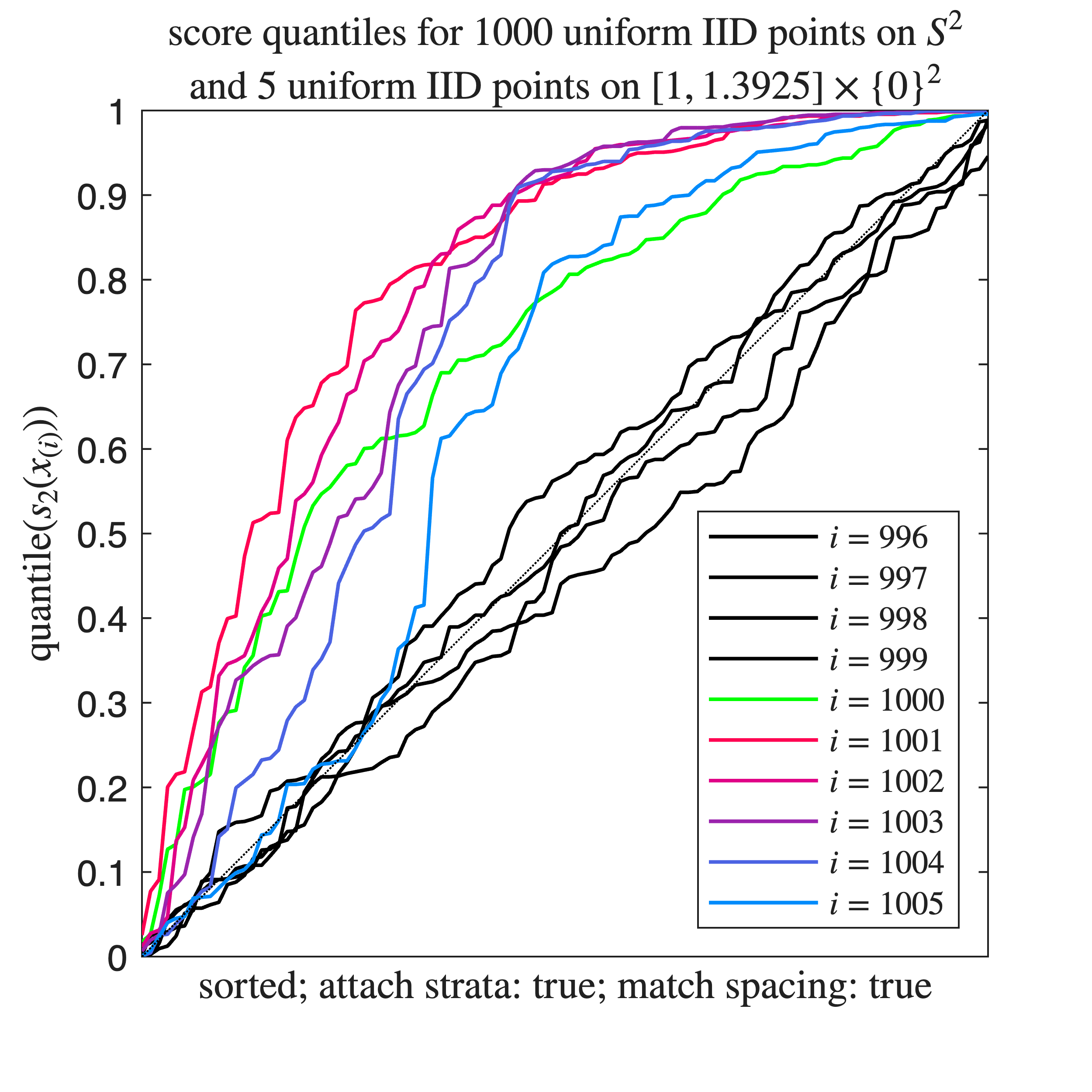}
        \caption{As in Figure \ref{fig:hair_ball_dim_3_attach_false_match_false}, but for an ``attached'' sample (with the attachment point indicated by a dashed line in the upper panels and {\color{green}in green in the lower panels}) and a spacing-matched interval. 
        In this case, $s_2$ data turn out to be identical for exact peels and heuristic approximations.
        }
    \label{fig:hair_ball_dim_3_attach_true_match_true}
\end{figure}

For lower data dimensions of the bulk strata $S^m$ such as here, both techniques experience degradations in performance. HADES loses discrimination power as its scores saturate around the 0.85 quantile, while $s_2$ loses predictive power. For attached strata $s_2$ clearly still outperforms HADES at the precise singularity point, while HADES identifies outliers on both strata. In low dimension, the number of outliers is actually higher, since more points are near-antipodes to the attachment point than in high dimension. In other words, HADES' tendency to identify outliers as singularities causes its low discrimination power in low dimension. Meanwhile, the dimension gradient is fundamentally smaller in this regime. On the other hand, the peel neighborhood of the point with largest $x_1$ will always saturate a threshold by construction, while the overall number of points whose peel neighborhood radii exceed the threshold in this experiment was around 6 percent, i.e., fewer than the roughly 15 percent points whose HADES scores saturated. From this perspective peel neighborhoods considered broadly still enable competitive singularity detection capability, and with faster runtime.

\section{Sampling uniformly from the Bolza surface of genus 2}\label{sec:bolza}

Since hyperbolic space is strict negative type by Corollary 4.2 of \cite{hjorth2002hyperbolic}, we can consider peel neighborhoods in finite subsets of hyperbolic space, say in the Poincar\'e disk model. While Theorem 5.4 of \cite{hjorth2002hyperbolic} also states that a compact Riemannian manifold of negative type must be simply connected, we can still compute peel neighborhoods of sufficiently dense samples from such a manifold in practice without trepidation.

In principle, we could sample uniformly from compact manifolds of constant negative curvature and compute distances on the result. The simplest way would be to construct a fundamental domain in the Poincar\'e disk, rejection sample (in high dimension, inefficiently) over the fundamental domain with respect to the hyperbolic measure over an enclosing disk, and pay careful attention to orbits under the corresponding group structure, taking the minimum of hyperbolic distances $2 \tanh^{-1}|(z-z')/(1-\bar zz')|$ over all the sufficiently close orbits of either argument \cite{van2021ollivier}. However, the problem of algorithmically computing distances on Riemann surfaces has (remarkably) only recently been addressed in any generality \cite{stepanyants2024computing}, and doing something similar for dimension $> 2$ is presently impractical.  

As a compromise, we restrict consideration to the \emph{Bolza surface} of genus 2 \cite{balazs1986chaos,van2021ollivier,stepanyants2024computing}. This has the advantage that the program above can be implemented fairly straightforwardly, albeit at the cost of not generalizing. \footnote{See p. 139 of \cite{balazs1986chaos} for complete details of the fundamental domain: the other necessary details are in \S C.1 of \cite{van2021ollivier}, with the minor caveat that a cyclic index $\textnormal{mod } 8$ is implicit in equation (C9) therein.} 

Rejection sampling from the surface is straightforward: the fundamental domain in $\mathbb{R}^2$ is the unit disk minus eight equispaced circles of radius $((2^{1/2}-1)/2)^{1/2}$ whose centers are at distances $((2^{1/2}+1)/2)^{1/2}$ from the origin. The bounding circumradius is $R_* := 2^{-1/4}$, and inverse transform sampling \cite{devroye1986general} gives the desired random radius as $r \sim (1-1/(1+cU))^{1/2}$, where $c = R_*^2/(1-R_*^2)$ and $U \sim U([0,1])$. \footnote{The area element on the Poincar\'e disk is $4 r (1-r^2)^{-2} \ dr \ d\theta$ (see Theorem 4.5.6 of \cite{ratcliffe2006foundations}), and $\int_0^R 4r(1-r^2)^{-2} \ dr = 2R^2/(1-R^2)$, which yields the inverse transform sampling result.} The result of rejection sampling and computing nearby orbits is shown in the left panel of Figure \ref{fig:bolzaDisk}. The corresponding peel neighborhoods are shown in the right panel.

\begin{figure}[htbp]
    \centering
    \includegraphics[width=.48\textwidth, trim={150 45 120 5mm}, clip]{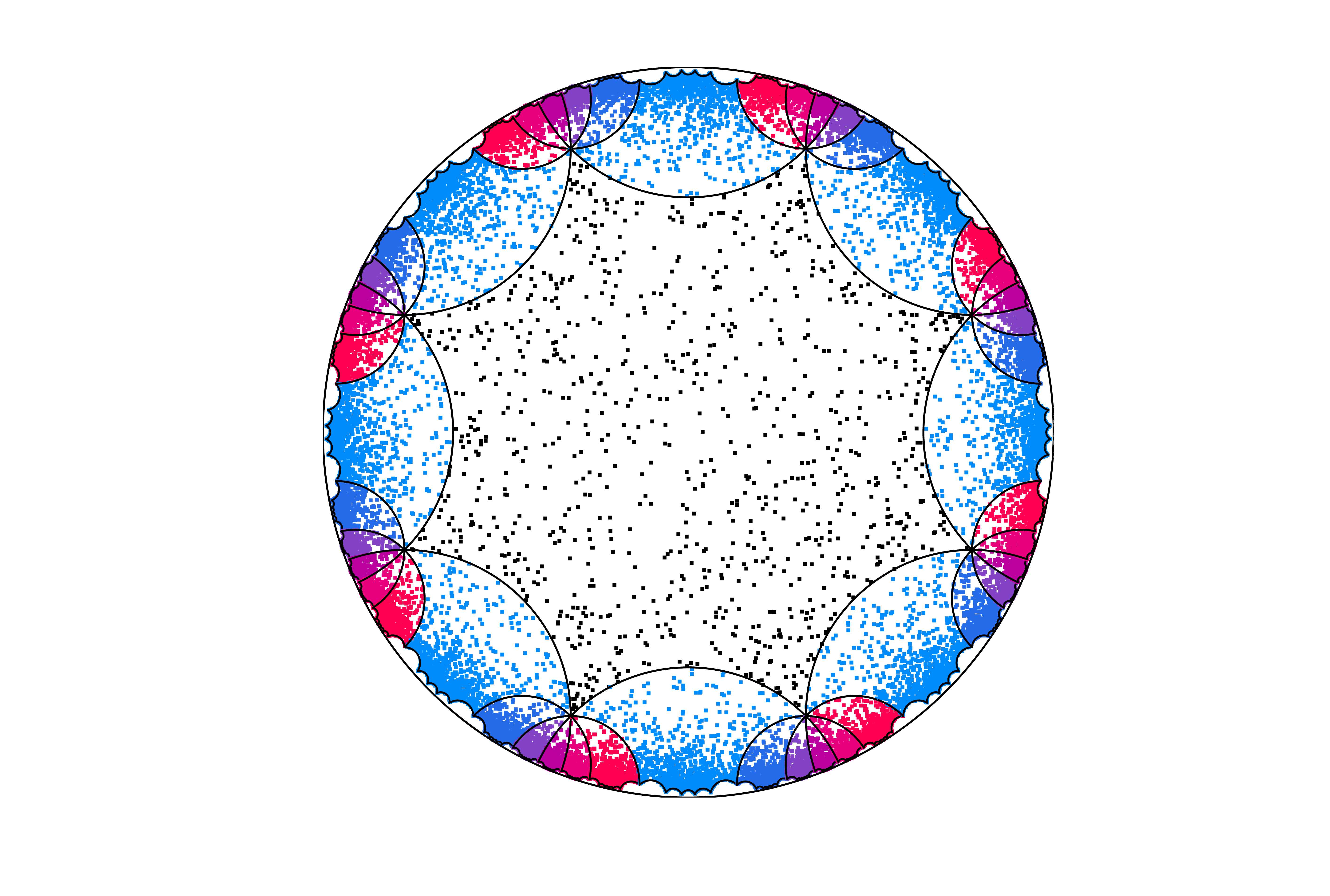}
    \includegraphics[width=.48\textwidth, trim={150 45 120 5mm}, clip]{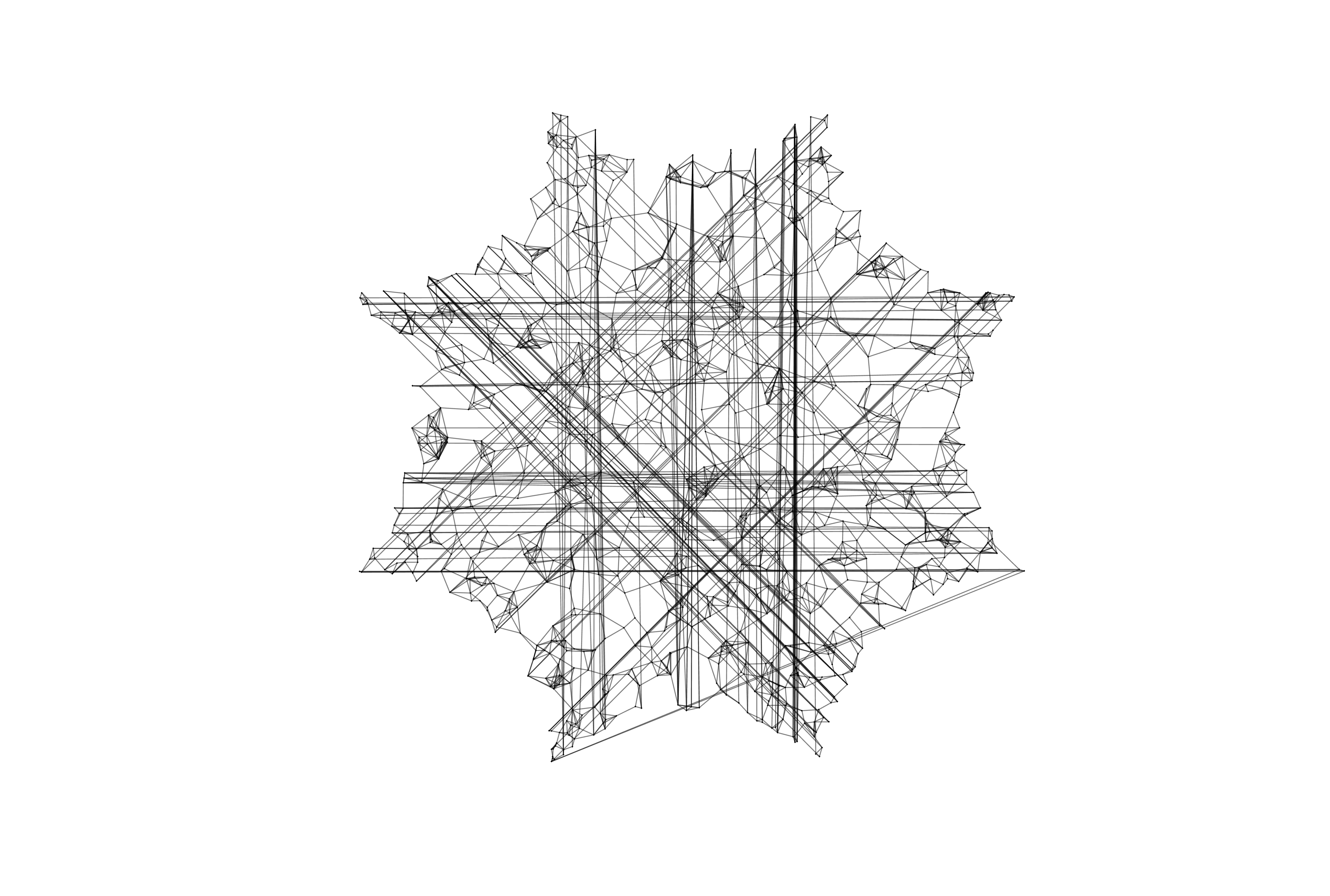}
        \caption{Left: in black, a sample of 1000 IID uniform points from the fundamental domain of the Bolza surface in the Poincar\'e disk. Colors from {\color{red}red} to {\color{blue}blue} indicate translates of the sample to the 48 edge- and vertex-adjacent domains, outlined in black. These translations suffice to compute distances on the Bolza surface via the usual metric on the Poincar\'e disk. Right: the undirected graph on the sample points obtained with peel neighborhoods along the lines of prior figures. The periodic boundary conditions are evident through long graph edges. 
        In this case, the exact peel neighborhoods and heuristic approximations are identical.
        }
    \label{fig:bolzaDisk}
\end{figure}

\section{Peel neighborhoods for MNIST demonstrate scalability}\label{sec:mnist}

Figure \ref{fig:MNIST_giant_component} shows most of the thresholded peel neighborhoods for MNIST using Euclidean distance. Thresholded peel neighborhoods can be readily computed on hundreds of thousands of points in thousands of dimensions.

\begin{figure}[htbp]
  \centering
  \includegraphics[trim = 40mm 15mm 20mm 10mm, clip, width=\textwidth,keepaspectratio]{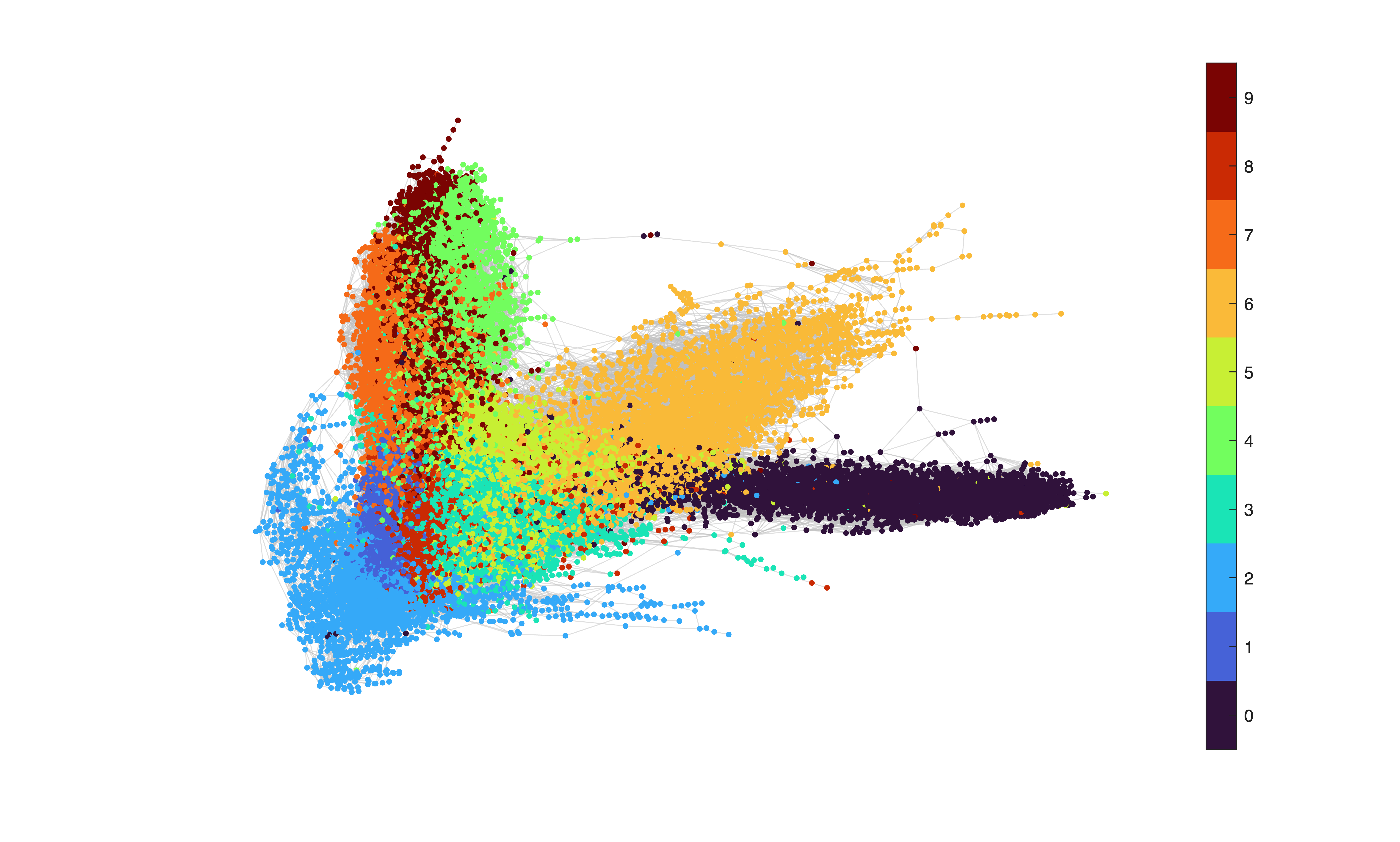}
\caption{The giant component of the peel neighborhood graph along the lines of Figure 7 in the main text but for MNIST training data, colored by digit. Using the default $(1+1/m) \times$ median distance threshold discussed in \S 4 in the main text, with $m = 28^2$, this graph has 57851 vertices (57843 vertices using heuristic approximate peel neighborhoods under the same soft threshold), while the MNIST training data has 60000 points. 25408 points in the entire graph reach the distance threshold. 55082 peel neighborhoods contain only one digit; 4364 contain two digits; 485 contain three digits; 60 contain four digits; eight contain five digits; and one peel neighborhood contains six digits. 
In this case, exact (thresholded) peel neighborhoods introduce about $1.3\%$ more edges than the heuristic approximations; the graph drawing of the latter is qualitatively similar (not shown here).
}
  \label{fig:MNIST_giant_component}
\end{figure}

\begin{figure}[htbp]
  \centering
  \includegraphics[trim = 0mm 60mm 0mm 60mm, clip, width=\textwidth,keepaspectratio]{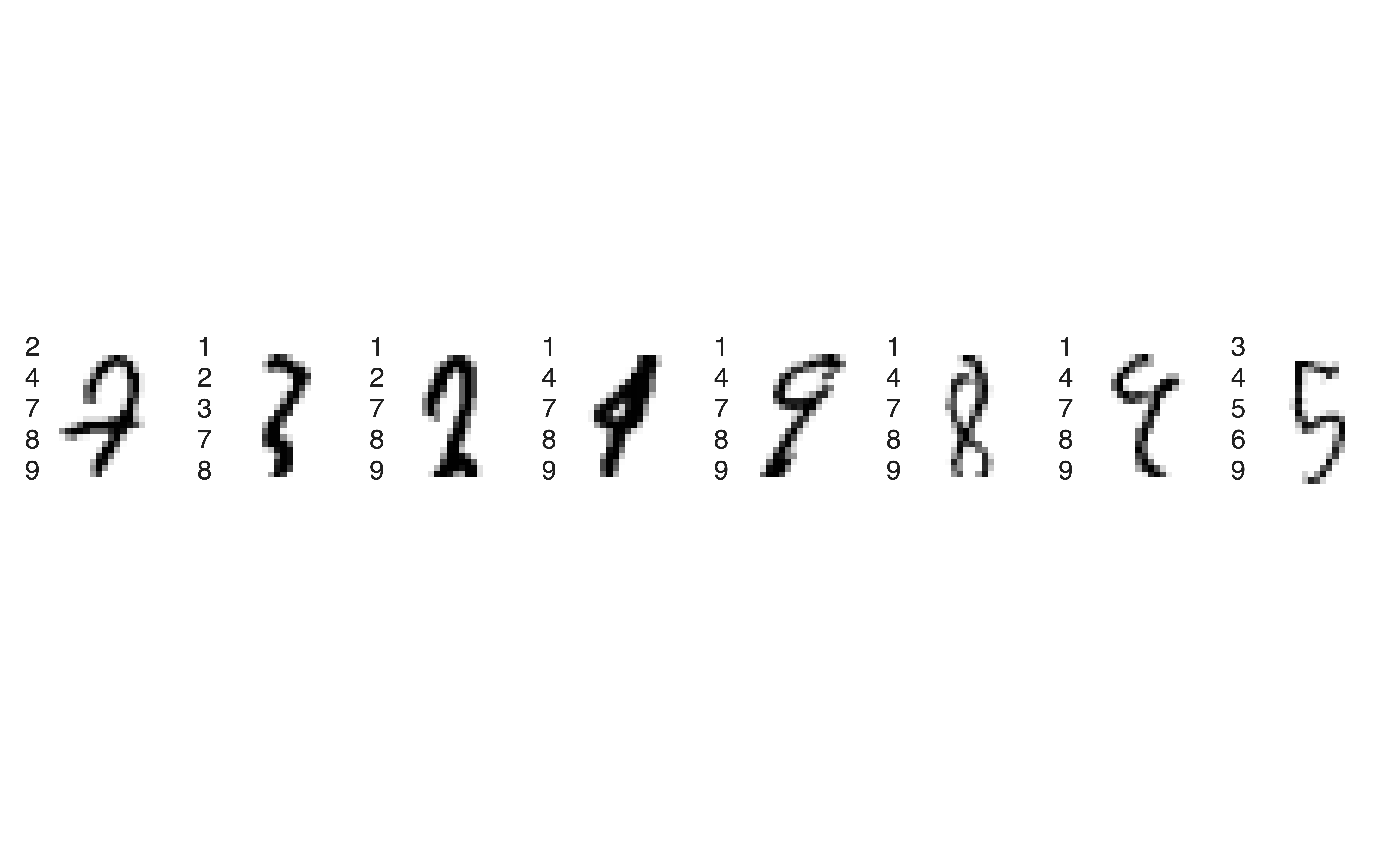}
\caption{The eight images near five classes as described in Figure \ref{fig:MNIST_giant_component}, with class labels indicated.}
  \label{fig:MNIST_near_five_classes}
\end{figure}

\end{document}